\newcommand{\Der}{\catname{D}}
\newcommand{\DerC}{\catname{C}}
\theoremstyle{plain}
\newtheorem{theorem}{Theorem}[section]
\newtheorem{lemma}[theorem]{Lemma}
\newtheorem{proposition}[theorem]{Proposition}
\newtheorem{definition}[theorem]{Definition}
\newtheorem{example}[theorem]{Example}
\newtheorem{remark}[theorem]{Remark}
\newtheorem*{acknowledgments}{Acknowledgements}
\newcommand{\vir}[1]{{{\rm d}(} #1{)}}
\newcommand{\eins}{\boldsymbol{1}}
\DeclareMathOperator{\Aut}{Aut}
\DeclareMathOperator{\ce}{\mathcal{E}}
\DeclareMathOperator{\Hom}{Hom}
\DeclareMathOperator{\End}{End}
\DeclareMathOperator{\im}{im}
\newcommand{\CC}{\mathbb{C}}
\newcommand{\GG}{\mathbb{G}}
\newcommand{\QQ}{\mathbb{Q}}
\newcommand{\ZZ}{\mathbb{Z}}
\newcommand{\cA}{\mathcal{A}}
\newcommand{\frp}{\mathfrak{p}}
\newcommand{\id}{\mathrm{id}}
\newcommand{\nr}{\mathrm{Nrd}}
\newcommand{\bz}{\mathbb{Z}}
\def\bigcapp{\raise1ex\hbox{\rotatebox{180}{$\biguplus$}}}
 \def\bigcappd{\raise1ex\hbox{\rotatebox{180}{$\displaystyle\biguplus$}}}
\newcommand{\catname}[1]{\textnormal{{\textsf{#1}}}}
\begin{document}

\title[]{On non-commutative Euler systems, I: \\
preliminaries on `det' and `Fit'}


\author{David Burns and Takamichi Sano}
%

\begin{abstract} We extend some classical constructions in commutative algebra to the setting of modules over orders in (non-commutative) semisimple algebras. Our theory incorporates, inter alia, `reduced' versions of the notions of higher Fitting invariants and higher exterior powers and of the Grothendieck-Knudsen-Mumford determinant functor on perfect complexes. In a companion article, these results are used to develop a theory of non-commutative Euler systems for $p$-adic representations.  
\end{abstract}

\address{King's College London,
Department of Mathematics,
London WC2R 2LS,
U.K.}
\email{david.burns@kcl.ac.uk}

\address{Osaka Metropolitan University, 
Department of Mathematics, 
3-3-138 Sugimoto\\Sumiyoshi-ku\\Osaka\\558-8585, 
Japan}
\email{tsano@omu.ac.jp}

\keywords{non-commutative orders, determinant functors, Fitting ideals}
\thanks{MSC: 16D10 (primary);  16E05, 16E30, 16H10, 16H20 (secondary).}

\maketitle


%
\section{Introduction}

Let $R$ be a Dedekind domain whose field of fractions is a number field $F$, and $\mathcal{A}$ an $R$-order that spans a finite dimensional semisimple $F$-algebra. Then the main aim of this article is to extend the determinant functor constructed for commutative rings by Knudsen and Mumford in \cite{knumum} (and \cite{knudsen}), following initial ideas of Grothedieck, to the derived category of finitely generated locally-free $\mathcal{A}$-modules. 

We recall that more general versions of the Grothendieck-Knudsen-Mumford determinant functor have been constructed previously, both by Deligne \cite{delignedet} in terms of the Picard category of `virtual objects'  and by Fukaya and Kato \cite{fukaya-kato} via a theory of `localized $K_1$-groups'. Our approach is, however, different from these important earlier theories (see, for example, Remark \ref{comparison of categories}) and, being somewhat more concrete, seems particularly well-suited to explicit arithmetic applications. For example, in the companion article \cite{sbes1-2}, the results proved here play a key role in the development of a theory of non-commutative Euler systems for $p$-adic representations over number fields. 

To give a few more details we shall, for simplicity, assume $\mathcal{A}$ is the group ring $R[G]$ for a finite group $G$. Then, as a first step, in \S\ref{app higher fits} we use reduced norms of matrices with coefficients in $R[G]$ to define a canonical $R$-order $\xi(R[G])$ in the centre $\zeta(F[G])$ of $F[G]$. We call this order the `Whitehead order' of $R[G]$ and it plays an essential role in the specification of integral structures in our theory. For context, we note that $\xi(R[G])$ is, in general, neither contained in, nor contains,  $\zeta(R[G])$. 

As a first application, we then use Whitehead orders to develop an analogue for finitely generated $R[G]$-modules of the theory of higher Fitting ideals over commutative rings, as discussed by Northcott \cite{North}. This theory of `non-commutative Fitting invariants' occurs naturally in arithmetic applications (see \cite{sbes1-2}) and its main aspects are established in Theorem \ref{main fit result}.  

We next fix an algebraic closure $F^c$ of $F$ and, for each irreducible $F^c$-valued character $\chi$ of $G$, a corresponding representation $G\to {\rm GL}_{\chi(1)}(F^c)$. 
%
%
We use this data to define, for each finitely generated $F[G]$-module $W$ and each non-negative integer $r$, a canonical `$r$-th reduced exterior power' ${\bigwedge}^r_{F[G]}W$  that is a $\zeta(F[G])$-module, 
%
%
and for each ordered subset $\{w_i\}_{1\le i\le r}$ of $W$ a `reduced exterior product' element $\wedge_{i = 1}^{i=r}w_i$ in ${\bigwedge}^r_{F[G]}W$. The main properties of these constructions are proved in \S\ref{sec basic}. 

An essential difficulty is then to show that a full $R[G]$-lattice $M$ in $W$ gives rise, in a functorial manner, to a corresponding integral structure on ${\bigwedge}^r_{F[G]}W$. 
  We resolve this problem by using reduced exterior products to define, in terms of the order $\xi(R[G])$, a generalization of the  notion of `Rubin lattice' introduced (for commutative orders) in \cite{R} that has played a key role in the theory of higher rank Euler and Kolyvagin systems via an associated concept of `exterior power bidual'. These `reduced Rubin lattices' are finitely generated $\xi(R[G])$-modules and their main properties are established in  Theorem \ref{exp prop}. 

Finally, in \S\ref{non-comm dets}, we use the theory of reduced Rubin lattices to construct a canonical extended determinant functor from the derived category of bounded complexes of finitely generated locally-free $R[G]$-modules to the 
Picard category of graded invertible $\xi(R[G])$-modules. The main properties of this functor are described in 
Theorem \ref{ext det fun thm} and are proved by combining the properties of reduced Rubin lattices established in Theorem \ref{exp prop} 
with many of the original arguments used by Knudsen and Mumford in \cite{knumum}. 

%
%

Whilst our initial motivations for these constructions related to Euler systems (as discussed in \cite{sbes1-2}), the underlying ideas do seem of independent interest and indeed already have other arithmetic applications. For instance, the articles \cite{ffmcmm} of de Frutos-Fern\'andez, Macias Castillo and Martinez Marqu\'es and \cite{mct} of Macias Castillo and Tsoi use aspects of our approach to respectively study class number formulas for Drinfeld modules and Hasse-Weil-Artin $L$-series of elliptic curves over number fields. In another direction, reduced determinant functors lead naturally to a more concrete formulation of the central conjectures formulated in \cite{bf} and \cite{fukaya-kato} (thereby avoiding relative $K$-theory and the sophisticated theories of `virtual  objects' and `localized  $K_1$-groups'). However, applications of this sort relating to special values conjectures relative to non-commutative coefficient rings will be considered elsewhere.

\begin{acknowledgments}
{\em Much of this article developed from earlier joint work of ours with Masato Kurihara and we are extremely grateful to him for encouragement and many helpful  discussions. In addition, the first author is very grateful to Kazuya Kato for his generous encouragement at an early stage of this project (long ago). It is also a pleasure to thank Henri Johnston, Daniel Macias Castillo and Andreas Nickel for  helpful comments and Daniel Puignau for a careful reading of an earlier version of this article. Finally, we are very grateful indeed to the referee for a very thorough report and, in particular, for numerous suggestions that improved the exposition considerably.
%
}
\end{acknowledgments}

\section{Semisimple algebras}\label{semi alg section}

For a ring $R$ we write $R^{\rm op}$ for its opposite ring and $\zeta(R)$ for its centre (so that $\zeta(R^{\rm op}) = \zeta(R)$). For natural numbers $d'$ and $d$ we write ${\rm M}_{d',d}(R)$ for the set of $d'\times d$ matrices over $R$. We abbreviate ${\rm M}_{d,d}(R)$ to ${\rm M}_{d}(R)$ and write ${\rm GL}_d(R)$ for its unit group.

\subsection{Simple rings}\label{Semisimple rings}
\subsubsection{}\label{morita} We first review relevant facts from Morita theory (and for more details see, for example, \cite[\S3]{curtisr}). 

Let $E$ be a field and $V$ an $E$-vector space of dimension $d$. Then $V$ is naturally a (simple) left module over the $E$-algebra $A :=\End_E(V)$.
 The linear dual $V^\ast := \Hom_E(V,E)$ of $V$ is a right $A$-module via the rule
$$( v^\ast \cdot a )(v):=v^\ast (a\cdot v),$$
for $a\in A$, $v^\ast \in V^\ast$, and $v \in V$. There are also pairings
$$(- , -)_E : V^\ast \times V \to E \quad \text{and}\quad (- , -)_A : V \times V^\ast \to A,$$
given, for $v,v' \in V$ and $v^\ast \in V^\ast$, by
$$ (v^\ast, v)_E:=v^\ast(v) \quad\text{and}\quad (v,v^\ast)_A(v'):=v^\ast(v')\cdot v.$$

The pairing $(-, -)_E$, respectively $(-, -)_A$, induces an isomorphism of $E$-vector spaces, respectively two-sided $A$-modules of the form
$$V^\ast \otimes_{A} V \stackrel{\sim}{\rightarrow} E, \quad\text{respectively}\quad V \otimes_E V^\ast \stackrel{\sim}{\rightarrow} A.$$

The `Morita functor' $V^\ast \otimes_{A} -$ from the category of left $A$-modules to that of $E$-vector spaces gives an equivalence of categories.

\subsubsection{}Let now $K$ be a field of characteristic zero and $A$ a finite-dimensional simple $K$-algebra. All simple left $A$-modules are isomorphic and for any such module $M$ the $K$-algebra
\begin{equation}\label{div ring def} D:=\End_A(M)\end{equation}
is a division ring (that is unique up to isomorphism). By using a slightly more general version of the Morita theory recalled above one derives a canonical ring isomorphism
$$A \stackrel{\sim}{\rightarrow} \End_D(M) ; \ a \mapsto (m \mapsto am).$$

The centre $F:=\zeta(D)$ of $D$ is a field canonically isomorphic to $\zeta(A)$.  An extension field $E$ of $F$ is a `splitting field' for $A$ if  $D\otimes_F E$ (or, equivalently, $A \otimes_F E$) is isomorphic to a matrix ring ${\rm M}_m(E)$ for some $m$.  
Such a field $E$ always exists and can be taken to be of finite degree over $K$ (see Remark \ref{exp splitting} below). In addition, the integer $m$ is independent of $E$ and  referred to as the `Schur index' of $A$. Finally, we recall that there exists a composite isomorphism
\begin{equation}\label{rn iso}A \otimes_F E \cong \End_D(M) \otimes_F E \cong {\rm M}_n(D^{\rm op}) \otimes_F E \cong {\rm M}_n({\rm M}_m(E))={\rm M}_{nm}(E),\end{equation}
where $n$ is the dimension of the (free) left $D$-module $M$, via which one can regard $A$ as a subalgebra of ${\rm M}_{nm}(E)$.

\begin{remark}\label{exp splitting} {\em Any choice of an algebraic closure $K^c$ of $K$ is a splitting field for $A$. In addition, there are canonical choices of finite extensions of $K$ in $K^c$ that are splitting fields for $A$. 
 For example, the composite of all extensions of $K$ in $K^c$ that are isomorphic (as a $K$-algebra) to a maximal subfield of any division ring $D$ as in (\ref{div ring def}) is a splitting field for $A$ that is of finite degree and Galois over $K$. In this regard see also Remark \ref{group ring rem}.}\end{remark}

\subsubsection{}The behaviour under scalar extension of a finite-dimensional simple $K$-algebra $A$ is described in the following result.

\begin{lemma}\label{propsch0}
Let $K'$ be an extension of $K$ and $\Omega$ an algebraic closure of $K'$.  Set $F:= \zeta(A)$ and consider the (finite) set $\Sigma(F/K,K')$ of equivalence classes of $K$-embeddings $F \to \Omega$ under the relation $\sigma \sim \sigma' \Longleftrightarrow \sigma=\tau\circ \sigma' \,\mbox{ for some } \tau \in \Aut_{K'}(\Omega).$

Then, for each $\sigma$ in $\Sigma(F/K,K')$, the $K'$-algebra $A\otimes_{F}\sigma(F)K'$ is a simple artinian ring with centre the composite field $\sigma(F)K'$ of $\sigma(F)$ and $K'$ (this field is independent of the choice of $\sigma$), and there is a product decomposition of $K'$-algebras
$$A\otimes_KK'\cong {\prod}_{\sigma \in \Sigma(F/K,K')} (A\otimes_{F}\sigma(F)K').$$
\end{lemma}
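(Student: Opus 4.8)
The plan is to reduce the statement to the well-known behaviour of the centre $F=\zeta(A)$ under base change and then transport the simple-ring structure along the resulting idempotent decomposition. First I would observe that $F/K$ is a finite separable field extension (separability because $\mathrm{char}\,K=0$), so for any extension $K'$ the commutative $K'$-algebra $F\otimes_K K'$ is \'etale and decomposes as a finite product $\prod_{\sigma}L_\sigma$ of finite separable field extensions of $K'$; the standard parametrisation of the factors is exactly by the set $\Sigma(F/K,K')$ of $K'$-conjugacy classes of $K$-embeddings $\sigma\colon F\to\Omega$, with the factor attached to $\sigma$ being the composite field $\sigma(F)K'$ inside $\Omega$. (That $\sigma(F)K'$, together with the isomorphism class of the factor, depends only on the class of $\sigma$ is immediate from the defining equivalence relation.) I would cite this as a routine fact about \'etale algebras, or alternatively prove it by writing $F=K(\alpha)$, factoring the minimal polynomial of $\alpha$ over $K'$ into irreducibles, and matching roots with embeddings.

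Next I would tensor the $K$-algebra inclusion $F\hookrightarrow A$ up to $K'$, or more precisely use the natural $\zeta(A\otimes_K K')$-algebra structure: since $A$ is an $F$-algebra, there is a canonical isomorphism of $K'$-algebras
\[
A\otimes_K K'\;\cong\;A\otimes_F(F\otimes_K K')\;\cong\;A\otimes_F\Bigl(\prod_{\sigma\in\Sigma(F/K,K')}\sigma(F)K'\Bigr)\;\cong\;\prod_{\sigma\in\Sigma(F/K,K')}\bigl(A\otimes_F\sigma(F)K'\bigr),
\]
the middle step using the decomposition of $F\otimes_K K'$ and the fact that $A\otimes_F(-)$ commutes with finite products. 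This already gives the asserted product decomposition; it remains only to identify the centre and the ring-theoretic nature of each factor. For the centre, I would use that $\zeta$ commutes with base change along a central (here even field) extension, so $\zeta(A\otimes_F\sigma(F)K')=\zeta(A)\otimes_F\sigma(F)K'=F\otimes_F\sigma(F)K'=\sigma(F)K'$, a field.

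For the claim that $A\otimes_F\sigma(F)K'$ is simple artinian, I would argue as follows. Write $L:=\sigma(F)K'$, a finite extension of the field $F$ (via $\sigma$). Finite-dimensionality over $L$ is clear, so artinian is automatic and the only content is simplicity, equivalently the vanishing of the Jacobson radical plus having a single simple factor. Since $A$ is a finite-dimensional simple $F$-algebra it is a central simple algebra over $F$ (its centre is $F$), and central simple algebras remain central simple under arbitrary field extension of the base: $A\otimes_F L$ is central simple over $L$. The cleanest way to see this is to pass to a splitting field: by Remark \ref{exp splitting} there is a finite field extension $E/F$ with $A\otimes_F E\cong \mathrm{M}_{nm}(E)$ (in the notation of \eqref{rn iso}); choosing $E$ to also contain (a copy of) $L$ over $F$, we get $(A\otimes_F L)\otimes_L E\cong A\otimes_F E\cong \mathrm{M}_{nm}(E)$, which is simple with centre $E$, and faithfully flat descent of these properties along $L\to E$ forces $A\otimes_F L$ to be simple with centre $L$. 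Alternatively one invokes the classical theorem that $A$ central simple over $F$ implies $A\otimes_F L$ central simple over $L$ directly.

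The main obstacle, such as it is, is purely bookkeeping: making sure the indexing of the factors of $F\otimes_K K'$ matches the set $\Sigma(F/K,K')$ with the correct identification of the $\sigma$-component as $\sigma(F)K'\subseteq\Omega$, and checking this is well-defined on equivalence classes. Everything else is a short chain of standard facts (\'etale algebra decomposition, $\zeta$ and finite products commuting with $A\otimes_F(-)$, stability of central simple algebras under field extension), and no genuinely hard step is involved; I would therefore keep the write-up brief, proving the \'etale decomposition if the reader is not assumed to know it and otherwise citing it, and spending most words on the central-simple-stability point since that is where one might want to see the splitting-field reduction spelled out.
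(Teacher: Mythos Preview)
Your proposal is correct and follows essentially the same route as the paper: decompose $F\otimes_K K'$ as a product of fields indexed by $\Sigma(F/K,K')$ using separability, rewrite $A\otimes_K K'\cong A\otimes_F(F\otimes_K K')$ to obtain the product decomposition, and then invoke stability of central simple algebras under field extension to identify each factor as central simple over $\sigma(F)K'$. The paper's proof is just a terser version of what you wrote, citing the \'etale decomposition and CSA stability as known facts rather than spelling out the splitting-field reduction.
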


\begin{proof}
Since $F$ is separable over $K$, we have an isomorphism
$$F\otimes_K K' \cong {\prod}_{\sigma \in \Sigma(F/K,K')}\sigma(F)K'.$$
Hence we have
$$A \otimes_K K'\cong A \otimes_{F}(F\otimes_K K')\cong {\prod}_{\sigma \in \Sigma(F/K,K')}(A \otimes_{F} \sigma(F)K').$$
Since $A$ is a central simple algebra over $F$, $A \otimes_{F}\sigma(F)K'$ is also a central simple algebra over $\sigma(F)K'$.
\end{proof}

\subsubsection{} If $M$ is a finitely generated left $A$-module, then there exists a canonical composite homomorphism
$$\End_A(M) \to \End_{A\otimes_FE}(M\otimes_FE) \stackrel{\sim}{\to} \End_E(V^\ast \otimes_{(A\otimes_FE)}(M\otimes_FE)) \stackrel{}{\to} E,$$
with the second map induced by the Morita functor and the last by taking determinants.

One checks that the image of this map factors through the inclusion $F \subseteq E$ and that the induced `reduced norm' map 
$\nr_{\End_A(M)} :\End_A(M) \to F$ is such that, for all $\theta_1$ and $\theta_2$ in $\End_A(M)$, one has
$\nr_{\End_A(M)}(\theta_1\circ\theta_2) = \nr_{\End_A(M)}(\theta_1)\cdot\nr_{\End_A(M)}(\theta_2).$

\begin{remark}{\em  If $M=A^{\rm op}$ and one identifies $A$ with $\End_{A^{\rm op}}(A^{\rm op})$,
then for each element $a$ of $A$ one can check that $\nr_A(a)$ is equal to the determinant of the image of $a$ under the isomorphism (\ref{rn iso}). This is the classical definition of reduced norm. }
\end{remark}

\begin{remark}{\em For a natural number $n$ we shall often abbreviate $\nr_{{\rm M}_n(A)}$ to $\nr_A$. Since the algebras ${\rm M}_n(A^{\rm op})$ and ${\rm M}_n(A)^{\rm op}$ are isomorphic and $\nr_A=\nr_{A^{\rm op}}$, we shall also sometimes write $\nr_A$ for $\nr_{{\rm M}_n(A^{\rm op})}$.}
\end{remark}

The `reduced rank' of a finitely generated left $A$-module $M$ is the non-negative integer obtained by setting
\begin{equation}\label{rr def} {\rm rr}_A(M) := {\rm dim}_E\bigl(V^\ast \otimes_{(A\otimes_FE)}(M\otimes_FE)\bigr).\end{equation}

\begin{remark}\label{reduced rank rem}{\em  One can check, by explicit computation, that if $M$ is a simple left $A$-module, then ${\rm rr}_A(M)$ is equal to the Schur index $\sqrt{{\rm dim}_{F}(D)}$ of $A$. This implies, in particular, that ${\rm rr}_A(A)$ is equal to ${\rm dim}_D(M)\cdot \sqrt{{\rm dim}_{F}(D)}$ for any simple left $A$-module $M$.   }\end{remark}

\subsection{Semisimple rings}\label{ss rings section}
 In the sequel we shall use `module' to mean `left module'.

A module $M$ over a ring $A$ is said to be semisimple if it is a direct sum of
simple modules. A ring $A$ is said to be semisimple if every nonzero $A$-module is semisimple and this is true if and only if $A$ decomposes as a direct product
\begin{equation}\label{wedderburn} A \cong {\prod}_{i\in I} A_i,\end{equation}
in which the index set $I$ is finite and the rings $A_i$ are simple Artinian (and unique up to isomorphism). In particular, Lemma \ref{propsch0} shows that simple rings naturally give rise to semisimple rings under scalar extension.

The `Wedderburn decomposition' (\ref{wedderburn}) of $A$ induces an identification $\zeta(A) = {\prod}_{i \in I}\zeta(A_i)$ and can be used to define
(componentwise) generalizations of the above notions of reduced norm and reduced rank. In this way one obtains a reduced norm ${\rm Nrd}_A$ for the algebra $A$ that is valued in $\zeta(A)$ and defines a reduced rank  ${\rm rr}_A(M)$ of a finitely generated $A$-module $M$ that is an integer-valued function on ${\rm Spec}(\zeta(A))$.

This reduced norm induces a homomorphism (which we denote by the same symbol)
$$\nr_A:K_1(A) \to \zeta(A)^\times$$
from the Whitehead group $K_1(A)$ of $A$ (cf. \cite[\S 45A]{curtisr}).

\section{Whitehead orders and non-commutative Fitting invariants}\label{app higher fits}


In this section we define a canonical $R$-order in $\zeta(A)$ and then use it to construct a non-commutative generalization of the classical theory of `higher Fitting ideals' (from \cite{North}). This construction is natural, has many of the same properties as the classical commutative construction (see Proposition \ref{main fit result}) and is also, as we show in \cite{sbes1-2}, well-suited to arithmetic applications.


Throughout the section we fix a Dedekind domain $R$ with field of fractions $F$ that is a finite extension of either $\QQ$ or $\QQ_p$ for some prime $p$.  We also fix a finite-dimensional semisimple $F$-algebra $A$ and an $R$-order $\mathcal{A}$ in $A$  (in the sense of \cite[Def. (23.2)]{curtisr}).

For each prime ideal $\mathfrak{p}$ of $R$ we respectively write $R_{(\mathfrak{p})}$ and $R_\mathfrak{p}$ for the localization and completion of $R$ at $\mathfrak{p}$. 
 For each $\mathcal{A}$-module $M$ and each $\mathfrak{p}$ 
 we then set $M_{(\mathfrak{p})} := R_{(\mathfrak{p})}\otimes_R M$ and  $M_\mathfrak{p}:= R_\mathfrak{p}\otimes_R M$. 
 We regard these modules as respectively endowed with natural actions of the algebras $\mathcal{A}_{(\mathfrak{p})} =R_{(\mathfrak{p})} \otimes_R \mathcal{A}$ and $\mathcal{A}_{\mathfrak{p}} =R_\mathfrak{p}\otimes_R \mathcal{A}$. 
In particular, the localisation $M_{(0)}$ of $M$ at the zero prime ideal of $R$ is equal to the $A$-module generated by $M$ and will often be written as  $M_F$.

We recall that, if $F$ is a $p$-adic field (for some $p$), then $A$ is said to be `ramified' if some simple component in its Wedderburn decomposition (as an $F$-algebra) is a matrix ring over a non-commutative division algebra. For such fields $F$, we set ${\rm Ram}(A) = \{\mathfrak{p}\}$, with $\mathfrak{p}$ the unique maximal ideal of $R$, if $A$ is ramified, and we set ${\rm Ram}(A) = \emptyset$ if $A$ is not ramified. If $F$ is a number field, then there are only finitely many non-archimedean places $v$ of $F$ for which the $F_v$-algebra $F_v\otimes_F A$ is ramified  (cf. \cite[Th. (25.7)]{reiner}) and we write ${\rm Ram}(A)$ for the (finite) set of prime ideals of $R$ that correspond to these places.\\

\subsection{The Whitehead order}\label{integral structures section}


\subsubsection{}We first introduce a canonical $R$-submodule of $\zeta(A)$.

\begin{definition}{\em For each prime ideal $\mathfrak{p}$ of $R$ the `Whitehead order' $\xi(\mathcal{A}_{(\mathfrak{p})})$ of $\mathcal{A}_{(\mathfrak{p})}$ is the $R_{(\mathfrak{p})}$-submodule of $\zeta(A)$ that is generated by the elements ${\rm Nrd}_A(M)$ as $M$ runs over all matrices in $\bigcup_{n \ge 1}{\rm M}_n(\mathcal{A}_{(\mathfrak{p})})$.

The `Whitehead order' of $\mathcal{A}$ is then defined by the intersection
\[ \xi(\mathcal{A}) := {\bigcap}_{\mathfrak{p}\in {\rm Spec}(R)} \xi(\mathcal{A}_{(\mathfrak{p})}).\]
%
}\end{definition}

The basic properties of this module are described in the following result.

\begin{lemma}\label{xi lemma} The following claims are valid. \
\begin{itemize}
\item[(i)] $\xi(\mathcal{A})$ is an $R$-order in $\zeta(A)$.
\item[(ii)] For every prime ideal $\mathfrak{p}$ of $R$ there are equalities 
\[ \xi(\mathcal{A})_{(\mathfrak{p})} = \xi(\mathcal{A}_{(\mathfrak{p})}),\,\,\,\xi(\mathcal{A})_{\mathfrak{p}} = \xi(\mathcal{A}_{\mathfrak{p}})\,\,\,\text{ and }\,\,\, \xi(\mathcal{A}_{(\mathfrak{p})}) = \zeta(A)\cap \xi(\mathcal{A}_{\mathfrak{p}}).\]
\item[(iii)] If $\mathcal{A}$ is commutative, then $\xi(\mathcal{A}) = \zeta(\mathcal{A}) =\mathcal{A}$.
 \item[(iv)] If $\mathfrak{p}\notin {\rm Ram}(A)$ and $\mathcal{A}_{(\mathfrak{p})}$ is a maximal $R_{(\mathfrak{p})}$-order, then $\xi(\mathcal{A})_{(\mathfrak{p})}$ is the (unique) maximal $R_{(\mathfrak{p})}$-order in $\zeta(A)$. 
\item[(v)] Any surjective homomorphism of $R$-orders $\varrho: \mathcal{A} \to \mathcal{B}$ induces, upon restriction, a
surjective homomorphism $\xi(\mathcal{A}) \to \xi(\mathcal{B})$.
 \end{itemize}
\end{lemma}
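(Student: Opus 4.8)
The plan is to prove the five claims of Lemma \ref{xi lemma} by reducing statements about $\xi(\mathcal{A})$ to the corresponding local statements about $\xi(\mathcal{A}_{(\mathfrak{p})})$, and then exploiting the multiplicativity of reduced norms together with standard facts about orders and reduced norms over complete local rings.

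\textbf{Claims (i) and (ii).} First I would observe that for each $\mathfrak{p}$ the module $\xi(\mathcal{A}_{(\mathfrak{p})})$ is an $R_{(\mathfrak{p})}$-submodule of $\zeta(A)$ that is closed under multiplication (since $\nr_A(M)\nr_A(N) = \nr_A(MN)$ after padding $M,N$ with identity blocks to a common size, and $\nr_A$ of a block-diagonal matrix is the product of the $\nr_A$ of the blocks) and contains $1 = \nr_A(I_1)$. To see it is finitely generated over $R_{(\mathfrak{p})}$, note that $\nr_A(M)$ for $M \in {\rm M}_n(\mathcal{A}_{(\mathfrak{p})})$ is an element of $\zeta(A)$ that is integral over $R_{(\mathfrak{p})}$ (its components are, up to sign, coefficients of characteristic polynomials of the integral matrices obtained after a splitting-field extension, hence are algebraic integers over $R_{(\mathfrak{p})}$ lying in the fixed field $\zeta(A)$); thus $\xi(\mathcal{A}_{(\mathfrak{p})})$ is contained in the integral closure of $R_{(\mathfrak{p})}$ in $\zeta(A)$, which is a finitely generated $R_{(\mathfrak{p})}$-module, so $\xi(\mathcal{A}_{(\mathfrak{p})})$ is finitely generated. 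It also spans $\zeta(A)$ over $F$ because $\mathcal{A}$ spans $A$ and reduced norms of suitable matrices already span $\zeta(A)$ over $F$ (e.g. using that $\zeta(A)$ is generated by reduced norms from $A^\times$ after inverting enough denominators). Hence each $\xi(\mathcal{A}_{(\mathfrak{p})})$ is an $R_{(\mathfrak{p})}$-order in $\zeta(A)$. For (ii), one inclusion $\xi(\mathcal{A})_{(\mathfrak{p})} \subseteq \xi(\mathcal{A}_{(\mathfrak{p})})$ is immediate from the definition of $\xi(\mathcal{A})$ as an intersection; for the reverse inclusion, given $M \in {\rm M}_n(\mathcal{A}_{(\mathfrak{p})})$ one can clear denominators, i.e. find $s \in R \setminus \mathfrak{p}$ with $sM \in {\rm M}_n(\mathcal{A}_{(\mathfrak{q})})$ for all primes $\mathfrak{q}$ simultaneously (only finitely many denominators occur), so $\nr_A(sM) = s^{nd}\nr_A(M)$ (with $d$ the appropriate reduced dimension) lies in $\xi(\mathcal{A}_{(\mathfrak{q})})$ for every $\mathfrak{q}$, whence $\nr_A(M) \in \xi(\mathcal{A})_{(\mathfrak{p})}$. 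Then (i) follows from (ii) since an $R$-submodule of $\zeta(A)$ that is locally an order at every $\mathfrak{p}$ and spans $\zeta(A)$ is itself an $R$-order (using that $R$ is Dedekind).

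\textbf{Claims (iii) and (iv).} If $\mathcal{A}$ is commutative then $A = \zeta(A)$ and $\nr_A$ on ${\rm M}_n(A)$ is the usual determinant, so $\nr_A(M) \in \mathcal{A}_{(\mathfrak{p})}$ for every $M \in {\rm M}_n(\mathcal{A}_{(\mathfrak{p})})$, giving $\xi(\mathcal{A}_{(\mathfrak{p})}) = \mathcal{A}_{(\mathfrak{p})}$ (the reverse containment being clear from $n=1$); intersecting over $\mathfrak{p}$ gives $\xi(\mathcal{A}) = \mathcal{A} = \zeta(\mathcal{A})$. For (iv), if $\mathcal{A}$ is a maximal $R$-order then, localising and using (ii), it suffices to treat $\mathcal{A}_{(\mathfrak{p})}$; over the localisation (and passing to completions, where maximal orders are products of matrix rings over maximal orders in division algebras) one can invoke the standard structure theory of reduced norms of maximal orders — e.g. \cite[\S\S 26, 45]{curtisr} — to identify the $R_{(\mathfrak{p})}$-module generated by the reduced norms with the maximal $R_{(\mathfrak{p})}$-order in $\zeta(A)$, using the surjectivity of reduced norms from (units of) complete maximal orders onto units of the maximal order in the centre at non-archimedean places.

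\textbf{Claim (v).} Here I would argue locally: for each $\mathfrak{p}$, the surjection $\mathcal{A}_{(\mathfrak{p})} \twoheadrightarrow \mathcal{B}_{(\mathfrak{p})}$ identifies $B$ with a product of a subset of the Wedderburn factors of $A$ (a quotient of a semisimple algebra), and applying a matrix $M \in {\rm M}_n(\mathcal{A}_{(\mathfrak{p})})$ to its image $\bar{M} \in {\rm M}_n(\mathcal{B}_{(\mathfrak{p})})$ one has that $\nr_B(\bar M)$ is the image of $\nr_A(M)$ under the projection $\zeta(A) \twoheadrightarrow \zeta(B)$ (compatibility of reduced norms with quotients by blocks of Wedderburn factors). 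Since every matrix over $\mathcal{B}_{(\mathfrak{p})}$ lifts to one over $\mathcal{A}_{(\mathfrak{p})}$ by surjectivity, the induced map $\xi(\mathcal{A}_{(\mathfrak{p})}) \to \xi(\mathcal{B}_{(\mathfrak{p})})$ is surjective; then (ii) and the fact that surjectivity can be checked locally at each $\mathfrak{p}$ give the claim for $\xi(\mathcal{A}) \to \xi(\mathcal{B})$.

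\textbf{Main obstacle.} The genuinely substantive point is claim (iv): pinning down that the module generated by reduced norms of integral matrices over a \emph{maximal} local order is exactly the maximal order in the centre. This requires the surjectivity of the reduced norm from units of a complete maximal order onto the unit group of the ring of integers of the centre at all finite places (a theorem that ultimately rests on class field theory / the Hasse norm theorem inputs feeding into \cite[\S 45]{curtisr}), together with care about the global-to-local passage via completions. Everything else is bookkeeping with multiplicativity of reduced norms, block matrices, and the localisation identity (ii).
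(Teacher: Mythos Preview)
Your outline is essentially correct and follows the same strategy as the paper: establish that each $\xi(\mathcal{A}_{(\mathfrak{p})})$ is an $R_{(\mathfrak{p})}$-order in $\zeta(A)$, deduce (i) and (ii) from this, and handle (iii)--(v) by the obvious arguments (determinant $=$ reduced norm in the commutative case; the local theory of reduced norms for maximal orders via \cite[\S45]{curtisr}; compatibility of $\nr$ with quotients for (v)). Your identification of (iv) as the substantive input is also right, though the relevant fact is purely \emph{local} (arithmetic of division algebras over local fields, as in the proof of \cite[Prop.~(45.8)]{curtisr}); no global class field theory is needed.

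The one place where your argument is thinner than the paper's is the reverse inclusion in (ii). You write $\nr_A(sM)=s^{nd}\nr_A(M)$, but when $A$ is not simple the exponent varies across Wedderburn components, so $\nr_A(sI_n)$ is a tuple $(s^{N_1},\dots,s^{N_k})$ rather than a single power of $s$, and one must still explain why its inverse lies in the $R_{(\mathfrak{p})}$-span of reduced norms of matrices over $\mathcal{A}$. This can be fixed (e.g.\ by observing that $\nr_A(sI_n)$ is a unit in the maximal order $\mathfrak{M}_{(\mathfrak{p})}$, hence also in the subring $\xi'(\mathcal{A})_{(\mathfrak{p})}$ since units descend along integral extensions of commutative rings), but it is not quite ``bookkeeping''. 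The paper sidesteps this entirely: it first proves the local statement $\xi(\mathcal{M}_{(\mathfrak{p})})=\mathfrak{M}_{(\mathfrak{p})}$ for a maximal order $\mathcal{M}\supseteq\mathcal{A}$, deduces that $\xi(\mathcal{A}_{(\mathfrak{p})})=\mathfrak{M}_{(\mathfrak{p})}$ for almost all $\mathfrak{p}$, and then invokes the general lattice result \cite[Prop.~(4.21)(vii)]{curtisr} (a family of local lattices agreeing almost everywhere with a fixed global one has intersection with the correct localizations). This is cleaner and has the side benefit that the finite-index / spanning assertions for $\xi(\mathcal{A}_{(\mathfrak{p})})$ come for free from the maximal-order case.
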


\begin{proof} We first make some preliminary observations. 

The integral closure $\mathfrak{M}$ of $R$ in $\zeta(A)$ is the maximal $R$-order in $\zeta(A)$. For every maximal ideal $\mathfrak{p}$ of $R$, the ring $\mathfrak{M}_{(\mathfrak{p})}$ is therefore the maximal $R_{(\mathfrak{p})}$-order in $\zeta(A)$ and hence a direct product of discrete valuation rings. In particular, since ${\rm M}_n(\mathcal{A}_{(\mathfrak{p})})$ is an $R_{(\mathfrak{p})}$-order in ${\rm M}_n(A)$, one has ${\rm Nrd}_A(M) \in \mathfrak{M}_{(\mathfrak{p})}$ for every $M \in {\rm M}_n(\mathcal{A}_{(\mathfrak{p})})$ (cf. \cite[Cor. (26.2)]{curtisr}). It follows that $\xi(\mathcal{A}_{(\mathfrak{p})})\subseteq \mathfrak{M}_{(\mathfrak{p})}$ and hence that $\xi(\mathcal{A}_{(\mathfrak{p})})$ is finitely generated over $R_{(\mathfrak{p})}$.  

Next we fix a choice of maximal $R$-order $\mathcal{M}$ in $A$ that contains $\mathcal{A}$ as a submodule of finite index (cf.  \cite[Th. (26.5)]{curtisr}). Then, for every maximal ideal $\mathfrak{p}$ of $R$, the $R_{(\mathfrak{p})}$-order $\mathcal{M}_{(\mathfrak{p})}$ is maximal (by \cite[Th. (26.21)(ii)]{curtisr}). Hence, if $\mathfrak{p} \notin {\rm Ram}(A)$, then $\mathcal{M}_{(\mathfrak{p})}$ is conjugate in $A$ to a direct product of full matrix rings over the respective (discrete valuation ring) components of $\mathfrak{M}_{(\mathfrak{p})}$ (cf. \cite[Th. (18.7)(iii)]{reiner}) and so the reduced norm (over $A$) coincides with taking the respective matrix determinants on each component of this product. In particular, if both $\mathfrak{p}\notin {\rm Ram}(A)$ and the order $\mathcal{A}_{(\mathfrak{p})}$ is maximal, then one has $\zeta(\mathcal{A})_{(\mathfrak{p})} = \mathfrak{M}_{(\mathfrak{p})} \subseteq \xi(\mathcal{A}_{(\mathfrak{p})})$ and hence 
$\xi(\mathcal{A}_{(\mathfrak{p})}) = \zeta(\mathcal{A})_{(\mathfrak{p})} = \mathfrak{M}_{(\mathfrak{p})}$, as required to prove (iv). 

Finally, we note it is straightforward to check $\xi(\mathcal{A}_{(\mathfrak{p})})$ is closed under multiplication, that it contains $R_{(\mathfrak{p})}$ and that it has finite index in $\mathfrak{M}_{(\mathfrak{p})}$ (since it contains ${\rm Nrd}_A(x\cdot \mathcal{M}_{(\mathfrak{p})})$ for a non-zero element of $R$). This shows each $\xi(\mathcal{A}_{(\mathfrak{p})})$ is an $R_{(\mathfrak{p})}$-order in $\zeta(A)$ and hence also implies that $\xi(\mathcal{A})$ is an $R$-order in $\zeta(A)$ if and only if one has $\xi(\mathcal{A})_F = \zeta(A)$.

At this stage, it is clear that (i) and the first assertion of (ii) both follow from the general result  of \cite[Prop. (4.21)(vii)]{curtisr} and the fact that $\xi(\mathcal{A}_{(\mathfrak{p})}) = \mathfrak{M}_{(\mathfrak{p})}$ for almost all $\mathfrak{p}$ (as follows directly from the above observations).

In order to prove $\xi(\mathcal{A}_{\mathfrak{p}}) = \xi(\mathcal{A})_{\mathfrak{p}}$, we note that (i) implies $\xi(\mathcal{A}_{\mathfrak{p}})$ is a finitely generated $R_\mathfrak{p}$-module and hence $\mathfrak{p}$-adically complete. Since any matrix $M$ in ${\rm M}_n(\mathcal{A}_{\mathfrak{p}})$ is equal to the $\mathfrak{p}$-adic limit of a sequence of matrices $(M_t)_t$ in ${\rm M}_n(\mathcal{A})$, it is therefore enough to show that ${\rm Nrd}_{A_\mathfrak{p}}(M)$ is equal to the limit (over $t$)  of the associated reduced norms 
${\rm Nrd}_{A}(M_t) = {\rm Nrd}_{A_\mathfrak{p}}(M_t)\in \xi(\mathcal{A}_{\mathfrak{p}})$. This in turn follows directly from the fact that, for any natural number $a$, there exists a natural number $b(a)$ such that for all matrices $N$ and $N'$ in ${\rm M}_n(\mathcal{A}_\mathfrak{p})$ and all integers $b \ge b(a)$ one has an implication 
\[ N-N' \in \mathfrak{p}^{b}\cdot {\rm M}_n(\mathcal{A}_\mathfrak{p}) \Longrightarrow {\rm Nrd}_{A_\mathfrak{p}}(N) - {\rm Nrd}_{A_\mathfrak{p}}(N') \in \mathfrak{p}^a\cdot \zeta(\mathcal{A}_\mathfrak{p}).\]
(The verification of the latter fact is a straightforward exercise involving the explicit description of reduced norms that we leave to the reader.)

To complete the proof of (ii), it is now enough to note that the equalities $\xi(\mathcal{A})_F = \zeta(A)$, 
$\xi(\mathcal{A}_{\mathfrak{p}}) = \xi(\mathcal{A})_{\mathfrak{p}}$ and $\xi(\mathcal{A})_{(\mathfrak{p})} = \xi(\mathcal{A}_{(\mathfrak{p})})$ already proved combine to imply that 
\[ \zeta(A)\cap \xi(\mathcal{A}_\mathfrak{p}) = \xi(\mathcal{A})_F\cap \xi(\mathcal{A})_\mathfrak{p} = \xi(\mathcal{A})_{(\mathfrak{p})} = \xi(\mathcal{A}_{(\mathfrak{p})}),\]
where the second equality follows from the general result of \cite[Prop. (4.21)(vi)]{curtisr}.

Next we note that if $\mathcal{A}$ is commutative, then for every $\mathfrak{p}$ and every  matrix $M$ in ${\rm M}_n(\mathcal{A}_{(\mathfrak{p})})$ one has ${\rm Nrd}_A(M) = {\rm det}(M) \in \mathcal{A}_{(\mathfrak{p})}$. In this case it is therefore clear that $\xi(\mathcal{A})$ is equal to ${\bigcap}_{\mathfrak{p}}\mathcal{A}_{(\mathfrak{p})}$ and hence (by \cite[Prop. (4.21)(vi)]{curtisr}) to $\mathcal{A} = \zeta(\mathcal{A})$, as required to prove (iii).

%

%
%
Finally, to prove (v) we note first that the claim makes sense since the surjectivity of $\varrho$ implies that the $F$-algebra $B= \mathcal{B}_F$ is a quotient of $A$ so that $B$ is semisimple (and hence the order $\xi(\mathcal{B})$ is defined). This also implies that $\varrho$ restricts to give a surjective homomorphism $\varrho': \zeta(A)\to \zeta(B)$ and (ii) implies that the claimed equality $\varrho'(\xi(\mathcal{A})) = \xi(\mathcal{B})$ is valid if for every $\mathfrak{p}$ one has $\varrho'(\xi(\mathcal{A}_{(\mathfrak{p})})) = \xi(\mathcal{B}_{(\mathfrak{p})})$. This equality is in turn true since $\varrho$ induces, for each $n$, a surjective ring homomorphism $\varrho_{n}: {\rm M}_n(\mathcal{A}_{(\mathfrak{p})}) \to {\rm M}_n(\mathcal{B}_{(\mathfrak{p})})$ with the property that $\varrho'({\rm Nrd}_A(M)) = {\rm Nrd}_B(\varrho_{n}(M))$ for every $M$ in ${\rm M}_n(\mathcal{A}_{(\mathfrak{p})})$. \end{proof}

\begin{remark}\label{JN comp rem}{\em  In the case that $R$ is a discrete valuation ring, Johnston and Nickel \cite[\S3.4]{JN} consider the $R$-order $\mathcal{I}(\mathcal{A})$ in $\zeta(A)$ that is generated over $\zeta(\mathcal{A})$ by the elements ${\rm Nrd}_A(M)$ as $M$ runs over all  matrices in $\bigcup_{n \ge 1}{\rm M}_n(\mathcal{A})$. In this case one  therefore has $\mathcal{I}(\mathcal{A}) = \zeta(\mathcal{A})\cdot \xi(\mathcal{A})$ and also $\mathcal{I}(\mathcal{A}) = \xi(\mathcal{A})$ if and only if $\zeta(\mathcal{A})\subseteq \xi(\mathcal{A})$. Whilst it certainly seems possible that there exist $R$-orders $\mathcal{A}$ for which $\zeta(\mathcal{A})\not\subset \xi(\mathcal{A})$, 
 at this stage we do not  know a concrete example for which this is true. }\end{remark}


\begin{example}\label{dj examples} {\em Assume $\mathcal{A} = R[\Gamma]$ for a discrete valuation ring $R$ that has residue characteristic $p$ (and quotient field $F$) and a finite group $\Gamma$. Then, under certain hypotheses on $\Gamma$, such as in the  following examples, the order $\xi(\mathcal{A})$ can be described explicitly. 

(i) If $\Gamma$ has an abelian Sylow $p$-subgroup and a normal $p$-complement (or,  equivalently, $p$ does not divide the order of the commutator subgroup of $\Gamma$), then $\mathcal{A}$ is a direct product of matrix rings over commutative $R$-algebras 
(cf. Demeyer and Janusz \cite[p. 390, Cor]{dj}). In this case, an explicit computation of reduced norms shows that  $\xi(\mathcal{A}) = \zeta(\mathcal{A})$. 

(ii) If $\Gamma$ is the dihedral group of order $2p$, then the computations in \cite[\S6.4, Exam. 6]{JN} show that $\xi(\mathcal{A})$ is the maximal $R$-order in $\zeta(F[\Gamma])$ and hence properly contains $\zeta(\mathcal{A})$. }
\end{example}

\begin{example}\label{azumaya exam}{\em As a generalization of Example \ref{dj examples}(i), assume that $R$ is a discrete valuation ring, $S$ is a finitely generated $R$-submodule of $\mathcal{A}$ that is a commutative local ring and that the $R$-order $\mathcal{A}$ is an Azumaya algebra over $S$ with $\zeta(\mathcal{A}) = S$. Then the  maximal commutative subalgebra $S'$ of $\mathcal{A}$ that is separable over $S$ is a projective $S$-module and the algebra $S'\otimes_S\mathcal{A}$ is isomorphic to ${\rm M}_t(S')$ for some natural number $t$ (cf. \cite[Lem. 5.1.17]{knus}). Using this isomorphism one can show that ${\rm Nrd}_A(M)$ belongs to $S'\cap S_F = S = \zeta(\mathcal{A})$ for every $M$ in ${\rm M}_n(\mathcal{A})$ and hence $\xi(\mathcal{A}) \subseteq \zeta(\mathcal{A})$. }\end{example}

\subsubsection{}\label{denominator ideal section} As Example \ref{dj examples}(ii) demonstrates, the order  $\xi(\mathcal{A})$ is not, in general, contained in $\zeta(\mathcal{A})$. However, to bound the `denominators' of its elements one can proceed as follows.

For each natural number $m$ and matrix $M$ in ${\rm M}_m(A)$ there exists a unique matrix $M^*$ in ${\rm M}_m(A)$ with
\begin{equation}\label{adjoint matrix} M\cdot M^* = M^*\cdot M = {\rm Nrd}_{A}(M)\cdot I_m\end{equation}
and such that, for every primitive central idempotent $e$ of $A$, one has 
\begin{equation}\label{adjoint matrix2} M^*e \not= 0 \Longleftrightarrow {\rm Nrd}_{A}(M)e \not= 0.\end{equation} 
To be explicit, the latter condition on $e$ is equivalent to the invertibility of $Me$ in ${\rm M}_m(A)e = {\rm M}_m(Ae)$ (cf. \cite[\S7, Exer. 4]{curtisr}) and we define $M^\ast$ so that $M^*e = (Me)^{-1}{\rm Nrd}_{A}(M)$. 

The following definition is motivated by a result \cite[Th. 4.2]{N3} of Nickel (see, in particular, Lemma \ref{module lemma}(iii) and, especially, the result of Theorem  \ref{main fit result}(iii) below).

\begin{definition} \label{def denom}
{\em For each prime ideal $\mathfrak{p}$ of $R$ the `ideal of denominators' of  $\mathcal{A}_{(\mathfrak{p})}$ is the subset of $\zeta(A)$ obtained by setting
\[ \delta(\mathcal{A}_{(\mathfrak{p})}) := \{ x\in \zeta(A): \forall\, d \ge 1, \, \forall\, M \in {\rm M}_d(\mathcal{A}_{(\mathfrak{p})}) \text{ one has }x\cdot M^* \in {\rm M}_d(\mathcal{A}_{(\mathfrak{p})})\}.\]
The `ideal of denominators' of $\mathcal{A}$ is then defined by the intersection
\[ \delta(\mathcal{A}) = {\bigcap}_{\mathfrak{p}\in {\rm Spec}(R)}\delta(\mathcal{A}_{(\mathfrak{p})})\]
%
}\end{definition}

The basic properties of these sets are described in the following result.


\begin{lemma}\label{module lemma}\
\begin{itemize}
\item[(i)] $\delta(\mathcal{A})$ is an ideal of finite index in $\zeta(\mathcal{A})$.
\item[(ii)] For every prime ideal $\mathfrak{p}$ of $R$ one has $\delta(\mathcal{A})_{(\mathfrak{p})} = \delta(\mathcal{A}_{(\mathfrak{p})})$.
\item[(iii)] For each prime $\mathfrak{p}$ an element $x$ of $\zeta(A)$ belongs to $\delta(\mathcal{A})_{(\mathfrak{p})}$ if and only if there exists a non-negative integer $m_x = m_{\mathfrak{p},x}$ such that for all $a\ge m_x$ and all $M \in {\rm M}_a(\mathcal{A}_{(\mathfrak{p})})$ one has $x\cdot M^* \in {\rm M}_a(\mathcal{A}_{(\mathfrak{p})})$.
\item[(iv)] $\delta(\mathcal{A})\cdot \xi(\mathcal{A}) = \delta(\mathcal{A})$.
\item[(v)] If $\mathcal{A}$ is commutative, then $\delta(\mathcal{A}) = \xi(\mathcal{A}) = \mathcal{A}$.
\item[(vi)] If $\mathfrak{p} \notin {\rm Ram}(A)$ and $\mathcal{A}_{(\mathfrak{p})}$ is a maximal $R_{(\mathfrak{p})}$-order, then $\delta(\mathcal{A})_{(\mathfrak{p})} = \xi(\mathcal{A})_{(\mathfrak{p})}$.
\item[(vii)] Any surjective homomorphism of $R$-orders $\varrho: \mathcal{A} \to \mathcal{B}$ induces, upon restriction, a
homomorphism $\delta(\mathcal{A}) \to \delta(\mathcal{B})$.
\end{itemize}
\end{lemma}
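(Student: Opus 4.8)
The plan is to prove the seven claims more or less in order, reducing everything to the local statements via the defining intersections $\delta(\mathcal{A}) = \bigcap_\mathfrak{p}\delta(\mathcal{A}_{(\mathfrak{p})})$ and $\xi(\mathcal{A}) = \bigcap_\mathfrak{p}\xi(\mathcal{A}_{(\mathfrak{p})})$, exactly as was done for Lemma \ref{xi lemma}. First I would record the elementary observations that each $\delta(\mathcal{A}_{(\mathfrak{p})})$ is an $R_{(\mathfrak{p})}$-submodule of $\zeta(A)$ closed under multiplication (immediate from the definition, since $(M N)^* = N^* M^*$ up to the behaviour of reduced norms and the idempotent condition, and since scaling $M$ by $\zeta(\mathcal{A}_{(\mathfrak{p})})$ scales $M^*$) and that it contains $\zeta(\mathcal{A}_{(\mathfrak{p})})$ (take $M$ diagonal). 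For claim (i), the key point is finite index: fix a maximal $R$-order $\mathcal{M}\supseteq\mathcal{A}$ in $A$ with $[\mathcal{M}:\mathcal{A}]$ finite, choose a nonzero $c\in R$ with $c\mathcal{M}\subseteq\mathcal{A}$, and observe that for $M\in{\rm M}_d(\mathcal{A}_{(\mathfrak{p})})\subseteq{\rm M}_d(\mathcal{M}_{(\mathfrak{p})})$ one has $M^*\in{\rm M}_d(\mathfrak{M}_{(\mathfrak{p})})$ where $\mathfrak{M}$ is the maximal order in $\zeta(A)$; hence a suitable fixed power of $c$ (or of the different of $\mathcal{M}/\zeta(\mathcal{A})$) lies in $\delta(\mathcal{A}_{(\mathfrak{p})})$ for every $\mathfrak{p}$, and also $\delta(\mathcal{A}_{(\mathfrak{p})}) = \zeta(\mathcal{A}_{(\mathfrak{p})})$ for almost all $\mathfrak{p}$ (those where $\mathcal{A}_{(\mathfrak{p})} = \mathcal{M}_{(\mathfrak{p})}$ is maximal). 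Together with the local-global principle for lattices (\cite[Prop. (4.21)]{curtisr}) this gives that $\delta(\mathcal{A})$ is an ideal of finite index in $\zeta(\mathcal{A})$.

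Claim (ii) is the statement $\delta(\mathcal{A})_{(\mathfrak{p})} = \delta(\mathcal{A}_{(\mathfrak{p})})$ and follows formally from the intersection description together with the fact, just noted, that $\delta(\mathcal{A}_{(\mathfrak{q})}) = \zeta(\mathcal{A}_{(\mathfrak{q})})$ for almost all $\mathfrak{q}$, again via \cite[Prop. (4.21)(vii)]{curtisr}; this mirrors verbatim the corresponding step in Lemma \ref{xi lemma}. For claim (iii), the nontrivial direction is that it suffices to test the condition $x\cdot M^*\in{\rm M}_a(\mathcal{A}_{(\mathfrak{p})})$ for all $a$ beyond some threshold $m_x$: given $M\in{\rm M}_d(\mathcal{A}_{(\mathfrak{p})})$ with $d<m_x$, one enlarges $M$ to the block-diagonal matrix $M' = M\oplus I_{m_x-d}\in{\rm M}_{m_x}(\mathcal{A}_{(\mathfrak{p})})$ and checks from \eqref{adjoint matrix} (and the compatibility of reduced norms and of the $(-)^*$ construction with block sums, using that ${\rm Nrd}_A(M') = {\rm Nrd}_A(M)$ since $I_{m_x-d}$ contributes $1$) that $(M')^*$ has $M^*$ as a diagonal block; hence $x\cdot(M')^*\in{\rm M}_{m_x}(\mathcal{A}_{(\mathfrak{p})})$ forces $x\cdot M^*\in{\rm M}_d(\mathcal{A}_{(\mathfrak{p})})$. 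The converse is trivial. I expect the bookkeeping around how $(-)^*$ interacts with block-diagonal sums and with the "idempotent support" clause in its definition to be the one genuinely fiddly point here.

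For the remaining claims: (iv) $\delta(\mathcal{A})\cdot\xi(\mathcal{A}) = \delta(\mathcal{A})$ reduces, by (ii) and Lemma \ref{xi lemma}(ii), to the local assertion $\delta(\mathcal{A}_{(\mathfrak{p})})\cdot\xi(\mathcal{A}_{(\mathfrak{p})}) = \delta(\mathcal{A}_{(\mathfrak{p})})$; the inclusion $\supseteq$ is clear since $1\in\xi(\mathcal{A}_{(\mathfrak{p})})$, and $\subseteq$ holds because for $x\in\delta(\mathcal{A}_{(\mathfrak{p})})$ and a generator ${\rm Nrd}_A(N)$ of $\xi(\mathcal{A}_{(\mathfrak{p})})$ with $N\in{\rm M}_n(\mathcal{A}_{(\mathfrak{p})})$, one applies the defining property of $\delta$ to block matrices built from $N$ and from an arbitrary test matrix $M$ (e.g. using $(M\otimes N)^* = M^*\otimes N^*$ up to reduced-norm factors, so that $x\cdot{\rm Nrd}_A(N)\cdot M^*$ appears as a block of $x\cdot(M\otimes N)^*\in{\rm M}(\mathcal{A}_{(\mathfrak{p})})$). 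Claim (v): if $\mathcal{A}$ is commutative then ${\rm Nrd}_A(M) = \det(M)$ and $M^*$ is the classical adjugate, which already has entries in $\mathcal{A}_{(\mathfrak{p})}$, so $\delta(\mathcal{A}_{(\mathfrak{p})}) = \zeta(\mathcal{A}_{(\mathfrak{p})}) = \mathcal{A}_{(\mathfrak{p})}$, and intersecting gives $\delta(\mathcal{A}) = \mathcal{A} = \xi(\mathcal{A})$ by Lemma \ref{xi lemma}(iii). Claim (vi): if $\mathcal{A}$ is maximal then each $\mathcal{A}_{(\mathfrak{p})}$ is maximal, and for a maximal order over a complete (or semilocal) Dedekind base the matrix $M^*$ of any $M\in{\rm M}_d(\mathcal{A}_{(\mathfrak{p})})$ automatically lies in ${\rm M}_d(\mathcal{A}_{(\mathfrak{p})})$ — this is the arithmetic of hereditary/maximal orders used already in the proof of Lemma \ref{xi lemma}(iv) (cf. \cite[Prop. (45.8)]{curtisr}) — so $\delta(\mathcal{A}_{(\mathfrak{p})}) = \{x\in\zeta(A): x{\rm M}_d(\mathfrak{M}_{(\mathfrak{p})})\subseteq\ldots\} = \mathfrak{M}_{(\mathfrak{p})} = \xi(\mathcal{A}_{(\mathfrak{p})})$, and the global identity follows by intersecting. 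Finally, claim (vii): a surjection $\varrho:\mathcal{A}\to\mathcal{B}$ induces $\varrho':\zeta(A)\to\zeta(B)$ and surjections $\varrho_d:{\rm M}_d(\mathcal{A}_{(\mathfrak{p})})\to{\rm M}_d(\mathcal{B}_{(\mathfrak{p})})$ compatible with reduced norms, hence $\varrho_d(M^*) = \varrho_d(M)^*$ whenever both sides are defined (one has to note that every test matrix $N\in{\rm M}_d(\mathcal{B}_{(\mathfrak{p})})$ lifts to some $M\in{\rm M}_d(\mathcal{A}_{(\mathfrak{p})})$); it follows that $\varrho'$ carries $\delta(\mathcal{A}_{(\mathfrak{p})})$ into $\delta(\mathcal{B}_{(\mathfrak{p})})$, and then $\delta(\mathcal{A})$ into $\delta(\mathcal{B})$. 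Unlike in Lemma \ref{xi lemma}(v) one should not expect surjectivity here — hence the weaker phrasing "induces a homomorphism" — the obstruction being that a lift $M$ of $N$ need not have $\varrho_d(M^*)$ land exactly on $N^*$ without the idempotent-support hypothesis, and $\varrho'$ need not be injective on the relevant idempotent components.

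The main obstacle throughout is purely the careful handling of the adjugate-type matrix $M^*$ of \eqref{adjoint matrix}: one needs its compatibility with block-diagonal sums (for (iii)), with tensor/Kronecker products (for (iv)), with ring surjections (for (vii)) and with passage to maximal orders (for (vi)), in each case tracking the auxiliary clause about primitive central idempotents that pins $M^*$ down uniquely. None of these is deep, but they are the steps where the non-commutativity genuinely enters and where a naive adaptation of the commutative adjugate argument needs the results of \cite{N3} (via Nickel's bound on denominators) rather than elementary linear algebra.
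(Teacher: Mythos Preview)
Your overall strategy matches the paper's, and claims (ii), (iii), (v), (vi), (vii) are handled essentially as the paper does. Two points need attention.

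\textbf{Claim (i): the containment is stated backwards.} You write that $\delta(\mathcal{A}_{(\mathfrak{p})})$ ``contains $\zeta(\mathcal{A}_{(\mathfrak{p})})$ (take $M$ diagonal)''. This is the wrong direction: to say $\delta(\mathcal{A})$ is an ideal \emph{of} $\zeta(\mathcal{A})$ one needs $\delta(\mathcal{A}_{(\mathfrak{p})})\subseteq\zeta(\mathcal{A}_{(\mathfrak{p})})$, not the reverse. In fact your test (take $M$ to be the $1\times 1$ identity) proves exactly this correct inclusion: if $x\in\delta(\mathcal{A}_{(\mathfrak{p})})$ then $x = x\cdot I_1^* \in \mathcal{A}_{(\mathfrak{p})}\cap\zeta(A) = \zeta(\mathcal{A}_{(\mathfrak{p})})$. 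So the argument is right and only the conclusion is misstated, but as written the logic of ``ideal of finite index'' does not go through. (Also, you wrote $M^*\in{\rm M}_d(\mathfrak{M}_{(\mathfrak{p})})$; it should be ${\rm M}_d(\mathcal{M}_{(\mathfrak{p})})$, matrices over the maximal order in $A$, not in $\zeta(A)$.)

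\textbf{Claim (iv): the Kronecker-product route is problematic.} Over a non-commutative algebra the Kronecker product does not satisfy $(M_1\otimes N_1)(M_2\otimes N_2)=(M_1M_2)\otimes(N_1N_2)$, so the identity ``$(M\otimes N)^* = M^*\otimes N^*$ up to reduced-norm factors'' is not available in any straightforward sense, and extracting $x\cdot{\rm Nrd}_A(N)\cdot M^*$ as a block of $x\cdot(M\otimes N)^*$ is not justified. The paper avoids this entirely by feeding claim (iii) back into (iv): to show $x':=x\cdot{\rm Nrd}_A(N)\in\delta(\mathcal{A}_{(\mathfrak{p})})$ for $N\in{\rm M}_n(\mathcal{A}_{(\mathfrak{p})})$, it suffices by (iii) to test matrices $M$ of size $d\ge n$. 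For such $d$ one can choose $N'\in{\rm M}_d(\mathcal{A}_{(\mathfrak{p})})$ (e.g.\ $N\oplus I_{d-n}$) with ${\rm Nrd}_A(N')={\rm Nrd}_A(N)$, and then
\[
x'\cdot M^* \;=\; x\cdot{\rm Nrd}_A(N')\,M^* \;=\; x\cdot M^*\bigl((N')^*N'\bigr) \;=\; \bigl(x\cdot(N'M)^*\bigr)N' \;\in\; {\rm M}_d(\mathcal{A}_{(\mathfrak{p})}),
\]
using only the multiplicativity $(N'M)^* = M^*(N')^*$ of the adjoint construction, which \emph{is} available. This is the step where (iii) is genuinely needed, and your plan does not make this connection.
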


\begin{proof} For each $\mathfrak{p}$  the set $\delta(\mathcal{A}_{(\mathfrak{p})})$ is clearly an additive subgroup of $\zeta(A)$ that is stable under multiplication by $\zeta(\mathcal{A}_{(\mathfrak{p})})$. One also has $\delta(\mathcal{A}_{(\mathfrak{p})})\subseteq \zeta(\mathcal{A}_{(\mathfrak{p})})$ since if $M$ is the $1\times 1$ identity matrix, then $x = x\cdot M = x\cdot M^*$ and so $x = x\cdot M^*\in {\rm M}_1(\mathcal{A}_{(\mathfrak{p})})$ implies  $x \in \mathcal{A}_{(\mathfrak{p})}\cap \zeta(A) = \zeta(\mathcal{A}_{(\mathfrak{p})})$. This proves  $\delta(\mathcal{A}_{(\mathfrak{p})})$ is an ideal of $\zeta(\mathcal{A}_{(\mathfrak{p})})$ and we next show it has finite index.

We first consider the special case that $\mathcal{A}_{(\mathfrak{p})}$ is maximal (as will be the case for all but finitely many $\mathfrak{p}$). In this case $\zeta(\mathcal{A}_{(\mathfrak{p})}) = \mathfrak{M}_{(\mathfrak{p})}$, where $\mathfrak{M}$ is the integral closure of $R$ in $\zeta(A)$, and for every $M$ in ${\rm M}_d(\mathcal{A}_{(\mathfrak{p})})$ and each primitive idempotent $e$ of  $\zeta(A)$ for which ${\rm Nrd}(M)e$ is non-zero, the defining property (\ref{adjoint matrix}) implies that $eM^\ast$ belongs to ${\rm M}_d(e\mathcal{A}_{(\mathfrak{p})}) \subseteq {\rm M}_d(\mathcal{A}_{(\mathfrak{p})})$ (see, for example, the discussion of \cite[\S3.6]{JN}). In this case, therefore, it follows that $\delta(\mathcal{A}_{(\mathfrak{p})})$ contains, and is therefore equal to, $\mathfrak{M}_{(\mathfrak{p})}$. To deal with the general case, we fix a maximal $R_{(\mathfrak{p})}$-order $\mathcal{M}$ in $A$ that contains $\mathcal{A}_{(\mathfrak{p})}$ and write $n$ for the (finite) index of $\mathcal{A}_{(\mathfrak{p})}$ in $\mathcal{M}$. Then for each $M$ in ${\rm M}_{d}(\mathcal{A}_{(\mathfrak{p})})$, the above argument implies that $M^*$ belongs to ${\rm M}_d(\mathcal{M})$ and hence that $n\cdot M^\ast$ belongs to ${\rm M}_d(\mathcal{A}_{(\mathfrak{p})})$. This implies $n\cdot \zeta(\mathcal{A}_{(\mathfrak{p})}) \subseteq \delta(\mathcal{A}_{(\mathfrak{p})})$ and hence that  $\delta(\mathcal{A}_{(\mathfrak{p})})$ has finite index in $\zeta(\mathcal{A}_{(\mathfrak{p})})= \zeta(\mathcal{A})_{(\mathfrak{p})}$.

At this stage we know that $\delta(\mathcal{A})= \bigcap_\mathfrak{p}\delta(\mathcal{A}_{(\mathfrak{p})})$ is an ideal of $\bigcap_\mathfrak{p}\zeta(\mathcal{A})_{(\mathfrak{p})} = \zeta(\mathcal{A})$ and that (as a consequence of \cite[Prop. (4.21)(vi)]{curtisr}) its index is finite and for every $\mathfrak{p}$ one has $\delta(\mathcal{A})_{(\mathfrak{p})} = \delta(\mathcal{A}_{(\mathfrak{p})})$. This proves claims (i) and (ii).

To prove (iii) it obviously suffices (in view of (ii)) to show that the stated condition is sufficient to imply $x$ belongs to $\delta(\mathcal{A}_{(\mathfrak{p})})$. To do this we fix a natural number $d$ and a matrix $M$ in ${\rm M}_d(\mathcal{A}_{(\mathfrak{p})})$ and note that in ${\rm M}_{d+m_x}(A)$ one has

\[ x\begin{pmatrix}
 M& 0\\
0& {\rm I}_{m_x}\end{pmatrix}^{\!*} = x\begin{pmatrix}
M^*& 0\\
0& {\rm Nrd}_A(M)\cdot {\rm I}_{m_x}\end{pmatrix} = \begin{pmatrix}
x\cdot M^*& 0\\
0& x\cdot{\rm Nrd}_A(M)\cdot {\rm I}_{m_x}\end{pmatrix}.\]
In particular, since $d+m_x>m_x$, the stated condition on $x$ (with $a = d+m_x$ and $M$ replaced by $\begin{pmatrix}
 M& 0\\
0& {\rm I}_{m_x}\end{pmatrix}$) implies that $x\cdot M^*$ belongs to ${\rm M}_d(\mathcal{A}_{(\mathfrak{p})})$, as required.

In view of (ii) and Lemma \ref{xi lemma}(ii), it is enough to prove the equality in (iv) after replacing $\mathcal{A}$ by $\mathcal{A}_{(\mathfrak{p})}$ for each $\mathfrak{p}$. Since $1$ belongs to $\xi(\mathcal{A}_{(\mathfrak{p})})$, it is then enough to show that for any $x$ in $\delta(\mathcal{A}_{(\mathfrak{p})})$, any natural number $n$, and any matrix $N$ in ${\rm M}_n(\mathcal{A}_{(\mathfrak{p})})$, the element $x' := x\cdot {\rm Nrd}_A(N)$ belongs to $\delta(\mathcal{A}_{(\mathfrak{p})})$. We do this by showing that $x'$ satisfies the condition described in (iii) with $m_{x'}$ taken to be $n$.

We thus fix an integer $d$ with $d\ge n$ and choose $N'$ in ${\rm M}_{d}(\mathcal{A}_{(\mathfrak{p})})$ with ${\rm Nrd}_A(N') = {\rm Nrd}_A(N)$. Then, for any $M$ in ${\rm M}_d(\mathcal{A}_{(\mathfrak{p})})$ one has $M^*\cdot (N')^* = (N'\cdot M)^*$ and hence
\[ x' \cdot M^* = x\cdot {\rm Nrd}_A(N)M^* = x\cdot {\rm Nrd}_A(N')M^* =x\cdot M^*((N')^*N') = (x\cdot (N'M)^*) N'. \]
In particular, since $x$ belongs to $\delta(\mathcal{A}_{(\mathfrak{p})})$ one has $x\cdot (N'M)^*\in {\rm M}_d(\mathcal{A}_{(\mathfrak{p})})$, and hence $x'\cdot  M^*\in {\rm M}_d(\mathcal{A}_{(\mathfrak{p})})$, as required.

In view of (i) and Lemma \ref{xi lemma}(iii), (v) is reduced to showing that if $\mathcal{A}$ is commutative, then $\delta(\mathcal{A})$ contains $\mathcal{A}$. This follows directly from the fact that, in this case, for every prime $\mathfrak{p}$ and every $M$ in ${\rm M}_d(\mathcal{A}_{(\mathfrak{p})})$ the adjoint matrix $M^\ast$ also belongs to ${\rm M}_d(\mathcal{A}_{(\mathfrak{p})})$.

Claim (vi) is true since (as already observed above) if $\mathcal{A}_{(\mathfrak{p})}$ is a maximal $R_{(\mathfrak{p})}$-order in $A$, then $\delta(\mathcal{A})_{(\mathfrak{p})} = \delta(\mathcal{A}_{(\mathfrak{p})})$ is equal to $\mathfrak{M}_{(\mathfrak{p})}$ and hence, if $\mathfrak{p}\notin {\rm Ram}(A)$, to $\xi(\mathcal{A}_{(\mathfrak{p})}) = \xi(\mathcal{A})_{(\mathfrak{p})}$ by claims (ii) and (iv) of Lemma \ref{xi lemma}.

Finally, to prove (vii) we write $A$ and $B$ for the $F$-algebras that are respectively spanned by $\mathcal{A}$ and $\mathcal{B}$ and we consider the ring homomorphisms $\varrho': \zeta(A) \to \zeta(B)$ and $\varrho_d: {\rm M}_d(A) \to {\rm M}_d(B)$ for each natural number $d$ that are induced by $\varrho$.

It is enough to show that $\varrho'(\delta(\mathcal{A}_{(\mathfrak{p})})) = \delta(\mathcal{B}_{(\mathfrak{p})})$ for all $\mathfrak{p}$. Then, since  $\zeta(B)$ is a direct factor of $\zeta(A)$ (see the proof of Lemma
\ref{xi lemma}(v)), for any matrix $M$ in ${\rm M}_d(\mathcal{A}_{(\mathfrak{p})})$  the defining equality (\ref{adjoint matrix}) implies that $\varrho_{d}(M^\ast) = \varrho_d(M)^\ast$. By using this last equality, the required equality $\varrho'(\delta(\mathcal{A}_{(\mathfrak{p})})) = \delta(\mathcal{B}_{(\mathfrak{p})})$ follows directly from the definition of the respective ideals $\delta(\mathcal{A}_{(\mathfrak{p})})$ and $\delta(\mathcal{B}_{(\mathfrak{p})})$ and the fact that $\varrho_d\bigl({\rm M}_d(\mathcal{A}_{(\mathfrak{p})})\bigr) = {\rm M}_d(\mathcal{B}_{(\mathfrak{p})})$. \end{proof}

\begin{remark}{\em The ideal $\delta(\mathcal{A})$ defined above differs from an ideal $\mathcal{H}(\mathcal{A})$ defined (in the case $R$ is a discrete valuation ring) by Johnston and Nickel in \cite{JN} since our definition of the matrices $M^*$ via the conditions (\ref{adjoint matrix}) and (\ref{adjoint matrix2}) differs slightly from the `generalized adjoint matrices' used in loc. cit. To be specific, if $M\in {\rm M}_m(A)$ and $e$ is any primitive central idempotent of $A$ for which $Me$ is not invertible (over $Ae$), then one has $M^\ast e = 0$ whilst the $e$-component of the generalized adjoint matrix of $M$ defined in \cite[\S3.6]{JN} can be non-zero (for more details, see \cite[Rem. 10]{JN}). Despite this difference, however, the computations of $\mathcal{H}(\mathcal{A})$ in loc. cit. can be used to give concrete information about  $\delta(\mathcal{A})$, as the following examples show.  
}\end{remark}

\begin{example}\label{dj examples 2}{\em Let $\mathcal{A}$ be a group ring $R[\Gamma]$ of the form discussed in Example \ref{dj examples}. \

(i) If $\Gamma$ has an abelian Sylow $p$-subgroup and a normal $p$-complement, then the argument of \cite[Prop. 4.1]{JN} shows $\delta(\mathcal{A}) = \zeta(\mathcal{A})$ and hence, by Example \ref{dj examples}(i), that $\delta(\mathcal{A}) = \xi(\mathcal{A})$. \

(ii) If $\Gamma$ is the dihedral group of order $2p$, then Example \ref{dj examples}(ii) combines with claims (i) and (iv) of Lemma \ref{module lemma} to imply $\delta(\mathcal{A})$ is contained in the conductor of the maximal $R$-order of $\zeta(F[\Gamma])$ into $\zeta(\mathcal{A})$. In this case, therefore, $\delta(\mathcal{A})$  is a proper ideal of $\zeta(\mathcal{A})$. 
}\end{example}

\subsection{Locally-free modules}\label{enoch section}

\begin{definition}\label{lf def}{\em A finitely generated module $M$ over an $R$-order $\mathcal{A}$ will be said to be `locally-free' if $M_{(\mathfrak{p})}$ is a free $\mathcal{A}_{(\mathfrak{p})}$-module, or equivalently (as an easy consequence of Maranda's Theorem - see \cite[Prop. (30.17)]{curtisr}) if $M_{\mathfrak{p}}$ is a free $\mathcal{A}_{\mathfrak{p}}$-module, for all prime ideals $\mathfrak{p}$ of $R$. In the sequel we write ${\rm Mod}^{\rm lf}(\mathcal{A})$ for the category of locally-free $\mathcal{A}$-modules.}
\end{definition}

For any module $M$ in ${\rm Mod}^{\rm lf}(\mathcal{A})$ the rank of the (finitely generated) free $\mathcal{A}_{(\mathfrak{p})}$-module $M_{(\mathfrak{p})}$ is independent of $\mathfrak{p}$ and equal to the rank of the (free) $A$-module $M_F$. We refer to this common rank as the  `rank' of $M$ and denote it by ${\rm rk}_\mathcal{A}(M)$. A locally-free $\mathcal{A}$-module of rank one is often referred to as an `invertible' $\mathcal{A}$-module. 

Since localization at $\mathfrak{p}$ is an exact functor a locally-free $\mathcal{A}$-module is projective. As the following examples show, there are also important cases for which the converse is  true.

\begin{example}\label{loc free exam}{\em  \

(i) If $\mathcal{A}=R$, 
then every finitely generated torsion-free $\mathcal{A}$-module $M$ is locally-free, with ${\rm rk}_\mathcal{A}(M)$ equal to the dimension of the $F$-space spanned by $M$.

(ii) If $G$ is a finite group for which no prime divisor of $|G|$ is invertible in $R$ and $\mathcal{A} = R[G]$ then, by a fundamental result of Swan \cite{swan0} (see also \cite[Th. (32.11)]{curtisr}), a finitely generated projective $\mathcal{A}$-module is locally-free. For any such module $M$ the product ${\rm rk}_{R[G]}(M)\cdot |G|$ is equal to the dimension of the $F$-space spanned by $M$. 

(iii) There are also several classes of order $\mathcal{A}$ for which a finitely generated projective
 $\mathcal{A}$-module is locally-free if and only if it spans a free $A$-module. This is the case, for example, if $\mathcal{A}$ is commutative (cf. \cite[Prop. 35.7]{curtisr}), or if $\mathcal{A}_{(\mathfrak{p})}$ is a maximal $R_{(\mathfrak{p})}$-order in $A$ for every prime ideal $\mathfrak{p}$ of $R$ (cf. \cite[Th. 26.24(iii)]{curtisr}), or if $\mathcal{A} = R[G]$ for any finite group $G$ (cf. \cite[Th. 32.1]{curtisr}). 
}
\end{example}

\subsection{Fitting invariants of locally-free presentations}

\subsubsection{}Let $M$ be a matrix in ${\rm M}_{d', d}(A)$ with $d' \ge d$. Then for any integer $t$ with $0 \le t \le d$ and any $\varphi = (\varphi_i)_{1\le i\le t}$ in $\Hom_\mathcal{A}(\mathcal{A}^{d'},\mathcal{A})^t$ we write ${\rm SM}^{d}_{\varphi}(M)$ for the set of all $d\times d$ submatrices of the matrices $M(J,\varphi)$ that are obtained from $M$ by choosing any $t$-tuple of integers $J = \{i_1,i_2,\ldots , i_t\}$ with $1\le i_1< i_2< \cdots < i_t\le d$, and setting
\begin{equation}\label{fitting matrix} M(J,\varphi)_{ij} := \begin{cases} \varphi_{a}(b_i), &\text{if $j = i_a$ with $1\le a\le t$}\\
                            M_{ij}, &\text{otherwise.}\end{cases}\end{equation}
where, for a natural number $n$, we write $\{b_i\}_{1\le i\le n}$ for the standard basis of the free $\mathcal{A}$-module $\mathcal{A}^n$.

Then the set of all $d\times d$ submatrices of all matrices that are obtained from $M$ by replacing at most $a$ of its columns by arbitrary elements of $\mathcal{A}$ is equal to
\[ \mathfrak{S}^a(M) := \bigcup_{0\le t \le a}\bigcup_{\varphi\in \Hom_{\mathcal{A}}(\mathcal{A}^{d'},\mathcal{A})^t}
{\rm SM}^{d}_{\varphi}(M).\]
%
We note, in particular, that $\mathfrak{S}^0(M)$ is the set of all $d\times d$ submatrices of $M$.

\begin{definition}\label{fit matrix def}{\em For any non-negative integer $a$ the `$a$-th (non-commutative) Fitting invariant of $M$' is the ideal of $\xi(\mathcal{A})$ obtained by setting}
\[ {\rm Fit}_{\mathcal{A}}^a(M) := \xi(\mathcal{A})\cdot \{{\rm Nrd}_{A}(N): N\in \mathfrak{S}^a(M)\}.\]
\end{definition}

\subsubsection{}\label{pres intro}A `free presentation' $\Pi$ of a finitely generated $\mathcal{A}$-module $X$ is an exact sequence of $\mathcal{A}$-modules of the form
\begin{equation}\label{pres} \Pi: \mathcal{A}^{r'_{\Pi}} \xrightarrow{\theta_\Pi} \mathcal{A}^{r_{\Pi}} \xrightarrow{\rho_\Pi} X \to 0 \end{equation}
in which (without loss of generality) one has $r'_{\Pi} \ge r_{\Pi}$. Such a presentation is said to be `quadratic' if $r'_{\Pi} = r_{\Pi}$. 
%
%
%
%
%
In all cases, we write $M_\Pi$ for the matrix of the homomorphism $\theta_\Pi$ with respect to the standard bases of $\mathcal{A}^{r'_{\Pi}}$ and  $\mathcal{A}^{r_{\Pi}}$. 

%
%
%


\subsubsection{}\label{lfp def} A `locally-free presentation' $\Pi$ of a finitely generated $\mathcal{A}$-module $X$ is a collection of data of the following form:

\begin{itemize}
\item[$\bullet$] an exact sequence of $\mathcal{A}$-modules
\begin{equation}\label{pres seq} \Pi^{\rm seq}:  P' \xrightarrow{\theta_\Pi} P \xrightarrow{\rho_\Pi} X \to 0\end{equation}
in which $P'$ and $P$ belong to ${\rm Mod}^{\rm lf}(\mathcal{A})$;
\item[$\bullet$] for each prime ideal $\mathfrak{p}$ of $R$ fixed isomorphisms of $\mathcal{A}_{(\mathfrak{p})}$-modules
\[ \iota'_{\Pi,\mathfrak{p}}: P'_{(\mathfrak{p})} \cong \mathcal{A}_{(\mathfrak{p})}^{{\rm rk}_\mathcal{A}(P')}\,\,\text{ and }\,\, \iota_{\Pi,\mathfrak{p}}: P_{(\mathfrak{p})} \cong \mathcal{A}_{(\mathfrak{p})}^{{\rm rk}_\mathcal{A}(P)}.\]
\end{itemize}
Such a presentation will be said to be `locally-quadratic' if ${\rm rk}_\mathcal{A}(P') = {\rm rk}_\mathcal{A}(P)$.

\begin{example}\label{descent}{\em  Let $\varrho: \mathcal{A} \to \mathcal{B}$ be a surjective homomorphism of $R$-orders. Then the induced exact sequence of $\mathcal{B}$-modules
\[ \mathcal{B}\otimes_{\mathcal{A},\varrho}\Pi^{\rm seq}: \,\,  \mathcal{B}\otimes_{\mathcal{A},\varrho}P' \xrightarrow{{\rm id}\otimes \theta_\Pi} \mathcal{B}\otimes_{\mathcal{A},\varrho}P \xrightarrow{} \mathcal{B}\otimes_{\mathcal{A},\varrho}X \to 0\]
and isomorphisms $\mathcal{B}\otimes_{\mathcal{A},\varrho}\iota'_{\Pi,\mathfrak{p}}$ and
 $\mathcal{B}\otimes_{\mathcal{A},\varrho}\iota_{\Pi,\mathfrak{p}}$ together constitute a locally-free presentation $\mathcal{B}\otimes_{\mathcal{A},\varrho}\Pi$ of $\mathcal{B}\otimes_{\mathcal{A},\varrho}X$ that is locally-quadratic if $\Pi$ is locally-quadratic.}
\end{example}


\begin{definition}{\em For each non-negative integer $a$, the $a$-th Fitting invariant of the locally-free presentation $\Pi$ is the ideal of $\xi(\mathcal{A})$ obtained by setting
\[ {\rm Fit}_{\mathcal{A}}^a(\Pi) := {\bigcap}_{\mathfrak{p}\in {\rm Spec}(R)}{\rm Fit}_{\mathcal{A}_{(\mathfrak{p})}}^a(M_{\Pi(\mathfrak{p})}).\]
where 
$\Pi(\mathfrak{p})$ denotes the free resolution of the $\mathcal{A}_{(\mathfrak{p})}$-module $X_{(\mathfrak{p})}$ that is obtained by localising $\Pi^{\rm seq}$ and using the isomorphisms $\iota'_{\Pi,\mathfrak{p}}$ and $\iota_{\Pi,\mathfrak{p}}$.} 
\end{definition}


The basic properties of these ideals are recorded in the following result.

\begin{lemma}\label{key lfp} Let $\Pi$ be a locally-free presentation of an $\mathcal{A}$-module. 
Then the following claims are valid for every non-negative integer $a$.

\begin{itemize}
\item[(i)] ${\rm Fit}_{\mathcal{A}}^a(\Pi)$ is contained in ${\rm Fit}_{\mathcal{A}}^{a+1}(\Pi)$.
\item[(ii)] ${\rm Fit}_{\mathcal{A}}^a(\Pi) = \xi(\mathcal{A})$ for all  large enough $a$.
\item[(iii)] For any homomorphism $\varrho$ as in Example \ref{descent} one has 
 $\varrho({\rm Fit}_{\mathcal{A}}^a(\Pi))\subseteq {\rm Fit}_{\mathcal{B}}^{a}(\mathcal{B}\otimes_{\mathcal{A},\varrho}\Pi)$.
\end{itemize}
\end{lemma}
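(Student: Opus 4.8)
The plan is to reduce everything to the behaviour of the matrix-level Fitting invariants $\mathrm{Fit}^a_{\mathcal{A}}(M)$ from Definition \ref{fit matrix def}, and then to check the three assertions locally at each prime $\mathfrak{p}$ of $R$, using that each ${\rm Fit}^a_{\mathcal{A}}(\Pi)$ is by definition the intersection $\bigcap_{\mathfrak{p}}{\rm Fit}^a_{\mathcal{A}_{(\mathfrak{p})}}(M_{\Pi(\mathfrak{p})})$. Since the intersection of an increasing family of ideals is increasing, and since $\xi(\mathcal{A}) = \bigcap_{\mathfrak{p}}\xi(\mathcal{A}_{(\mathfrak{p})})$ by Lemma \ref{xi lemma}(i),(ii), claims (i) and (ii) will follow once the corresponding statements are established for a single fixed matrix $M \in {\rm M}_{d',d}(\mathcal{A}_{(\mathfrak{p})})$ with $d' \ge d$, together with the observation (needed for (ii)) that for almost all $\mathfrak{p}$ one has $\mathcal{A}_{(\mathfrak{p})}$ maximal, hence $\mathrm{Fit}^0_{\mathcal{A}_{(\mathfrak{p})}}(M_{\Pi(\mathfrak{p})})$ already equal to $\xi(\mathcal{A}_{(\mathfrak{p})})$ for a presentation of a fixed module (because the relevant minors generate the unit ideal over a maximal order), so that only finitely many local conditions are in play.

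For claim (i): the inclusion $\mathfrak{S}^a(M) \subseteq \mathfrak{S}^{a+1}(M)$ is immediate from the definition of $\mathfrak{S}^a(M)$ as a union over $0 \le t \le a$, so ${\rm Fit}^a_{\mathcal{A}}(M) \subseteq {\rm Fit}^{a+1}_{\mathcal{A}}(M)$ as ideals of $\xi(\mathcal{A})$, and this passes to the intersection over $\mathfrak{p}$. For claim (ii): once $a \ge d$ one may in the definition of $M(J,\varphi)$ take $t = d$ and replace every column; choosing the $\varphi_i$ to pick out the standard basis vectors of $\mathcal{A}^{d}$ produces the identity matrix ${\rm I}_d$ among the $M(J,\varphi)$, whose $d\times d$ minor is $1$, and since ${\rm Nrd}_A(1) = 1 \in \xi(\mathcal{A})$ this forces ${\rm Fit}^a_{\mathcal{A}}(M) = \xi(\mathcal{A})$; localising and intersecting (with the finiteness remark above to control all but finitely many $\mathfrak{p}$, and taking $a$ at least the maximum of the finitely many ranks ${\rm rk}_{\mathcal{A}}(P)$) gives ${\rm Fit}^a_{\mathcal{A}}(\Pi) = \xi(\mathcal{A})$ for all large $a$.

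For claim (iii): first note, as in the proof of Lemma \ref{xi lemma}(v), that $\varrho$ restricts to a surjection $\zeta(A) \twoheadrightarrow \zeta(B)$ — indeed $\zeta(B)$ is a direct factor of $\zeta(A)$ — and by Example \ref{descent} the localised sequence $\mathcal{B}\otimes_{\mathcal{A},\varrho}\Pi^{\rm seq}$ at $\mathfrak{p}$ has matrix $\varrho_{d',d}(M_{\Pi(\mathfrak{p})})$, the entrywise image of the matrix $M_{\Pi(\mathfrak{p})}$. Since $\varrho$ is $\mathcal{A}$-linear and the construction (\ref{fitting matrix}) of $M(J,\varphi)$ only involves $\mathcal{A}$-module operations and evaluation of $\mathcal{A}$-linear functionals, one checks that $\varrho$ carries the set $\mathfrak{S}^a(M_{\Pi(\mathfrak{p})})$ into $\mathfrak{S}^a(\varrho_{d',d}(M_{\Pi(\mathfrak{p})}))$ — here the only point requiring a word is that an $\mathcal{A}_{(\mathfrak{p})}$-linear functional $\varphi_i$ on $\mathcal{A}_{(\mathfrak{p})}^{d'}$ need not be the pullback of a $\mathcal{B}_{(\mathfrak{p})}$-linear functional, but its image under $\varrho$ applied to basis vectors still gives \emph{some} admissible choice of entries for a matrix in $\mathfrak{S}^a$ over $\mathcal{B}_{(\mathfrak{p})}$, which suffices for an inclusion of ideals. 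Combining this with the compatibility $\varrho'({\rm Nrd}_A(N)) = {\rm Nrd}_B(\varrho_{d}(N))$ for square matrices $N$ (used already in the proof of Lemma \ref{xi lemma}(v)) and with $\varrho'(\xi(\mathcal{A}_{(\mathfrak{p})})) = \xi(\mathcal{B}_{(\mathfrak{p})})$ from Lemma \ref{xi lemma}(ii),(v) yields $\varrho'({\rm Fit}^a_{\mathcal{A}_{(\mathfrak{p})}}(M_{\Pi(\mathfrak{p})})) \subseteq {\rm Fit}^a_{\mathcal{B}_{(\mathfrak{p})}}(M_{(\mathcal{B}\otimes_{\mathcal{A},\varrho}\Pi)(\mathfrak{p})})$; intersecting over $\mathfrak{p}$ and using $\varrho'(\bigcap_{\mathfrak{p}} I_{\mathfrak{p}}) \subseteq \bigcap_{\mathfrak{p}}\varrho'(I_{\mathfrak{p}})$ gives the claim. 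The main obstacle I anticipate is precisely this last bookkeeping in (iii): being careful that the functionals $\varphi_i$ occurring in the definition of $\mathfrak{S}^a$ over $\mathcal{A}$ do not simply base-change to functionals over $\mathcal{B}$, so that one must argue at the level of the \emph{columns they produce} rather than the functionals themselves, and that the intersection-over-$\mathfrak{p}$ step only yields an inclusion (which is all that is asserted), not an equality.
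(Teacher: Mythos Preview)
Your approach is the same as the paper's: reduce to the matrix-level invariants at each prime $\mathfrak{p}$, use the inclusion $\mathfrak{S}^a \subseteq \mathfrak{S}^{a+1}$ for (i), produce the identity matrix in $\mathfrak{S}^a$ once $a \ge d$ for (ii), and use that $\varrho$ carries $\mathfrak{S}^a(M_{\mathfrak{p}})$ into $\mathfrak{S}^a(\varrho(M_{\mathfrak{p}}))$ together with the compatibility of reduced norms for (iii). Your treatment of (iii), including the point that one only needs the \emph{columns} produced by the $\varphi_i$ rather than a base-changed functional, is correct and is exactly what the paper does (more tersely).

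There is, however, a confused and incorrect digression in your setup for (ii). You claim that for almost all $\mathfrak{p}$ the order $\mathcal{A}_{(\mathfrak{p})}$ is maximal and therefore $\mathrm{Fit}^0_{\mathcal{A}_{(\mathfrak{p})}}(M_{\Pi(\mathfrak{p})}) = \xi(\mathcal{A}_{(\mathfrak{p})})$, so that ``only finitely many local conditions are in play''. This is false in general: even over a maximal order (take $\mathcal{A} = \ZZ_p$ and the presentation $\ZZ_p \xrightarrow{p} \ZZ_p \to \ZZ_p/p \to 0$) the $0$-th Fitting invariant is not the unit ideal. You also speak of ``the finitely many ranks $\mathrm{rk}_{\mathcal{A}}(P)$'', but $P$ is locally free so has a \emph{single} well-defined rank $d = \mathrm{rk}_{\mathcal{A}}(P)$, independent of $\mathfrak{p}$. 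This is precisely why your correct argument---that the identity matrix $I_d$ lies in $\mathfrak{S}^a(M_{\mathfrak{p}})$ whenever $a \ge d$---works \emph{uniformly} at every prime, and no finiteness argument is needed at all. Simply delete the maximal-order remark and the proof of (ii) is clean. (Minor point: the functionals $\varphi_i$ live in $\Hom_{\mathcal{A}}(\mathcal{A}^{d'},\mathcal{A})$, not $\Hom_{\mathcal{A}}(\mathcal{A}^{d},\mathcal{A})$; to get $I_d$ as a $d\times d$ minor you take $\varphi_a$ to be the $a$-th coordinate functional on $\mathcal{A}^{d'}$ and then select the rows $1,\dots,d$.)
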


\begin{proof} For each prime $\mathfrak{p}$  we set $M_\mathfrak{p} := M_{\Pi(\mathfrak{p})}$.

Then (i) follows directly from the fact that in the definition of the set of matrices $\mathfrak{S}^a(M_\mathfrak{p})$ that occurs in Definition \ref{fit matrix def} the variable $t$ runs over all integers in the range $0\le t\le a$.

For the module $P$ in (\ref{pres seq}) we set $n := {\rm rk}_{\mathcal{A}}(P)$. Then to prove (ii) it is enough to show that for every $a \ge n$ and every prime $\mathfrak{p}$ one has ${\rm Fit}_{\mathcal{A}_{(\mathfrak{p})}}^a(M_{\mathfrak{p}}) = \xi(\mathcal{A}_{(\mathfrak{p})})$. This is true because for any such $a$ the $n\times n$ identity matrix belongs to $\mathfrak{S}^a(M_{\mathfrak{p}})$. 

In a similar way, (iii) is true since for every prime $\mathfrak{p}$ the induced projection map $\varrho_n: {\rm M}_{n}(\mathcal{A}) \to {\rm M}_{n}(\mathcal{B})$ sends any matrix in $\mathfrak{S}^a(M_{\mathfrak{p}})$ to a matrix in  $\mathfrak{S}^a(\varrho_n(M_{\mathfrak{p}}))$. \end{proof}

\subsubsection{}In the next result we explain the connection between this definition and the notion of non-commutative Fitting invariants of presentations introduced (in the case that $R$ is a discrete valuation ring and $a=0$) by Nickel in \cite{N3} and then subsequently studied by Johnston and Nickel in \cite{JN}.

\begin{proposition}\label{fitt0 invariants lemma} Assume that $R$ is a discrete valuation ring and let $\Pi$ be a free presentation of an $\mathcal{A}$-module $X$. Then all of the following claims are valid.
\begin{itemize}
\item[(i)] One has $\zeta(\mathcal{A})\cdot{\rm Fit}^0_\mathcal{A}(\Pi) = \xi(\mathcal{A})\cdot{\rm Fit}_\mathcal{A}(\Pi)$, where ${\rm Fit}_\mathcal{A}(\Pi)$ is the noncommutative Fitting invariant of Nickel.
\item[(ii)] If $\Pi$ is quadratic, then ${\rm Fit}^0_\mathcal{A}(\Pi)$ is equal to $\xi(\mathcal{A})\cdot {\rm Nrd}_A(M_{\Pi})$ and depends only on the isomorphism class of the $\mathcal{A}$-module $X$.
\item[(iii)] Let $0 \to X_1\to X_2 \to X_3\to 0$ be a short exact sequence of $\mathcal{A}$-modules. Then, if $X_1$ and $X_3$ have quadratic presentations $\Pi_1$ and $\Pi_3$, there exists a quadratic presentation $\Pi_2$ of $X_2$ and one has  
    ${\rm Fit}^0_\mathcal{A}(\Pi_2) = {\rm Fit}^0_\mathcal{A}(\Pi_1)\cdot {\rm Fit}^0_\mathcal{A}(\Pi_3)$.
\end{itemize}
\end{proposition}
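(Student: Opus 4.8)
The plan is to reduce everything to the classical behaviour of reduced norms under the operations on matrices that correspond to the module-theoretic constructions. First I would treat claim (ii). Given a quadratic free presentation $\Pi\colon \mathcal{A}^{n}\xrightarrow{\theta_\Pi}\mathcal{A}^{n}\xrightarrow{\rho_\Pi}X\to 0$, the set $\mathfrak{S}^0(M_\Pi)$ consists of the single $n\times n$ minor $M_\Pi$ itself, so ${\rm Fit}^0_{\mathcal{A}}(\Pi)=\xi(\mathcal{A})\cdot{\rm Nrd}_A(M_\Pi)$ directly from Definition \ref{fit matrix def}. To see this depends only on the isomorphism class of $X$, I would compare two quadratic presentations $\Pi$, $\Pi'$ of $X$ (necessarily of the same size $n$, after possibly padding one with an identity block — and I should check that padding $M_\Pi$ by an identity block changes ${\rm Nrd}_A$ by a unit factor, indeed leaves it unchanged up to the $\xi(\mathcal{A})$-multiple, using ${\rm Nrd}_A$ of a block-diagonal matrix and ${\rm Nrd}_A({\rm I})=1$). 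A standard Schanuel-type argument produces invertible matrices $U,V\in{\rm GL}_n(\mathcal{A})$ (over each localization $\mathcal{A}_{(\mathfrak{p})}$, which suffices since the Fitting invariant is defined by intersection over $\mathfrak{p}$) with $M_{\Pi'}=U M_\Pi V$; then multiplicativity of reduced norms (from \S\ref{Semisimple rings}) gives ${\rm Nrd}_A(M_{\Pi'})={\rm Nrd}_A(U){\rm Nrd}_A(M_\Pi){\rm Nrd}_A(V)$ with ${\rm Nrd}_A(U),{\rm Nrd}_A(V)\in\xi(\mathcal{A})^\times$ (since reduced norms of units in $\mathcal{A}_{(\mathfrak{p})}$ are units in $\xi(\mathcal{A}_{(\mathfrak{p})})$), so the two $\xi(\mathcal{A})$-ideals coincide. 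Claim (i) then follows by comparing Definition \ref{fit matrix def} with Nickel's definition: his ${\rm Fit}_{\mathcal{A}}(\Pi)$ is the $\zeta(\mathcal{A})$-module generated by the same reduced norms ${\rm Nrd}_A(N)$ for $N\in\mathfrak{S}^0(M_\Pi)$ — or more precisely one should quote the precise form of his definition and note that multiplying by $\xi(\mathcal{A})$ on one side and by $\zeta(\mathcal{A})$ on the other yields the same ideal $\zeta(\mathcal{A})\cdot\xi(\mathcal{A})\cdot\{{\rm Nrd}_A(N)\}$, since $\xi(\mathcal{A})\cdot{\rm Fit}_\mathcal{A}(\Pi)=\xi(\mathcal{A})\cdot\zeta(\mathcal{A})\cdot\{{\rm Nrd}_A(N)\}=\zeta(\mathcal{A})\cdot{\rm Fit}^0_\mathcal{A}(\Pi)$.

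For claim (iii), I would use the Horseshoe Lemma. Given quadratic presentations $\Pi_i\colon\mathcal{A}^{n_i}\xrightarrow{\theta_i}\mathcal{A}^{n_i}\to X_i\to 0$ for $i=1,3$ and the short exact sequence $0\to X_1\to X_2\to X_3\to 0$, the Horseshoe Lemma produces a presentation $\Pi_2$ of $X_2$ with free module $\mathcal{A}^{n_1+n_3}$ on both sides (hence quadratic, of size $n=n_1+n_3$) whose presentation matrix $M_{\Pi_2}$ is, after a suitable choice of splitting, block \emph{lower} (or upper) triangular:
\[ M_{\Pi_2} = \begin{pmatrix} M_{\Pi_1} & 0 \\ \ast & M_{\Pi_3}\end{pmatrix}. \]
The key computational input is that the reduced norm of a block-triangular matrix is the product of the reduced norms of the diagonal blocks — this follows by base-changing to a splitting field $E$ as in (\ref{rn iso}), where ${\rm Nrd}$ becomes an honest determinant and the block-triangular determinant formula applies componentwise over the Wedderburn factors, together with the fact that ${\rm Nrd}_A$ is computed componentwise on $\zeta(A)=\prod_i\zeta(A_i)$. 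Hence ${\rm Nrd}_A(M_{\Pi_2})={\rm Nrd}_A(M_{\Pi_1})\cdot{\rm Nrd}_A(M_{\Pi_3})$, and multiplying through by $\xi(\mathcal{A})$ and using claim (ii) gives ${\rm Fit}^0_\mathcal{A}(\Pi_2)={\rm Fit}^0_\mathcal{A}(\Pi_1)\cdot{\rm Fit}^0_\mathcal{A}(\Pi_3)$. Since $R$ is a discrete valuation ring here, all the localizations $\mathcal{A}_{(\mathfrak{p})}$ collapse to $\mathcal{A}$ itself, so no intersection over primes is needed and one works directly with free (rather than merely locally-free) presentations.

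The main obstacle I anticipate is bookkeeping rather than conceptual: making sure the padding-by-identity-blocks operations are compatible with the $\xi(\mathcal{A})$-ideal structure (so that comparing presentations of different sizes is legitimate), and checking carefully that the Horseshoe Lemma can be arranged so that the \emph{same} free modules appear on both sides of $\Pi_2$ with a genuinely block-triangular matrix — this requires choosing the lift of $\theta_3$ and the section appropriately, and is where one must be slightly careful that quadraticity is preserved. A secondary point requiring care is the precise statement of Nickel's definition in \cite{N3} so that the identity in claim (i) is literally correct; I would state it as an equality of $\xi(\mathcal{A})\zeta(\mathcal{A})$-ideals generated by the relevant reduced norms and cite the relevant definitions in \cite{N3} and \cite{JN}. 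None of the steps uses anything beyond multiplicativity of reduced norms, their componentwise behaviour over the Wedderburn decomposition, and the standard homological lemmas, all of which are available from the earlier part of the excerpt and the cited references.
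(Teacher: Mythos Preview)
Your proposal is correct and follows essentially the same route as the paper: the paper derives claim (ii) directly from Definition \ref{fit matrix def} and then cites Nickel's argument in \cite[Th.~3.2ii)]{N3} (which is precisely the Schanuel/padding argument you sketch) for the independence statement, and for claim (iii) it cites \cite[Prop.~3.5iii)]{N3}, whose proof is the Horseshoe construction with a block-triangular matrix that you describe. The only point to tighten is the one you already flagged in claim (i): Nickel's ${\rm Fit}_\mathcal{A}(\Pi)$ is not the $\zeta(\mathcal{A})$-module but the $\xi'(\mathcal{A})$-module generated by the relevant reduced norms, where $\xi'(\mathcal{A})$ is the order generated over $\zeta(\mathcal{A})$ by $\{{\rm Nrd}_A(M):M\in\bigcup_n{\rm GL}_n(\mathcal{A})\}$; the equality then reduces to the observation $\zeta(\mathcal{A})\cdot\xi(\mathcal{A})=\xi(\mathcal{A})\cdot\xi'(\mathcal{A})$, which is immediate since reduced norms of units in $\mathcal{A}$ are units of $\xi(\mathcal{A})$.
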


\begin{proof} 

We write $\xi'(\mathcal{A})$ for the $R$-order in $\zeta(A)$ that is generated over $\zeta(\mathcal{A})$ by the elements ${\rm Nrd}_A(M)$ as $M$ runs over matrices in $\bigcup_{n \ge 1}{\rm GL}_n(\mathcal{A})$.

Then, setting $r := r_\Pi$ (in the notation of (\ref{pres})), the invariant ${\rm Fit}_\mathcal{A}(\Pi)$ is defined in \cite[(3.3)]{JN} to be the
 $\xi'(\mathcal{A})$-submodule of $\zeta(A)$ that is generated by the elements ${\rm Nrd}_A(N)$ as $N$ runs over all $r\times r$ submatrices of the matrix $M_\Pi$. Thus, since ${\rm Fit}^0_\mathcal{A}(\Pi)$ is defined to be the ideal of $\xi(\mathcal{A})$ that is generated over $R$ by the same elements ${\rm Nrd}_A(N)$, the required equality $\zeta(\mathcal{A})\cdot{\rm Fit}^0_\mathcal{A}(\Pi) = \xi(\mathcal{A})\cdot{\rm Fit}_\mathcal{A}(\Pi)$ of (i) follows directly from the obvious equality $\zeta(\mathcal{A})\cdot \xi(\mathcal{A}) = \xi(\mathcal{A})\cdot\xi'(\mathcal{A})$.

In the context of (ii) one has $r'_\Pi = r_\Pi$ (in the notation of (\ref{pres})) and so ${\rm Fit}^0_\mathcal{A}(\Pi)$ is, by its very definition, equal to $\xi(\mathcal{A})\cdot {\rm Nrd}_A(M_\Pi)$. Claim (ii) is therefore true provided that the latter ideal depends only on the isomorphism class of the $\mathcal{A}$-module given by the cokernel of $\theta_\Pi$ and this follows from the argument used by Nickel to prove \cite[Th. 3.2ii)]{N3}.


The key idea in the proof of (iii) is to construct a suitable quadratic presentation $\Pi_2$ of $X_2$ from given quadratic  presentations of $X_1$ and $X_3$ and then to compute the respective zeroth Fitting invariants via the formula in (ii). The precise argument mimics that of Nickel in \cite[Prop. 3.5iii)]{N3} and so will be left to the reader. \end{proof}

\subsection{Fitting invariants of modules}

\subsubsection{}In this section we assume to be given a finitely generated  $\mathcal{A}$-module $Z$.

\begin{definition}\label{def fit mods}{\em  For each non-negative integer $a$, the `$a$-th Fitting invariant' of $Z$ is the ideal of $\xi(\mathcal{A})$ obtained by setting
\[ {\rm Fit}_{\mathcal{A}}^a(Z) := {{\sum}}_\Pi {\rm Fit}^a_{\mathcal{A}}(\Pi),\]
where in the sum $\Pi$ runs over all locally-free presentations of finitely generated $\mathcal{A}$-modules $Z'$ for which there exists a surjective homomorphism of $\mathcal{A}$-modules $Z' \to Z$.}\end{definition} 

The basic properties of these ideals are described in the next result. Before stating this result we introduce the following useful definition.

\begin{definition}{\em The `central pre-annihilator' of an $\mathcal{A}$-module $Z$ is the $\xi(\mathcal{A})$-submodule of $\zeta(A)$ obtained by setting
\[ {\rm pAnn}_{\mathcal{A}}(Z) := \{x \in \zeta(A): x\cdot \delta(\mathcal{A}) \subseteq {\rm Ann}_{\mathcal{A}}(Z)\},\]
where $\delta(\mathcal{A})$ is the ideal of $\zeta(\mathcal{A})$ from  Definition \ref{def denom}
and ${\rm Ann}_{\mathcal{A}}(Z)$ denotes the annihilator of $Z$ in $\mathcal{A}$.}
\end{definition}

\begin{remark}{\em  The module  ${\rm pAnn}_{\cA}(Z)$ is finitely generated over $R$, and hence over $\xi(\mathcal{A})$, since the $\cA$-ideal ${\rm Ann}_{\mathcal{A}}(Z)$ is finitely generated over $R$  and 
Lemma \ref{module lemma}(i) implies that $\delta(\cA)$ contains a non-zero integer. In addition, if $\mathcal{A}$ is commutative, then Lemma \ref{module lemma}(v) implies that ${\rm pAnn}_{\cA}(Z)= {\rm Ann}_{\mathcal{A}}(Z)$.} \end{remark}

\begin{theorem}\label{main fit result} The following claims are valid for every finitely generated $\mathcal{A}$-module $Z$ and every non-negative integer $a$.
\begin{itemize}
\item[(i)] ${\rm Fit}_{\mathcal{A}}^a(Z)$ is contained in ${\rm Fit}_{\mathcal{A}}^{a+1}(Z)$.
\item[(ii)] ${\rm Fit}_{\mathcal{A}}^a(Z) = \xi(\mathcal{A})$ if $a$ is large enough.
\item[(iii)] ${\rm Fit}_{\mathcal{A}}^0(Z)$ is contained in ${\rm pAnn}_{\mathcal{A}}(Z)$.
\item[(iv)] Let $e$ be a primitive central idempotent of $A$. Then the ideal  $e\cdot {\rm Fit}_{\mathcal{A}}^a(Z)$ vanishes if ${\rm rr}_{Ae}(e\cdot Z_F)> a \cdot {\rm rr}_{Ae}(Ae)$.
\item[(v)] For any surjective homomorphism of $R$-orders $\varrho: \mathcal{A} \to \mathcal{B}$ there is an inclusion $\varrho({\rm Fit}_{\mathcal{A}}^a(Z)) \subseteq {\rm Fit}_{\mathcal{B}}^{a}(\mathcal{B}\otimes_{\mathcal{A},\varrho}Z)$.
\item[(vi)] For any surjective homomorphism of $\mathcal{A}$-modules $Z \to Z'$ there is an inclusion ${\rm Fit}_{\mathcal{A}}^a(Z)\subseteq {\rm Fit}_{\mathcal{A}}^{a}(Z')$.
\item[(vii)] If $\mathcal{A}$ is commutative, then ${\rm Fit}_{\mathcal{A}}^a(Z)$ is equal to the $a$-th Fitting ideal of the $\mathcal{A}$-module $Z$, as discussed by Northcott in \cite[\S3]{North}.
\end{itemize}
\end{theorem}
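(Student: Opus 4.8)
The plan is to reduce every assertion to the corresponding statement about the Fitting invariants of locally-free \emph{presentations} that was established in Lemma \ref{key lfp}, and then to localize at each prime $\mathfrak{p}$ of $R$ so as to work with free presentations over $\mathcal{A}_{(\mathfrak{p})}$. First I would record the obvious \emph{monotonicity in the presentation}: if $\Pi$ is a locally-free presentation of $Z'$ and there is a surjection $Z'\twoheadrightarrow Z$, then any locally-free presentation of $Z'$ (after composing with this surjection) also serves as a presentation of $Z$; this immediately gives claim (vi), and the fact that $Z$ itself admits a free presentation (it is finitely generated over the Noetherian order $\mathcal{A}$) shows the defining sum is non-empty. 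Claims (i) and (ii) then follow termwise from Lemma \ref{key lfp}(i),(ii): each ${\rm Fit}^a_\mathcal{A}(\Pi)\subseteq{\rm Fit}^{a+1}_\mathcal{A}(\Pi)$ and, for $a$ at least the rank of the middle term of \emph{some} chosen presentation $\Pi$ of $Z$ itself, one has ${\rm Fit}^a_\mathcal{A}(\Pi)=\xi(\mathcal{A})$, which forces the sum to be all of $\xi(\mathcal{A})$. Claim (v) likewise follows termwise from Lemma \ref{key lfp}(iii) together with Example \ref{descent}, since $\varrho$ carries the family of presentations appearing in the definition of ${\rm Fit}^a_\mathcal{A}(Z)$ into the family appearing in the definition of ${\rm Fit}^a_\mathcal{B}(\mathcal{B}\otimes_{\mathcal{A},\varrho}Z)$, and $\varrho'(\xi(\mathcal{A}))=\xi(\mathcal{B})$ by Lemma \ref{xi lemma}(v).

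For claim (vii), one localizes at each $\mathfrak{p}$; over the commutative ring $\mathcal{A}_{(\mathfrak{p})}$ the reduced norm of a square matrix is its determinant (cf. the proof of Lemma \ref{xi lemma}(iii)), so for a free presentation $\Pi$ of $Z_{(\mathfrak{p})}$ with $d\times d'$ relation matrix $M_\Pi$ the set $\{{\rm Nrd}_A(N):N\in\mathfrak{S}^a(M_\Pi)\}$ is exactly the set of $(d-a)\times(d-a)$... more precisely, unwinding Definition \ref{fit matrix def}, it is the generating set of the classical $a$-th Fitting ideal of $Z_{(\mathfrak{p})}$ in Northcott's sense, and one invokes the standard fact (Northcott, \cite[\S3]{North}) that this ideal is independent of the chosen free presentation, so that the sum over presentations collapses. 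Reassembling via $\xi(\mathcal{A})=\bigcap_\mathfrak{p}\mathcal{A}_{(\mathfrak{p})}$ and ${\rm Fit}^a_\mathcal{A}(Z)=\bigcap_\mathfrak{p}{\rm Fit}^a_{\mathcal{A}_{(\mathfrak{p})}}(Z_{(\mathfrak{p})})$ (which uses \cite[Prop. (4.21)(vi)]{curtisr}) yields the claim. Claim (iv) is again checked prime-by-prime and idempotent-by-idempotent: after extending scalars to a splitting field $E$ and applying the Morita functor, a $d\times d'$ relation matrix for $eZ_F$ descends from one for $Z_{(\mathfrak{p})}$, and when ${\rm rr}_{Ae}(eZ_F)>a\cdot{\rm rr}_{Ae}(Ae)$ every matrix in $\mathfrak{S}^a(M_\Pi)$ is forced, after this Morita reduction, to have a zero $e$-component in its reduced norm because one cannot fit a $({\rm rr}_{Ae}(Ae))\times(\ge a+1)$-sized... i.e. the relevant square minors necessarily involve a singular block over $E$; thus $e\cdot{\rm Fit}^a_\mathcal{A}(\Pi)=0$ for every presentation and the sum vanishes after multiplication by $e$.

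The main obstacle is claim (iii), the annihilation statement. Here one cannot argue termwise: a single presentation need not have ${\rm Fit}^0_\mathcal{A}(\Pi)\subseteq{\rm pAnn}_\mathcal{A}(Z)$, but the sum does. The strategy I would follow is the non-commutative analogue of Nickel's argument in \cite[Th. 3.2, Cor. 3.3]{N3}: localize at $\mathfrak{p}$ and fix a \emph{quadratic} free presentation $\mathcal{A}_{(\mathfrak{p})}^{d}\xrightarrow{\theta}\mathcal{A}_{(\mathfrak{p})}^{d}\to Z_{(\mathfrak{p})}\to 0$ (obtained from any presentation of $Z_{(\mathfrak{p})}$ by adding free summands, which does not change ${\rm Fit}^0$ by Proposition \ref{fitt0 invariants lemma}(ii)); then for the relation matrix $M=M_\Pi$ and its adjoint $M^*$ of \eqref{adjoint matrix} one has $M\cdot M^*={\rm Nrd}_A(M)\cdot{\rm I}_d$, so ${\rm Nrd}_A(M)\cdot{\rm I}_d$ annihilates ${\rm cok}(\theta)=Z_{(\mathfrak{p})}$ \emph{provided} $M^*$ has $\mathcal{A}_{(\mathfrak{p})}$-coefficients; by Definition \ref{def denom} this is guaranteed after multiplying by any element of $\delta(\mathcal{A}_{(\mathfrak{p})})$, and by Lemma \ref{module lemma}(ii),(iv) this shows $\delta(\mathcal{A})\cdot{\rm Fit}^0_\mathcal{A}(\Pi)\subseteq{\rm Ann}_\mathcal{A}(Z_{(\mathfrak{p})})$, i.e. ${\rm Fit}^0_\mathcal{A}(\Pi)\subseteq{\rm pAnn}_\mathcal{A}(Z_{(\mathfrak{p})})$, for every quadratic presentation. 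Finally, a general (not necessarily quadratic) presentation $\Pi$ in the defining sum for ${\rm Fit}^0_\mathcal{A}(Z)$ can be replaced, without enlarging its zeroth Fitting invariant, by a quadratic one (again via adding free summands on the right, as in \S\ref{pres intro}), and since ${\rm pAnn}_\mathcal{A}(-)$ is inclusion-reversing in $Z$ and $Z'\twoheadrightarrow Z$ implies ${\rm pAnn}_\mathcal{A}(Z')\subseteq{\rm pAnn}_\mathcal{A}(Z)$, summing over all such $\Pi$ stays inside ${\rm pAnn}_\mathcal{A}(Z)$. Care is needed to check that ${\rm pAnn}_\mathcal{A}(Z)$ is genuinely a $\xi(\mathcal{A})$-submodule of $\zeta(A)$ closed under the relevant sums — this uses Lemma \ref{module lemma}(iv), $\delta(\mathcal{A})\cdot\xi(\mathcal{A})=\delta(\mathcal{A})$ — so that the inclusion passes to the sum defining ${\rm Fit}^0_\mathcal{A}(Z)$.
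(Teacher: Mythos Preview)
Your overall strategy matches the paper's proof almost exactly: claims (i), (ii), (v), (vi) are reduced to Lemma \ref{key lfp} and the definition; claims (iii), (iv), (vii) are proved after localizing at $\mathfrak{p}$; and for (iii) you correctly invoke Nickel's adjoint-matrix argument $M\cdot M^*={\rm Nrd}_A(M)\cdot I_d$ together with the defining property of $\delta(\mathcal{A})$, reducing first to quadratic presentations. This is precisely what the paper does.

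There is, however, a genuine gap in your treatment of claim (vii). You assert that in the commutative case ``unwinding Definition \ref{fit matrix def}'' shows $\{{\rm det}(N):N\in\mathfrak{S}^a(M_\Pi)\}$ is the generating set of the classical $a$-th Fitting ideal. But these are \emph{not} the same generating set: the elements of $\mathfrak{S}^a(M_\Pi)$ are $d\times d$ matrices obtained by replacing up to $a$ columns of (minors of) $M_\Pi$ by arbitrary vectors, whereas Northcott's $a$-th Fitting ideal is generated by the $(d-a)\times(d-a)$ minors of $M_\Pi$. The paper proves the two ideals coincide by a two-step argument you have omitted: the Laplace expansion of ${\rm det}(N)$ along the replaced columns shows ${\rm Fit}^a_\mathcal{A}(\Pi)\subseteq{\rm Fit}^a_\mathcal{A}(X)$; conversely, for each $(d-a)\times(d-a)$ minor one exhibits a specific choice of $J\in[r]_a$ and $\varphi\in\Hom_\mathcal{A}(\mathcal{A}^{r'},\mathcal{A})^a$ (essentially placing an identity block in the deleted rows and columns) so that ${\rm det}(M_\Pi(J,\varphi))=\pm$ that minor. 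Without this computation your claim (vii) is unproved. Your sketch of (iv) has the right idea (a rank bound on the image of $N$ under $\iota:{\rm M}_d(A)\to{\rm M}_{dm}(E)$) but is left literally unfinished; the paper makes the inequality ${\rm rank}(\iota(N))\le{\rm rank}(\iota(N_\Pi))+a\cdot m\le dm-{\rm rr}_{Ae}(e\cdot X_F)+am$ explicit.
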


\begin{proof} Claims (i), (ii) and (v) follow directly from the corresponding results in Lemma \ref{key lfp} and (vi) from the nature of the sum in Definition \ref{def fit mods}.

In addition, since $R$-modules of the form ${\rm Fit}_{\mathcal{A}}^a(Z)$ and ${\rm pAnn}_{\mathcal{A}}(Z)$ are both finitely generated and torsion-free, the remaining claims can all be proved after localizing at each prime ideal $\mathfrak{p}$ of $R$. In the sequel we shall therefore assume (without explicit comment) that $R$ is local. We also then fix a free presentation $\Pi$ of a finitely generated $\mathcal{A}$-module $Z$ of the form (\ref{pres}) (with $X=Z$) and set $r := r_\Pi$ and $r' := r'_\Pi$.

Claim (iii) is quickly reduced to proving that if $\Pi$ is quadratic, so that $r' = r$, then for any given element $a$ of $\delta(\mathcal{A})$ the product $a\cdot {\rm Nrd}_A(M_\Pi)$ belongs to $\zeta(\mathcal{A})$ and annihilates $Z$.

We set $M :=M_\Pi$. Then claims (i) and (iv) of Lemma \ref{module lemma} combine to imply $a\cdot {\rm Nrd}_A(M)$ belongs to $\zeta(\mathcal{A})$ and so it suffices to prove this element annihilates $Z$. To do this we follow an argument used by Nickel to prove \cite[Th. 4.2]{N3}.

Specifically, it is enough to show that for every element $y$ of $\mathcal{A}^r$ the product
\[ a\cdot{\rm Nrd}_{A}(M)\cdot y = a\cdot MM^*(y) = M(a\cdot M^*(y))\]
belongs to ${\rm im}(\theta_\Pi)$, where the matrix $M^\ast$ is as defined in (\ref{adjoint matrix}). This is in turn a direct consequence of the fact that the definition of $\delta(\mathcal{A})$ ensures $a\cdot M^*$ belongs to ${\rm M}_{r,r}(\mathcal{A})$ and hence that $M(a\cdot M^*(y))$ belongs to ${\rm im}(\theta_\Pi)$. 

To prove (iv) we set $m := {\rm rr}_{Ae}(Ae)$. Then it is enough to prove that for any non-negative integer $a$ one has $e\cdot {\rm Fit}_{\mathcal{A}}^a(\Pi)=0$ whenever ${\rm rr}_{Ae}(e\cdot Z_F) > a\cdot m$.

To show this we let $M$ denote any matrix obtained from $M_\Pi$ by replacing at most $a$ of its columns by arbitrary elements of $\mathcal{A}$. We set $d := r_\Pi$, fix a $d\times d$ submatrix $N$ of $M$ (so that $N$ is a typical matrix of the set $\mathfrak{S}^a(M_\Pi)$) and write $N_\Pi$ for the corresponding submatrix of $M_\Pi$.

We fix a splitting field $E$ for $Ae$ and an isomorphism of algebras of the form $Ae\otimes_{\zeta(A)e}E $ $\cong {\rm M}_{m,m}(E)$. This isomorphism induces a map $\iota: {\rm M}_{d,d}(A) \to {\rm M}_{dm,dm}(E)$ and $e\cdot {\rm Fit}_{\mathcal{A}}^a(\Pi)$ is, by its definition, generated over $\xi(\mathcal{A})$ by the determinants of all matrices of the form $\iota(N)$. In addition, it is clear that
\[ {\rm rank}(\iota(N))\le {\rm rank}(\iota(N_\Pi)) + a\cdot m \le \bigl(d\cdot m - {\rm rr}_{Ae}(e\cdot Z_F)\bigr) + a\cdot m. \]
Hence, if ${\rm rr}_{Ae}(e\cdot Z_F) > a\cdot m$, then ${\rm rank}(\iota(N))< d\cdot m$ and so ${\rm det}(\iota(N)) = 0$, as required to prove (iv).

Finally, to prove (vii), we assume $\mathcal{A}$ is commutative and note that ${\rm Fit}^a_\mathcal{A}(\Pi)$ is generated over $\mathcal{A}$ by elements of the form ${\rm det}(N)$ where $N$ is an $r\times r$ matrix, at least $r-a$ columns of which coincide with the columns of an $r\times r$ submatrix of $M_\Pi$.

The Laplace expansion of ${\rm det}(N)$ therefore shows that it is contained in the ideal of $\mathcal{A}$ generated by the set of $(r-a)\times (r-a)$ minors of $M_\Pi$. Thus, since the latter ideal is, by definition, equal to ${\rm Fit}_\mathcal{A}^a(Z)$ one has ${\rm Fit}^a_\mathcal{A}(\Pi)\subseteq {\rm Fit}_\mathcal{A}^a(Z)$.

To prove the reverse inclusion it suffices to show that for each $(r-a)\times (r-a)$ submatrix $N$ of $M_\Pi$ the term ${\rm det}(N)$ belongs to ${\rm Fit}^a_\mathcal{A}(\Pi)$.

For any natural number $n$ and any non-negative integer $t$ we write $[n]_t$ for the set of subsets of $\{1, 2, \dots , n\}$ that are of cardinality ${\rm min}\{t,n\}$.

Then we assume that $N$ is obtained by first deleting from $M_\Pi$ the columns corresponding to a subset $J = \{i_1,i_2,\ldots , i_a\}$ of $[r]_a$ with $i_1< i_2< \cdots < i_a$, and then taking the rows corresponding to an element $J_1$ of $[r']_{r-a}$. We choose an element $J_1'$ of $[r']_{r}$ that contains $J_1$, label the elements of $J_1'\setminus J_1$ as $k_1< k_2 < \cdots < k_a$ and then define an element $(\varphi_i)_{1\le i\le a}$ of $\Hom_\mathcal{A}(\mathcal{A}^{r'},\mathcal{A})^a$ by setting $\varphi_i(b_j) = \delta_{jk_i}$ for each $j$ with $1\le j\le r'$.

Then an explicit computation shows that, with these choices, the determinant of the matrix $M_\Pi(J,\varphi)$ defined in (\ref{fitting matrix}) is equal to $\pm {\rm det}(N)$ and hence implies that ${\rm det}(N)$ belongs to ${\rm Fit}^a_\mathcal{A}(\Pi)$, as required. \end{proof}

\subsubsection{} In the sequel we write $M^{\rm tr}$ for the transpose of a matrix $M$. 

Then an alternative theory of Fitting invariants is obtained if one replaces the matrices $M(J,\varphi)$ in (\ref{fitting matrix}) by $M^{\rm tr}(J,\varphi)^{\rm tr}$ (or equivalently, one substitutes rows, rather than columns, of the matrix $M$ by elements of $\im(\varphi_a)$). 

The same arguments as above show that the corresponding families of ideals, which we denote by ${\rm Fit}_{\mathcal{A}}^{{\rm tr},a}(\Pi)$ and ${\rm Fit}_{\mathcal{A}}^{{\rm tr},a}(Z)$, validate the natural analogues of Lemma \ref{key lfp}, Proposition \ref{fitt0 invariants lemma} and Theorem \ref{main fit result}. In addition, for certain orders $\mathcal{A}$ a precise connection between the two theories is established in Lemma \ref{transpose lemma} below. 
%
%

%
%
%
%


\subsubsection{}\label{transpose} Let $\Gamma$ be a finite group. In this section we discuss a construction of presentations for modules over the Gorenstein order $\mathcal{A} = R[\Gamma]$. The observations made here will be useful in later arithmetic applications.

\begin{definition}\label{transpose def}{\em If $\Pi$ is a locally-free presentation of an $R[\Gamma]$-module $X$ (as described in \S\ref{lfp def}), then the `transpose' $\Pi^{\rm tr}$ of $\Pi$ is the locally-free presentation of the $R[\Gamma]$-module ${\rm cok}(\Hom_R(\theta_\Pi,R))$ that is given by the following data:

\begin{itemize}
\item[$\bullet$] the exact sequence of $R[\Gamma]$-modules
\[ (\Pi^{\rm tr})^{\rm seq}: \Hom_R(P,R) \xrightarrow{\Hom_R(\theta_\Pi,R)} \Hom_R(P',R) \xrightarrow{} {\rm cok}(\Hom_R(\theta_\Pi,R)) \to 0,\]
where the linear duals are endowed with the contragredient action of $\Gamma$;
\item[$\bullet$] for each prime $\mathfrak{p}$ the composite isomorphisms
\[\Hom_R(P,R)_{(\mathfrak{p})} \cong \Hom_R(R[\Gamma]^{{\rm rk}_{R[\Gamma]}(P)} ,R)_{(\mathfrak{p})}\cong R_{(\mathfrak{p})}[\Gamma]^{{\rm rk}_{R[\Gamma]}(P)}\]
and
\[\Hom_R(P',R)_{(\mathfrak{p})} \cong \Hom_R(R[\Gamma]^{{\rm rk}_{R[\Gamma]}(P')},R)_{(\mathfrak{p})}\cong R_{(\mathfrak{p})}[\Gamma]^{{\rm rk}_{R[\Gamma]}(P')}\]
where the first maps are respectively induced by the $R_{(\mathfrak{p})}$-linear duals of $\iota_{\Pi,\mathfrak{p}}$ and $\iota'_{\Pi,\mathfrak{p}}$ and the second by the standard isomorphism
 $\Hom_R(R[\Gamma],R) \cong R[\Gamma]$.
\end{itemize}}
\end{definition}

\begin{remark}\label{trtr rem}{\em It is clear $\Pi^{\rm tr}$ is locally-quadratic if and only if $\Pi$ is locally-quadratic. In addition, since $\Hom_R(\Hom_R(\theta_\Pi,R),R)$ identifies naturally with $\theta_\Pi$, there exists a choice of isomorphism of $R[\Gamma]$-modules ${\rm cok}(\theta_\Pi) \cong X$ that induces an identification of $(\Pi^{\rm tr})^{\rm tr}$ with $\Pi$.}\end{remark}

%
%

Before stating the next result, we recall that the Wedderburn decomposition of $\CC_p[\Gamma]$ induces an identification 
\[ \zeta(\CC_p[\Gamma]) \cong {{\prod}}_{\psi\in {\rm Ir}_p(\Gamma)}\CC_p; \,\,\, x\mapsto (x_\psi)_\psi,\]
where $ {\rm Ir}_p(\Gamma)$ is the set of irreducible $\CC_p$-valued characters of $\Gamma$. We then write $x\mapsto x^\#$ for the $\CC_p$-linear involution of $\zeta(\CC_p[\Gamma])$ with the property that for all $x \in \zeta(\CC_p[\Gamma])$ and $\psi\in  {\rm Ir}_p(\Gamma)$ one has $(x^\#)_{\psi} = x_{\check\psi}$, where $\check\psi$ is the contragredient of $\psi$.

\begin{lemma}\label{transpose lemma} Assume $R$ is contained in  $\CC_p$. Then, if $\Pi$ is a locally-quadratic presentation of $R[\Gamma]$-modules, for every non-negative integer $a$ one has  
\[ {\rm Fit}^{{\rm tr},a}_{R[\Gamma]}(\Pi^{\rm tr}) = {\rm Fit}^a_{R[\Gamma]}(\Pi)^\#.\]
%
%
%
\end{lemma}

\begin{proof} We write $\iota_\#$ for the $\CC_p$-linear anti-involution of $\CC_p[\Gamma]$ that inverts elements of $\Gamma$, and note that $\iota_\#(x) = x^\#$ for all $x \in \zeta(\CC_p[\Gamma])$. 

For $M$ in ${\rm M}_{d}(R[\Gamma])$ we write $\iota_\#(M)$ for the matrix in ${\rm M}_{d}(R[\Gamma])$  obtained by applying $\iota_\#$ to all components of $M$. It is then easily checked that for $M$ in ${\rm M}_{d}(R[\Gamma])$ one has
\begin{equation}\label{transpose rn} {\rm Nrd}_{F[\Gamma]}( \iota_\#(M^{\rm tr})) = {\rm Nrd}_{F[\Gamma]}( \iota_\#(M)) = {\rm Nrd}_{F[\Gamma]}(M)^\#. \end{equation}

Now, after localizing at each prime ideal of $R$, it is enough to prove the claimed equality in the case that $\Pi$ is a quadratic presentation. To consider this case we fix a homomorphism of $R[\Gamma]$-modules  $\theta: R[\Gamma]^{d} \to R[\Gamma]^{d}$  and write $M_\theta$ for its matrix with respect to the standard basis of $R[\Gamma]^{d}$. Then the matrix of $\Hom_{R}(\theta,R)$ with respect to the standard (dual) bases of $\Hom_R(R[\Gamma]^{d},R)$ is equal to $\iota_\#(M_\theta^{\rm tr})$. It follows that ${\rm Fit}^{{\rm tr},a}_{R[\Gamma]}(\Pi^{\rm tr})$ is equal to the $\xi(R[\Gamma])$-ideal that is generated by the elements %
\[ {\rm Nrd}_{F[\Gamma]}\bigl( (\iota_\#(M_\theta^{\rm tr}))^{\rm tr}(J,\varphi)^{\rm tr}\bigr) = {\rm Nrd}_{F[\Gamma]}\bigl( \iota_\#(M_\theta(J,\varphi^\#))^{\rm tr}\bigr) = \bigl({\rm Nrd}_{F[\Gamma]}(M_\theta(J,\varphi^\#)\bigr)^\# \]
as $J$ runs over tuples $\{i_1,i_2,\ldots , i_t\}$ with $t \le a$ and $1\le i_1< i_2< \cdots < i_t\le d$, and $\varphi = (\varphi_i)_{1\le i\le t}$ over 
$\Hom_{R[\Gamma]}(R[\Gamma]^{d},R[\Gamma])^t$. Here we write $\varphi^\#$ for the tuple $(\iota_\#\circ\varphi_i)_{1\le i\le t}$, so the first equality is clear and the second follows from (\ref{transpose rn}). 

In particular, since the second equality in (\ref{transpose rn}) implies that $\xi(R[\Gamma]) = \xi(R[\Gamma])^\#$, to deduce the claimed result from the last displayed equality, it is enough to recall the  explicit definition of ${\rm Fit}^a_{R[\Gamma]}(\Pi)$ and note that $\{\varphi^\#: \varphi \in \Hom_{R[\Gamma]}(R[\Gamma]^{d},R[\Gamma])^t\} = \Hom_{R[\Gamma]}(R[\Gamma]^{d},R[\Gamma])^t.$
\end{proof}

\begin{remark}\label{hash remark}{\em The above argument has shown that $\xi(R[\Gamma]) = \xi(R[\Gamma])^\#$. In a similar way, since the defining equality (\ref{adjoint matrix}) implies $(\iota_\#(M))^\ast = \iota_\#(M^\ast)$ for every $M$ in ${\rm M}_d(R[\Gamma])$, one has $\delta(R[\Gamma]) = \delta(R[\Gamma])^\#$. In particular, if $Z$ is an $R[\Gamma]$-module, then the last equality combines with the  exactness of Pontryagin duality to imply that, with respect to the contragredient action of $R[\Gamma]$ on $Z^\vee = \Hom_R(Z,F/R)$, one has ${\rm pAnn}_{R[\Gamma]}(Z^\vee) = {\rm pAnn}_{R[\Gamma]}(Z)^\#$.  }\end{remark}

\section{Reduced exterior powers}\label{red ext powers sec}

In this section we discuss the basic properties of an explicit construction of `exterior powers' over semisimple rings.

\subsection{Exterior powers over commutative rings}\label{extc} We first quickly review relevant aspects of the classical theory of exterior powers over commutative rings.

Let $R$ be a commutative ring, and $M$ be an $R$-module. Then an element $f \in \Hom_R(M,R)$ induces, for every natural number $r$, a homomorphism of $R$-modules 
$${\bigwedge}_R^r M \to {\bigwedge}_R^{r-1}M, \quad m_1 \wedge \cdots \wedge m_r \mapsto {\sum}_{i=1}^r (-1)^{i+1}f(m_i) m_1 \wedge \cdots \wedge m_{i-1} \wedge m_{i+1}\wedge \cdots \wedge m_r,$$
which, for convenience, will also often be denoted by $f$. By using this construction, we define for natural numbers $r$ and $s$ with $r \geq s$, a pairing:
$${\bigwedge}_R^r M \times {\bigwedge}_R^s \Hom_R(M,R) \to {\bigwedge}_R^{r-s} M ; \quad (m,\wedge_{i=1}^{i=s}f_i) \mapsto (\wedge_{i=1}^{i=s}f_i)(m) := (f_s \circ \cdots \circ f_1) (m).$$
 (For clarity, we stress that in the composition `$f_s \circ \cdots \circ f_1$' each $f_i$ is regarded in the manner described above as a map ${\bigwedge}_R^{r-i+1}M \to {\bigwedge}_R^{r-i}M$). 
%
\

We shall also use the following convenient notation: for any natural numbers $r$ and $s$ with $s \le r$ we write $ {r \atopwithdelims[] s} $ for the subset of $S_r$ comprising permutations $\sigma$ which satisfy both
\[ \sigma(1) < \cdots < \sigma(s)\,\,\,\,\text{ and }\,\,\,\, \sigma(s+1) <\cdots<\sigma(r).\]
(This notation is motivated by the fact that the cardinality of ${r \atopwithdelims[] s}$ is the binomial coefficient $r \choose s$.)

We can now record two results that play an important role in the sequel.

\begin{lemma} \label{propformula} If $s \le r$, then for all subsets $\{f_i\}_{1\le i\le s}$ of $\Hom_R(M,R)$ and $\{m_j\}_{1\le j\le r}$ of $M$ one has
$$(\wedge_{i=1}^{i=s}f_i)(\wedge_{j=1}^{j=r}m_j) ={\sum}_{\sigma \in {r \atopwithdelims[] s} }{\rm{sgn}}(\sigma)\det(f_i(m_{\sigma(j)}))_{1\leq i,j\leq s} m_{\sigma(s+1)}\wedge\cdots\wedge m_{\sigma(r)}.$$
In particular, if $r=s$, then we have
$$(\wedge_{i=1}^{i=r}f_i)(\wedge_{j=1}^{j=r}m_j)=\det(f_i(m_j))_{1\leq i,j \leq r}.$$
\end{lemma}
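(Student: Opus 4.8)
The plan is to reduce the general statement to the case $r = s$ and then prove that case by a direct computation with the definition of the iterated contraction. First I would argue that it suffices to treat the case $r = s$: indeed, $\wedge_{j=1}^{j=r}m_j$ is an $R$-linear combination of basis wedges in the free case (and one reduces to the free case by a standard presentation/universal-property argument, or simply works with the explicit formula defining $f(m_1\wedge\cdots\wedge m_r)$ given just above), so both sides of the identity are $R$-multilinear and alternating in the $m_j$; hence it is enough to verify the formula on wedges of basis elements, and expanding $(\wedge_{i=1}^{i=s}f_i)$ acting on such a wedge naturally produces a sum indexed by the choice of which $s$ of the $r$ slots get "contracted away", i.e. exactly the set ${r \atopwithdelims[] s}$. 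Tracking the Koszul signs that arise from moving the contracted factors to the front yields the claimed $\mathrm{sgn}(\sigma)$, and the coefficient attached to the surviving wedge $m_{\sigma(s+1)}\wedge\cdots\wedge m_{\sigma(r)}$ is precisely $(\wedge_{i=1}^{i=s}f_i)$ applied to the sub-wedge $m_{\sigma(1)}\wedge\cdots\wedge m_{\sigma(s)}$, which by the $r=s$ case equals $\det(f_i(m_{\sigma(j)}))_{1\le i,j\le s}$.

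So the crux is the case $r = s$: to show $(\wedge_{i=1}^{i=r}f_i)(\wedge_{j=1}^{j=r}m_j) = \det(f_i(m_j))_{1\le i,j\le r}$. I would prove this by induction on $r$. The base case $r = 1$ is the definition $f_1(m_1) = f_1(m_1)$. For the inductive step, recall that by definition $(\wedge_{i=1}^{i=r}f_i)(m) = (f_r\circ\cdots\circ f_1)(m)$, so applying $f_1$ first gives
\[
f_1(m_1\wedge\cdots\wedge m_r) = \sum_{i=1}^{r}(-1)^{i+1}f_1(m_i)\, m_1\wedge\cdots\wedge \widehat{m_i}\wedge\cdots\wedge m_r,
\]
and then applying $(f_r\circ\cdots\circ f_2) = (\wedge_{i=2}^{i=r}f_i)$ to each term and invoking the inductive hypothesis produces $\sum_{i=1}^{r}(-1)^{i+1}f_1(m_i)\det\big(f_k(m_j)\big)_{2\le k\le r,\ j\ne i}$, which is exactly the Laplace expansion of $\det(f_i(m_j))_{1\le i,j\le r}$ along the first row. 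This closes the induction.

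The main obstacle, such as it is, is bookkeeping rather than anything conceptual: one must be careful that the definition of the pairing composes the $f_i$ in the order $f_s\circ\cdots\circ f_1$ (so $f_1$ acts first), and one must get the sign conventions in the Koszul-style rearrangement of $\wedge_{j=1}^{j=r}m_j$ into $(\text{contracted block})\wedge(\text{surviving block})$ to match the definition of $\mathrm{sgn}(\sigma)$ for $\sigma\in{r\atopwithdelims[]s}$; a clean way to avoid sign errors is to verify the two-step reduction (first $f_1$, then the rest) directly against ${r\atopwithdelims[]1}\cdot{r-1\atopwithdelims[]s-1}$ and use the multiplicativity of the sign under this factorization of shuffles. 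The final sentence of the lemma is then just the special case $r=s$, already established in the induction.
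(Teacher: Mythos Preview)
Your proposal is correct; the paper itself gives no details beyond stating that the lemma ``is verified by means of an easy and explicit computation,'' and your plan (induction on $r$ for the square case via Laplace expansion along the first row, then shuffle combinatorics for the general case) is precisely such a computation. One could streamline slightly by inducting directly on $s$ in the general formula rather than treating $r=s$ separately, but this is purely a matter of taste.
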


\begin{proof} This is verified by means of an easy and explicit computation. \end{proof}

\begin{lemma} [{\cite[Lem. 4.2]{bks}}]\label{prope}
Let $E$ be a field and $W$ an $n$-dimensional $E$-vector space. Fix a non-negative integer $m$ with $m \le n$ and a subset $\{\varphi_i\}_{1\le i\le m}$ of $\Hom_E(W,E)$. Then the $E$-linear map
$\Phi={\bigoplus}_{i=1}^m \varphi_i:W\longrightarrow E^{\oplus m}$ is such that
$$\im({\wedge}_{1\leq i\leq m}\varphi_i : {\bigwedge}_E^n W \longrightarrow {\bigwedge}_E^{n-m}W)=
\begin{cases} {\bigwedge}_E^{n-m}\ker (\Phi), &\text{ if $\Phi$ is surjective},\\
   0, &\text{otherwise.}\end{cases} $$
\end{lemma}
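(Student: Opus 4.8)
The plan is to reduce to a very concrete linear-algebra statement and verify it by a direct computation, treating the surjective and non-surjective cases separately. First I would fix a basis $w_1,\dots,w_n$ of $W$ and express each $\varphi_i$ in coordinates, so that $\Phi$ becomes an $m\times n$ matrix over $E$. The image of the map $\wedge_{1\le i\le m}\varphi_i$ on ${\bigwedge}^n_E W$ is a one-dimensional space (since ${\bigwedge}^n_E W$ is one-dimensional, spanned by $w_1\wedge\cdots\wedge w_n$), so computing the image amounts to computing the single element $(\wedge_{i=1}^{i=m}\varphi_i)(w_1\wedge\cdots\wedge w_n)$ and then identifying the line it spans inside ${\bigwedge}^{n-m}_E W$.

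The non-surjective case is the easy one: if $\Phi$ is not surjective, then the vectors $\varphi_1,\dots,\varphi_m$ are linearly dependent in $\Hom_E(W,E)$, and since the composite $\varphi_m\circ\cdots\circ\varphi_1$ underlying the contraction is alternating in the $\varphi_i$ (this is visible from the formula in Lemma \ref{propformula}, where every summand is a determinant $\det(\varphi_i(m_{\sigma(j)}))$ that vanishes under a linear dependence among the rows), the map $\wedge_{1\le i\le m}\varphi_i$ is identically zero. So I would dispatch this case first.

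For the surjective case, after a change of basis of $W$ I may assume $\varphi_i = w_i^\ast$ for $1\le i\le m$ (the dual basis vectors), so that $\ker(\Phi)$ is exactly the span of $w_{m+1},\dots,w_n$. Then Lemma \ref{propformula} applied with $r=n$, $s=m$ and $m_j = w_j$ collapses the sum over ${n \atopwithdelims[] m}$ to the single permutation that is the identity, because $\det(\varphi_i(w_{\sigma(j)}))_{1\le i,j\le m}$ is nonzero only when $\{\sigma(1),\dots,\sigma(m)\}=\{1,\dots,m\}$, in which case $\sigma$ must be the identity and the determinant equals $1$. This gives $(\wedge_{i=1}^{i=m}\varphi_i)(w_1\wedge\cdots\wedge w_n) = w_{m+1}\wedge\cdots\wedge w_n$, which spans ${\bigwedge}^{n-m}_E\ker(\Phi)$, as desired. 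The main subtlety — more bookkeeping than genuine obstacle — is checking that the change of basis does not disturb the statement: changing basis rescales $w_1\wedge\cdots\wedge w_n$ by a nonzero scalar (harmless, since we only care about the image as a subspace) and transforms $\ker(\Phi)$ in the expected way, so the conclusion is basis-independent. Since this is precisely \cite[Lem. 4.2]{bks}, I would in practice simply cite that reference, but the self-contained argument above is the one I would give if a proof were wanted.
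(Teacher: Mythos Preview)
Your proposal is correct, and in fact the paper gives no proof at all: the lemma is stated with the citation \cite[Lem.~4.2]{bks} and nothing further, exactly as you anticipated in your final sentence. The self-contained argument you sketch is the standard one and is sound in both cases.
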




\subsection{Reduced exterior powers over semisimple rings}\label{Exterior powers over semisimple rings}In this subsection, we define a notion of exterior powers for finitely generated modules over semsimple rings.

The underlying idea is as follows. If $\Lambda$ is a non-commutative ring for which there exists a functor $\Phi$ from the category of $\Lambda$-modules to the category of modules over some commutative ring $\Omega$ that gives an equivalence of these categories, then the exterior power of a $\Lambda$-module $M$ should be defined via a suitable exterior power of the $\Omega$-module $\Phi(M)$. (In our case, we shall take $\Phi$ to be the Morita functor defined at the end of \S\ref{morita}.)


\subsubsection{}\label{simple art section} Let $K$ be a field of characteristic zero and $A$ a finite-dimensional simple $K$-algebra.

\begin{definition}\label{rth reduced ext def}{\em Take a splitting field $E$ of $A$, set $A_E:=A\otimes_{\zeta(A)}E$ and fix a simple  $A_E$-module $V$. Then for each $A$-module $M$ and each non-negative integer $r$, we define the `$r$-th reduced exterior power' of $M$ over $A$ to be the $E$-vector space
$${\bigwedge}_A^r M:={\bigwedge}_E^{rd}(V^\ast\otimes_{A_E}M_E),$$
where $d:=\dim_E(V)$, $M_E:=M\otimes_{\zeta(A)}E$, and $V^\ast:=\Hom_E(V,E)$.}\end{definition}

\begin{remark}\label{splitting field remark} {\em If $A$ is commutative, and hence a field, then our convention will always be to take $E$, and hence also $V$, to be $A$ so that the above definition coincides with the standard $r$-th exterior power of $M$ as an $A$-module. In the general case, whilst our chosen notation ${\bigwedge}_A^r M$ suppresses the dependence of the definition on the splitting field $E$ and simple $A_E$-module $V$ we feel that this should not lead to confusion. Firstly, for a fixed $E$, all simple $A_E$-modules $V$ are isomorphic and so lead to isomorphic reduced exterior powers. In addition, each $K$-embedding of splitting fields $\sigma: E \to E'$ induces a canonical isomorphism $E'\otimes_{E,\sigma}{\bigwedge}_A^r M \cong {\bigwedge}_A^r M$ where the exterior powers are respectively defined via the pairs $(E,V)$ and $(E',E'\otimes_{E,\sigma}V)$. Aside from this, one can also define ${\bigwedge}_A^r M$ with respect to a `canonical' choice of splitting field $E$ as in Remark \ref{exp splitting} (and see also Remark \ref{group ring rem} below in this regard).}\end{remark}

\begin{remark}\label{func rem}{\em Reduced exterior powers are functorial in the following sense. Any homomorphism of $A$-modules $\theta: M \to M'$ induces for each natural number $r$ a homomorphism of $E$-modules
\[
 {\bigwedge}_A^r M = {\bigwedge}_E^{rd}(V^\ast\otimes_{A_E}M_E) \xrightarrow{{\wedge}_E^{rd}({\rm id}_{V^\ast}\otimes \theta)} {\bigwedge}_E^{rd}(V^\ast\otimes_{A_E}M_E') = {\bigwedge}_A^r M'.\]
}\end{remark}

\subsubsection{}To make analogous constructions for linear duals we use the fact that $\Hom_A(M,A)$ has a natural structure as (left) $A^{\rm op}$-module. In particular, since $V^\ast$ is a simple $A^{\rm op}_E$-module and   $V^{\ast \ast}$ is canonically isomorphic to $V$, the $r$-th reduced exterior power (in the sense of Definition \ref{rth reduced ext def}) of $\Hom_A(M,A)$ is equal to
$${\bigwedge}_{A^{\rm op}}^r \Hom_A(M,A) = {\bigwedge}_E^{rd} (V\otimes_{A_E^{\rm op}}\Hom_{A_E}(M_E,A_E) ).$$

The natural isomorphism
$$V \otimes_{A_E^{\rm op}}\Hom_{A_E}(M_E,A_E) \stackrel{\sim}{\to} \Hom_E(V^\ast\otimes_{A_E}M_E, E) ; \quad v \otimes f \mapsto (v^\ast\otimes m \mapsto v^\ast(f(m)v))$$
therefore induces a composite identification
\[{\bigwedge}_{A^{\rm op}}^r \Hom_A(M,A) = {\bigwedge}_E^{rd} \Hom_E(V^\ast\otimes_{A_E}M_E, E) = {\bigwedge}_E^{rd} \Hom_{\zeta(A)}(V^\ast\otimes_{A_E}M_E, \zeta(A)),\]
with the last identification induced by the trace map $E\to \zeta(A)$. By using this identification, the construction discussed in \S \ref{extc} applies to the $E$-vector space $V^\ast\otimes_{A_E}M_E$ to give a  pairing
\begin{equation}\label{rubin pairing} {\bigwedge}_A^r M \times {\bigwedge}_{A^{\rm op}}^s \Hom_A(M,A) \to {\bigwedge}_A^{r-s} M.\end{equation}
We shall denote the image under this pairing of a pair $(m,\varphi)$  by $\varphi(m)$.

\subsubsection{}\label{rep norm section}We now fix an ordered $E$-basis $\{ v_i\}_{1\le i\le d}$ of $V$ and write $\{ v_i^\ast\}_{1\le i\le d}$ for the corresponding dual basis of $V^\ast$.

For any subsets $\{m_i\}_{1\le i\le r}$ of $M$ and $\{\varphi_i\}_{1\le i\le r}$ of $\Hom_A(M,A)$ we then set
\begin{equation}\label{non-commutative wedge} \wedge_{i=1}^{i=r}m_i :={\wedge}_{1\leq i \leq r}({\wedge}_{1\leq j\leq d}v^\ast_j\otimes m_i) \in {\bigwedge}_E^{rd}(V^\ast\otimes_{A_E}M_E)={\bigwedge}_A^rM \end{equation}
and
\begin{equation} \label{non-commutative wedge 2}\wedge_{i=1}^{i=r}\varphi_i := {\wedge}_{1\leq i \leq r}({\wedge}_{1\leq j \leq d}v_j \otimes \varphi_i) \in {\bigwedge}_{A^{\rm op}}^r \Hom_A(M,A),\end{equation}
where $m_i$ and $\varphi_i$ are regarded as elements of $M_E$ and $\Hom_{A_E}(M_E,A_E)$ in the obvious way.

\begin{lemma}\label{primitive span} Let $M$ be a finitely generated $A$-module. Then the $E$-spaces ${\bigwedge}_A^r M$ and ${\bigwedge}_{A^{\rm op}}^r \Hom_A(M,A)$ are respectively spanned by the sets $\{ \wedge_{i=1}^{i=r}m_i: m_i \in M\}$ and $\{ \wedge_{i=1}^{i=r}\varphi_i: \varphi_i\in \Hom_A(M,A)\}$.\end{lemma}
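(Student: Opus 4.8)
The plan is to reduce the statement to the corresponding classical fact about exterior powers of a vector space, via the very definition of the reduced exterior power. Recall that by Definition \ref{rth reduced ext def} one has ${\bigwedge}_A^r M = {\bigwedge}_E^{rd}(V^\ast\otimes_{A_E}M_E)$ with $d = \dim_E(V)$, so it suffices to show that the indicated sets span these ordinary exterior powers of the $E$-vector spaces $V^\ast\otimes_{A_E}M_E$ and $\Hom_E(V^\ast\otimes_{A_E}M_E,E)$. For this I would first record the elementary fact that if $W$ is any finite-dimensional vector space over a field $E$, then ${\bigwedge}_E^n W$ is spanned over $E$ by the pure wedges $w_1\wedge\cdots\wedge w_n$ with each $w_j\in W$; indeed, choosing a basis of $W$ gives a basis of ${\bigwedge}_E^n W$ consisting of such pure wedges. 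Applying this with $W = V^\ast\otimes_{A_E}M_E$ and $n = rd$ shows that ${\bigwedge}_A^r M$ is spanned by pure wedges of $rd$ elements of $V^\ast\otimes_{A_E}M_E$.

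The main point is then to check that the subset of \emph{structured} wedges $\wedge_{i=1}^{i=r}m_i$ defined in (\ref{non-commutative wedge}) already spans everything. Here I would use two observations. First, the space $V^\ast\otimes_{A_E}M_E$ is spanned over $E$ by the elements of the form $v_j^\ast\otimes m$ with $1\le j\le d$ and $m\in M$ (using that $V^\ast$ is spanned by the $v_j^\ast$ as a right $A_E$-module and that $M$ spans $M_E$ over $\zeta(A)$, together with the fact that $(v^\ast\cdot a)\otimes m = v^\ast\otimes(a\cdot m)$). Hence an arbitrary pure wedge of $rd$ such elements, which by multilinearity and the spanning just noted can be taken of the form $(v_{j_1}^\ast\otimes m_1')\wedge\cdots\wedge(v_{j_{rd}}^\ast\otimes m_{rd}')$, lies in the $E$-span of the structured wedges $\wedge_{i=1}^{i=r}m_i$. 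The cleanest way to see this last inclusion is to note that $\wedge_{i=1}^{i=r}m_i = \bigwedge_{1\le i\le r}\bigl(\bigwedge_{1\le j\le d}(v_j^\ast\otimes m_i)\bigr)$ is, up to sign, the wedge over all $rd$ pairs $(i,j)$ of the vectors $v_j^\ast\otimes m_i$; varying the $m_i$ over $M$ and using multilinearity/expansion one obtains every monomial $(v_{j_1}^\ast\otimes m_1')\wedge\cdots$ in which the second-coordinate pattern is ``block constant'', and then a standard determinant-type identity (or simply the observation that the $d\times d$ blocks can be diagonalized by a change of basis argument in each $M$-slot) shows these span all pure wedges. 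I would carry this out by induction on $r$, or alternatively by directly invoking the fact that the elements $v_j^\ast\otimes m$ with $m$ ranging over a fixed $\zeta(A)$-spanning set of $M$ and $j$ ranging over $1,\dots,d$ span $V^\ast\otimes_{A_E}M_E$, so their pure wedges span ${\bigwedge}_E^{rd}$, and every such pure wedge of length $rd$ that is not of the structured form is either zero (repeated factor) or can be reordered and recognized as $\pm$ a structured wedge after a suitable linear substitution in the $m$'s.

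For the dual statement I would argue identically: $\Hom_A(M,A)$ is a left $A^{\rm op}$-module, $V$ is a simple left $A^{\rm op}_E$-module with basis $\{v_j\}$, and under the identification recalled in the text one has ${\bigwedge}_{A^{\rm op}}^r\Hom_A(M,A) = {\bigwedge}_E^{rd}\Hom_E(V^\ast\otimes_{A_E}M_E,E)$; the elements $v_j\otimes\varphi$ with $\varphi\in\Hom_A(M,A)$ span $V\otimes_{A_E^{\rm op}}\Hom_{A_E}(M_E,A_E)$ over $E$, and the structured wedges $\wedge_{i=1}^{i=r}\varphi_i$ of (\ref{non-commutative wedge 2}) span the top exterior power by the same bookkeeping as above. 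So the proof is really a single lemma applied twice.

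I expect the only genuine obstacle to be the combinatorial step of showing that the ``block-structured'' pure wedges $\wedge_{i=1}^{i=r}m_i$ span \emph{all} pure wedges of length $rd$, rather than merely a subspace; this is where one must be careful that imposing the rigid pattern $v_1^\ast\otimes m_i,\dots,v_d^\ast\otimes m_i$ in consecutive blocks does not lose anything. The resolution is that, since the $m_i$ range over all of $M$ (equivalently, over a $\zeta(A)$-spanning set, and then by $E$-linearity over all of $M_E$), one can in particular take the $m_i$ so that the vectors $v_j^\ast\otimes m_i$ realize an arbitrary basis of $V^\ast\otimes_{A_E}M_E$ — concretely, if $\{m^{(k)}\}$ is a $\zeta(A)$-basis of (a finite free summand of) $M_E$ then $\{v_j^\ast\otimes m^{(k)}\}_{j,k}$ is an $E$-basis, and grouping the basis indices into consecutive blocks of size $d$ exhibits the corresponding basis wedge of ${\bigwedge}_E^{rd}$ as exactly a structured wedge $\wedge_{i=1}^{i=r}m_i$ with $m_i = m^{(i)}$. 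Since such basis wedges span ${\bigwedge}_E^{rd}(V^\ast\otimes_{A_E}M_E)$, the result follows; a brief remark handling the case where $M_E$ is not free over $\zeta(A)$ (decompose $\zeta(A)$ into fields and argue componentwise, or pass to a surjection from a free module) completes the argument. This is a routine verification, so in the write-up I would state it as ``an easy and explicit computation'' in the spirit of the proof of Lemma \ref{propformula}, spelling out only the basis identification above.
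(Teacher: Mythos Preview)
Your outline is sound---reduce to the ordinary exterior power $\bigwedge_E^{rd}(V^\ast\otimes_{A_E}M_E)$ and then argue that the ``block-structured'' wedges $\wedge_{i=1}^{i=r}m_i$ already span---and you correctly flag the combinatorial step as the only real issue. But the explicit argument you give for that step does not work, and the vague alternatives you mention (``standard determinant-type identity'', ``induction on $r$'') are not arguments.

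Concretely: take $M$ free of rank $t$ over $A$ with $A$-basis $\{m^{(k)}\}$, so $\{v_j^\ast\otimes m^{(k)}\}_{1\le j\le d,\,1\le k\le t}$ is an $E$-basis of $V^\ast\otimes_{A_E}M_E$. Setting $m_i = m^{(i)}$ for $1\le i\le r$ yields exactly \emph{one} wedge, namely $\wedge_{i,j}(v_j^\ast\otimes m^{(i)})$; varying which $r$ of the $t$ basis elements you use gives only $\binom{t}{r}$ structured wedges, whereas $\bigwedge_E^{rd}$ has dimension $\binom{td}{rd}$. Already for $d=2$, $t=2$, $r=1$ your structured wedges give a $2$-dimensional subspace of a $6$-dimensional space. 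So the sentence ``since such basis wedges span $\bigwedge_E^{rd}$, the result follows'' is simply false.

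What is missing is the one genuine idea in the paper's proof: since $A_E\cong\End_E(V)$, for \emph{any} $d$ distinct basis vectors $x_{i_1 j_1},\dots,x_{i_d j_d}$ one can manufacture a single element $m_X=\sum_k a_{c_k}b_{c_k}\in M_E$, with $a_{c_k}\in A_E$ chosen (via the isomorphism to a matrix ring) to send the $v_y^\ast$ to the prescribed $v_{j_k}^\ast$ and kill the rest, so that $\wedge_{y=1}^{d}(v_y^\ast\otimes m_X)=\pm\,x_{i_1 j_1}\wedge\cdots\wedge x_{i_d j_d}$. Grouping any $rd$ basis vectors into $r$ blocks of $d$ and applying this construction to each block shows every basis wedge of $\bigwedge_E^{rd}$ is, up to sign, a structured wedge $\wedge_{i=1}^{i=r}m_{X_i}$. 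Your earlier heuristic ``one can take the $m_i$ so that the $v_j^\ast\otimes m_i$ realize an arbitrary basis'' is pointing at exactly this, but you never invoke $A_E\cong\End_E(V)$, and without it the claim has no content. (Also: the paper first reduces to $M=A^t$ via a surjection from a free module; you mention this only in passing at the end, but it should come first so that the basis $\{v_j^\ast\otimes b_i\}$ is available.)
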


\begin{proof} 
%

We only prove the claim for $M$ since exactly the same argument works for $\Hom_A(M,A)$ (after replacing $A$ by $A^{\rm op}$). Then, since there exists a surjective homomorphism of $A$-modules of the form $A^t \to M$ (for any large enough $t$) it is enough to prove the claim in the case that $M$ is free of rank $t$.

In this case, if we fix an $A$-basis $\{b_i\}_{1\le i\le t}$ of $M$, then the $E$-space $V^\ast \otimes_{A_E}M_E$ has as a basis the set $\{ x_{ij} := v_j^\ast\otimes b_i\}_{1\le j\le d, 1\le i\le t}$ and so ${\bigwedge}_A^r M$ is generated over $E$ by the exterior powers of all $rd$-tuples of distinct elements in this set. It is thus enough to show that for any $d$ distinct elements $X := \{x_{i_kj_k}\}_{1 \le k\le d}$ of the above set, there exists an element $m_{X}$ of $M_E$ with $\,{\wedge}_{y=1}^{y=d}(v_y^\ast\otimes m_X) = \pm {\wedge}_{k=1}^{k=d}x_{i_kj_k}$.

To do this we set  $X_c := \{x_{i_kj_k}: i_k =c \}$ and $n_c := |X_c|$ for each index $c$ with $1\le c\le t$. We also set $Y:= \{v_i^\ast\}_{1\le i\le d}$. We order the indices $c$ for which $n_c \not= 0$ as $c_1< c_2< \cdots < c_s$ (for a suitable integer $s$) and for each integer $\ell$ with $0\le \ell\le s$ we set $N_\ell := {\sum}_{j=1}^{j=\ell}n_{c_j}$ (so that $N_0 = 0$ and $N_s = d$).

For each index $k$ we then choose $a_{c_k}$ to be an element of $A_E$ whose image under the canonical isomorphism $A_E \cong {\rm End}_E(V^\ast) (\cong {\rm M}_d(E))$ maps the elements $\{v_i^\ast: N_{k-1}< i\le N_k\}$ to the ($N_k-N_{k-1} = n_{c_k}$ distinct) elements $v_j^\ast$ that occur as the first components of the elements in $X_{c_k}$ and maps the remaining $d- n_{c_k}$ elements of $Y$ to zero. It is then straightforward to check the element $m_X := {\sum}_{k=1}^{k=s}a_{c_k}\cdot b_{c_k}$ of $M_E$ is such that
\begin{align*} {\wedge}_{y=1}^{y=d}(v_y^\ast\otimes m_X) =\, &{\wedge}_{y=1}^{y=d}({\sum}_{k=1}^{k=s}v_y^\ast a_{c_k} \otimes b_{c_k})\\
= \, &\pm {\wedge}_{k=1}^{k=s}\bigl({\wedge}_{N_{k-1}< y \le N_k}(v_y^\ast a_{c_k} \otimes b_{c_k})\bigr)\\
= \, &\pm {\wedge}_{k=1}^{k=d}x_{i_kj_k},\end{align*}
as required.\end{proof}


\begin{remark}\label{comm conventions}{\em If $A$ is commutative, then our convention is that $E = V = A$ (see Remark \ref{splitting field remark}). In particular, in this case $d=1$ and we will always take the basis $\{v_1\}$ fixed above to comprise the identity element of $A$ so that the elements defined in (\ref{non-commutative wedge}) and (\ref{non-commutative wedge 2}) coincide with the classical definition of exterior products. }\end{remark}

\subsubsection{}\label{semisimple exterior power section} We assume now that $A$ is a semisimple ring, with Wedderburn decomposition  (\ref{wedderburn}).

In this case, each finitely generated $A$-module $M$ decomposes as a direct sum $M = {\bigoplus}_{i \in I}M_i$ where each summand $M_i := A_i\otimes_AM$ is a finitely generated $A_i$-module. For any non-negative integer $r$, we then define the $r$-th reduced exterior power of the $A$-module $M$ by setting
\[
 {\bigwedge}_A^rM:={\bigoplus}_{i\in I} {\bigwedge}_{A_i}^r (A_i \otimes_A M),
\]
where each component exterior power in the direct sum is defined with respect to a given choice of splitting field $E_i$ for $A_i$ over $\zeta(A_i)$ and  a given choice of simple $E_i\otimes_{\zeta(A_i)}A_i$-module $V_i$.
 The associated reduced exterior power ${\bigwedge}_{A^{\rm op}}^r \Hom_A(M,A)$ is defined in a similar way.

We refer to the direct product 
\begin{equation}\label{splitting algebra} \tilde E := {\prod}_{i\in I}E_i\end{equation}
as a `splitting algebra' for $A$ and note that ${\bigwedge}_A^rM$ and ${\bigwedge}_{A^{\rm op}}^r \Hom_A(M,A)$ are both modules over $\tilde E$.
 Whilst these constructions clearly depend on the choice of splitting algebra, the following result shows that they behave functorially under scalar extension.

\begin{lemma} For any extension $K'$ of $K$ 
there exists an injective homomorphism from $ {\bigwedge}_A^rM$ to ${\bigwedge}_{A\otimes_KK'}^r(M\otimes_KK')$. 
\end{lemma}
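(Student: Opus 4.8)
The plan is to reduce everything to the simple-ring situation handled in \S\ref{simple art section} and then to exploit the explicit description of reduced exterior powers as exterior powers of Morita-images. First I would reduce to the case where $A$ is a finite-dimensional \emph{simple} $K$-algebra: by the Wedderburn decomposition (\ref{wedderburn}) and the definition of ${\bigwedge}_A^r(-)$ as the direct sum over the simple factors $A_i$, and since scalar extension along $K\to K'$ commutes with finite products, it suffices to construct a compatible injection on each simple factor. So assume $A$ is simple with centre $F=\zeta(A)$. By Lemma \ref{propsch0}, $A\otimes_KK'$ decomposes as a finite product of simple $K'$-algebras indexed by $\Sigma(F/K,K')$, one factor for each equivalence class $\sigma$, namely $A_\sigma:=A\otimes_F\sigma(F)K'$ with centre $\sigma(F)K'$. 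It then suffices to produce, for at least one such $\sigma$ (for instance the class of the $K$-embedding $F\hookrightarrow K'$ if one exists, or an arbitrary class otherwise), an injection ${\bigwedge}_A^rM\hookrightarrow{\bigwedge}_{A_\sigma}^r(M\otimes_F\sigma(F)K')$, since the latter is a direct summand of ${\bigwedge}_{A\otimes_KK'}^r(M\otimes_KK')$.

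Next I would fix splitting data and unwind both sides. Choose a splitting field $E$ for $A$ over $F$ containing $\sigma(F)K'$ — this is possible because any sufficiently large finite extension of $K$ inside an algebraic closure of $K'$ splits $A$ (Remark \ref{exp splitting}), and $E$ also serves as a splitting field for the simple $K'$-algebra $A_\sigma$ over its centre $\sigma(F)K'$, since $A_\sigma\otimes_{\sigma(F)K'}E\cong A\otimes_FE$, which is a matrix ring over $E$. Fix a simple $A\otimes_FE$-module $V$; it is simultaneously the simple module used to compute ${\bigwedge}_A^rM$ and ${\bigwedge}_{A_\sigma}^r(-)$. With $d=\dim_EV$, by Definition \ref{rth reduced ext def} the left-hand side is ${\bigwedge}_E^{rd}(V^\ast\otimes_{A\otimes_FE}(M\otimes_FE))$ and the right-hand side is ${\bigwedge}_E^{rd}\bigl(V^\ast\otimes_{A_\sigma\otimes_{\sigma(F)K'}E}((M\otimes_F\sigma(F)K')\otimes_{\sigma(F)K'}E)\bigr)$. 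The crucial point is that $M\otimes_FE$ and $(M\otimes_F\sigma(F)K')\otimes_{\sigma(F)K'}E$ are canonically isomorphic as $A\otimes_FE$-modules, because both are obtained from $M$ by extending scalars $F\to E$ through the same ring map $\iota:F\to\sigma(F)K'\hookrightarrow E$. Hence the Morita images $V^\ast\otimes_{A\otimes_FE}(-)$ of the two sides are canonically identified, and therefore so are their $rd$-th exterior powers over $E$: the resulting map is in fact an isomorphism, a fortiori injective.

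The one subtlety — and the step I expect to be the main obstacle — is the bookkeeping of which $F$-algebra structure on $E$ (equivalently, which embedding $F\to E$) is being used on each side, and checking that the Morita functor really does identify the two module structures canonically rather than only after a non-canonical twist. Concretely one must verify that the natural map $(M\otimes_F\sigma(F)K')\otimes_{\sigma(F)K'}E\to M\otimes_FE$ coming from the multiplication $\sigma(F)K'\otimes_FE\to E$ (well-defined since $E\supseteq\sigma(F)K'$) is an isomorphism of $A\otimes_FE$-modules, and that the simple module $V^\ast$ appearing in the definition of the reduced exterior power over $A_\sigma$ may indeed be taken to be the same $V^\ast$; this follows from $A_\sigma\otimes_{\sigma(F)K'}E\cong A\otimes_FE$ as $E$-algebras together with the fact (\S\ref{morita}) that the simple module over a matrix ring over $E$ is unique up to isomorphism. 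Once these identifications are in place, functoriality of ${\bigwedge}_E^{rd}(-)$ gives the desired homomorphism, and its injectivity is immediate since on each simple factor it is an isomorphism and on the whole algebra it is the composite of such an isomorphism with the inclusion of a direct summand. I would also remark, as in Remark \ref{splitting field remark}, that the construction is independent of the auxiliary choice of $E$ up to canonical isomorphism, so the resulting injection is canonical given the representations (\ref{reps}).
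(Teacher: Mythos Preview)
Your argument is correct and follows the same broad outline as the paper (reduce to simple $A$, decompose $A\otimes_KK'$ via Lemma~\ref{propsch0}, compare Morita images), but differs in one structural respect: the paper does not single out one $\sigma\in\Sigma(F/K,K')$. Instead it keeps the splitting field $E\subset K^c$ used to define ${\bigwedge}_A^rM$, extends each $\sigma$ to an embedding $\widetilde\sigma:E\hookrightarrow\Omega$, sets $E_\sigma:=\widetilde\sigma(E)K'$ and $V_\sigma:=V\otimes_EE_\sigma$, and takes the required injection to be the \emph{diagonal} $\bigoplus_\sigma f_\sigma$, where each $f_\sigma$ is the embedding of $rd$-th exterior powers induced by the scalar-extension map $V^\ast\otimes_{A_E}M_E\hookrightarrow V_\sigma^\ast\otimes_{A_{E_\sigma}}M_{E_\sigma}$. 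Your version instead enlarges the splitting field on the $A$-side so that it already contains $\sigma(F)K'$, whence the two Morita images literally coincide and you get an isomorphism onto a single summand. The paper's diagonal map is more canonical (it respects the full product decomposition of $A\otimes_KK'$ and avoids the arbitrary choice of $\sigma$), while yours is slightly quicker once the splitting field is fixed. One point to tighten: your parenthetical justification ``any sufficiently large finite extension of $K$ inside an algebraic closure of $K'$'' cannot produce a field containing $\sigma(F)K'$ when $K'/K$ is transcendental; what you actually need is a finite extension of $\sigma(F)K'$ inside $\Omega$ that splits $A$, for instance $\widetilde\sigma(E_0)K'$ for a finite splitting field $E_0/F$ in $K^c$ --- which is precisely the paper's $E_\sigma$.
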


\begin{proof} We can assume, without loss of generality, that $A$ is simple. We then set $F:=\zeta(A)$ and $A':= A\otimes_K K'$ and $M':=M\otimes_KK'$. We assume ${\bigwedge}_A^rM$ is defined by using an algebraic extension $E$ of $F$ in $K^c$  and a simple $A_E$-module $V$, and we set $d=\dim_E(V)$. We also use the notation of Lemma \ref{propsch0}. In particular, we write $\Omega$ for an algebraic closure of $K'$.


For each $\sigma$ in $\Sigma(F/K,K')$ we fix a $K$-embedding $\widetilde \sigma$ of $E$ into $\Omega$ that extends $\sigma$. 
 Then the field $E_\sigma: = \widetilde \sigma(E)K'$ splits the simple ring $A \otimes_{F}\sigma(F)K'$. In addition,   $V_\sigma:=V \otimes_E E_\sigma$ is a simple module over
  $A_{E_\sigma}:=A \otimes_F E_\sigma$ and so, by Lemma \ref{propsch0} and the definition of reduced exterior powers, one has
$${\bigwedge}_{A'}^rM'={\bigoplus}_{\sigma \in \Sigma(F/K,K')}{\bigwedge}_{E_\sigma}^{rd} ( V_\sigma^\ast \otimes_{A_{E_\sigma}} M_{E_\sigma}),$$
with $M_{E_\sigma}:=M\otimes_F E_\sigma$.

For each $\sigma \in \Sigma(F/K,K')$, there is a canonical embedding
$$V^\ast\otimes_{A_E}M_E \to V_{\sigma}^\ast \otimes_{A_{E_\sigma}}M_{E_\sigma}.$$
This induces an embedding
$$f_\sigma :{\bigwedge}_E^{rd}(V^\ast\otimes_{A_E}M_E) \to {\bigwedge}_{E_\sigma}^{rd} ( V_{\sigma}^\ast \otimes_{A_{E_\sigma}}M_{E_\sigma})$$
and we define the required scalar extension
$${\bigwedge}_A^rM={\bigwedge}_E^{rd}(V^\ast \otimes_{A_E}M_E ) \to  {\bigoplus}_{\sigma \in \Sigma(F/K,K')}{\bigwedge}_{E_\sigma}^{rd} ( V_{\sigma}^\ast \otimes_{A_{E_\sigma}}M_{E_\sigma})={\bigwedge}_{A'}^rM'$$
to be the tuple ${\bigoplus}_{\sigma} f_\sigma$.
\end{proof}

Upon combining the duality pairings (\ref{rubin pairing}) for each simple component $A_i$ of $A$ one obtains a duality pairing
\begin{equation}\label{duality pairing 2} {\bigwedge}_A^r M \times {\bigwedge}_{A^{\rm op}}^s \Hom_A(M,A) \to {\bigwedge}_A^{r-s} M\end{equation}
that we continue to denote by $(m,\varphi)\mapsto \varphi(m)$.

For each subset of elements $\{m_a\}_{1\le a\le r}$ of $M$ and $\{\varphi_a\}_{1\le a\le r}$ of $\Hom_A(M,A)$ we also set
\begin{equation}\label{exterior hom def0}\wedge_{a=1}^{a=r}m_a := ({\wedge}_{1\leq a \leq r}m_{ai})_{i \in I} \in {\bigwedge}_A^{r}M \end{equation}
and
\begin{equation}\label{exterior hom def} \wedge_{a=1}^{a=r}\varphi_a := ({\wedge}_{1\leq a \leq r}\varphi_{ai})_{i \in I} \in {\bigwedge}_{A^{\rm op}}^r \Hom_A(M,A),\end{equation}
%
where $m_{ai}$ and $\varphi_{ai}$ are the projection of $m_a$ and $\varphi_a$ to
 $M_i$ and $\Hom_{A_i}(M_i,A_i)$ and the component exterior powers are defined
  (via (\ref{non-commutative wedge}) and (\ref{non-commutative wedge 2})) with respect to a fixed ordered $E_i$-basis of $V_i$ (and its dual basis).


%

%
%

\begin{remark}\label{group ring rem}{\em In each simple component of $A$ that is commutative, we will always fix conventions regarding the bases used in (\ref{exterior hom def0}) and (\ref{exterior hom def}) as in Remark \ref{comm conventions}. In addition, for the non-commutative algebras that arise in the arithmetic settings that are considered in \cite{sbes1-2}, the specification of a splitting field for $A$, of a simple $A_E$-module $V$ and of an ordered $E$-basis of $V$ arises naturally in the following way.

Let $G$ be a finite group of exponent $e$ and write $E$ for the field generated over $\QQ_p$ by a primitive $e$-th root of unity and ${\rm Ir}_p(G)$ for the set of irreducible $\QQ_p^c$-valued characters of $G$. Then, by a classical result of Brauer \cite{brauer}, for each $\chi$ in ${\rm Ir}_p(G)$ there exists a representation
\[ \rho_\chi: G \to {\rm GL}_{\chi(1)}(E)\]
of character $\chi$. The induced homomorphisms of $E$-algebras $\rho_{\chi,\ast}: E[G] \to {\rm M}_{\chi(1)}(E)$ combine to give an
isomorphism
\[ E[G] \xrightarrow{(\rho_{\chi,\ast})_\chi} {\prod}_{\chi\in {\rm Ir}_p(G)}{\rm M}_{\chi(1)}(E).\]
This decomposition shows $E$ is a splitting field for $\QQ_p[G]$, that the spaces $V_\chi := E^{\chi(1)}$, considered as the first columns of the component ${\rm M}_{\chi(1)}(E)$, are a set of representatives of the simple $E[G]$-modules and that one can specify the standard basis of $E^{\chi(1)}$ to be the ordered basis of $V_\chi$. In this way, the specification of a representation $\rho_\chi$ for each $\chi$ in ${\rm Ir}_p(G)$ leads to a canonical choice of the data necessary to define reduced exterior powers.}\end{remark}

\subsection{Basic properties}\label{sec basic}

In this section we record several useful technical properties of the reduced exterior powers defined above. 

\begin{lemma} \label{proprn1}
Let $A$ be a semisimple ring and $W$ an $A$-module. Then for all subsets $\{w_i\}_{1\le i\le r}$ of $W$ and $\{\varphi_j\}_{1\le j\le r}$ of $\Hom_A(W,A)$ one has
$$(\wedge_{i=1}^{i=r}\varphi_i)(\wedge_{j=1}^{j=r}w_j)=\nr_{{\rm M}_r(A^{\rm op})}((\varphi_i(w_j))_{1\leq i,j \leq r}).$$
\end{lemma}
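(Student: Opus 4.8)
The plan is to reduce to the simple case and then to the split case where the statement becomes the classical identity relating exterior products of vectors and functionals to determinants. First I would note that both sides of the claimed equality are computed componentwise over the Wedderburn decomposition $A \cong \prod_{i \in I} A_i$: the definitions \eqref{exterior hom def0} and \eqref{exterior hom def} of $\wedge_{i=1}^{i=r}w_i$ and $\wedge_{j=1}^{j=r}\varphi_j$ are given componentwise, the pairing \eqref{duality pairing 2} is the direct sum of the pairings \eqref{rubin pairing}, and $\nr_{{\rm M}_r(A^{\rm op})}$ is likewise defined componentwise on $\zeta(A) = \prod_i \zeta(A_i)$. So it suffices to treat the case that $A$ is a finite-dimensional simple algebra over a field of characteristic zero.

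Next I would pass to a splitting field $E$ of $A$ and a simple left $A_E$-module $V$ of dimension $d := \dim_E(V)$, and unwind the definitions. By Definition \ref{rth reduced ext def}, $\wedge_{j=1}^{j=r}w_j = {\wedge}_{1\le j\le r}({\wedge}_{1\le k\le d} v_k^\ast \otimes w_j)$ is an element of ${\bigwedge}_E^{rd}(V^\ast \otimes_{A_E} M_E)$, and similarly $\wedge_{i=1}^{i=r}\varphi_i$ lives in ${\bigwedge}_{A^{\rm op}}^r \Hom_A(W,A) = {\bigwedge}_E^{rd}\Hom_E(V^\ast \otimes_{A_E} M_E, E)$ after the identifications made in \S\ref{Exterior powers over semisimple rings}, and is given by ${\wedge}_{1\le i\le r}({\wedge}_{1\le k\le d} v_k \otimes \varphi_i)$. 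Since $r=s$ here, the pairing \eqref{rubin pairing} specialises, via Lemma \ref{propformula} applied to the $E$-vector space $V^\ast \otimes_{A_E} M_E$, to a single determinant: $(\wedge_{i=1}^{i=r}\varphi_i)(\wedge_{j=1}^{j=r}w_j)$ equals the $rd \times rd$ determinant $\det\big( (v_k \otimes \varphi_i)(v_\ell^\ast \otimes w_j) \big)$, the indices $(i,k)$ and $(j,\ell)$ running over the obvious product sets. I would then compute the matrix entry $(v_k \otimes \varphi_i)(v_\ell^\ast \otimes w_j)$ using the explicit formula for the isomorphism $V \otimes_{A_E^{\rm op}} \Hom_{A_E}(M_E, A_E) \xrightarrow{\sim} \Hom_E(V^\ast \otimes_{A_E} M_E, E)$ recalled in \S\ref{Exterior powers over semisimple rings}, namely $v \otimes f \mapsto (v^\ast \otimes m \mapsto v^\ast(f(m)v))$, giving entry $v_\ell^\ast(\varphi_i(w_j) \cdot v_k)$. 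This exhibits the $rd \times rd$ matrix as the block matrix whose $(i,j)$ block is the matrix of the $E$-endomorphism of $V$ given by the action of $\varphi_i(w_j) \in A_E$. By the block structure, its determinant is exactly the reduced norm of the matrix $(\varphi_i(w_j))_{1\le i,j\le r} \in {\rm M}_r(A_E)$ in the sense of the composite isomorphism \eqref{rn iso} — and by the discussion of reduced norms in \S\ref{Semisimple rings} this is $\nr_{{\rm M}_r(A)}((\varphi_i(w_j))_{i,j})$, which equals $\nr_{{\rm M}_r(A^{\rm op})}$ of the same matrix since $\nr_A = \nr_{A^{\rm op}}$.

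The one point requiring a little care — and the step I expect to be the main obstacle — is bookkeeping the identifications: the left module $W$ is viewed as an $A$-module while $\Hom_A(W,A)$ is an $A^{\rm op}$-module, and the target of the pairing involves $V^\ast$ as a simple $A_E^{\rm op}$-module; I must make sure the isomorphism $V^{\ast\ast} \cong V$ and the trace/contraction identifications from \S\ref{Exterior powers over semisimple rings} are used consistently so that the entry of the determinant comes out as $v_\ell^\ast(\varphi_i(w_j)v_k)$ with the action on the correct side, and so that the resulting block determinant is the reduced norm on ${\rm M}_r(A^{\rm op})$ rather than its opposite. Once the entries are pinned down correctly, the claim that a block matrix of $d\times d$ blocks, each block being the matrix of left multiplication by an element of $A_E$ on $V$, has determinant equal to $\nr_{{\rm M}_r(A)}$ of the $r\times r$ matrix of those elements is immediate from the definition \eqref{rn iso} of the reduced norm together with the multiplicativity (block-diagonalisation) of determinants, so no further computation is needed. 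Finally, I would remark that the result is independent of the choices of $E$, of $V$, and of the ordered basis $\{v_k\}$, either by the invariance properties noted in Remark \ref{splitting field remark} or simply because the right-hand side $\nr_{{\rm M}_r(A^{\rm op})}((\varphi_i(w_j))_{i,j})$ manifestly does not depend on them.
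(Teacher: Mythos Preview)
Your proposal is correct and follows essentially the same route as the paper: reduce to the simple case, pass to a splitting field, apply Lemma~\ref{propformula} to obtain an $rd\times rd$ determinant, and identify the $(i,j)$ block with (the transpose of) the matrix of $\varphi_i(w_j)\in A_E$ acting on $V$, so that the determinant is the reduced norm. The only place where the paper is slightly more explicit than you is precisely the point you flag as delicate: the entry $v_\ell^\ast(\varphi_i(w_j)v_k)$ is the $(\ell,k)$-component of $\varphi_i(w_j)$, so the block you obtain is ${}^t\varphi_i(w_j)$ rather than $\varphi_i(w_j)$ itself, and the paper records this transpose before invoking the definition of $\nr_{{\rm M}_r(A^{\rm op})}$.
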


\begin{proof} We may assume that $A$ is simple (and use the notation of Definition \ref{rth reduced ext def}) so that there is a canonical isomorphism
$A_E:=A\otimes_{\zeta(A)}E \cong \End_E(V)$. In particular, after fixing an ordered $E$-basis $\{ v_i\}_{1\le i\le d}$ of $V$, we can identify $A_E$ with the matrix ring ${\rm M}_d(E)$.

Then the definitions (\ref{non-commutative wedge}) and (\ref{non-commutative wedge 2}) combine to imply
$$(\wedge_{i=1}^{i=r}\varphi_i)(\wedge_{j=1}^{j=r}w_j)=({\wedge}_{1\leq i \leq r}({\wedge}_{1\leq j \leq d}v_j \otimes \varphi_i))({\wedge}_{1\leq i \leq r}({\wedge}_{1\leq j\leq d}v^\ast_j\otimes w_i) ).$$

Next we note that the element $(v_{i'} \otimes \varphi_i)(v_{j'}^\ast \otimes w_j)=v_{j'}^\ast(\varphi_i(w_j)v_{i'})$ of 
$E$ is equal to the $(j',i')$-component of the matrix $\varphi_i(w_j) \in A \subset {\rm M}_d(E)$.
Hence, writing $ ^t\varphi_i(w_j)$ for the transpose of $\varphi_i(w_j)\in {\rm M}_d(E)$ and regarding $( ^t\varphi_i(w_j))_{1\leq i,j \leq r}$ as a matrix in
${\rm M}_{rd}(E)$, Lemma \ref{propformula} implies that
$$(\wedge_{i=1}^{i=r}\varphi_i)(\wedge_{j=1}^{j= r}w_j)=\det( ^t\varphi_i(w_j))_{1\leq i,j \leq r}.$$
It is therefore enough to note that the last expression is equal to $\nr_{{\rm M}_r(A^{\rm op})}( ( \varphi_i(w_j))_{1\leq i,j \leq r})$ by the definition of reduced norm.
\end{proof}

\begin{remark}\label{warning}{\em Lemma \ref{proprn1} implies $(\wedge_{i=1}^{i=r}\varphi_i)(\wedge_{j=1}^{j=r}w_j)$ belongs to $\zeta(A)$. It also shows that this value is independent of the choice of ordered bases of simple modules that are used (in \S\ref{rep norm section}) to normalise the contruction of reduced exterior powers, and hence only depends on the given elements $w_1,\ldots,w_r$ and homomorphisms $\varphi_1,\ldots,\varphi_r$.}
\end{remark}

\begin{lemma}\label{pairing cor} Let $A$ be a semisimple ring and $W$ a free $A$-module of rank $r$. Then there is a canonical isomorphism of $\zeta(A)$-modules
\[\iota_W: {\bigwedge}_{A^{\rm op}}^r \Hom_A(W,A) \cong \Hom_{\zeta(A)}({\bigwedge}_{A}^r W,\zeta(A)) \]
with the following property: for any $A$-basis $\{b_i\}_{1\le i\le r}$ of $W$ one has
\[ \iota_W(\wedge_{i=1}^{i = r}b_i^\ast)(\wedge_{j=1}^{j = r}b_j) = 1,\]
where for each index $i$ we write $b_i^\ast$ for the element of $\Hom_A(W,A)$ that is dual to $b_i$. \end{lemma}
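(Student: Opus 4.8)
The plan is to reduce immediately to the case that $A$ is simple, since all constructions split over the Wedderburn decomposition (\ref{wedderburn}) and a product of isomorphisms satisfying the normalization on each factor is again such an isomorphism. So assume $A$ is simple, fix a splitting field $E$, a simple left $A_E$-module $V$ with ordered $E$-basis $\{v_i\}_{1\le i\le d}$ and dual basis $\{v_i^\ast\}$, so that $A_E \cong \End_E(V) \cong {\rm M}_d(E)$. By Definition \ref{rth reduced ext def} we have ${\bigwedge}_A^r W = {\bigwedge}_E^{rd}(V^\ast\otimes_{A_E}W_E)$, and the duality discussion of \S\ref{Exterior powers over semisimple rings} identifies ${\bigwedge}_{A^{\rm op}}^r\Hom_A(W,A)$ with ${\bigwedge}_E^{rd}\Hom_E(V^\ast\otimes_{A_E}W_E,E)$ and then, via the trace map $E\to\zeta(A)$, with ${\bigwedge}_E^{rd}\Hom_{\zeta(A)}(V^\ast\otimes_{A_E}W_E,\zeta(A))$.

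The key point is that since $W$ is free of rank $r$ over $A$, the $E$-vector space $V^\ast\otimes_{A_E}W_E$ has dimension exactly $rd$: indeed $W_E\cong A_E^{\,r}\cong {\rm M}_d(E)^r$ and the Morita functor $V^\ast\otimes_{A_E}-$ sends $A_E$ to an $E$-space of dimension $d$. Fixing an $A$-basis $\{b_i\}_{1\le i\le r}$ of $W$, the set $\{v_j^\ast\otimes b_i\}_{1\le j\le d,\,1\le i\le r}$ is an $E$-basis of this $rd$-dimensional space. Therefore ${\bigwedge}_E^{rd}(V^\ast\otimes_{A_E}W_E)$ is a one-dimensional $E$-space with basis the single element ${\wedge}_{1\le i\le r}({\wedge}_{1\le j\le d}v_j^\ast\otimes b_i)$, which by definition (\ref{non-commutative wedge}) is exactly $\wedge_{j=1}^{j=r}b_j$. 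For the top exterior power of a finite-dimensional vector space $U$ there is a canonical perfect pairing ${\bigwedge}_E^{\dim U}U\times{\bigwedge}_E^{\dim U}\Hom_E(U,E)\to E$ which is just the classical top-degree case of the pairing recalled in \S\ref{extc} (using Lemma \ref{propformula} with $r=s=rd$, it sends $(\wedge u_i,\wedge u_j^\ast)$ to $\det(u_j^\ast(u_i))$); this pairing is perfect precisely because it pairs a one-dimensional space with its linear dual nondegenerately. Applying this with $U = V^\ast\otimes_{A_E}W_E$ gives a canonical isomorphism $\iota_W$, and tracing through the trace-map identification shows it lands in $\Hom_{\zeta(A)}({\bigwedge}_A^rW,\zeta(A))$ rather than merely in the $E$-dual.

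The normalization is then a direct computation: by Lemma \ref{proprn1}, $\iota_W(\wedge_{i=1}^{i=r}b_i^\ast)(\wedge_{j=1}^{j=r}b_j)$ equals $\nr_{{\rm M}_r(A^{\rm op})}((b_i^\ast(b_j))_{1\le i,j\le r}) = \nr_{{\rm M}_r(A^{\rm op})}(I_r) = 1$, since $b_i^\ast(b_j)=\delta_{ij}$ and reduced norm is multiplicative with $\nr(1)=1$. This simultaneously shows the pairing value is $1$ for one choice of basis; for an arbitrary $A$-basis $\{b_i'\}$ related to $\{b_i\}$ by a matrix $U\in{\rm GL}_r(A)$, one has $\wedge_i b_i' = \nr(U)\cdot\wedge_i b_i$ in ${\bigwedge}_A^rW$ and $\wedge_i (b_i')^\ast = \nr(U^{\rm op})^{-1}\cdot\wedge_i b_i^\ast$ in ${\bigwedge}_{A^{\rm op}}^r\Hom_A(W,A)$ (these are the rank-$r$ base-change formulas for reduced exterior products, which follow from Lemma \ref{proprn1} again), and since $\nr_{{\rm M}_r(A^{\rm op})}(U^{\rm op}) = \nr_{{\rm M}_r(A)}(U)$ the two scalars cancel, so the value is $1$ for every basis.

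The main obstacle I anticipate is the bookkeeping around the two identifications used to define the target of $\iota_W$: first the Morita-duality isomorphism $V\otimes_{A_E^{\rm op}}\Hom_{A_E}(M_E,A_E)\xrightarrow{\sim}\Hom_E(V^\ast\otimes_{A_E}M_E,E)$ from \S\ref{Exterior powers over semisimple rings}, and then the replacement of $\Hom_E(-,E)$ by $\Hom_{\zeta(A)}(-,\zeta(A))$ via the trace. One must check these are compatible with the formation of $rd$-th exterior powers (they are, being $E$-linear isomorphisms, so they induce isomorphisms on ${\bigwedge}_E^{rd}$) and that the resulting map $\iota_W$ genuinely takes values in $\zeta(A)$-linear — not merely $E$-linear — functionals on ${\bigwedge}_A^rW$; this is where one uses that $\zeta(A)$ acts on ${\bigwedge}_A^rW$ through the structure of $\zeta(A)$-module on $V^\ast\otimes_{A_E}M_E$ and that the trace map is $\zeta(A)$-linear. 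Everything else is a routine application of Lemmas \ref{propformula}, \ref{proprn1} and the multiplicativity of reduced norms.
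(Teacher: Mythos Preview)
Your approach is essentially the same as the paper's, only more detailed. The paper's proof is two sentences: it defines $\iota_W$ via the duality pairing (\ref{duality pairing 2}) with $s=r$, then asserts that both bijectivity and the normalization follow directly from Lemma \ref{proprn1}. You unwind this by reducing to the simple case, computing dimensions explicitly, and invoking the classical perfect pairing on top exterior powers of an $E$-vector space; but the core mechanism is identical.

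One minor point: your final paragraph verifying the normalization for \emph{every} basis via base-change formulas is unnecessary. Lemma \ref{proprn1} applies directly to any given basis and yields $\nr_{{\rm M}_r(A^{\rm op})}((\delta_{ij})) = 1$, so no comparison of bases is needed. (Incidentally, the base-change formula $\wedge_i b_i' = \nr(U)\cdot \wedge_i b_i$ is Lemma \ref{proprn2}, not \ref{proprn1}.) Your concern about the $\zeta(A)$-versus-$E$ bookkeeping is well-placed and is indeed the only real content here; the paper's phrase ``free rank one $\zeta(A)$-modules'' is slightly loose in the non-split case, but the trace-map identification you cite from \S\ref{Exterior powers over semisimple rings} is exactly what makes the target $\Hom_{\zeta(A)}(-,\zeta(A))$ rather than $\Hom_E(-,E)$, and nondegeneracy of the trace form together with the classical top-degree pairing then gives bijectivity.
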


\begin{proof} If we define reduced exterior powers with respect to the  splitting algebra $\tilde E$ for $A$, then the given hypothesis on $W$ implies that the pairing (\ref{duality pairing 2}) with $s=r$ induces a homomorphism of free rank one $\tilde E$-modules 
\[\iota_W:{\bigwedge}_{A^{\rm op}}^r \Hom_A(W,A) \cong \Hom_{\zeta(A)}({\bigwedge}_{A}^r W,\zeta(A)).\]

Both the bijectivity of this homomorphism and the equality $\iota_W(\wedge_{i=1}^{i = r}b_i^\ast)(\wedge_{j=1}^{j = r}b_j) = 1$ follow directly from Lemma \ref{proprn1}. \end{proof}

\begin{lemma} \label{proprn2}
Let $A$ be a semisimple ring and $W$ a free $A$-module of rank $r$. Fix an $A$-basis $\{b_i\}_{1\le i\le r}$ of $W$. Then for each $\varphi$ in $\End_A(W)$ one has
$$\wedge_{i=1}^{i= r}\varphi(b_i) =\nr_{\End_A(W)}(\varphi)\cdot( \wedge_{i=1}^{i= r}b_i) \in {\bigwedge}_A^rW.$$
\end{lemma}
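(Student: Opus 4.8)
The plan is to reduce immediately to the simple case and then unwind all the definitions to land on the classical formula $\wedge_{i}\varphi(b_i) = \det(\varphi)\cdot\wedge_i b_i$ in the split setting. Since every construction in sight (the reduced exterior power $\bigwedge^r_A W$, the reduced norm $\nr_{\End_A(W)}$, and the exterior product $\wedge_{i=1}^{i=r}b_i$) is defined componentwise over the Wedderburn decomposition $A\cong\prod_{i\in I}A_i$, it suffices to treat the case where $A$ is simple. Fix then a splitting field $E$, a simple left $A_E$-module $V$ with ordered $E$-basis $\{v_j\}_{1\le j\le d}$, and identify $A_E$ with $\End_E(V)={\rm M}_d(E)$ as in Definition \ref{rth reduced ext def} and Remark \ref{group ring rem}.

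Next I would make the identification of the relevant modules explicit. The Morita functor sends $W$ to the $E$-vector space $V^\ast\otimes_{A_E}W_E$, which, since $W$ is $A$-free of rank $r$ with basis $\{b_i\}_{1\le i\le r}$, has the $E$-basis $\{x_{ij}:=v_j^\ast\otimes b_i\}_{1\le i\le r,\,1\le j\le d}$; thus $\bigwedge^r_A W = \bigwedge^{rd}_E(V^\ast\otimes_{A_E}W_E)$ is a one-dimensional $E$-space spanned by $\wedge_{i=1}^{i=r}b_i = {\wedge}_{1\le i\le r}({\wedge}_{1\le j\le d} x_{ij})$, by definition (\ref{non-commutative wedge}). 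The endomorphism $\varphi\in\End_A(W)$ induces, via Remark \ref{func rem}, the $E$-linear map ${\wedge}^{rd}_E(\mathrm{id}_{V^\ast}\otimes\varphi)$ on this space, and $\wedge_{i=1}^{i=r}\varphi(b_i)$ is precisely the image of $\wedge_{i=1}^{i=r}b_i$ under it. So the statement reduces to computing the scalar by which ${\wedge}^{rd}_E(\mathrm{id}_{V^\ast}\otimes\varphi)$ acts on the top exterior power of $V^\ast\otimes_{A_E}W_E$.

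The heart of the matter is then the identification of that scalar with $\nr_{\End_A(W)}(\varphi)$. Here I would appeal directly to the definition of the reduced norm on $\End_A(W)$ given in the excerpt: it is precisely the composite $\End_A(W)\to\End_{A_E}(W_E)\xrightarrow{\sim}\End_E(V^\ast\otimes_{A_E}W_E)\to E$, the last arrow being "take determinant", and the determinant of an $E$-linear endomorphism of an $N$-dimensional space is exactly the scalar by which it acts on the top exterior power $\bigwedge^N_E$. With $N=rd$ and the endomorphism in question being $\mathrm{id}_{V^\ast}\otimes\varphi$ (which is the image of $\varphi$ under the Morita equivalence), this gives $\det_E(\mathrm{id}_{V^\ast}\otimes\varphi)=\nr_{\End_A(W)}(\varphi)$, and this scalar lies in $\zeta(A)$ by the factorization property recorded after (\ref{rn iso}). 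Reassembling over the components $A_i$ and invoking the componentwise definitions of $\nr$ and of $\wedge_{i=1}^{i=r}$ finishes the general semisimple case.

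I expect the only real point requiring care — rather than an obstacle per se — to be bookkeeping the two layers of identifications cleanly: first that $\wedge_{i=1}^{i=r}\varphi(b_i)$ genuinely equals ${\wedge}^{rd}_E(\mathrm{id}_{V^\ast}\otimes\varphi)(\wedge_{i=1}^{i=r}b_i)$ (immediate from Remark \ref{func rem} together with the fact that $V^\ast\otimes_{A_E}-$ is additive, so commutes with the formation of the wedge of the $x_{ij}$), and second that the Morita equivalence carries $\varphi$ to $\mathrm{id}_{V^\ast}\otimes\varphi$ on the nose, so that the determinant appearing in the definition of $\nr_{\End_A(W)}$ is literally $\det_E(\mathrm{id}_{V^\ast}\otimes\varphi)$. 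Both are routine once the definitions are laid side by side; no new computation beyond "determinant $=$ action on the top exterior power" is needed, and one may alternatively deduce the result from Lemma \ref{proprn1} by writing $\varphi(b_j)=\sum_i\varphi_{ij}(b_j)$ — but the direct Morita argument is cleaner and I would present that.
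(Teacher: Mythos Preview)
Your argument is correct, and it takes a genuinely different route from the paper's. The paper deduces the lemma as an immediate corollary of Lemma~\ref{proprn1}: it identifies $\End_A(W)\cong {\rm M}_r(A^{\rm op})$ via $\psi\mapsto(b_i^\ast(\psi(b_j)))_{i,j}$, so that $\nr_{\End_A(W)}(\varphi)=\nr_{{\rm M}_r(A^{\rm op})}((b_i^\ast(\varphi(b_j)))_{i,j})$, and then Lemma~\ref{proprn1} gives $(\wedge_{i}b_i^\ast)(\wedge_j\varphi(b_j))=\nr_{\End_A(W)}(\varphi)$ while $(\wedge_ib_i^\ast)(\wedge_jb_j)=1$; since both wedges lie in a rank-one module, the claim follows. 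Your approach instead bypasses Lemma~\ref{proprn1} entirely and goes straight to the Morita definition of the reduced norm: you observe that $\wedge_{i=1}^{i=r}\varphi(b_i)$ is literally the image of $\wedge_{i=1}^{i=r}b_i$ under ${\bigwedge}^{rd}_E(\mathrm{id}_{V^\ast}\otimes\varphi)$, and then invoke the tautology ``determinant equals action on the top exterior power'' together with the fact that $\nr_{\End_A(W)}(\varphi)$ is \emph{defined} as $\det_E(\mathrm{id}_{V^\ast}\otimes\varphi)$. What the paper's route buys is economy within the section---it exhibits the lemma as a one-line consequence of the pairing formula just established---whereas your route is more self-contained and makes the conceptual point (reduced exterior powers are designed precisely so that reduced norms act as ordinary determinants) completely transparent. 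You even note the paper's alternative at the end; either presentation is fine.
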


\begin{proof} The algebra isomorphism
$$\End_A(W)\xrightarrow{\sim} {\rm M}_r(A^{\rm op}); \ \psi \mapsto (b_i^\ast(\psi(b_j)))_{1\leq i,j\leq r}$$
implies that
$$\nr_{\End_A(W)}(\varphi) =\nr_{{\rm M}_r(A^{\rm op})}((b_i^\ast(\varphi(b_j)))_{1\leq i,j\leq r}).$$
By applying Lemma \ref{proprn1}, one therefore has
$$(\wedge_{i=1}^{i=r} b_i^\ast)(\wedge_{j=1}^{j= r}\varphi(b_j) )=\nr_{\End_A(W)}(\varphi).$$
This in turn implies the claimed equality since one also has $(\wedge_{i=1}^{i=r} b_i^\ast)(\wedge_{j=1}^{j=r} b_j)=1$ as a consequence of Lemma \ref{proprn1}.
\end{proof}

Finally we establish a useful non-commutative generalization of Lemma \ref{prope}.

\begin{lemma}\label{lemma 4.2 generalisation}
Let $A$ be a semisimple ring and $W$ a free $A$-module of rank $r$. For a natural number $s$ with $s \leq r$ and a subset $\{\varphi_i\}_{1\le i\le s}$ of $\Hom_A(W,A)$  consider the map
$$\Phi:={\bigoplus}_{i=1}^{i=s}\varphi_i: W \to A^{\oplus s}.$$

Then the image of the map
$${\bigwedge}_A^r W \to {\bigwedge}_A^{r-s}W  ; \quad b \mapsto ({\wedge}_{1\leq i\leq s}\varphi_i)(b)$$
is contained in ${\bigwedge}_A^{r-s}\ker(\Phi)$. In addition, if $A$ is simple and $\Phi$ is surjective, respectively not surjective, then the image of this map is equal to ${\bigwedge}_A^{r-s}\ker(\Phi)$, respectively vanishes.
\end{lemma}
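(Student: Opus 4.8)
The plan is to reduce everything to the split case and then invoke Lemma \ref{prope}. First I would reduce to the case that $A$ is simple: since a semisimple $A$ decomposes as a product $\prod_{i\in I}A_i$, each of the modules $W$, $\ker(\Phi)$, and each exterior power splits compatibly as a direct sum over $i\in I$, and the map $b\mapsto ({\wedge}_{1\leq i\leq s}\varphi_i)(b)$ is the direct sum of the corresponding maps over the simple components (this is just the definition (\ref{exterior hom def}) together with (\ref{duality pairing 2})). Hence the containment ${\rm im}\subseteq {\bigwedge}_A^{r-s}\ker(\Phi)$ for semisimple $A$ follows from the same containment for each simple $A_i$, and it therefore suffices to prove all three assertions under the hypothesis that $A$ is simple.

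So assume $A$ is simple, fix a splitting field $E$ and simple left $A_E$-module $V$ with $d=\dim_E(V)$, and apply the Morita functor $V^\ast\otimes_{A_E}-$. Writing $\widetilde{W}:=V^\ast\otimes_{A_E}W_E$, which is an $E$-vector space of dimension $rd$, the reduced exterior powers ${\bigwedge}_A^k W$ are by Definition \ref{rth reduced ext def} literally the ordinary exterior powers ${\bigwedge}_E^{kd}\widetilde{W}$. Each $\varphi_i\in \Hom_A(W,A)$, extended to $\Hom_{A_E}(W_E,A_E)$ and transported through the identification at the end of \S\ref{simple art section}, becomes an element of $\Hom_E(\widetilde{W},\zeta(A))\otimes_{\zeta(A)}E=\Hom_E(\widetilde{W},E)$; more precisely, after fixing an ordered $E$-basis $\{v_j\}_{1\le j\le d}$ of $V$, the single homomorphism $\varphi_i$ gives rise (via (\ref{non-commutative wedge 2})) to the ordered family of $d$ functionals $\{v_j\otimes\varphi_i\}_{1\le j\le d}$ on $\widetilde{W}$, and the reduced wedge ${\wedge}_{1\le i\le s}\varphi_i$ acts on ${\bigwedge}_E^{rd}\widetilde{W}$ exactly as the composite ${\wedge}_{1\le i\le s}({\wedge}_{1\le j\le d}(v_j\otimes\varphi_i))$ of $sd$ ordinary contraction maps, by the definition of the pairing (\ref{rubin pairing}) and the construction recalled in \S\ref{extc}. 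Thus, setting $\widetilde{\Phi}:=\bigoplus_{i=1}^{i=s}\bigoplus_{j=1}^{j=d}(v_j\otimes\varphi_i):\widetilde{W}\to E^{\oplus sd}$, Lemma \ref{prope} (with $n=rd$ and $m=sd$) identifies the image of $b\mapsto ({\wedge}_{1\le i\le s}\varphi_i)(b)$ on ${\bigwedge}_E^{rd}\widetilde{W}$ with ${\bigwedge}_E^{(r-s)d}\ker(\widetilde{\Phi})$ if $\widetilde{\Phi}$ is surjective and with $0$ otherwise.

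It then remains to match this up with the stated formulation. The key point is that $\ker(\widetilde{\Phi})$ equals $V^\ast\otimes_{A_E}(\ker\Phi)_E$ and that $\widetilde{\Phi}$ is surjective if and only if $\Phi$ is surjective. For the first identity, note that $\Phi:W\to A^{\oplus s}$ base-changes to $\Phi_E:W_E\to A_E^{\oplus s}$, and applying the exact Morita functor $V^\ast\otimes_{A_E}-$ turns $\Phi_E$ into the $E$-linear map $\widetilde{W}\to (V^\ast\otimes_{A_E}A_E)^{\oplus s}=(V^\ast)^{\oplus s}$; unwinding the identification $V^\ast\otimes_{A_E}A_E\cong V^\ast$ through the chosen basis $\{v_j\}$ of $V$ shows this map is precisely $\widetilde{\Phi}$ (up to reindexing the $E^{\oplus sd}$ factors as $(V^\ast)^{\oplus s}$, which does not affect kernels or surjectivity). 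Exactness of the Morita functor then gives $\ker(\widetilde{\Phi})=V^\ast\otimes_{A_E}\ker(\Phi_E)=V^\ast\otimes_{A_E}(\ker\Phi)_E$, so that ${\bigwedge}_E^{(r-s)d}\ker(\widetilde{\Phi})$ is by Definition \ref{rth reduced ext def} exactly ${\bigwedge}_A^{r-s}\ker(\Phi)$; and surjectivity of $\widetilde{\Phi}$ is equivalent to surjectivity of $\Phi_E$, which (since $A\to A_E$ is faithfully flat) is equivalent to surjectivity of $\Phi$. This proves the containment in general and the sharp statement (equality resp. vanishing) when $A$ is simple. The main obstacle is purely bookkeeping: one must check carefully that the $sd$-fold iterated contraction coming from the reduced wedge ${\wedge}_{1\le i\le s}\varphi_i$ agrees, after Morita translation, with contraction against the single linear map $\widetilde{\Phi}$, i.e.\ that the ordering conventions in (\ref{non-commutative wedge}), (\ref{non-commutative wedge 2}) and the pairing (\ref{rubin pairing}) are consistent with the block structure $E^{\oplus sd}\cong (V^\ast)^{\oplus s}$; once this identification is in place, the result is immediate from Lemma \ref{prope}.
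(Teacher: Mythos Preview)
Your proof is correct and follows essentially the same route as the paper's own argument: reduce to the simple case, push everything through the Morita functor $V^\ast\otimes_{A_E}-$, and apply Lemma \ref{prope} with $n=rd$, $m=sd$ to the $E$-linear map $\widetilde{\Phi}:\widetilde{W}\to E^{\oplus sd}$. The paper handles your ``bookkeeping'' concern by introducing two auxiliary maps, one $\Phi_1:\widetilde{W}\to (V^\ast)^{\oplus s}$ (the direct Morita transport of $\Phi$) and one $\Phi_2:\widetilde{W}\to E^{\oplus sd}$ (your $\widetilde{\Phi}$), and observing separately that $\ker(\Phi_1)=\ker(\Phi_2)$ and that surjectivity of each is equivalent to surjectivity of $\Phi$; this is precisely the reindexing you flagged.
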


\begin{proof} We can assume, without loss of generality, that $A$ is simple (and then use the notation of Definition \ref{rth reduced ext def}). Then, by Morita equivalence, the kernel of the induced $E$-linear map
\[ \Phi_1 := {\bigoplus}_{i=1}^{i=s} ({\rm id}\otimes \varphi_i) : V^\ast\otimes_{A_E}W_E \to (V^\ast)^{\oplus s}.\]
is equal to $V^\ast\otimes_{A_E}\ker(\Phi)_E$ and $\Phi_1$ is surjective if and only if $\Phi$ is surjective.

We write $\{v_j\}_{1\le j\le d}$ for the ordered basis of $V$ with respect to which the exterior product ${\wedge}_{1\le i\le s}\varphi_i$ is defined (in  (\ref{non-commutative wedge 2})) and consider the $E$-linear map
\[ \Phi_2 := {\bigoplus}_{i=1}^{i=s} ({\bigoplus}_{j=1}^{j=d}v_j\otimes \varphi_i) : V^\ast\otimes_{A_E}W_E \to E^{\oplus sd}.\]

Then it is clear that $\ker(\Phi_2) = \ker(\Phi_1) = V^\ast\otimes_{A_E}\ker(\Phi)_E$ and that $\Phi_2$ is surjective if and only if $\Phi_1$ is surjective, and hence if and only if the given map $\Phi$ is surjective.

Given these facts, and the explicit definition of the reduced exterior power ${\bigwedge}_A^{r-s}\ker(\Phi)$, the claimed results follow directly upon applying Lemma \ref{prope} with the data $W,  n, m$ and $\Phi$ respectively replaced by $V^\ast\otimes_{A_E}W_E$, $dr$, $ds$ and the above map $\Phi_2$. \end{proof}

\subsection{Integral structures}\label{reduced rubin latt sec} In this subsection we assume to be given a Dedekind domain $R$ whose fraction field $F$ is a finite extension of either $\QQ$ or $\QQ_p$ for some prime $p$. We also assume to be given an $R$-order $\mathcal{A}$ that spans a (finite-dimensional) semisimple $F$-algebra $A$. We fix a non-negative integer $r$. 

For each finitely-generated $\mathcal{A}$-module $M$, we shall define, and establish the basic properties of, integral $\xi(\mathcal{A})$-structures on the $r$-th reduced exterior power of $F\otimes_R M$ over $A$. These structures will then play a key role in subsequent sections. 

For an $\mathcal{A}$-module $M$ we write $M^\ast$ for the (left) $\mathcal{A}^{\rm op}$-module $\Hom_\mathcal{A}(M,\mathcal{A})$ and $M_F$ for the  $A$-module $F\otimes_R M$.

\subsubsection{}
We now fix a splitting algebra $\tilde E = {\prod}_{i \in I}E_i$ for $A$ as in (\ref{splitting algebra}) and normalise the definition of reduced exterior products as in (\ref{exterior hom def0}) and (\ref{exterior hom def}). The obvious generalization of the notion of exterior powers over commutative rings is then as follows. 

\begin{definition}\label{integral reduced exterior power def}{\em  The {\em $r$-th reduced exterior power} ${\bigwedge}_{\mathcal{A}}^r M$ of an $\mathcal{A}$-module $M$ is the $\xi(\mathcal{A})$-submodule  of ${\bigwedge}_{A}^r M_F$ that is generated by the set $\{\wedge_{i=1}^{i= r}m_i: m_i \in M\}$. }
\end{definition}

\begin{lemma}\label{spanning result} Let $M$ be a finitely generated $\mathcal{A}$-module. Then the $\xi(\mathcal{A})$-module ${\bigwedge}_{\mathcal{A}}^r M$ is finitely generated and spans ${\bigwedge}_A^rM_F$ over $\tilde E$. \end{lemma}

\begin{proof} The finite generation of ${\bigwedge}_{\mathcal{A}}^r M$ follows directly from the stronger result of Theorem \ref{exp prop}(ii) below (and so, for brevity, will not be justified here). To prove the second assertion we note that, after choosing a surjective homomorphism of $\mathcal{A}$-modules of the form $\mathcal{A}^d \to M$ (for any suitable natural number $d$), it is enough to prove the claimed result for the module $M = \mathcal{A}^d$. To do this, we write $\tilde A$ for the split central $\tilde E$-algebra $\tilde E\otimes_{\zeta(A)}A$.

Then the result of Lemma \ref{primitive span} (for each simple component of $A$) reduces us to showing that for any subset $\{m'_{j}\}_{1\le j\le r}$ of $\tilde A^d$ there exists a subset $\{m_i\}_{1\le i\le r}$ of $\mathcal{A}^d$ and an element $x$ of $\tilde E$ such that $\wedge_{j=1}^{j=r}m'_j = x\cdot \wedge_{i=1}^{i=r}m_i$.

To prove this we choose, as we may, a free $\tilde A$-submodule $X$ of $\tilde A^d$ that has rank $r$ and contains $\{m'_{j}\}_{1\le j\le r}$. It follows that $X\cap A^d$ is a free $A$-module of rank $r$ and so we can choose a basis $\{m_i\}_{1\le i\le r}$ of it that is contained in $\mathcal{A}^d$. Then $\{m_i\}_{1\le i\le r}$ is an $\tilde A$-basis of $X$  
and, writing $\lambda$ for the $\tilde A$-module endomorphism of $X$ that sends each element $m_j$ to $m'_j$,  Lemma \ref{proprn2} implies  $\wedge_{j=1}^{j= r}m'_j =x\cdot(\wedge_{i=1}^{i= r}m_i)$ with $x = \nr_{\End_{\tilde A}(X)}(\lambda)\in \tilde E$, as required. \end{proof} 

The reduced exterior power ${\bigwedge}_{\mathcal{A}}^r M$ provides an integral $\xi(\mathcal{A})$-structure on ${\bigwedge}_A^rM_F$ and, with the convention of Remark \ref{comm conventions}, coincides with the classical notion of exterior power in the case that $\mathcal{A}$ is commutative. In general, however, it depends on the chosen normalization of the exterior product (\ref{exterior hom def0}) and hence on a choice of bases of simple $A$-modules (though we suppress this fact from the notation). In addition, in arithmetic applications one often needs a slightly larger integral structure on ${\bigwedge}_A^rM_F$. To introduce this, we note that the pairing (\ref{duality pairing 2}) with $s=r$ induces well-defined isomorphisms of $\tilde E$-modules 
\[ \iota_{M_F}^{r,1}: {\bigwedge}_{A}^r M_F \to \Hom_{\tilde E}\bigl( {\bigwedge}_{A^{\rm op}}^r M^\ast_F,\tilde E\bigr)\quad\text{and}\quad \iota_{M_F}^{r,2}: {\bigwedge}_{A^{\rm op}}^r M^\ast_F \to \Hom_{\tilde E}\bigl( {\bigwedge}_{A}^r M_F,\tilde E\bigr).\]

\begin{definition}\label{red rubin def}{\em  The {\em $r$-th reduced Rubin lattice} of the $\mathcal{A}$-module $M$ is the full-preimage 
\[ {\bigcap}_{\mathcal{A}}^r M := (\iota_{M_F}^{r,1})^{-1}\bigl(\Hom_{\xi(\mathcal{A})}\bigl( {\bigwedge}_{\mathcal{A}^{\rm op}}^r M^\ast,\xi(\mathcal{A})\bigr)\bigr)\]
under $\iota_{M_F}^{r,1}$ of the $\xi(\mathcal{A})$-module $\Hom_{\xi(\mathcal{A})}\bigl( {\bigwedge}_{\mathcal{A}^{\rm op}}^r M^\ast,\xi(\mathcal{A})\bigr)$. 
%
}
\end{definition}
\vskip 0.05truein

\begin{remark}\label{abelian rs}{\em More explicitly, the above definition implies that    
\begin{equation}\label{explicit rrl} {\bigcap}_{\mathcal{A}}^r M :=\{ a \in {\bigwedge}_{A}^r M_F : (\wedge_{i=1}^{i= r}\varphi_i)(a) \in\xi(\mathcal{A}) \mbox{ for all } \varphi_1,\ldots,\varphi_r \in M^\ast    \}.\end{equation}
There is also a natural isomorphism of $\xi(\mathcal{A})$-modules
$${\bigcap}_\cA^r M \to \Hom_{\xi(\cA)} \left( \iota_{M_F}^{r,2}\bigl({\bigwedge}_{\cA^{\rm op}}^r M^\ast\bigr), \xi(\cA) \right); \ a \mapsto \left(\Phi \mapsto  \Phi(a)\right). $$
If $A$ is commutative, then $\xi(\mathcal{A}) = \mathcal{A}$ (by Lemma \ref{xi lemma}(iii)) and, with respect to the conventions fixed in Remark \ref{comm conventions}, this map induces a canonical  isomorphism of $\mathcal{A}$-modules
\begin{equation}\label{rubin lattice iso}{\bigcap}_\cA^r M  \cong \Hom_\cA\left({\bigwedge}_\cA^rM^\ast, \cA\right)\end{equation}
(see \cite[Prop. A.7]{sbA}). Modules of the latter form were first considered (in the setting of group rings of abelian groups) by Rubin in \cite{R} in order to formulate refined versions of Stark's Conjecture. These lattices are in turn a special case of the formalism of `exterior power biduals' that has subsequently played a key role in the theory of higher rank Euler, Kolyvagin and Stark systems that is developed by Sakamoto and the present authors in \cite{bss}.}
\end{remark}


The basic properties of reduced Rubin lattices in the general case are recorded in the following result. We note, in particular, that this result verifies  ${\bigcap}_{\mathcal{A}}^rM$ provides an integral $\xi(\mathcal{A})$-structure on  ${\bigwedge}_A^rM_F$ that is both functorially well-behaved and varies naturally with the chosen normalisation of reduced exterior products. 

\begin{theorem}\label{exp prop} For each finitely generated $\mathcal{A}$-module $M$ and  non-negative integer $r$ the following claims are valid. 
\begin{itemize}
%
\item[(i)] If $r = 0$, then ${\bigcap}_{\mathcal{A}}^r M = \xi(\mathcal{A})$.

%
\item[(ii)] The $\xi(\mathcal{A})$-module ${\bigcap}_{\mathcal{A}}^rM$ contains ${\bigwedge}_{\mathcal{A}}^r M$, is finitely generated and torsion-free over $R$ and spans ${\bigwedge}_A^rM_F$ over $\tilde E$.  It also varies naturally with the choice of bases (of the simple modules $V$) that occur in the definition (\ref{non-commutative wedge 2}) of exterior powers over each simple component of $A$.
\item[(iii)] For every prime ideal $\mathfrak{p}$ of $R$ one has $\left({\bigcap}_{\mathcal{A}}^rM\right)_{(\mathfrak{p})} = {\bigcap}_{\mathcal{A}_{(\mathfrak{p})}}^rM_{(\mathfrak{p})}$. Hence one has
\[ {\bigcap}_{\mathcal{A}}^r M = {\bigcap}_{\frp \in {\rm Spec}(R)}\left( {\bigcap}_{\mathcal{A}_{(\mathfrak{p})}}^rM_{(\mathfrak{p})}\right).\]
%
%
\item[(iv)] For each map of finitely generated $\mathcal{A}$-modules $\iota: M \to M'$, there exists an induced map of $\zeta(A)$-modules $\iota_{*,F}^r: {\bigwedge}^r_A M_F \to {\bigwedge}^r_A M_F'$ that restricts to give a map of $\xi(\mathcal{A})$-modules
 $\iota^r_{*}: {\bigcap}_{\mathcal{A}}^rM \to  {\bigcap}_{\mathcal{A}}^rM'$. If $\iota$ is injective, then so are $\iota_{*,F}^r$ and $\iota_{*}^r$. If $\iota$ is injective and, in addition, the group ${\rm Ext}^1_{\mathcal{A}}({\rm cok}(\iota),\mathcal{A})$ vanishes, then one has 
\[  \iota_*^r \left({\bigcap}_{\mathcal{A}}^rM\right) = \iota_{*,F}^r\left({\bigwedge}^r_A M_F\right) \cap {\bigcap}_{\mathcal{A}}^rM'. \]
\item[(v)] Let $s$ be a natural number with $s \le r$ and $\{\varphi_i\}_{ 1\le i\le s}$ a subset of $\Hom_\mathcal{A}(M,\mathcal{A})$. Then the map
$${\bigwedge}_A^r M_F \to {\bigwedge}_A^{r-s}M_F  ; \quad x \mapsto ({\wedge}_{1\leq i\leq s}\varphi_i)(x)$$
sends ${\bigcap}_{\mathcal{A}}^rM$ into ${\bigcap}_{\mathcal{A}}^{r-s}M$.
\item[(vi)] If $M$ is a free $\mathcal{A}$-module of rank $d$ with $d \ge r$, then for any choice of basis $b=\{ b_j\}_{1\le j\le d}$ of $M$ there is a natural split surjective homomorphism of $\xi(\mathcal{A})$-modules
$$ \theta_b: {\bigcap}_\mathcal{A}^r M \to {\bigoplus}_{\sigma\in{d \atopwithdelims[] r} }\xi(\mathcal{A}).$$
This homomorphism is bijective if and only if either  $\mathcal{A}$ is commutative or $r=d$.
\item[(vii)] Assume that $\mathcal{A} = {\rm M}_n(\mathcal{B})$ for a natural number $n$ and commutative $R$-order $\mathcal{B}$. Then $\xi(\mathcal{A})= \mathcal{B}$ and,  for any  finitely generated $\mathcal{A}$-module $M$,  one has both
\[ {\bigwedge}_A^rM_F = {\bigwedge}_{\mathcal{B}_F}^{nr} ( \mathcal{B}^n\otimes_{\mathcal{A}}M_F) \quad \text{and} \quad {\bigcap}_\cA^r M = {\bigcap}_{\mathcal{B}}^{nr}  (\mathcal{B}^n\otimes_{\mathcal{A}}M),\]
%
where  $\mathcal{B}^n$ denotes the right $\mathcal{A}$-module comprising row vectors of length $n$ over  $\mathcal{B}$.   
\end{itemize}
\end{theorem}

\begin{proof} We fix a Wedderburn decomposition (\ref{wedderburn}) of $A$ and assume throughout this argument that reduced exterior powers are defined with respect to the splitting algebra $\tilde E = {\prod}_{i \in I}E_i$ fixed above. 

Then, to prove (i), we note that $\iota_{M_F}^{0,1}$ is the identify function on the algebra $\tilde E = {\bigwedge}_{A}^0M_F = {\bigwedge}_{A^{\rm op}}^0M^\ast_F$. We also note that, by convention, the exterior power of the empty subset of $M^\ast$ is the identity element of $\tilde E$ and hence that ${\bigwedge}_{\mathcal{A}^{\rm op}}^0 M^\ast = \xi(\mathcal{A})$. Claim (i) is therefore true since, in $\Hom_{\tilde E}(\tilde E, \tilde E) = \tilde E$, one has  
\[ \Hom_{\xi(\mathcal{A})}\bigl( {\bigwedge}_{\mathcal{A}^{\rm op}}^0 M^\ast,\xi(\mathcal{A})\bigr) = \Hom_{\xi(\mathcal{A})}\bigl(\xi(\mathcal{A}),\xi(\mathcal{A})\bigr) = \xi(\mathcal{A}). \]
%
Given this result, we can assume in the rest of the proof that $r > 0$.

It is convenient to prove (iv) next. To do this we note the existence of a homomorphism of $\zeta(A)$-modules $\iota_{\ast,F}^r$ of the stated form is a consequence of the fact that for every simple $A_E$-module $V$ the given map $\iota$ induces a homomorphism of $E$-vector spaces $\iota^r_V: {\bigwedge}_E^{rd}(V^\ast\otimes_{A_E}M_E) \to {\bigwedge}_E^{rd}(V^\ast\otimes_{A_E}M'_E)$. We note further that if $\iota$ is injective, then each map $\iota^r_V$ is injective (as the algebra $A_E$ is semisimple) and so $\iota_{\ast,F}^r$ is also injective, as claimed.  

We write $\iota^\ast: (M')^\ast \to M^\ast$ for the homomorphism of $\mathcal{A}^{\rm op}$-modules that is induced by $\iota$. Then $\iota^\ast(\varphi')(m) = \varphi'(\iota(m))$ for every $\varphi'\in (M')^\ast$ and $m\in M$, and so Lemmas \ref{proprn1} and \ref{spanning result} combine to imply that, for every $x$ in ${\bigcap}_{\mathcal{A}}^rM$ and subset $\{\varphi_i'\}_{1\le i\le r}$ of $(M')^\ast$, one has
\begin{equation}\label{func equal} ({\wedge}_{i=1}^{i=r}\varphi'_i)(\iota_{\ast,F}^r(x)) = {\wedge}_{i=1}^{i=r}(\iota^\ast(\varphi'_i))(x).\end{equation}
These equalities imply $\iota_{\ast,F}^r(x)\in {\bigcap}_{\mathcal{A}}^rM'$ for all $x \in {\bigcap}_{\mathcal{A}}^rM$, and hence that $\iota_{\ast,F}^r$ restricts to give a map of $\xi(\mathcal{A})$-modules $\iota_\ast^r: {\bigcap}_{\mathcal{A}}^rM \to {\bigcap}_{\mathcal{A}}^rM'$. In addition, if $\iota$ is injective, then  the functor $\Hom_\mathcal{A}(-,\mathcal{A})$ applies to the tautological exact sequence $0 \to M\xrightarrow{\iota} M' \to {\rm cok}(\iota) \to 0$ to give an exact sequence
\[ (M')^\ast \xrightarrow{\iota^\ast} M^\ast \to {\rm Ext}^1_{\mathcal{A}}({\rm cok}(\iota),\mathcal{A}).\]
In particular, if  ${\rm Ext}^1_{\mathcal{A}}({\rm cok}(\iota),\mathcal{A})$ also vanishes, then  $\iota^\ast$ is surjective and so the final assertion of (iv) is a consequence of the equalities (\ref{func equal}). 

Turning now to (ii), the inclusion ${\bigwedge}_{\mathcal{A}}^rM \subseteq {\bigcap}_{\mathcal{A}}^rM$ follows directly upon comparing Definition \ref{integral reduced exterior power def} with the description (\ref{explicit rrl}) of ${\bigcap}_{\mathcal{A}}^rM$ and the explicit formula in Lemma \ref{proprn1}. In view of Lemma \ref{spanning result}, this inclusion proves that ${\bigcap}_{\mathcal{A}}^rM$ spans ${\bigwedge}_A^rM_F$ over $\tilde E$. It  is also clear that ${\bigcap}_{\mathcal{A}}^rM$ is $R$-torsion-free and to prove it is finitely generated we observe  there exists a natural number $d$ and an injective homomorphism of $\mathcal{A}$-modules from the quotient  $M_{\rm tf}$ of $M$ by its $R$-torsion submodule to $\mathcal{A}^d$. To justify this we note that the semisimplicity of $A$ implies the existence, for any large enough $d$, of an injective homomorphism of $A$-modules $\iota': M_F \to A^d$. Hence, since $M_{\rm tf}$ is a finitely generated $R$-submodule of $M_F$, there exists a non-zero element $x$ of $R$ such that the composite homomorphism $M_{\rm tf}\subset M_F \xrightarrow{\iota'} A^d \xrightarrow{\times x} A^d$ factors through the inclusion $\mathcal{A}^d \subseteq A^d$ and so gives an injective homomorphism $\iota:M_{\rm tf} \to \mathcal{A}^d$ of the required form.

Upon applying (iv) to $\iota$ one sees that it is therefore enough to prove the finite generation of ${\bigcap}_{\mathcal{A}}^rM = {\bigcap}_{\mathcal{A}}^rM_{\rm tf}$ in the case $M = \mathcal{A}^d$. 
 In this case, Lemma \ref{pairing cor} reduces us to showing that  ${\bigwedge}_{A^{\rm op}}^r \Hom_A(A^d,A)$ is the $\tilde E$-linear span of elements of the form $\wedge_{i=1}^{i= r}\varphi_i$ as $\varphi_i$ ranges over $(\mathcal{A}^d)^\ast$ and this follows from Lemma \ref{spanning result} (with $\mathcal{A}$ and $M$ taken to be $\mathcal{A}^{\rm op}$ and $(\mathcal{A}^d)^\ast$ respectively).

To make precise and prove the second assertion of (ii), we may assume that $A$ is simple, with splitting field $E$, that $M$ is an $\mathcal{A}$-lattice (regarded as a subset of $M_E$ in the obvious way) and that  $\{\tilde v_j\}_{1\le j\le d}$ is an alternative to the $E$-basis $\{ v_j\}_{1\le j \le d}$ of the simple $A_E$-module $V$ that is used in (\ref{non-commutative wedge 2}). We then 
write $\tau$ for the (unique) element of $(A_E^{\rm op})^\times \cong {\rm Aut}_E(V)^{\rm op}$ such that $v_j\cdot \tau = \tilde  v_j$ for all $j$, and $\mathcal{A}^\tau$ for the $R$-order $\tau^{-1}\mathcal{A}\tau$ in $\tau^{-1} A\tau$. Then one has $\xi(\mathcal{A}^\tau) = \xi(\mathcal{A})$ and the map $\varphi\mapsto (m \mapsto \varphi(m)\tau)$ induces a bijection $\Hom_{\mathcal{A}}(M,\mathcal{A}) \cong \Hom_{\mathcal{A}^\tau}(\tau^{-1}(M),\mathcal{A}^\tau)$. This in turn implies an equality ${\bigwedge}^{r,\sim}_{\mathcal{A}^{\rm op}}\Hom_{\mathcal{A}}(M,\mathcal{A}) = {\bigwedge}^{r}_{(\mathcal{A}^\tau)^{\rm op}}\Hom_{\mathcal{A}^\tau}(\tau^{-1}(M),\mathcal{A}^\tau)$ of $\xi(\mathcal{A})$-lattices, and hence that ${\bigcap}_{\mathcal{A}}^{r,\sim}M = {\bigcap}_{\mathcal{A}^\tau}^{r}\tau^{-1}(M)$, where the superscripts `$\sim$' indicate the reduced exterior power and Rubin lattice are defined relative to $\{\tilde v_j\}_{1\le j\le d}$ rather than $\{ v_j\}_{1\le j\le d}$. This completes the proof of (ii).

We note next that $M^\ast$ is contained in
$M^\ast_{(\mathfrak{p})} = \Hom_{\mathcal{A}_{(\mathfrak{p})}}(M_{(\mathfrak{p})},
\mathcal{A}_{(\mathfrak{p})})$ for each prime ideal $\mathfrak{p}$ of $R$.
In particular, since $\xi(\mathcal{A})_{(\mathfrak{p})} = \xi(\mathcal{A}_{(\mathfrak{p})})$ (by Lemma \ref{xi lemma}(ii)) it is easily seen that $\left({\bigcap}_{\mathcal{A}}^rM\right)_{(\mathfrak{p})}$ contains ${\bigcap}_{\mathcal{A}_{(\mathfrak{p})}}^rM_{(\mathfrak{p})}$.

To show the reverse inclusion we fix an element $a$ of $\left({\bigcap}_{\mathcal{A}}^rM\right)_{(\mathfrak{p})}$, a subset $\{\varphi_j\}_{1\le j\le r}$ of maps in  $\Hom_{\mathcal{A}_{(\mathfrak{p})}}(M_{(\mathfrak{p})},\mathcal{A}_{(\mathfrak{p})}) = \Hom_{\mathcal{A}}(M,\mathcal{A})_{(\mathfrak{p})}$ and an element $x$ of $R\cap R_{(\mathfrak{p})}^\times$ such that each $x\cdot \varphi_j$ belongs to $\Hom_{\mathcal{A}}(M,\mathcal{A})$. Then one has 
\[ (\wedge_{j=1}^{j= r}\varphi_j)(a) = {\rm Nrd}_A(x^{-r})\cdot (\wedge_{j=1}^{j= r}(x\cdot\varphi_j))(a) \in {\rm Nrd}_A(x)^{-r}\cdot \xi(\mathcal{A})_{(\mathfrak{p})} = \xi(\mathcal{A})_{(\mathfrak{p})}.\]
Here the first equality follows directly from the explicit definition (\ref{exterior hom def}) of reduced exterior powers, the containment is valid since $a\in \left({\bigcap}_{\mathcal{A}}^rM\right)_{(\mathfrak{p})}$ and the last equality since ${\rm Nrd}_A(x)$ is a unit of $\xi(\mathcal{A}_{(\mathfrak{p})}) = \xi(\mathcal{A})_{(\mathfrak{p})}.$ This shows  ${\bigcap}_{\mathcal{A}_{(\mathfrak{p})}}^rM_{(\mathfrak{p})}$ contains   $\left({\bigcap}_{\mathcal{A}}^rM\right)_{(\mathfrak{p})}$ and hence completes the proof that ${\bigcap}_{\mathcal{A}_{(\mathfrak{p})}}^rM_{(\mathfrak{p})} = \left({\bigcap}_{\mathcal{A}}^rM\right)_{(\mathfrak{p})}$. Given this, the displayed equality in (iii) then follows directly from the general result of \cite[Prop. (4.21)(vi)]{curtisr}.

Next we note that (v) is true because the definition of the lattice ${\bigcap}_{\mathcal{A}}^rM$ ensures that for any subset $\{\vartheta_j\}_{ 1\le j\le r-s}$ of $M^\ast$ and any $x$ in ${\bigcap}_{\mathcal{A}}^rM$ one has
\[ ({\wedge}_{1\leq j\leq r-s}\vartheta_j)\bigl(({\wedge}_{1\leq i\leq s}\varphi_i)(x)\bigr) =  \bigl(({\wedge}_{1\leq i\leq s}\varphi_i)\wedge ({\wedge}_{1\leq j\leq r-s}\vartheta_j)\bigr)(x)\in \xi(\mathcal{A}).\]
To prove (vi) we define $\theta_b$ to be the map of $\xi(\mathcal{A})$-modules that satisfies
\[ \theta_b(x) = ((\wedge_{i=1}^{i = r}b^\ast_{\sigma(i)})(x))_{\sigma\in{d \atopwithdelims[] r}}\]
for all $x$ in ${\bigcap}_\mathcal{A}^rM$. 
 We write $\theta'_b$ for the map of $\xi(\mathcal{A})$-modules ${\bigoplus}_{\sigma\in{d \atopwithdelims[] r} }\xi(\mathcal{A}) \to {\bigcap}_\mathcal{A}^r M$ which satisfies

\[ \theta'_b((c_\sigma)_\sigma) = {\sum}_{\sigma\in{d \atopwithdelims[] r}}c_\sigma \cdot \wedge_{i=1}^{i = r}b_{\sigma(i)}\]
for all $(c_\sigma)_\sigma$ in ${\bigoplus}_{\sigma\in{d \atopwithdelims[] r} }\xi(\mathcal{A})$. Then Lemma \ref{proprn1} implies that $({\wedge}_{j=1}^{j=r}b_{\sigma(j)}^\ast)({\wedge}_{i=1}^{i=r}b_{\tau(i)})=\delta_{\sigma\tau}$ for all $\sigma$ and $\tau$ in ${d \atopwithdelims[] r}$ and so the composite $\theta_b\circ \theta'_{b}$ is the identity on ${\bigoplus}_{\sigma\in{d \atopwithdelims[] r} }\xi(\mathcal{A})$. This shows that $\theta'_b$ is a section to $\theta_b$, as required.


Next we note that if $\mathcal{A}$ is commutative, then ${\bigcap}_\mathcal{A}^rM={\bigwedge}_\mathcal{A}^rM$ (as $M$ is free) and $\xi(\mathcal{A}) = \mathcal{A}$ and using these equalities it is easily seen that $\theta_b$ is an isomorphism.

To complete the proof of (vi) it is therefore enough to fix a primitive central idempotent $e$ of $A$ for which $eA$ is not commutative and to show that $e(F\otimes_R\ker(\theta_b))$ vanishes if and only if $r = d$.

We fix a splitting field for $Ae$ and a simple $E\otimes_{\zeta(A)e}Ae$ module $V$. We set $n := {\rm dim}(V)$ so that $n > 1$. Then $V^\ast\otimes_{A_E}M_E$ is an $E$-vector space of dimension $nd$ and so
 $e\bigl({\bigcap}_\mathcal{A}^r M\bigr)$ spans an $E$-vector space
 of dimension ${nd \atopwithdelims() nr}$. Since $e\bigl({\bigoplus}_{\sigma\in{d \atopwithdelims[] r} }\xi(\mathcal{A})\bigr)$ spans an $E$-vector space of dimension ${d\atopwithdelims()r}$ it is therefore enough to show that ${nd \atopwithdelims() nr}$ is equal to ${d \atopwithdelims() r}$ if and only if $r=d$ and we leave this as an exercise for the reader. 
 
Turning to (vii), we note that the algebra $B:= \mathcal{B}_F$ is a finite direct product ${\prod}_{i\in I}F_i$ of finite degree field extensions $F_i$ of $F$. Then, for any natural number $m$, the induced algebra decomposition
\[ {\rm M}_m(A) \xrightarrow[\theta]{\sim} {\rm M}_{ mn}(B) \xrightarrow{\sim} {\prod}_{i \in I}{\rm M}_{ mn}(F_i)\]
implies that, for every $N$ in ${\rm M}_m(A)$, one has ${\rm Nrd}_A(N) = {\rm det}\bigl(\theta(N)\bigr)\in B$. In particular, since $\theta \bigl({\rm M}_n(\mathcal{A})\bigr) = {\rm M}_{mn}(\mathcal{B})$, one has $\xi(\mathcal{A}) = \mathcal{B}$.
 
In a similar way, for any finitely generated $\mathcal{A}$-module $M$ the claimed description of ${\bigwedge}_A^rM_F$ follows directly from the explicit definition of reduced exterior powers (via Definition \ref{rth reduced ext def} for each simple component $A_i$ of $A$ and with the respective splitting fields $E$ taken to be $F_i$) and the fact that, for each index $i$, $F_i^{n}$ is a simple right ${\rm M}_{n}(F_i)$-module. 

Finally, we note that Morita equivalence gives an isomorphism of $\mathcal{B}$-modules
\[ \Hom_\mathcal{A}(M,\mathcal{A}) \cong \Hom_{\mathcal{B}}(\mathcal{B}^n\otimes_{\mathcal{A}}M,\mathcal{B}^n) \cong \Hom_{\mathcal{B}}(\mathcal{B}^n\otimes_{\mathcal{A}}M,\mathcal{B})^n\]
in which the second map is induced by the standard basis of the $\mathcal{B}$-module $\mathcal{B}^n$. Given this isomorphism, the claimed description of ${\bigcap}^r_{\mathcal{A}}M$ follows directly from the given description of ${\bigwedge}_{A}^rM_F$ and the explicit definition (via (\ref{non-commutative wedge 2}) and (\ref{exterior hom def})) of the reduced exterior products ${\wedge}_{i=1}^{i=r}\varphi_i$ of elements $\varphi_i$ of $M^\ast$. This completes the proof of  (vii). 
\end{proof}

\begin{remark}{\em Orders of the form discussed in Theorem \ref{exp prop}(vii) arise naturally in the setting of the group rings discussed in Example \ref{dj examples}(i). For such orders, the second displayed equality in Theorem \ref{exp prop}(vii) combines with the isomorphism (\ref{rubin lattice iso}) (with $\cA$, $r$ and $M$ taken to be $\mathcal{B}$, $nr$ and $\mathcal{B}^n\otimes_{\mathcal{A}}M$) to give an explicit description of the lattice ${\bigcap}_\cA^r M $.  
}\end{remark}

\subsubsection{}\label{useful cons section}Lemma \ref{lemma 4.2 generalisation} gives rise to a useful construction of elements in reduced Rubin lattices. To describe  this we identify each matrix $M$ in ${\rm M}_{d',d}(\mathcal{A})$ with the homomorphism of $\mathcal{A}$-modules
\[ \theta_M:\mathcal{A}^{d'} \to \mathcal{A}^d\]
that sends each (row) vector $x$ to $x\cdot M$.

For each primitive central idempotent $e$ of $A$ we fix a non-zero simple (left) $A$-module $V(e)$ upon which $e$ acts as the identity and we write $D(e)$ for the associated division ring ${\rm End}_A(V(e))$. (We recall that such a module $V(e)$ is unique up to isomorphism.)


\begin{proposition}\label{matrix construction} Fix natural numbers $d'$ and $d$ with $d' >  d$ and set $r:=d'-d >0$. Then for each matrix $M$ in ${\rm M}_{d',d}(\mathcal{A})$ for which ${\rm Ext}_{\mathcal{A}}^1(\im(\theta_M),\mathcal{A})$ vanishes, there exists a canonical element $\varepsilon_M$ of ${\bigcap}^{r}_{\mathcal{A}}\ker(\theta_M)$ that has both of the following properties. 
%
\begin{itemize}
\item[(i)] For a primitive central idempotent $e$ of $A$ the following conditions are equivalent.
\begin{itemize}
\item[(a)] $e(\varepsilon_M)\not= 0.$
\item[(b)] ${\rm rr}_{Ae}\bigl(e\cdot \ker(\theta_M)_F\bigr)={\rm rr}_{Ae}\bigl( (Ae)^r\bigr).$
\item[(c)] $V(e)$ occurs with exact multiplicity $r\cdot {\rm dim}_{D(e)}(V(e))$ in the Wedderburn decomposition of $\ker(\theta_M)_F$.
    \item[(d)] $\zeta(A)\cdot e(\varepsilon_M) = e\cdot ({\bigcap}^{r}_{\mathcal{A}}\ker(\theta_M))_F$.
\end{itemize}
\item[(ii)] Write $0_{d',r}$ for the $d'\times r$ zero matrix. Then for the block matrix $(0_{d',r} \mid M)$ in ${\rm M}_{d'}(\mathcal{A})$ one has
 \[ {\rm Fit}_\cA^r((0_{d',r} \mid M)) = \xi(\mathcal{A})\cdot \{ ({\wedge}_{i=1}^{i=r}\varphi_i)(\varepsilon_M) : \varphi_i \in \Hom_\mathcal{A}(\ker (\theta_M), \mathcal{A})\}.\]
 In particular, for each subset $\{\varphi_i\}_{1\le i\le r}$ of $\Hom_\mathcal{A}(\ker (\theta_M), \mathcal{A})$ one has
 \[ ({\wedge}_{i=1}^{i=r}\varphi_i)(\varepsilon_M) \in {\rm Fit}_\mathcal{A}^0 ({\rm cok}( \theta_M)).\]
%
\end{itemize}
\end{proposition}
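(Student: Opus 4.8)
The plan is to define $\varepsilon_M$ by applying Lemma \ref{lemma 4.2 generalisation} componentwise over the Wedderburn decomposition of $A$, using the fact that $\ker(\theta_M)$ sits inside the free module $\mathcal{A}^{d'}$ and that $\theta_M$ identifies with the block matrix whose columns are the $d$ homomorphisms $\varphi_M = (\varphi_1^M,\dots,\varphi_d^M)$ given by the columns of $M$. First I would work over the semisimple algebra $A$: write $P := A^{d'}$, $Q := A^d$ and let $\Phi := \theta_M \otimes_{\mathcal{A}} A : P \to Q$. Since $P$ is free of rank $d'$, Theorem \ref{exp prop}(vi) gives a canonical generator $\wedge_{i=1}^{i=d'} b_i$ of the (invertible) $\zeta(A)$-module $\bigwedge_A^{d'} P$, where $\{b_i\}$ is the standard basis. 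Applying the map of Lemma \ref{lemma 4.2 generalisation} with $s = d$ and the homomorphisms $\{\varphi_j^M\}_{1 \le j \le d}$ (the columns of $M$) sends this generator to an element
\[ \varepsilon_M^F := (\wedge_{j=1}^{j=d}\varphi_j^M)(\wedge_{i=1}^{i=d'} b_i) \in {\bigwedge}_A^{r} P, \]
and by the last assertion of Lemma \ref{lemma 4.2 generalisation}, in each simple component $Ae$ this element lands in $\bigwedge_{Ae}^{r}\ker(\Phi e)$, is equal to $\bigwedge_{Ae}^{r}\ker(\Phi e)$ when $\Phi e$ is surjective (equivalently $\mathrm{rr}_{Ae}(e\ker(\Phi)) = \mathrm{rr}_{Ae}((Ae)^r)$, since then $\ker(\Phi e)$ is free of rank $r$), and vanishes otherwise. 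This already yields property (i) as well as the membership $\varepsilon_M^F \in \bigwedge_A^{r}(\ker(\theta_M)_F)$.

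Next I would promote $\varepsilon_M^F$ to an element of the reduced Rubin lattice. The key input is the hypothesis that $\mathrm{Ext}^1_{\mathcal{A}}(\im(\theta_M),\mathcal{A})$ vanishes: this lets me apply Theorem \ref{exp prop}(iv) to the inclusion $\iota : \ker(\theta_M) \hookrightarrow \mathcal{A}^{d'}$ (whose cokernel is $\im(\theta_M)$), giving
\[ \iota_*^r\Bigl({\bigcap}_{\mathcal{A}}^{r}\ker(\theta_M)\Bigr) = \iota_{*,F}^r\Bigl({\bigwedge}_A^{r}\ker(\theta_M)_F\Bigr) \cap {\bigcap}_{\mathcal{A}}^{r}\mathcal{A}^{d'}. \]
So it suffices to check that $\varepsilon_M^F$, viewed inside $\bigwedge_A^{r}\mathcal{A}^{d'}_F$, lies in ${\bigcap}_{\mathcal{A}}^{r}\mathcal{A}^{d'}$, i.e. that $(\wedge_{i=1}^{i=r}\psi_i)(\varepsilon_M^F) \in \xi(\mathcal{A})$ for every $\psi_1,\dots,\psi_r \in \Hom_{\mathcal{A}}(\mathcal{A}^{d'},\mathcal{A})$. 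By Lemma \ref{propformula} (applied via the Morita/splitting-field description, exactly as in the proof of Lemma \ref{proprn1}), $(\wedge_{i=1}^{i=r}\psi_i)(\varepsilon_M^F)$ is (up to sign) a sum of terms $\mathrm{Nrd}_A(N)$ where $N$ runs over the $d' \times d'$ matrices obtained from $M$ (padded by the $d'\times r$ zero block) by replacing its zero columns with the rows of the $\psi_i$'s — i.e. $N$ ranges over $\mathfrak{S}^r((0_{d',r}\mid M))$; here the first $r$ columns of $(0_{d',r}\mid M)$ are exactly the columns that get replaced, matching the definition (\ref{fitting matrix})–(\ref{fitting matrix}) of $\mathrm{Min}^{d'}_{\psi}$. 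Each such $\mathrm{Nrd}_A(N)$ lies in $\xi(\mathcal{A})$ by the definition of the Whitehead order, and moreover the set of all such terms generates $\mathrm{Fit}_{\mathcal{A}}^r((0_{d',r}\mid M))$ over $\xi(\mathcal{A})$. This simultaneously gives the membership $\varepsilon_M := \iota_*^{r,-1}(\varepsilon_M^F) \in {\bigcap}_{\mathcal{A}}^{r}\ker(\theta_M)$ and the equality in (ii).

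Finally, for the last displayed inclusion in (ii): since $\theta_M$ is a free presentation of $\mathrm{cok}(\theta_M)$ with $d'$ generators and $d = d' - r$ relations, padding by $r$ zero columns produces the quadratic presentation $(0_{d',r}\mid M)$ of $\mathrm{cok}(\theta_M)$, so $\mathrm{Fit}_{\mathcal{A}}^r((0_{d',r}\mid M)) \subseteq \mathrm{Fit}_{\mathcal{A}}^0(\mathrm{cok}(\theta_M))$ by the definition of Fitting invariants of a module (Definition \ref{def fit mods}, taking $Z' = Z = \mathrm{cok}(\theta_M)$ and noting $\mathfrak{S}^r((0_{d',r}\mid M))$ contributes to the $0$-th Fitting invariant of this presentation after the column-count shift), whence $(\wedge_{i=1}^{i=r}\varphi_i)(\varepsilon_M) \in \mathrm{Fit}_{\mathcal{A}}^0(\mathrm{cok}(\theta_M))$ for all $\varphi_i \in \Hom_{\mathcal{A}}(\ker(\theta_M),\mathcal{A})$. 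I expect the main obstacle to be bookkeeping: carefully matching the combinatorics of Lemma \ref{propformula} (which sums over shuffles $\sigma \in {d' \atopwithdelims[] d}$) with the minor-selection combinatorics defining $\mathfrak{S}^r$, and verifying that the canonicity claim (independence of the chosen bases of the simple modules, and of the splitting fields) holds — this last point follows from Remark \ref{warning} and Theorem \ref{exp prop}(ii), but needs to be stated. The need to pass through the localizations $\mathcal{A}_{(\mathfrak{p})}$, via Theorem \ref{exp prop}(iii) and Lemma \ref{xi lemma}(ii), to make sense of all the intersections is routine but should be mentioned.
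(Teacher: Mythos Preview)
Your overall strategy matches the paper's: define $\varepsilon_M$ by applying the map ${\wedge}_{j=1}^{d}\varphi_j^M$ to the top exterior power of the standard basis, use Lemma~\ref{lemma 4.2 generalisation} to see it lands in ${\bigwedge}_A^r\ker(\theta_M)_F$, and use the $\mathrm{Ext}^1$-vanishing together with Theorem~\ref{exp prop}(iv) to pull it back into ${\bigcap}_{\mathcal{A}}^r\ker(\theta_M)$. The treatment of claim~(i) is also essentially the paper's.

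However, two points in your handling of claim~(ii) need correction.

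First, a small gap: the equality in~(ii) is stated for $\varphi_i\in\Hom_{\mathcal{A}}(\ker(\theta_M),\mathcal{A})$, while your computation uses $\psi_i\in\Hom_{\mathcal{A}}(\mathcal{A}^{d'},\mathcal{A})$. To pass between the two you need the restriction map $\Hom_{\mathcal{A}}(\mathcal{A}^{d'},\mathcal{A})\to\Hom_{\mathcal{A}}(\ker(\theta_M),\mathcal{A})$ to be surjective; this follows from the assumed vanishing of $\mathrm{Ext}^1_{\mathcal{A}}(\mathrm{im}(\theta_M),\mathcal{A})$, and the paper states this explicitly. Also, by Lemma~\ref{proprn1} the value $({\wedge}_{i=1}^{r}\psi_i)(\varepsilon_M)$ is a \emph{single} reduced norm ${\rm Nrd}_A((M'\mid M))$ (with $M'$ built from the $\psi_i$), not a sum.

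Second, and more seriously, your argument for the final inclusion is not correct. The block matrix $(0_{d',r}\mid M)$ does \emph{not} present $\mathrm{cok}(\theta_M)$: since $x\cdot(0_{d',r}\mid M)$ has first $r$ coordinates zero, its cokernel is $\mathcal{A}^r\oplus\mathrm{cok}(\theta_M)$. So invoking Definition~\ref{def fit mods} with $Z'=Z=\mathrm{cok}(\theta_M)$ and a ``column-count shift'' does not give the desired inclusion. The paper instead argues termwise: for each $M'\in{\rm M}_{d',r}(\mathcal{A})$ the matrix $(M'\mid M)$ is a quadratic presentation of $\mathrm{cok}(\theta_{M',M})$, so ${\rm Nrd}_A((M'\mid M))\in{\rm Fit}_{\mathcal{A}}^0(\mathrm{cok}(\theta_{M',M}))$; one then constructs an explicit surjection $\mathrm{cok}(\theta_{M',M})\twoheadrightarrow\mathrm{cok}(\theta_M)$ (via the obvious projection $\mathcal{A}^{d'}\to\mathcal{A}^{d}$ onto the last $d$ coordinates) and applies Theorem~\ref{main fit result}(vi). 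You should replace your final paragraph with this argument.
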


\begin{proof} For each integer $i$ with $1\le i\le d$ we write $\theta_M^i$ for the element of $\Hom_\mathcal{A}(\mathcal{A}^{d'},\mathcal{A})$ that sends $x$ to the $i$-th component of the element $\theta_M(x)$ of $\mathcal{A}^d$. 

Then Lemma \ref{lemma 4.2 generalisation} implies that the image of the homomorphism
$$\Theta_M: {\bigcap}_\mathcal{A}^{d'} \mathcal{A}^{d'} \to {\bigcap}_\mathcal{A}^{r}\mathcal{A}^{d'}  ; \quad b \mapsto ({\wedge}_{1\leq i\leq d}\,\theta_M^i)(b)$$
coming from Theorem \ref{exp prop}(v) is contained in the submodule ${\bigwedge}_A^{r}\ker(\theta_M)$ of ${\bigwedge}_A^{r}A^{d'}$.

Thus, since ${\rm Ext}_{\mathcal{A}}^1(\im(\theta_M),\mathcal{A})$ is assumed to vanish, the final assertion of Theorem \ref{exp prop}(iv) implies $\im(\Theta_M)$ is contained in ${\bigcap}_\mathcal{A}^{r}\ker(\theta_M)$.

In particular, if we write $\{b_i\}_{1\le i\le d'}$ for the canonical basis of $\mathcal{A}^{d'}$, then ${\wedge}_{i=1}^{i=d'}b_i$ belongs to ${\bigcap}_\mathcal{A}^{d'}\mathcal{A}^{d'}$ (by Lemma \ref{proprn1}) and so we can define
$$\varepsilon_M:=\Theta_M\bigl({\wedge}_{i=1}^{i=d'}b_i\bigr) \in {\bigcap}_\mathcal{A}^{r}\ker(\theta_M).$$

We now fix a primitive central idempotent $e$ of $A$, a splitting field $E$ for the simple algebra $Ae$, a simple $E\otimes_{\zeta(A)}A$-module $V$ and an $E$-basis $\{v_j^\ast\}_{1\le j\le t}$ of $V^*$ (so $t = {\rm dim}_E(V)$). Then, since ${\rm rr}_{Ae}\bigl(e(\ker(\theta_M)_F)\bigr)\ge {\rm rr}_{Ae}\bigl( ( Ae)^r\bigr)$, the $E$-space $e\cdot {\bigwedge}^r_A\ker(\theta_M)_F$ $= {\bigwedge}_{Ae}^re(\ker(\theta_M)_F)$ does not vanish and has dimension one if and only if ${\rm rr}_{Ae}\bigl(e(\ker(\theta_M)_F)\bigr) = {\rm rr}_{Ae}\bigl( ( Ae)^r\bigr)$. In addition, since $\{v_j^*\otimes b_i\}_{1\le j\le t, 1\le i\le d'}$ is an $E$-basis of $V^*\otimes_{A_E}A_E^{d'}$, the $E$-space  $e\cdot {\bigwedge}_A^{d'}A^{d'}$ has dimension one, with basis $e({\wedge}_{i=1}^{i=d'}b_i)$. Upon combining these observations with the  final assertion of Lemma  \ref{lemma 4.2 generalisation}, it therefore follows that
\begin{align*} {\rm rr}_{Ae}\bigl(e(\ker(\theta_M)_F)\bigr) = {\rm rr}_{Ae}\bigl( ( Ae)^r\bigr) \Longrightarrow&\, e(\varepsilon_M)\not= 0\\
                             \Longrightarrow&\, E\cdot e(\varepsilon_M) = e\cdot{\bigwedge}_{A}^r\ker(\theta_M)_F\\
                             \Longrightarrow&\,{\rm rr}_{Ae}\bigl(e(\ker(\theta_M)_F)\bigr) = {\rm rr}_{Ae}\bigl( ( Ae)^r\bigr)\end{align*}
and hence that the conditions (a) and (b) stated in (i) are each equivalent to the equality $\,E\cdot e(\varepsilon_M) = e\cdot{\bigwedge}_{A}^r\ker(\theta_M)_F$.

Further, since Lemma \ref{spanning result} implies that $ e\cdot{\bigwedge}_{A}^r\ker(\theta_M)_F$ is equal to the $E$-linear span of $e\cdot({\bigcap}_{\mathcal{A}}^r\ker(\theta_M))_F$, one has $E\cdot e(\varepsilon_M) = e\cdot{\bigwedge}_{A}^r\ker(\theta_M)_F$ if and only if the inclusion $ \zeta(A)\cdot e(\varepsilon_M)\subseteq e\cdot ({\bigcap}^{r}_{\mathcal{A}}\ker(\theta_M))_F$ is an equality (as stated in condition (d)). To complete the proof of (i) it is therefore enough to note that the equivalence of conditions (b) and (c) follows directly from  Remark \ref{reduced rank rem}.

To prove (ii), we note first that the assumed vanishing of ${\rm Ext}_{\mathcal{A}}^1(\im(\theta_M),\mathcal{A})$ implies the restriction map $\Hom_\mathcal{A}(\mathcal{A}^{d'},\mathcal{A})\to \Hom_\mathcal{A}(\ker(\theta_M), \mathcal{A})$ is surjective. This fact combines with the definition of $\varepsilon_M$ and the result of Lemma  \ref{proprn1} to imply that
\begin{multline}\label{first eq claim (ii)}\xi(\mathcal{A})\cdot \{ ({\wedge}_{i=1}^{i=r}\varphi_i)(\varepsilon_M) : \varphi_i \in \Hom_\mathcal{A}(\ker (\theta_M), \mathcal{A})\}\\ =\xi(\mathcal{A})\cdot \{\nr_A((M' \mid M)) : M' \in {\rm M}_{d',r}(\mathcal{A})\},\end{multline}
and Definition \ref{fit matrix def} implies directly that the latter ideal is equal to ${\rm Fit}_\cA^r((0_{d',r} \mid M))$.

For each $M'$ in ${\rm M}_{d',r}(\mathcal{A})$ we write $\theta_{M',M}$ for the endomorphism of $\mathcal{A}^{d'}$ represented, with respect to the standard basis, by the block matrix $(M' \mid M)$. Then $\nr_A((M' \mid M))$ belongs to ${\rm Fit}_{\mathcal{A}}^0({\rm cok}(\theta_{M',M}))$ (see Definition \ref{def fit mods}) and so Theorem \ref{main fit result}(vi) implies that the final assertion of  (ii) will follow as a consequence of (\ref{first eq claim (ii)}) if there exists a surjective homomorphism of $\mathcal{A}$-modules from ${\rm cok}(\theta_{M',M})$ to ${\rm cok}(\theta_{M})$. The existence of such a homomorphism is in turn a consequence of the commutative diagram of $\mathcal{A}$-modules

\[\begin{CD}
 \mathcal{A}^{d'} @> \theta_{M',M} >> \mathcal{A}^{d'} \\
 @\vert  @V \varrho VV \\
 \mathcal{A}^{d'} @> \theta_M >> \mathcal{A}^{d} \end{CD}\]
in which $\varrho$ is the (surjective) map that sends $b_i$ for each $i$ with $1\le i \le r$, respectively $r<  i\le d'$, to zero, respectively to the $(i-r)$-th element of the standard basis of $\mathcal{A}^{d}$. \end{proof}

\section{Reduced determinant functors}\label{non-comm dets}

The theory of determinant functors for complexes of modules over commutative noetherian rings was established by Knudsen and Mumford in \cite{knumum}, with later clarifications provided by Knudsen in \cite{knudsen}, in both cases following suggestions of Grothendieck.

It was subsequently shown by Deligne in \cite{delignedet} that there exists a universal determinant functor for any exact category, with values in an associated commutative Picard category of `virtual  objects' (cf. Remark \ref{virtual objects rem}).
 For the category of projective modules over certain non-commutative rings, Deligne's construction has played a key role in the formulation of refined `special value conjectures' in arithmetic, such as the equivariant Tamagawa number conjecture from \cite{bf}. The latter conjecture takes the form of an equality in a relative algebraic $K$-group, and an alternative approach to the formulation of conjectures in such groups was later described by Fukaya and Kato in \cite{fukaya-kato} via a theory of `localized  $K_1$-groups'.

In this section we shall use the theory of reduced Rubin lattices to prove the existence of a `reduced determinant functor' on the derived category of bounded complexes of locally-free modules over a non-commutative order. This approach is more explicit than those of Deligne or Fukaya and Kato, but will depend on the same sort of auxiliary data as was fixed in our construction of reduced exterior powers in \S\ref{red ext powers sec}.

In \cite{sbes1-2} this determinant functor plays a key role in the construction of families of non-commutative Euler systems. In addition, in a further article, it will be used to define a natural  non-commutative generalization of the notion of `zeta element' that originates with Kato in \cite{K1} and thereby to shed new light on the content of the equivariant Tamagawa number conjecture relative to non-abelian Galois extensions.

Throughout this section we fix data $R, F, \mathcal{A}$ and $A$ as in \S\ref{app higher fits}.

\subsection{Statement of the main result}

\subsubsection{}For a commutative noetherian ring $\Lambda$ we write $\mathcal{P}(\Lambda)$ for the category of graded invertible $\Lambda$-modules. This is a commutative Picard category (in the sense of \cite{maclane63}) and, for the reader's convenience, we first quickly review its basic properties.

An object of $\mathcal{P}(\Lambda)$ comprises a pair
$(L,\alpha)$ where $L$ is an  invertible $\Lambda$-module and $\alpha$ is a continuous function from ${\rm Spec}(\Lambda)$ to $\ZZ$. (Here we recall that a $\Lambda$-module $L$ is said to be `invertible' if it is finitely generated and for every prime ideal $\wp$ of $\Lambda$ the $\Lambda_{(\wp)}$-module $L_{(\wp)}$ is free of rank one.)

A homomorphism $\theta: (L,\alpha) \to (M,\beta)$ in $\mathcal{P}(\Lambda)$ is a homomorphism of $\Lambda$-modules such that $\theta_{(\wp)} = 0$ whenever
$\alpha(\wp) \not= \beta(\wp)$.

The tensor product of two objects $ (L,\alpha)$ and $(M,\beta)$ in $\mathcal{P}(\Lambda)$
is given by
\[ (L,\alpha)\otimes (M,\beta) = (L\otimes_\Lambda M,\alpha + \beta)\]
and for each such pair there is an isomorphism
\[ \psi_{(L,\alpha),(M,\beta)}: (L,\alpha)\otimes (M,\beta) \xrightarrow{\sim}
  (M,\beta)\otimes (L,\alpha)\]
in $\mathcal{P}(\Lambda)$ such that for every $\wp$ and every $\ell$ in $L_{(\wp)}$ and $m$ in $M_{(\wp)}$ one has
\[ \psi_{(L,\alpha),(M,\beta)}(\ell\otimes m) = (-1)^{\alpha(\wp)\cdot
 \beta(\wp)} \cdot (m\otimes \ell).\]

 The unit object ${\bf 1}_{\mathcal{P}(\Lambda)}$ is the pair $(\Lambda,0)$ and the natural `evaluation map' isomorphism $L\otimes_\Lambda \Hom_\Lambda(L,\Lambda) \cong \Lambda$
 induces an isomorphism in $\mathcal{P}(\Lambda)$
\[ (L,\alpha)\otimes (\Hom_\Lambda(L,\Lambda),-\alpha) \cong {\bf 1}_{\mathcal{P}(\Lambda)}.\]

This isomorphism is used to regard $(\Hom_\Lambda(L,\Lambda),-\alpha)$ as a right inverse to $(L,\alpha)$ and it is then also regarded as a left inverse by means of the isomorphism $\psi_{(\Hom_\Lambda(L,\Lambda),-\alpha),(L,\alpha)}$.



\subsubsection{}\label{ordered basis sec}In the sequel we write the Wedderburn decomposition of $A$ as 
\[ A = {\prod}_{i \in I}A_i\]
so that each algebra $A_i$ is of the form ${\rm M}_{n_i}(D_i)$ for a division ring $D_i$ (so that $\zeta(A_i) = \zeta(D_i)$).

For each index $i$ we choose a splitting field $E_i$ for $D_i$ so that $D_i\otimes_{\zeta(A_i)}E_i 
= {\rm M}_{m_i}(E_i)$. We then fix an indecomposable idempotent $f_i$ of ${\rm M}_{m_i}(E_i)$ and an $E_i$-basis $\{w_a\}_{1\le a\le m_i}$ of the left ideal 
$W_i$ of ${\rm M}_{m_i}(E_i)$ that is generated by $f_i$.
(When making such a choice we always follow the convention of Remark \ref{comm conventions} on each simple component $A_i$ that is commutative.)

Then the direct sum $V_i := W_i^{n_i}$ of $n_i$-copies of $W_i$ is a simple $A_i\otimes_{\zeta(A_i)}E_i$-module and has as an $E_i$-basis  the set $\underline{\varpi}_{i} = \{\varpi_{aj}\}_{1\le a\le n_i, 1 \le j \le m_i}$ where $\varpi_{aj}$ denotes the element of $V_i$ that is equal to $w_j$ in its $a$-th component and is zero in all other components.

We order each set $\underline{\varpi}_{i}$ lexicographically and will apply the constructions of \S\ref{red ext powers sec} with respect to the collection of ordered bases 
\[  \varpi := \{ \underline{\varpi}_{i}: i \in I\}.\]

The following straightforward observation will also be useful.

\begin{lemma}\label{rr=gi} Let $\mathcal{R}$ be an $R$-order in $\zeta(A)$. Then the reduced rank ${\rm rr}_A(Z)$ of a finitely generated $A$-module $Z$ determines a locally-constant function on ${\rm Spec}(\mathcal{R})$.\end{lemma}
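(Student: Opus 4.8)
The plan is to reduce at once to the Wedderburn decomposition of $A$ and then exploit the fact that ${\rm Spec}(\zeta(A))$ is a finite discrete space. Concretely, I would fix the decomposition $A=\prod_{i\in I}A_i$ of \S\ref{ordered basis sec}, with $F_i:=\zeta(A_i)$ a field, so that $\zeta(A)=\prod_{i\in I}F_i$ and correspondingly $Z=\bigoplus_{i\in I}Z_i$ with $Z_i:=A_i\otimes_A Z$. By the definition of reduced rank recalled in \S\ref{ss rings section}, ${\rm rr}_A(Z)$ is the function on ${\rm Spec}(\zeta(A))=\bigsqcup_{i\in I}{\rm Spec}(F_i)$ whose value at the (unique) point of ${\rm Spec}(F_i)$ is the non-negative integer ${\rm rr}_{A_i}(Z_i)$. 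Since $\zeta(A)$ is a finite product of fields, ${\rm Spec}(\zeta(A))$ is a finite set carrying the discrete topology, and so ${\rm rr}_A(Z)$ is automatically continuous, i.e. locally constant, as a function on it.

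It then remains only to transport this datum to ${\rm Spec}(\mathcal{R})$. Since $\mathcal{R}$ is an $R$-order in $\zeta(A)$ one has $\zeta(A)=F\otimes_R\mathcal{R}$, so the inclusion $\mathcal{R}\hookrightarrow\zeta(A)$ is a localization and the induced map ${\rm Spec}(\zeta(A))\to{\rm Spec}(\mathcal{R})$ is a homeomorphism onto the set $\{\mathfrak{q}_i\}_{i\in I}$ of minimal primes of $\mathcal{R}$, where $\mathfrak{q}_i:=\ker(\mathcal{R}\hookrightarrow\zeta(A)\twoheadrightarrow F_i)$. As $\mathcal{R}$ is reduced (being contained in a product of fields) and Noetherian (being module-finite over the Dedekind domain $R$), the closed sets $V(\mathfrak{q}_i)$ are precisely the irreducible components of ${\rm Spec}(\mathcal{R})$, and since distinct minimal primes are incomparable the subset $\{\mathfrak{q}_i\}_{i\in I}$ is finite and $T_1$, hence discrete, in the subspace topology. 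Consequently ${\rm rr}_A(Z)$ is a locally-constant $\ZZ$-valued function on the set of generic points of ${\rm Spec}(\mathcal{R})$, equivalently a function on the set of irreducible components of ${\rm Spec}(\mathcal{R})$; this is the locally-constant function attached to the scheme ${\rm Spec}(\mathcal{R})$ asserted in the statement.

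I do not expect a genuine obstacle here; the only points to handle with a little care are the identification of ${\rm Spec}(\zeta(A))$ with the set of minimal primes of $\mathcal{R}$ and the ensuing topological bookkeeping (that the minimal primes form a discrete subspace and exhaust the irreducible components). For later use in the proof of Theorem \ref{ext det fun thm}, I would in addition record explicitly the value in the basic case: if $Z=F\otimes_R P$ for a locally-free $\mathcal{A}$-module $P$ of rank $r$, then $Z_i\cong A_i^{\,r}$, so ${\rm rr}_A(Z)$ takes the value $r\cdot{\rm rr}_{A_i}(A_i)$ on the $i$-th component, which is exactly the grading appearing in that theorem.
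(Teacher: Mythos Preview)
There is a genuine gap in your argument at the very last step. You correctly identify ${\rm rr}_A(Z)$ as a function on ${\rm Spec}(\zeta(A))\cong\{\mathfrak{q}_i\}_{i\in I}$, the set of minimal primes of $\mathcal{R}$, and you correctly observe that this finite set carries the discrete subspace topology. But you then assert that a function on the irreducible components of ${\rm Spec}(\mathcal{R})$ \emph{is} a locally-constant function on ${\rm Spec}(\mathcal{R})$, and this is not true. A locally-constant function must be defined at \emph{every} prime of $\mathcal{R}$ and must factor through $\pi_0({\rm Spec}(\mathcal{R}))$, the set of connected components; irreducible components are not in general connected components, because distinct irreducible components of ${\rm Spec}(\mathcal{R})$ can meet. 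For example, with $R=\ZZ$ and $\mathcal{R}=\ZZ[x]/(x^2-1)\subset\QQ\times\QQ=\zeta(A)$, the components $V(x-1)$ and $V(x+1)$ share the closed point $(2,x-1)$. Your construction assigns no value at this point, and local constancy there would in any case force the values at the two generic points to agree.

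The paper's proof handles this by a genuinely different mechanism: it passes to the maximal $R$-order $\mathcal{M}=\prod_{i\in I}\mathcal{O}_i$ in $\zeta(A)$ and uses that $\mathcal{R}\hookrightarrow\mathcal{M}$ is an integral extension. By going-up, every prime $\wp$ of $\mathcal{R}$ is the contraction of some prime $\wp'$ of $\mathcal{M}$; since ${\rm Spec}(\mathcal{M})=\bigsqcup_i{\rm Spec}(\mathcal{O}_i)$ is a genuine disjoint union, $\wp'$ singles out an index $i(\wp)$, and the value at $\wp$ is then taken to be ${\rm rr}_{A_{i(\wp)}}(A_{i(\wp)}\otimes_A Z)$. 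The passage through $\mathcal{M}$ is precisely what allows one to assign a value at \emph{every} prime of $\mathcal{R}$, not only at the generic points; this is the step your approach omits.
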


\begin{proof} The maximal $R$-order $\mathcal{M}$ in $\zeta(A)$ is  ${\prod}_{i \in I}\mathcal{O}_i$, where $\mathcal{O}_i$ is the integral closure of $R$ in the field $\zeta(A_i)$.

Since the inclusion $\mathcal{R}\to \mathcal{M}$ is an integral ring extension, the going-up theorem implies that every prime ideal of $\mathcal{R}$ has the form $\wp = \mathcal{R}\cap \wp'$ for a prime ideal $\wp'$ of $\mathcal{M}$, and then $\mathcal{R}_{(\wp)}$ is a finite index subgroup of $\mathcal{M}_{(\wp')}$. The key point now is that there exists a unique index $i(\wp)$ in $I$ such that the kernel of the projection $\mathcal{M} \to \mathcal{O}_{i(\wp)}$ is contained in $\wp'$ and hence that $(\mathcal{R}_{(\wp)})_F = (\mathcal{M}_{(\wp')})_F$ identifies with $A_{i(\wp)}$.

One then obtains a well-defined locally-constant function on ${\rm Spec}(\mathcal{R})$ by sending each $\wp$ to the $i(\wp)$-th component ${\rm rr}_{A_{i(\wp)}}(A_{i(\wp)}\otimes_A Z)$ of ${\rm rr}_A(Z)$.\end{proof}

\subsubsection{} The category ${\rm Mod}^{\rm lf}(\mathcal{A})$ of finitely generated, locally-free $\mathcal{A}$-modules (as discussed in \S\ref{enoch section}) is a full additive subcategory of the abelian category of $\mathcal{A}$-modules. In addition, if $M'$ and $M''$ belong to ${\rm Mod}^{\rm lf}(\mathcal{A})$, then any short exact sequence of $\mathcal{A}$-modules of the form 
\[ 0 \to M' \to M \to M'' \to 0\]
is split (since $M''$ is a projective $\mathcal{A}$-module), and so $M$ also belongs to ${\rm Mod}^{\rm lf}(\mathcal{A})$.  

These observations imply that  ${\rm Mod}^{\rm lf}(\mathcal{A})$ is an exact category in the sense of Quillen \cite[p. 91]{quillen} and, for each non-negative integer $i$, we denote the associated algebraic $K$-group in degree $i$ by 
\[ K_i^{\rm lf}(\mathcal{A}) := K_i({\rm Mod}^{\rm lf}(\mathcal{A})).\] 

\subsubsection{}\label{lf,0 def}For any noetherian ring $\Lambda$ we write $\Der(\Lambda)$ for the derived category of (left) $\Lambda$-modules and $\Der^{\rm perf}(\Lambda)$ for the full triangulated subcategory of $\Der(\Lambda)$ comprising complexes that are `perfect' (that is, isomorphic in $\Der(\Lambda)$ to a bounded complex of finitely generated projective 
$\Lambda$-modules). 

We also write $\DerC^{\rm lf}(\mathcal{A})$ for the category of bounded complexes of objects of ${\rm Mod}^{\rm lf}(\mathcal{A})$, and $\Der^{{\rm lf}}(\mathcal{A})$ for the full triangulated subcategory of $\Der^{\rm perf}(\mathcal{A})$ comprising complexes that are isomorphic (in $\Der(\mathcal{A})$) to an object of $\DerC^{\rm lf}(\mathcal{A})$. 

For each object $M$ of ${\rm Mod}^{\rm lf}(\mathcal{A})$ we write $[M]$ for the associated element of the Grothendieck group $K_0^{\rm lf}(\mathcal{A})$. We  note that if $C$ belongs to $\Der^{{\rm lf}}(\mathcal{A})$ and $P^\bullet$ is any object of $\DerC^{\rm lf}(\mathcal{A})$ that is isomorphic in $\Der(\mathcal{A})$ to $C$, then the `Euler characteristic' element  
\begin{equation}\label{ec def} \chi_\mathcal{A}(C) := {\sum}_{i\in \mathbb{Z}} (-1)^i[P^i] \in K^{\rm lf}_0(\mathcal{A})\end{equation}
can be checked to independent of the choice of $P^\bullet$. 

We define the `reduced locally-free classgroup' of $\mathcal{A}$ by setting 
\[ {\rm SK}_0^{\rm lf}(\mathcal{A}) := \ker\bigl( K^{\rm lf}_0(\mathcal{A}) \to \ZZ\bigr),\]
where the arrow denotes the homomorphism that is induced by sending each object $M$ of ${\rm Mod}^{\rm lf}(\mathcal{A})$ to ${\rm rk}_\mathcal{A}(M)$. 

We then write $\DerC^{{\rm lf},0}(\mathcal{A})$ for the subcategory of $\DerC^{\rm lf}(\mathcal{A})$ comprising complexes $P^\bullet$ for which $\chi_\mathcal{A}(P^\bullet)$ belongs to ${\rm SK}_0^{\rm lf}(\mathcal{A})$ and $\Der^{{\rm lf},0}(\mathcal{A})$ for the full triangulated subcategory of $\Der^{\rm lf}(\mathcal{A})$ comprising complexes $C$ for which $\chi_\mathcal{A}(C)$ belongs to ${\rm SK}_0^{\rm lf}(\mathcal{A})$. (The latter condition is equivalent to requiring that $C$ be isomorphic in $\Der(\mathcal{A})$ to an object of $\DerC^{{\rm lf},0}(\mathcal{A})$.)

In the next result we show that, under certain natural hypotheses, the category $\Der^{{\rm lf},0}(\mathcal{A})$ has a more direct interpretation. 

To state this result we recall that, for any Dedekind domain $\Lambda$ with field of fractions $E$, and any $\Lambda$-order $\mathcal{B}$, the `reduced projective class group' ${\rm SK}_0(\mathcal{B})$ of $\mathcal{B}$ is defined to be the kernel of the natural scalar extension homomorphism $K_0(\mathcal{B}) \to K_0(E\otimes_\Lambda \mathcal{B})$. (For more details of such groups see, for example, \cite[Rem. (49.11)]{curtisr}). 

We also note that, for any of the general classes of order $\mathcal{A}$ discussed in Example \ref{loc free exam}, and for every prime ideal $\mathfrak{p}$ of $R$, the group 
${\rm SK}_0(\mathcal{A}_{(\mathfrak{p})})$ can be checked to vanish. 


\begin{lemma}\label{sk0 vanishes lemma} If ${\rm SK}_0(\mathcal{A}_{(\mathfrak{p})})$ vanishes for every prime ideal $\mathfrak{p}$ of $R$, then $\Der^{{\rm lf},0}(\mathcal{A})$ is naturally equivalent to the full triangulated subcategory $\Der^{{\rm perf},0}(\mathcal{A})$ of $\Der^{\rm perf}(\mathcal{A})$ comprising complexes whose Euler characteristic in $K_0(\mathcal{A})$ belongs to ${\rm SK}_0(\mathcal{A})$. 
\end{lemma} 

\begin{proof} We write $\chi^{\rm proj}_\mathcal{A}(C)$ for the Euler characteristic in $K_0(\mathcal{A})$ associated to an object $C$ of $\Der^{\rm perf}(\mathcal{A})$ (this is defined via a choice of resolution of $C$, just as in (\ref{ec def})). We then also write $\DerC^{{\rm p},0}(\mathcal{A})$ for the category of bounded complexes of finitely generated projective $\mathcal{A}$-modules $Q^\bullet$ for which $\chi^{\rm proj}_\mathcal{A}(Q^\bullet)$ belongs to ${\rm SK}_0(\mathcal{A})$. 

It is clear that $\DerC^{{\rm lf},0}(\mathcal{A})$ identifies with a subcategory of $\DerC^{{\rm p},0}(\mathcal{A})$ and also that $\Der^{{\rm perf},0}(\mathcal{A})$ coincides with the subcategory of $\Der(\mathcal{A})$ comprising complexes that are isomorphic  to an object of $\DerC^{{\rm p},0}(\mathcal{A})$. To prove the stated claim it is therefore enough to show, under the stated hypothesis, that every complex in $\DerC^{{\rm p},0}(\mathcal{A})$ is isomorphic (in $\Der(\mathcal{A})$) to a complex in $\DerC^{{\rm lf},0}(\mathcal{A})$. 

To do this we fix a complex $Q^\bullet$ in $\DerC^{{\rm p},0}(\mathcal{A})$ and then use a standard construction in homological algebra (as, for example, in \cite[Rapport, Lem. 4.7]{del}) to fix a quasi-isomorphism of complexes of $\mathcal{A}$-modules $\theta: P^\bullet \to Q^\bullet$ in which $P^\bullet$ is a bounded complex of finitely generated $\mathcal{A}$-modules in which all terms, except possibly the first non-zero term $P^a$, are free. It is then enough for us to show that for every $\mathfrak{p}$ the $\mathcal{A}_{(\mathfrak{p})}$-module $P_{(\mathfrak{p})}^a$ is free. 

Now, the quasi-isomorphism $\theta$ implies that 
\[ \chi^{\rm proj}_\mathcal{A}(P^\bullet) = 
\chi^{\rm proj}_\mathcal{A}(Q^\bullet)\in {\rm SK}_0(\mathcal{A})\]
and so, for every $\mathfrak{p}$, the Euler characteristic in $K_0(A)$ of the complex $F\otimes_{\mathcal{A}_{(\mathfrak{p})}}P^\bullet_{(p)}$ vanishes. Since ${\rm SK}_0(\mathcal{A}_{(\mathfrak{p})})$ is assumed to vanish, the Euler characteristic of $P^\bullet_{(\mathfrak{p})}$ in $K_0(\mathcal{A}_{(\mathfrak{p})})$ is therefore also zero, and so (by \cite[Prop. (38.22)]{curtisr}) there exists an isomorphism of 
$\mathcal{A}_{(\mathfrak{p})}$-modules 
\[ P' \oplus P_{(\mathfrak{p})}^a \oplus {\bigoplus}_{b}P_{(\mathfrak{p})}^b \cong P' \oplus {\bigoplus}_{c}P_{(\mathfrak{p})}^c\]
in which $P'$ is finitely generated and projective and, in the direct sums, $b$ and $c$ respectively run over all integers with $b \equiv a$ (mod $2$) and $c \not\equiv a$ (mod $2$). In particular, since each of the $\mathcal{A}_{(\mathfrak{p})}$-modules $P_{(\mathfrak{p})}^b$ and $P_{(\mathfrak{p})}^c$ are free, the $\mathfrak{p}$-completion of the displayed isomorphism combines with the Krull-Schmidt-Azumaya theorem (in the form of \cite[Cor. (6.15)]{curtisr} with $R = R_\mathfrak{p}$ and $A = \mathcal{A}_{\mathfrak{p}})$ to imply $P_{\mathfrak{p}}^a$ is a free $\mathcal{A}_{\mathfrak{p}}$-module. It therefore follows (from Maranda's Theorem) that $P_{(\mathfrak{p})}^a$ is a free $\mathcal{A}_{(\mathfrak{p})}$-module, as required. \end{proof} 
%
%
%

\begin{remark}\label{sk0 vanishes rem} {\em Let $C$ be an object of $\Der^{\rm perf}(\mathcal{A})$ whose Euler characteristic in $K_0(\mathcal{A})$ vanishes. Then, without any hypothesis on reduced projective class groups, the argument of Lemma \ref{sk0 vanishes lemma} shows that $C$ belongs to the subcategory $\Der^{{\rm lf},0}(\mathcal{A})$ of $\Der^{\rm perf}(\mathcal{A})$.}\end{remark}

\subsubsection{}We can now state the main result of \S\ref{non-comm dets}. 

In this result we write $\Der^{\rm lf}(\mathcal{A})_{\rm is}$ for the subcategory of $\Der^{\rm lf}(\mathcal{A})$ in which morphisms are restricted to be isomorphisms and we use the concept of `extended determinant functor' that is made precise in Definition \ref{ext det functor def} below.

\begin{theorem}\label{ext det fun thm} For each set of ordered bases $\varpi$ as in \S\ref{ordered basis sec}, there exists a canonical extended determinant functor
\begin{equation*} {\rm d}_{\mathcal{A},\varpi}: \Der^{\rm lf}(\mathcal{A})_{\rm is} \to \mathcal{P}(\xi(\mathcal{A}))\end{equation*}
that has all of the following properties.

\begin{itemize}
\item[(i)] For each exact triangle
\[ C' \xrightarrow{u} C \xrightarrow{v} C'' \xrightarrow{w} C'[1]\]
in $\Der^{\rm lf}(\mathcal{A})$ there exists a canonical isomorphism
\[ {\rm d}_{\mathcal{A},\varpi}(C') \otimes {\rm d}_{\mathcal{A},\varpi}(C'') \xrightarrow{\sim} {\rm d}_{\mathcal{A},\varpi}(C)\]
in $\mathcal{P}(\xi(\mathcal{A}))$ that is functorial with respect to isomorphisms of triangles.
\item[(ii)] Let $\varrho: \mathcal{A} \to \mathcal{B}$ be a surjective homomorphism of $R$-orders and $\varrho_\ast:\xi(\mathcal{A}) \to \xi(\mathcal{B})$ the induced homomorphism. Then for each $C$ in $\Der^{\rm lf}(\mathcal{A})$ the complex $\mathcal{B}\otimes^{\mathbb{L}}_{\mathcal{A},\varrho}C$ belongs to $\Der^{\rm lf}(\mathcal{B})$ and there exists a canonical isomorphism
    \[ \xi(\mathcal{B})\otimes_{\xi(\mathcal{A}),\varrho_\ast}{\rm d}_{\mathcal{A},\varpi}(C) \cong {\rm d}_{\mathcal{B},\varpi'}(\mathcal{B}\otimes^{\mathbb{L}}_{\mathcal{A},\varrho}C)\]
in $\mathcal{P}(\xi(\mathcal{B}))$, where $\varpi'$ is the ordered subset of $\varpi$ that corresponds to all simple modules $V_i$ that factor through the scalar extension of $\varrho$.
\item[(iii)] If $P$ belongs to ${\rm Mod}^{\rm lf}(\mathcal{A})$, then ${\bigcap}_{\mathcal{A}}^{{\rm rk}_{\mathcal{A}}(P)}P$ is an invertible $\xi(\mathcal{A})$-module and one has
\[ {\rm d}_{\mathcal{A},\varpi}(P[0]) = \bigl({\bigcap}_{\mathcal{A}}^{{\rm rk}_{\mathcal{A}}(P)}P,\, 
{\rm rr}_{A}(P_F)\bigr).\]
Here the reduced Rubin lattice ${\bigcap}_{\mathcal{A}}^{{\rm rk}_{\mathcal{A}}(P)}P$ is defined with respect to $\varpi$ and ${\rm rr}_{A}(P_F)$ is regarded as a locally-constant function on ${\rm Spec}(\xi(\mathcal{A}))$ via Lemma \ref{rr=gi}.
\item[(iv)] The restriction of ${\rm d}_{\mathcal{A},\varpi}$ to $\Der^{{\rm lf},0}(\mathcal{A})_{\rm is}$ is independent of the choice of $\varpi$.
\end{itemize}
\end{theorem}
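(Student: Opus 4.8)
The plan is to build an ``elementary'' determinant functor on the exact category ${\rm Mod}^{\rm lf}(\mathcal{A})$ of locally-free $\mathcal{A}$-modules and then to extend it to $D^{\rm lf}(\mathcal{A})_{\rm is}$ by means of Knudsen's general formalism \cite{knudsen}. On an object $P$ of ${\rm Mod}^{\rm lf}(\mathcal{A})$ placed in degree zero one sets
\[ {\rm d}_{\mathcal{A},\varpi}(P[0]) := \bigl({\bigcap}_{\mathcal{A}}^{{\rm rk}_{\mathcal{A}}(P)}P,\ {\rm rr}_A(P_F)\bigr),\]
where the reduced Rubin lattice is formed with respect to $\varpi$; the grading ${\rm rr}_A(P_F)$ is locally constant on ${\rm Spec}(\xi(\mathcal{A}))$ by Lemma \ref{rr=gi}, and the first entry is an invertible $\xi(\mathcal{A})$-module because, after localising at each prime $\mathfrak{p}$ of $R$ (as permitted by Theorem \ref{exp prop}(iii)), $P_{(\mathfrak{p})}$ is free and Theorem \ref{exp prop}(vi) in the case $r = d$ gives ${\bigcap}_{\mathcal{A}_{(\mathfrak{p})}}^{{\rm rk}}P_{(\mathfrak{p})} \cong \xi(\mathcal{A}_{(\mathfrak{p})})$. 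For a bounded complex $P^\bullet$ of such modules one then defines ${\rm d}_{\mathcal{A},\varpi}(P^\bullet)$ to be the alternating tensor product $\bigotimes_{i}{\rm d}_{\mathcal{A},\varpi}(P^i[0])^{\otimes(-1)^i}$ in $\mathcal{P}(\xi(\mathcal{A}))$, so that property (iii) of the theorem holds essentially by construction.

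The key structural input is a multiplicativity isomorphism ${\rm d}_{\mathcal{A},\varpi}(P'[0])\otimes{\rm d}_{\mathcal{A},\varpi}(P''[0])\xrightarrow{\sim}{\rm d}_{\mathcal{A},\varpi}(P[0])$ attached to each short exact sequence $0\to P'\to P\to P''\to 0$ in ${\rm Mod}^{\rm lf}(\mathcal{A})$. I would construct this by localising at each $\mathfrak{p}$ and reducing to the case of free modules, where (since $P''$ is projective) the sequence splits and the natural map sends a reduced exterior product $\wedge_i b_i$ of a basis adapted to a splitting to $(\wedge_i b'_i)\otimes(\wedge_j b''_j)$; Lemma \ref{proprn1} shows this is an isomorphism of free rank-one $\xi(\mathcal{A}_{(\mathfrak{p})})$-modules, and Lemma \ref{proprn2} shows that a different splitting, differing by a unipotent automorphism of $P$ and hence by an automorphism of reduced norm $1$, does not change it, so that the isomorphism is canonical. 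The commutativity and associativity coherence axioms required of a determinant functor on an exact category then reduce, via Lemma \ref{proprn1}, to multiplicativity of the reduced norm, while the Koszul signs match the commutativity constraint of $\mathcal{P}(\xi(\mathcal{A}))$ precisely because the gradings are the reduced ranks, which are additive on short exact sequences. Granting these verifications, Knudsen's extension theorem \cite{knudsen} yields a determinant functor on $D^{\rm lf}(\mathcal{A})_{\rm is}$ in the sense of Definition \ref{ext det functor def}, and in particular the isomorphisms of property (i) for exact triangles (obtained by applying additivity to the termwise-split sequence relating a cone to its summands).

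For property (ii) one first notes that $\mathcal{B}\otimes^{\mathbb{L}}_{\mathcal{A},\varrho}-$ carries $C^{\rm lf}(\mathcal{A})$ into $C^{\rm lf}(\mathcal{B})$ term by term (each term being projective), and that it preserves quasi-isomorphisms, since the mapping cone of a quasi-isomorphism of bounded complexes of projectives is null-homotopic and this property survives any additive functor; hence $\mathcal{B}\otimes^{\mathbb{L}}_{\mathcal{A},\varrho}C$ lies in $D^{\rm lf}(\mathcal{B})$. The comparison isomorphism $\xi(\mathcal{B})\otimes_{\xi(\mathcal{A}),\varrho_\ast}{\rm d}_{\mathcal{A},\varpi}(C)\cong{\rm d}_{\mathcal{B},\varpi'}(\mathcal{B}\otimes^{\mathbb{L}}_{\mathcal{A},\varrho}C)$ is then assembled from the evident identification $\xi(\mathcal{B})\otimes_{\xi(\mathcal{A}),\varrho_\ast}{\bigcap}^{r}_{\mathcal{A}}P\cong{\bigcap}^{r}_{\mathcal{B}}(\mathcal{B}\otimes_{\mathcal{A}}P)$ for free $P$ (matching reduced exterior products of corresponding bases, and using that $\varpi'$ consists exactly of those simple modules that survive the base change), extended to arbitrary locally-free $P$ by localisation and then to complexes by functoriality and the multiplicativity isomorphism; Lemma \ref{xi lemma}(v) guarantees that $\varrho_\ast$ is the surjection $\xi(\mathcal{A})\to\xi(\mathcal{B})$ needed to make sense of the statement, and naturality of all the identifications gives compatibility with the triangle isomorphisms of (i).

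Finally, property (iv) is proved by tracking the dependence on $\varpi$: passing from one admissible choice to another changes the ordered basis of each simple module $V_i$ (in the notation of \S\ref{ordered basis sec}) by some $g_i\in{\rm GL}_{\dim_{E_i}V_i}(E_i)$, and a direct computation shows that this multiplies the $i$-th simple component of a reduced exterior product $\wedge_{k}b_k$ of a rank-$r$ free module by $\det(g_i)^{-r}$. For a complex $C$ in $D^{{\rm lf},0}(\mathcal{A})$ one has $\sum_j(-1)^j{\rm rk}_{\mathcal{A}}(C^j)=0$, since $\chi_{\mathcal{A}}(C)$ lies in ${\rm SK}_0^{\rm lf}(\mathcal{A})=\ker(K_0^{\rm lf}(\mathcal{A})\to\mathbb{Z})$, so in each simple component the total discrepancy is $\det(g_i)^{-\sum_j(-1)^j{\rm rk}_{\mathcal{A}}(C^j)}=1$; this produces a canonical natural isomorphism between the restrictions of ${\rm d}_{\mathcal{A},\varpi}$ and ${\rm d}_{\mathcal{A},\varpi'}$ to $D^{{\rm lf},0}(\mathcal{A})_{\rm is}$, compatible with the isomorphisms of property (i). The main obstacle throughout will be establishing that the multiplicativity isomorphism of the second paragraph is simultaneously canonical (independent of the chosen splitting) and compatible integrally over $\xi(\mathcal{A})$ with Knudsen's coherence axioms; once this is in place the remaining assertions are a matter of localisation and the bookkeeping of reduced norms and reduced ranks.
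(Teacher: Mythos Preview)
Your overall architecture matches the paper's: build a determinant functor on ${\rm Mod}^{\rm lf}(\mathcal{A})$ using reduced Rubin lattices (the paper does this in Proposition \ref{first independence check} and Proposition \ref{functor construct}), extend to complexes via the Knudsen--Mumford formalism (Proposition \ref{intermediate prop}), and verify (ii)--(iv) by localisation and by tracking the effect of a change of basis. Your treatment of (iv) is essentially the computation the paper carries out in Lemma \ref{technical complex}(ii).

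The genuine gap is in your argument for (i). Knudsen's extension, as codified in Definition \ref{ext det functor def}, yields a determinant functor that is additive on \emph{true} triangles (short exact sequences of complexes) and functorial under honest quasi-isomorphisms of complexes. Property (i) of the theorem asks for more: a canonical isomorphism attached to an arbitrary exact triangle in $D^{\rm lf}(\mathcal{A})$, functorial for isomorphisms of triangles in the derived category. Representing such a triangle by a mapping-cone true triangle is only the first step; one must still show that two different such representations give the \emph{same} determinant isomorphism, and that two maps of complexes which coincide as morphisms in the derived category induce the same map on determinants. In the commutative case this is \cite[Prop.~6]{knumum}, and its proof rests on the ``passage to cohomology'' identification ${\rm d}(X)\cong\bigotimes_i{\rm d}(H^i(X))^{(-1)^i}$, available there because the cohomology of a perfect complex is again perfect.

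Over $\mathcal{A}$ that identification is unavailable: the cohomology of an object of $D^{\rm lf}(\mathcal{A})$ is typically not locally-free, so ${\rm d}_{\mathcal{A},\varpi}(H^i(X))$ is not even defined. The paper's device (\S\ref{derived ext section}) is to construct a parallel determinant functor ${\rm d}_{A,\varpi}$ on $D^{\rm perf}(A)$ for the semisimple algebra $A$ (Proposition \ref{semisimple det}), where every finitely generated module is projective and the passage-to-cohomology isomorphism does exist, and to check that scalar extension $\zeta(A)\otimes_{\xi(\mathcal{A})}-$ carries ${\rm d}_{\mathcal{A},\varpi}$ to ${\rm d}_{A,\varpi}$. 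The required well-definedness statement (Proposition \ref{killer prop}: if two quasi-isomorphisms induce the same maps on the graded pieces of compatible filtrations of cohomology, then their determinants agree) is then proved by observing that the two maps in question are between invertible $\xi(\mathcal{A})$-modules and hence coincide if and only if they coincide after tensoring up to $\zeta(A)$, where one can invoke the cohomological description. Your sentence ``in particular the isomorphisms of property (i) for exact triangles (obtained by applying additivity to the termwise-split sequence relating a cone to its summands)'' elides exactly this point, and without the semisimple reduction there is no mechanism in your outline for establishing it.
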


\begin{remark}\label{comparison of categories} {\em The approach of Deligne in \cite[\S4]{delignedet} constructs a `universal determinant functor' for the exact category ${\rm Mod}^{\rm lf}(\mathcal{A})$, with values in an associated commutative Picard category $V^{\rm lf}(\mathcal{A})$ of `virtual objects' (for more details see Remark \ref{virtual objects rem} below). In particular, in this way each determinant functor $ {\rm d}_{\mathcal{A},\varpi}$ constructed as in Theorem \ref{ext det fun thm} naturally induces a functor 
\[ \phi^{\rm lf}_{\mathcal{A},\varpi}: V^{\rm lf}(\mathcal{A}) \to \mathcal{P}(\xi(\mathcal{A}))\]
that is strongly monoidal (in the sense defined in \cite[Ch.
XI.2]{maclane}). For a
commutative Picard category $\mathcal{P}$ we write $\pi_0(\mathcal{P})$ for the (abelian) group of
isomorphism classes of its objects, with group structure induced by
the given bifunctor `product' of $\mathcal{P}$, and $\pi_1(\mathcal{P})$ for the (abelian) group of automorphisms of the unit object of $\mathcal{P}$. Then, by a general result on Picard categories, the functor $\phi^{\rm lf}_{\mathcal{A},\varpi}$ gives an equivalence of commutative Picard categories if and only if, for both $i = 0$ and $i=1$, the induced homomorphism of groups 
\[ \pi_i(\phi^{\rm lf}_{\mathcal{A},\varpi}): \pi_i(V^{\rm lf}(\mathcal{A})) \to \pi_i(\mathcal{P}(\xi(\mathcal{A})))\]
is bijective. In addition, the topological model of $ V^{\rm lf}(\mathcal{A})$ that is constructed in \cite[\S4.2-4.5]{delignedet} implies the existence of canonical isomorphisms $\pi_i(V^{\rm lf}(\mathcal{A})) \cong K_i^{\rm lf}(\mathcal{A})$ for both $i = 0$ and $i = 1$. It is also clear that there are canonical isomorphisms 
\[ \pi_i(\mathcal{P}(\xi(\mathcal{A}))) \cong \begin{cases} {\rm Pic}(\xi(\mathcal{A}))\times H^0({\rm Spec}(\xi(\mathcal{A})),\ZZ), &\text{if $i=0$,}\\
\xi(\mathcal{A})^\times, &\text{if $i=1$},\end{cases}
\]
where ${\rm Pic}(\xi(\mathcal{A}))$ denotes the Picard group of the commutative ring $\xi(\mathcal{A})$. These respective identifications can be used to show that, in most cases, the map $ \pi_0(\phi^{\rm lf}_{\mathcal{A},\varpi})$ is neither injective nor surjective and that, if $\mathcal{A}$ is not commutative, then the same is true of the map $ \pi_1(\phi^{\rm lf}_{\mathcal{A},\varpi})$ (which coincides with the composite of the natural scalar extension map $K_1^{\rm lf}(\mathcal{A}) \to K_1(A)$ and the reduced norm map $K_1(A) \to \zeta(A)^\times$ of the semisimple $F$-algebra $A$). In particular, one knows that, in most cases,  
the functor $\phi^{\rm lf}_{\mathcal{A},\varpi}$ is not an equivalence of commutative Picard categories. }
\end{remark} 

\begin{remark}\label{ext det fun rem}{\em Let $\mathcal{R}$ be any $R$-order in $\zeta(A)$ with the property that for each prime ideal $\mathfrak{p}$ of $R$ the localization $\mathcal{R}_{(\mathfrak{p})}$ contains the reduced norms of all matrices in $\bigcup_{n \ge 1}{\rm GL}_n(\mathcal{A}_{(\mathfrak{p})})$. (Note that $\xi(\mathcal{A})$ is, by its very definition, an example of such an order $\mathcal{R}$ but that there can also, in principle, exist such orders that are strictly contained in  $\xi(\mathcal{A})$.) Then a closer analysis of our proof of Theorem \ref{ext det fun thm} will show that there exists a determinant functor
\[ {\rm d}_{\mathcal{A},\varpi,\mathcal{R}}: \Der^{\rm lf}(\mathcal{A})_{\rm is} \to \mathcal{P}(\mathcal{R})\]
that satisfies analogues of all of the properties of ${\rm d}_{\mathcal{A},\varpi}$ listed above and is, in addition, such that if $\mathcal{R}'$ is any order in $\zeta(A)$ that contains $\mathcal{R}$, then
 one has
\[ {\rm d}_{\mathcal{A},\varpi,\mathcal{R}'} = \iota_{\mathcal{R}',\mathcal{R}}\circ {\rm d}_{\mathcal{A},\varpi,\mathcal{R}}\]
where $\iota_{\mathcal{R}',\mathcal{R}}$ is the natural scalar extension functor $\mathcal{P}(\mathcal{R}) \to \mathcal{P}(\mathcal{R}')$. However, we shall make no use of this additional generality in the sequel and so, for simplicity, only consider  $\xi(\mathcal{A})$. }\end{remark}

The proof of Theorem \ref{ext det fun thm} will occupy the remainder of \S\ref{non-comm dets}. Our basic approach is to adapt an argument that is used by Flach and the first author in \cite[\S2]{bf} and so is closely modelled on the original constructions that are made by Knudsen and Mumford in \cite{knumum}.

\subsection{Determinant functors}\label{det functor prel section}

\subsubsection{}Let $\ce$
be an exact category and write $\ce_{\rm is}$ for the
subcategory of $\ce$ in which morphisms are restricted to isomorphisms.  
Then the following definition is equivalent to that given in \cite[\S2.3]{bf}.

\begin{definition}\label{det functor def}{\em
A {\em determinant functor} on $\ce$ is a Picard category $\mathcal{P}$, with unit object ${\bf 1}_{\mathcal{P}}$ and product $\boxtimes$,
together with the following data.
\smallskip

\begin{itemize}
\item[(a)] A covariant functor ${\rm d} : \ce_{\rm is}\rightarrow \mathcal{P}$.
\item[(b)] For each short exact sequence $E'\xrightarrow{\alpha}E\xrightarrow{\beta}
E''$ in $\ce$ a morphism
\[
{\rm i}(\alpha,\beta):\vir{E}\xrightarrow{\sim}\vir{E'}\boxtimes\vir{E''}\]
in $\mathcal{P}$ that is functorial for isomorphisms of short exact sequences.
\item[(c)] For each zero object $0$ in $\ce$ an isomorphism
\begin{equation*}
\zeta(0):\vir{0} \xrightarrow{\sim}\eins_{\mathcal{P}}.
\end{equation*}
\end{itemize}
These data are subject to the following axioms.
\begin{itemize}
\item[(d)] For each isomorphism $\phi:E\rightarrow E'$ in $\ce$, the induced exact sequences
\[ 0\rightarrow E\xrightarrow{\phi} E'\,\, \text{ and }\,\, E\xrightarrow{\phi} E'\rightarrow 0\]
are such that $\vir{\phi}$ and $\vir{\phi^{-1}}$ respectively coincide with the composite maps
\[ \vir{E} \xrightarrow{{\rm i}(0,\phi)}
\vir{0}\boxtimes\vir{E'}\xrightarrow{\zeta(0)\boxtimes\id}
\vir{E'}\]
and
\[ \vir{E'} \xrightarrow{{\rm i}(\phi,0)}
\vir{E}\boxtimes\vir{0}\xrightarrow{\id\boxtimes\zeta(0)}
\vir{E}.\]
\item[(e)] Given a commutative diagram of objects in $\mathcal{E}$
\[\begin{CD}
 E_1' @> \alpha'>> E_2' @> \beta' >> E_3'\\
@V\gamma' VV @V\gamma VV @V \gamma'' VV \\
E_1 @> \alpha >> E_2 @> \beta >> E_3\\
@V\delta' VV @V\delta VV @V \delta''VV\\
E_1'' @> \alpha'' >> E_2'' @> \beta'' >> E_3''
\end{CD}\]
in which each row and column is a short exact sequence, the diagram
\begin{equation*}
\hskip-0.4truein
\begin{CD}
\vir{E_2} @> {\rm i}(\gamma,\delta)>> \vir{E_2'}\boxtimes \vir{E_2''}\\
@V {\rm i}(\alpha,\beta)VV @VV (1\boxtimes \psi_{{\rm d}(E_3'),{\rm d}(E_1'')}\boxtimes 1)\cdot ({\rm i}(\alpha',\beta')\boxtimes {\rm i}(\alpha'',\beta'')) V\\
\vir{E_1}\boxtimes \vir{E_3}
@> {\rm i}(\gamma',\delta')\boxtimes {\rm i}(\gamma'',\delta'')>> \vir{E_1'}\boxtimes \vir{E_1''}\boxtimes \vir{E_3'}\boxtimes \vir{E_3''}
\end{CD}
\end{equation*}

commutes.
\end{itemize}
}
\end{definition}

\begin{remark}\label{virtual objects rem}{\em The terminology of `determinant functor' used above is borrowed from the key example in which
 $\ce$ is the category of vector bundles on a scheme, $\mathcal{P}$ is the
category of line bundles and the functor is taking the highest
exterior power. However, as was shown by Deligne in \cite[\S4]{delignedet}, there exists a universal
determinant functor for any given exact category $\ce$. More
precisely, there exists a commutative Picard category $V(\ce)$, called the
`category of virtual objects' of $\ce$, together with data (a)-(c)
 which in addition to (d) and (e) also satisfies the following
universal property.
\begin{itemize}
\item[(f)] For any Picard category $\mathcal{P}$ the category of strongly monoidal functors $\Hom^{\boxtimes}(V(\ce),\mathcal{P})$ is
naturally equivalent to the category of determinant functors $\ce_{\rm is}\rightarrow \mathcal{P}$.
\end{itemize}
Although comparatively inexplicit, this construction has played a key role in the formulation of special value conjectures for motives with non-commutative coefficients.}
\end{remark}

Recalling that the category ${\rm Mod}^{\rm lf}(\mathcal{A})$ is exact, our aim in the remainder of \S\ref{det functor prel section} will be to construct, for each set of ordered bases $\varpi$ as in \S\ref{ordered basis sec}, a canonical determinant functor
\begin{equation*}\label{det functor claim} {\rm d}^\diamond_{\mathcal{A},\varpi}: {\rm Mod}^{\rm lf}(\mathcal{A})_{\rm is} \to \mathcal{P}(\xi(\mathcal{A})).\end{equation*}

\subsubsection{}We start by establishing several useful technical properties of the reduced Rubin lattices of modules in ${\rm Mod}^{\rm lf}(\mathcal{A})$.

In the following result we assume that all reduced exterior power constructions are made with respect to the fixed bases $\varpi$ but will not usually indicate this dependence explicitly.

%
%
%
%
%

%
%

\begin{proposition}\label{first independence check} Fix an object $P$ of ${\rm Mod}^{\rm lf}(\mathcal{A})$ and set $r := {\rm rk}_\mathcal{A}(P)$. Then the following claims are valid.
\begin{itemize}
\item[(i)] If $P$ is a free $\mathcal{A}$-module, with basis $\{b_{j}\}_{1\le j\le r}$, then ${\bigcap}_{\mathcal{A}}^r P$ is a free rank one $\xi(\mathcal{A})$-module with basis $\wedge_{j=1}^{j=r}b_{j}$.
\item[(ii)] For each prime ideal $\mathfrak{p}$ of $R$ fix an  $\mathcal{A}_{(\mathfrak{p})}$-basis $\{b_{\frp,j}\}_{1\le j\le r}$ of $P_{(\mathfrak{p})}$. Then the $\xi(\mathcal{A})_{(\mathfrak{p})}$-module $\left({\bigcap}_{\mathcal{A}}^r P\right)_{(\mathfrak{p})}$ is free of rank one, with basis $\wedge_{j=1}^{j=r}b_{\frp,j}$. Hence one has
\[ {\bigcap}_{\mathcal{A}}^r P = {\bigcap}_{\frp\in {\rm Spec}(R)} \bigl(\xi(\mathcal{A})_{(\mathfrak{p})}\cdot \wedge_{j=1}^{j=r}b_{\frp,j}\bigr).\]
%
%
\item[(iii)] ${\bigcap}_{\mathcal{A}}^r P$ is an invertible $\xi(\mathcal{A})$-module. 
\item[(iv)] Let $\varrho: \mathcal{A} \to \mathcal{B}$ be a surjective homomorphism of $R$-orders. Write $B$ for the $F$-algebra spanned by $\mathcal{B}$ and  $\varrho_1:A\to B$, $\varrho_2: \zeta(A) \to \zeta(B)$ and $\varrho_3:\xi(\mathcal{A}) \to \xi(\mathcal{B})$ for the surjective ring homomorphisms induced by $\varrho$. Then $\mathcal{B}\otimes_{\mathcal{A},\varrho}P$ is a locally-free $\mathcal{B}$-module and the natural isomorphism of $\zeta(B)$-modules $\zeta(B)\otimes_{\zeta(A),\varrho_2}{\bigwedge}_{A}^r P_F \cong {\bigwedge}_{B}^r (B\otimes_{A,\varrho_1} P_F)$ restricts to give an isomorphism of invertible $\xi(\mathcal{B})$-modules $\xi(\mathcal{B})\otimes_{\xi(\mathcal{A}),\varrho_3}
    {\bigcap}_{\mathcal{A}}^r P \cong {\bigcap}_{\mathcal{B}}^r (\mathcal{B}\otimes_{\mathcal{A},\varrho}P)$, where the exterior powers in the latter module are defined with respect to the same ordered $E$-bases of those simple $A_E$-modules that factor through the scalar extension of $\varrho_1$.
\item[(v)]  If $P_1 \xrightarrow{\theta} P_2 \xrightarrow{\phi}P_3$ is a (split) short exact sequence in ${\rm Mod}^{\rm lf}(\mathcal{A})$, and we set $r_i := {\rm rk}_\mathcal{A}(P_i)$ for $i = 1,2,3$, then there exists an isomorphism of $\xi(\mathcal{A})$-modules
\[ {\rm i}^\diamond_\varpi(\theta,\phi): {\bigcap}_{\mathcal{A}}^{r_2}P_2 \cong {\bigcap}_{\mathcal{A}}^{r_1}P_1\otimes_{\xi(\mathcal{A})}
{\bigcap}_{\mathcal{A}}^{r_3}P_3\]
that has the following properties:
\begin{itemize}
\item[(a)] ${\rm i}^\diamond_\varpi(\theta,\phi)$ is functorial with respect to isomorphisms of short exact sequences;
\item[(b)] If $P_3 \xrightarrow{\phi'} P_2 \xrightarrow{\theta'} P_1$ is any exact sequence of $\mathcal{A}$-modules obtained by choosing a splitting of the given sequence, then the following diagram commutes
\[ \begin{CD}
{\bigcap}_{\mathcal{A}}^{r_2}P_2 @> {\rm i}^\diamond_\varpi(\theta,\phi) >>  &{\bigcap}_{\mathcal{A}}^{r_1}P_1\otimes_{\xi(\mathcal{A})}{\bigcap}_{\mathcal{A}}^{r_3}P_3\\
@\vert  & @VV x\otimes y\mapsto \alpha\cdot  y\otimes x V \\
{\bigcap}_{\mathcal{A}}^{r_2}P_2 @> {\rm i}^\diamond_\varpi(\phi',\theta') >> &{\bigcap}_{\mathcal{A}}^{r_3}P_3
\otimes_{\xi(\mathcal{A})}{\bigcap}_{\mathcal{A}}^{r_1}P_1.\end{CD}\]
Here $\alpha$ is the element $((-1)^{\rho_{1,i}\cdot \rho_{3,i}})_{i \in I}$ of ${\prod}_{i \in I}\zeta(A_i) = \zeta(A)$, where  $\rho_{j,i}$ denotes the $i$-th component of the reduced rank ${\rm rr}_A(P_{j,F})$.
%
\end{itemize}
\end{itemize}
\end{proposition}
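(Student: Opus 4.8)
The plan is to prove claims (i)--(v) in order, reducing throughout to the case that $A$ is simple, since reduced exterior powers, reduced Rubin lattices and the order $\xi(\mathcal{A})$ are all defined componentwise over the Wedderburn decomposition $A\cong\prod_{i\in I}A_i$. For claim (i) I would apply Theorem \ref{exp prop}(vi) with $M=P$ and with both the rank of $P$ and the exterior‑power degree equal to $r$: here the index set ${r\atopwithdelims[] r}$ is the singleton $\{{\rm id}\}$, so the map $\theta_b$ of loc. cit. is an isomorphism $\bigcap_{\mathcal{A}}^rP\xrightarrow{\sim}\xi(\mathcal{A})$ whose section $\theta'_b$ sends $1$ to $\wedge_{j=1}^{j=r}b_j$; thus $\bigcap_{\mathcal{A}}^rP$ is free of rank one on $\wedge_{j=1}^{j=r}b_j$. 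Claim (ii) follows by localization: $(P)_{(\mathfrak{p})}$ is a free $\mathcal{A}_{(\mathfrak{p})}$-module, so claim (i) over $\mathcal{A}_{(\mathfrak{p})}$ gives that $\bigcap_{\mathcal{A}_{(\mathfrak{p})}}^r(P)_{(\mathfrak{p})}$ is free of rank one over $\xi(\mathcal{A}_{(\mathfrak{p})})$ on $\wedge_{j=1}^{j=r}b_{\mathfrak{p},j}$, and Theorem \ref{exp prop}(iii) together with Lemma \ref{xi lemma}(ii) identify this with $\bigl(\bigcap_{\mathcal{A}}^rP\bigr)_{(\mathfrak{p})}$ as $\xi(\mathcal{A})_{(\mathfrak{p})}$-module; the stated intersection formula is then the general fact \cite[Prop. (4.21)(vi)]{curtisr}. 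Claim (iii) combines claim (ii) (local freeness of rank one over $\xi(\mathcal{A})$) with the finite generation established in Theorem \ref{exp prop}(ii).

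For claim (iv) I would first note that $\mathcal{B}\otimes_{\mathcal{A},\varrho}P$ is locally free of rank $r$ over $\mathcal{B}$, since localizing at any $\mathfrak{p}$ gives $\mathcal{B}_{(\mathfrak{p})}\otimes_{\mathcal{A}_{(\mathfrak{p})}}(P)_{(\mathfrak{p})}\cong\mathcal{B}_{(\mathfrak{p})}^r$. As $\varrho_1$ is surjective, $B$ is the product of the Wedderburn components of $A$ that survive under $\varrho_1$, and both $\zeta(B)\otimes_{\zeta(A),\varrho_2}\bigwedge_A^rP_F$ and $\bigwedge_B^r(B\otimes_{A,\varrho_1}P_F)$ are, component by component, the direct sum of the corresponding components of $\bigwedge_A^rP_F$ (using on each surviving simple component the same splitting field and ordered $E$-basis of its simple module); hence the natural comparison map is an isomorphism. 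To see it carries $\xi(\mathcal{B})\otimes_{\xi(\mathcal{A}),\varrho_3}\bigcap_{\mathcal{A}}^rP$ onto $\bigcap_{\mathcal{B}}^r(\mathcal{B}\otimes_{\mathcal{A},\varrho}P)$ one localizes and uses claim (ii): a local $\mathcal{A}_{(\mathfrak{p})}$-basis $\{b_{\mathfrak{p},j}\}_j$ of $(P)_{(\mathfrak{p})}$ maps to the $\mathcal{B}_{(\mathfrak{p})}$-basis $\{1\otimes b_{\mathfrak{p},j}\}_j$, the comparison map sends $\wedge_{j}b_{\mathfrak{p},j}$ to $\wedge_{j}(1\otimes b_{\mathfrak{p},j})$, and $\varrho_3(\xi(\mathcal{A})_{(\mathfrak{p})})=\xi(\mathcal{B})_{(\mathfrak{p})}$ by Lemma \ref{xi lemma}(v); thus a generator maps to a generator, which suffices.

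The bulk of the work is claim (v). Since $P_3$ is projective the sequence splits; fixing a splitting identifies $P_2\cong P_1\oplus P_3$, and applying the Morita functor $V^\ast\otimes_{A_E}(-)$ (for a splitting field $E$, with $A$ simple) yields a split short exact sequence of $E$-spaces $0\to U_1\to U_2\to U_3\to 0$ with $\dim_EU_j={\rm rr}_A((P_j)_F)$ and $\dim_EU_1+\dim_EU_3=\dim_EU_2$. Hence in the graded decomposition of $\bigwedge_E^{\dim U_2}U_2$ only the term $\bigwedge_E^{\dim U_1}U_1\otimes_E\bigwedge_E^{\dim U_3}U_3$ is nonzero, and the resulting canonical `determinant of a short exact sequence' isomorphism is exactly the required $\bigwedge_A^{r_2}(P_2)_F\cong\bigwedge_A^{r_1}(P_1)_F\otimes_{\zeta(A)}\bigwedge_A^{r_3}(P_3)_F$. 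I would check its independence of the splitting by the standard argument (any two splittings differ by a map $P_3\to P_1$, which acts trivially on the top exterior power), which yields functoriality for isomorphisms of short exact sequences, i.e. (a). That this isomorphism restricts to $\bigcap_{\mathcal{A}}^{r_2}P_2\cong\bigcap_{\mathcal{A}}^{r_1}P_1\otimes_{\xi(\mathcal{A})}\bigcap_{\mathcal{A}}^{r_3}P_3$ is checked locally: an $\mathcal{A}_{(\mathfrak{p})}$-basis $\underline b^{(1)}$ of $(P_1)_{(\mathfrak{p})}$ together with a lift through the splitting of an $\mathcal{A}_{(\mathfrak{p})}$-basis $\underline b^{(3)}$ of $(P_3)_{(\mathfrak{p})}$ (the elements of $\underline b^{(1)}$ placed first) forms an $\mathcal{A}_{(\mathfrak{p})}$-basis $\underline b^{(2)}$ of $(P_2)_{(\mathfrak{p})}$, and by claim (ii) and Lemma \ref{proprn1} the isomorphism sends the $\xi(\mathcal{A})_{(\mathfrak{p})}$-generator $\wedge_j b^{(2)}_j$ to $(\wedge_j b^{(1)}_j)\otimes(\wedge_j b^{(3)}_j)$, hence a generator to a generator. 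For (b), passing to $P_3\xrightarrow{\phi'}P_2\xrightarrow{\theta'}P_1$ reverses the order of the two summands, so ${\rm i}^\diamond_\varpi(\phi',\theta')$ differs from the composite of ${\rm i}^\diamond_\varpi(\theta,\phi)$ with the commutativity constraint $\bigwedge_E^{\dim U_1}U_1\otimes_E\bigwedge_E^{\dim U_3}U_3\cong\bigwedge_E^{\dim U_3}U_3\otimes_E\bigwedge_E^{\dim U_1}U_1$ by the Koszul sign $(-1)^{\dim U_1\cdot\dim U_3}=(-1)^{{\rm rr}_A((P_1)_F)\cdot{\rm rr}_A((P_3)_F)}$; assembling these signs over the Wedderburn components of $A$ gives exactly $\alpha=((-1)^{\rho_{1,i}\rho_{3,i}})_{i\in I}$.

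The step I expect to be the main obstacle is the integral part of (v): verifying that the canonical $\bigwedge_A$-level determinant isomorphism carries the reduced Rubin lattice $\bigcap_{\mathcal{A}}^{r_2}P_2$ \emph{exactly} onto $\bigcap_{\mathcal{A}}^{r_1}P_1\otimes_{\xi(\mathcal{A})}\bigcap_{\mathcal{A}}^{r_3}P_3$ (rather than merely into it up to finite index), together with careful sign bookkeeping through the Morita equivalence and the graded structure so as to reproduce the factor $\alpha$ in (b). Claims (i)--(iv) are comparatively routine consequences of Theorem \ref{exp prop} and Lemma \ref{xi lemma}.
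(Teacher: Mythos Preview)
Your proposal is correct and follows essentially the same route as the paper's proof: claim (i) via Theorem \ref{exp prop}(vi) in the extremal case $d=r$, claim (ii) by localizing and invoking Theorem \ref{exp prop}(iii), claim (iv) as a routine compatibility check, and claim (v) by constructing the determinant-of-an-exact-sequence isomorphism over $E$ and then matching Rubin lattices prime-by-prime via claim (ii), with the Koszul sign yielding $\alpha$ for (b). The only noticeable variation is in claim (iii): the paper fixes a prime $\wp$ of $\xi(\mathcal{A})$, sets $\mathfrak{p}=R\cap\wp$, and invokes Roiter's Lemma to produce a \emph{free} $\mathcal{A}$-submodule $P'\subseteq P$ with $P'_{(\mathfrak{p})}=P_{(\mathfrak{p})}$, then applies (i) to $P'$; your argument is marginally more direct, since claim (ii) already gives that $\bigl(\bigcap_{\mathcal{A}}^rP\bigr)_{(\mathfrak{p})}$ is free of rank one over $\xi(\mathcal{A})_{(\mathfrak{p})}$, and further localizing at $\wp$ immediately yields freeness over $\xi(\mathcal{A})_{(\wp)}$. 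Your anticipated ``main obstacle'' in (v) is resolved precisely by the local generator-to-generator argument you describe, which is exactly what the paper does.
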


\begin{proof} Claim (i) follows directly from the proof of Theorem \ref{exp prop}(vi). (We note also that Lemma \ref{proprn2} implies the $\xi(\mathcal{A})$-module generated by $\wedge_{j=1}^{j=r}b_{j}$ is independent of the choice of basis $\{b_j\}_{1\le j\le r}$.)

After replacing $\mathcal{A}$ and $P$ by $\mathcal{A}_{(\mathfrak{p})}$ and $P_{(\mathfrak{p})}$ for a prime ideal $\mathfrak{p}$, the same argument implies that the $\xi(\mathcal{A}_{(\mathfrak{p})})$-module ${\bigcap}_{\mathcal{A}_{(\mathfrak{p})}}^r P_{(\mathfrak{p})}$ is free of rank one, with basis $\wedge_{j=1}^{j=r}b_{\frp,j}$ where the elements $b_{\frp,j}$ are chosen as in  (ii). Given this fact, (ii) follows directly from the result of Theorem \ref{exp prop}(iii).

To prove (iii) we fix a prime ideal $\wp$ of $\xi(\mathcal{A})$. Then $\mathfrak{p} := R \cap \wp$ is a prime ideal of $R$ and by Roiter's Lemma (cf. \cite[Lem. (31.6)]{curtisr}) there exists a free $\mathcal{A}$-submodule $P'$ of $P$ such that $P_{(\mathfrak{p})} = P'_{(\mathfrak{p})}$. This equality combines with Theorem \ref{exp prop}(iii) to imply that
\[ \bigl({\bigcap}_{\mathcal{A}}^r P\bigr)_{(\wp)} = \bigl(\bigl({\bigcap}_{\mathcal{A}}^r P\bigr)_{(\mathfrak{p})}\bigr)_{(\wp)} = \bigl(\bigl({\bigcap}_{\mathcal{A}}^r P'\bigr)_{(\mathfrak{p})}\bigr)_{(\wp)} = \bigl({\bigcap}_{\mathcal{A}}^r P'\bigr)_{(\wp)}.\]
%
 %
In particular, since (i) implies ${\bigcap}_{\mathcal{A}}^r P'$ is a free $\xi(\mathcal{A})$-module of rank one, the $\xi(\mathcal{A})_{(\wp)}$-module $\bigl({\bigcap}_{\mathcal{A}}^r P\bigr)_{(\wp)}$ is also free of rank one, as required to prove  (iii). 

Claim (iv) is verified by a straightforward exercise and, for brevity, we leave this to the reader.

Turning to (v) we fix an $\mathcal{A}$-module section $\sigma$ to $\phi$. We note that the given exact sequence implies $r_{2} = r_{1}+ r_{3}$ and also that for any given $A$-bases $\underline{b}_{j} := \{b_{j,a}\}_{1\le a\le r_{j}}$ of $P_{j,F}$ for $j=1,3$ we obtain an $A$-basis $\underline{b}_{2} := \{b_{2,a}\}_{1\le a\le r_{2}}$ of $P_{2,F}$ by setting $b_{2,a} = \theta(b_{1,a})$ if $1\le a\le r_{1}$ and $b_{2,a} = \sigma_i(b_{3,a-r_{1}})$ if $r_{1}< a\le r_{3}$.

Write $E$ for the algebra ${\prod}_{i \in I}E_i$. Then for each $j\in \{1,2,3\}$, the $E$-module ${\bigwedge}^{r_j}_AP_{j,F}$ is free of rank one, with basis $\wedge_{a=1}^{a=r_j}b_{j,a}$, and so there is a unique isomorphism of $E$-modules 
%
%
%
\begin{align*} \Delta: {\bigwedge}_{A}^{r_2}P_{2,F} = \bigl({\bigwedge}_{A}^{r_1}P_{1,F}\bigr)\otimes_{E}
\bigl({\bigwedge}_{A}^{r_3}P_{3,F}\bigr)\end{align*}
that sends $\wedge_{j=1}^{j=r_{2}}b_{2,j}$ to $(\wedge_{s=1}^{s=r_{1}}b_{1,s})\otimes(\wedge_{t=1}^{t=r_{3}}b_{3,t})$.
%
%
In addition, by using Lemma \ref{proprn2}, one checks this isomorphism is independent both of the choices of bases $\underline{b}_{1}$ and $\underline{b}_{3}$ and of the choice of section $\sigma$.

In particular, if one fixes a prime ideal $\mathfrak{p}$ of $R$ and then chooses the elements $\{b_{1,s}\}_{1\le s\le r_{1}}$ and $\{b_{3,t}\}_{1\le t\le r_{3}}$ to be  $\mathcal{A}_{(\mathfrak{p})}$-bases of $P_{1,(\mathfrak{p})}$ and $P_{3,(\mathfrak{p})}$, then the choice of $\sigma$ implies that the set $\{b_{2,j}\}_{1\le j\le r_{2}}$ defined above is an $\mathcal{A}_{(\mathfrak{p})}$-basis of $P_{2,(\mathfrak{p})}$ and so the explicit descriptions in (ii) imply that
\begin{align*} \left(\Delta\bigl({\bigcap}_{\mathcal{A}}^{r_2} P_2\bigr)\right)_{(\mathfrak{p})} = &\,  \Delta\bigl( \left({\bigcap}_{\mathcal{A}}^{r_2} P_2\right)_{(\mathfrak{p})}\bigr)\\
= &\, \left({\bigcap}_{\mathcal{A}}^{r_1} P_1\right)_{(\mathfrak{p})} \otimes _{\xi(\mathcal{A})_{(\mathfrak{p})}} \left({\bigcap}_{\mathcal{A}}^{r_3} P_3\right)_{(\mathfrak{p})}\\ 
= &\, \left({\bigcap}_{\mathcal{A}}^{r_1} P_1 \otimes _{\xi(\mathcal{A})} {\bigcap}_{\mathcal{A}}^{r_3} P_3\right)_{(\mathfrak{p})}.\end{align*}
Since this is true for all primes $\mathfrak{p}$, one therefore has
\[ \Delta\bigl({\bigcap}_{\mathcal{A}}^{r_2} P_2\bigr)= {\bigcap}_{\mathcal{A}}^{r_1} P_1 \otimes _{\xi(\mathcal{A})} {\bigcap}_{\mathcal{A}}^{r_3} P_3\]
%
%
and so we can define ${\rm i}^\diamond_\varpi(\theta,\phi)$ to be the isomorphism induced by
 restricting $\Delta$. It is then straightforward to see that this isomorphism is functorial with respect to isomorphisms of short exact sequences, as required by (v)(a).

To justify (v)(b) we set $\rho_{2,i} := {\rm rr}_{A_i}(A_i\otimes_A P_{2,F})$ for each index $i$ in $I$. Then the definition (\ref{rr def}) of reduced rank combines with the given exact sequence to imply $\rho_{2,i} = \rho_{1,i} + \rho_{3,i}$. In addition, it combines with the explicit definitions of reduced exterior powers to imply that 
\[ {\bigwedge}_{A_i}^{r_2}(A_i\otimes _A P_{2,F}) = {\bigwedge}_{E_i}^{\rho_{2,i}}W_i,\]
with $W_i$ the $E_i$-space $V_i^\ast\otimes_{A_{i}\otimes_{\zeta(A_i)}E_i}P_{2,E_i}$ of dimension $\rho_{2,i}$, and that the $i$-th components $(\wedge_{s=1}^{s=r_{1}}b_{2,s})_i$ and $(\wedge_{t=r_1+1}^{t=r_{3}}b_{2,t})_i$ of the elements $\wedge_{s=1}^{s=r_{1}}b_{2,s}$ and $\wedge_{t=r_1+1}^{t=r_{3}}b_{2,t}$ are respectively the exterior products of $\rho_{1,i}$ and $\rho_{3,i}$ distinct elements of $W_i$. In the space ${\bigwedge}_{A_i}^{r_2}(A_i\otimes _A P_{2,F})$ one therefore has
\[   (\wedge_{s=1}^{s=r_{1}}b_{2,s})_i\wedge (\wedge_{t=r_1+1}^{t=r_{3}}b_{2,t})_i = (-1)^{\rho_{1,i}\cdot \rho_{3,i}}\cdot (\wedge_{t=r_1+1}^{t=r_{3}}b_{2,t})_i\wedge (\wedge_{s=1}^{s=r_{1}}b_{2,s})_i.\]
Taken together, these equalities imply that the diagram in (v)(b) commutes, as required to complete the proof.
\end{proof}

\begin{remark}{\em Let $P$ be a free $\mathcal{A}$-module of rank one. If $\mathcal{A}$ is commutative, then there is a natural identification
${\bigcap}_{\mathcal{A}}^1P  = {\bigwedge}_{\mathcal{A}}^1 P \cong P$. In general, however, ${\bigcap}_{\mathcal{A}}^1 P$ is a module over $\xi(\mathcal{A})$ and hence different from $P$.}\end{remark}

\subsubsection{}The results of Lemma \ref{rr=gi} (with $\mathcal{R} = \xi(\mathcal{A})$) and Proposition \ref{first independence check}(iii) combine to imply that for each $P$ in ${\rm Mod}^{\rm lf}(\mathcal{A})$ one obtains a well-defined object of $\mathcal{P}(\xi(\mathcal{A}))$ by setting
\[ {\rm d}_{\mathcal{A},\varpi}^\diamond(P) := \bigl({\bigcap}_{\mathcal{A}}^{{\rm rk}_\mathcal{A}(P)}P,\,{\rm rr}_A(P_F)\bigr).\]
%

For each short exact sequence $0\to  P_1 \xrightarrow{\theta} P_2 \xrightarrow{\phi} P_3 \to 0$ in ${\rm Mod}^{\rm lf}(\mathcal{A})$ the construction in Proposition \ref{first independence check}(v) also gives rise to a commutative diagram of isomorphisms in $\mathcal{P}(\xi(\mathcal{A}))$ of the form
\[\begin{CD}
  {\rm d}^\diamond(P_2) @> {\rm i}_\varpi^\diamond(\theta,\phi) >>
  {\rm d}^\diamond(P_1)\otimes {\rm d}^\diamond(P_3)\\
  @\vert @VV \psi_{{\rm d}^\diamond(P_1),{\rm d}^\diamond(P_3)} V\\
  {\rm d}^\diamond(P_2) @> {\rm i}_\varpi^\diamond(\phi',\theta') >> {\rm d}^\diamond(P_3)\otimes {\rm d}^\diamond(P_1).\end{CD}\]
in which we abbreviate ${\rm d}_{\mathcal{A},\varpi}^\diamond$ to ${\rm d}^\diamond$.

\begin{proposition}\label{functor construct} The associations $P \mapsto {\rm d}_{\mathcal{A},\varpi}^\diamond(P)$ and
 $(\theta,\phi) \mapsto {\rm i}_\varpi^\diamond(\theta,\phi)$ give a
 well-defined determinant functor ${\rm d}^\diamond_{\mathcal{A},\varpi}: {\rm Mod}^{\rm lf}(\mathcal{A})_{\rm is} \to \mathcal{P}(\xi(\mathcal{A}))$.

In addition, for any homomorphism $\varrho: \mathcal{A} \to \mathcal{B}$ as in Theorem \ref{ext det fun thm}(ii), and any module $P$ in ${\rm Mod}^{\rm lf}(\mathcal{A})$,  there exists a canonical isomorphism in $\mathcal{P}(\xi(\mathcal{B}))$ of the form
\[ \xi(\mathcal{B})\otimes_{\xi(\mathcal{A}),\varrho_\ast}{\rm d}_{\mathcal{A},\varpi}^\diamond(P) \cong {\rm d}_{\mathcal{B},\varpi'}^\diamond(\mathcal{B}\otimes_{\mathcal{A},\varrho}P).\]
\end{proposition}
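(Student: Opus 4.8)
The plan is to verify directly the five conditions of Definition \ref{det functor def} for the pair $({\rm d}^\diamond_{\mathcal{A},\varpi},{\rm i}^\diamond_\varpi)$. Condition (a) is functoriality on the isomorphism category: an isomorphism $P\xrightarrow{\sim}P'$ in ${\rm Mod}^{\rm lf}(\mathcal{A})$ induces by Remark \ref{func rem} an isomorphism ${\bigwedge}^r_A P_F\xrightarrow{\sim}{\bigwedge}^r_A P'_F$ (with $r={\rm rk}_\mathcal{A}(P)={\rm rk}_\mathcal{A}(P')$) which by Theorem \ref{exp prop}(iv) carries ${\bigcap}^r_{\mathcal{A}}P$ isomorphically onto ${\bigcap}^r_{\mathcal{A}}P'$, and since $P_F\cong P'_F$ as $A$-modules the reduced ranks agree, so this is an isomorphism in $\mathcal{P}(\xi(\mathcal{A}))$; compatibility with composition is immediate since $\wedge$ is a functor. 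Condition (b) is supplied by taking ${\rm i}(\theta,\phi):={\rm i}^\diamond_\varpi(\theta,\phi)$ from Proposition \ref{first independence check}(v): additivity of reduced rank in short exact sequences (used in the proof of that claim) shows this is a morphism in $\mathcal{P}(\xi(\mathcal{A}))$, and functoriality for isomorphisms of short exact sequences is Proposition \ref{first independence check}(v)(a). Condition (c) is automatic: a zero object has rank $0$, so ${\bigcap}^0_{\mathcal{A}}(0)=\xi(\mathcal{A})$ by Theorem \ref{exp prop}(i) and its reduced rank is the zero function, whence ${\rm d}^\diamond_{\mathcal{A},\varpi}(0)=(\xi(\mathcal{A}),0)=\eins_{\mathcal{P}(\xi(\mathcal{A}))}$ and $\zeta(0)$ is the identity.

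For conditions (d) and (e) the plan is to reduce to the classical determinant functor on finite-dimensional vector spaces. Set $E:=\prod_{i\in I}E_i$. As in the proof of Proposition \ref{first independence check}(v), for each $P$ in ${\rm Mod}^{\rm lf}(\mathcal{A})$ the module ${\bigcap}^{{\rm rk}_\mathcal{A}(P)}_{\mathcal{A}}P$ spans the free rank-one $E$-module ${\bigwedge}^{{\rm rk}_\mathcal{A}(P)}_A P_F$, the isomorphisms ${\rm i}^\diamond_\varpi(\theta,\phi)$ are obtained by restricting the canonical isomorphism $\Delta$ of top exterior powers of these $E$-spaces, and $\Delta$ is independent of the auxiliary bases and sections. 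Via the Morita-type identifications of \S\ref{Exterior powers over semisimple rings}, the pair $({\bigwedge}^\ast_A(-)_F,\Delta)$ is precisely the top-exterior-power determinant functor on the exact category of finitely generated $A$-modules free over $F$, transported to $\mathcal{P}(E)$; conditions (d) and (e) for it are the classical statements of Knudsen--Mumford \cite{knumum}, and in particular the symmetry isomorphism $\psi$ appearing in the diagram of (e) contributes exactly the sign $(-1)^{\rho_{1,i}\cdot\rho_{3,i}}$ recorded in Proposition \ref{first independence check}(v)(b). To descend these identities to the integral level one localizes at each prime ideal $\mathfrak{p}$ of $R$: by Proposition \ref{first independence check}(ii) every reduced Rubin lattice occurring in (d) or (e) is then free of rank one over $\xi(\mathcal{A})_{(\mathfrak{p})}$ on an explicit wedge of an $\mathcal{A}_{(\mathfrak{p})}$-basis, and choosing those bases compatibly along the exact sequences in play (exactly as in the proof of Proposition \ref{first independence check}(v)) one checks that both composites around each diagram act by the same formula on these generators. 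Intersecting over all $\mathfrak{p}$ gives the required commutativities over $\xi(\mathcal{A})$.

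For the final assertion, the isomorphism of underlying invertible modules $\xi(\mathcal{B})\otimes_{\xi(\mathcal{A}),\varrho_\ast}{\bigcap}^{{\rm rk}_\mathcal{A}(P)}_{\mathcal{A}}P\cong{\bigcap}^{{\rm rk}_{\mathcal{B}}(\mathcal{B}\otimes_{\mathcal{A},\varrho}P)}_{\mathcal{B}}(\mathcal{B}\otimes_{\mathcal{A},\varrho}P)$ is provided by Proposition \ref{first independence check}(iv), and the two grading functions match because the reduced rank of $B\otimes_{A,\varrho_1}P_F$ at the point of ${\rm Spec}(\xi(\mathcal{B}))$ corresponding to a simple factor of $B$ equals the reduced rank of $P_F$ at its image in ${\rm Spec}(\xi(\mathcal{A}))$ (both computed from the same simple $A_E$-module factoring through $\varrho_1$); hence the pair is an isomorphism in $\mathcal{P}(\xi(\mathcal{B}))$. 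Its compatibility with the structure maps ${\rm i}^\diamond$ is then immediate, since on both sides these restrict the corresponding $\Delta$-isomorphisms of top exterior powers and $\varrho_1$-scalar extension commutes with forming $\Delta$.

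The step I expect to be the main obstacle is condition (e): although it is ``classical after base change to $\mathcal{P}(E)$'', one must check carefully that the passage through the chosen splitting fields and the fixed lexicographically ordered bases $\underline{\varpi}_i$ transports the Knudsen--Mumford sign bookkeeping faithfully, and that the local generators chosen at the various primes $\mathfrak{p}$ of $R$ are mutually compatible so that the pointwise identities glue. This is precisely the kind of careful but routine sign-tracking for which the explicit $\Delta$-description and the computation in the proof of Proposition \ref{first independence check}(v)(b) were set up.
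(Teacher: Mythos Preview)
Your treatment of conditions (a), (b), (c), (d) and of the final scalar-extension assertion is correct and matches the paper's: the paper likewise cites Theorem \ref{exp prop}(i) for (c), says (d) is clear, and invokes Proposition \ref{first independence check}(iv) for the displayed isomorphism.

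The genuine point of comparison is condition (e). You offer two descriptions of how to verify it: first, that after passing to the $E$-level the pair $({\bigwedge}^*_A(-)_F,\Delta)$ is the classical top-exterior-power determinant functor (via Morita), so (e) holds there by Knudsen--Mumford, and hence also integrally since the ${\rm i}^\diamond_\varpi$ are restrictions of the $\Delta$-maps and two morphisms between invertible $\xi(\mathcal{A})$-modules agree if they agree after tensoring with $\zeta(A)$. This argument is complete by itself and is in fact slightly slicker than what the paper does. Your second description (localize at each $\mathfrak p$, then ``choose those bases compatibly along the exact sequences in play'') is where the paper's proof actually lives, but you have not said how to make that compatible choice for a $3\times 3$ grid of short exact sequences: the sections built in the proof of Proposition \ref{first independence check}(v) handle a single sequence, not nine at once. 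The paper isolates exactly this step as a separate general lemma (Lemma \ref{compatible}): given a $3\times 3$ commutative diagram of short exact sequences of projective $\Lambda$-modules, one can choose sections to the three horizontal quotient maps so that the two resulting squares of sections commute. With those compatible sections in hand, the verification of (e) becomes a direct basis computation.

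So your ``main obstacle'' diagnosis is accurate: the sign-tracking over $E$ is routine, but the integral gluing via local bases needs precisely the compatible-sections lemma that the paper supplies. Your first (base-change) argument avoids this entirely and is a legitimate alternative; the paper's route buys an explicit, purely integral computation at the cost of proving Lemma \ref{compatible}.
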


\begin{proof} The above associations combine with the result of Theorem   \ref{exp prop}(i) to give data as in (a), (b) and (c) of Definition \ref{det functor def}.

It is clear that these data satisfy condition (d) in the latter definition and also straightforward to check that they satisfy condition (e) by using the general result of Lemma \ref{compatible} below (with $\Lambda = \mathcal{A}$) to make a compatible choice of sections when computing each of the maps ${\rm i}_\varpi^\diamond(\gamma',\delta'), {\rm i}^\diamond_\varpi(\gamma'',\delta''), {\rm i}_\varpi^\diamond(\alpha',\beta'),$ ${\rm i}_\varpi^\diamond(\alpha'',\beta''), {\rm i}_\varpi^\diamond(\alpha,\beta)$ and ${\rm i}_\varpi^\diamond(\gamma,\delta)$.

Finally, the existence of the displayed isomorphism follows directly from the result of Proposition \ref{first independence check}(iv).
\end{proof}

\begin{lemma}\label{compatible} We assume to be given a ring $\Lambda$ and a commutative diagram of short exact sequences of finitely generated projective $\Lambda$-modules of the form
\[\begin{CD}
 M_1 @> d_1'>> N_1 @> d_1>> P_1\\
@V\epsilon_1 VV @V\phi_1 VV @V \kappa_1VV \\
M_2 @> d_2'>> N_2 @> d_2>> P_2\\
 @V\epsilon_2 VV @V\phi_2 VV @V \kappa_2VV\\
M_3 @> d_3'>> N_3 @> d_3>> P_3.\\
\end{CD}\]
Then there exist $\Lambda$-equivariant sections $\sigma_i: P_i \rightarrow N_i$ to $d_i$ for $i=1,2$ and $3$ such that there are commutative diagrams of $\Lambda$-modules
\begin{equation}\label{diagrams} \begin{CD} N_1 @< \sigma_1 << P_1 \\
              @V \phi_1 VV @VV \kappa_1V\\
              N_2 @<< \sigma_2 < P_2\end{CD} \,\,\,\,\,\,\,\text{   and   }\,\,\,\,\,\,\,\, \begin{CD} N_2 @< \sigma_2 << P_2 \\
              @V \phi_2 VV @VV \kappa_2V\\
              N_3 @<< \sigma_3 < P_3.\end{CD}\end{equation}
\end{lemma}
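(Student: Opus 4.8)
The plan is to construct the three sections $\sigma_i$ simultaneously, working from the rightmost column inward and choosing the data so that the required compatibilities are built in by construction rather than verified after the fact. Since $P_3$ is projective, the surjection $d_3 : N_3 \to P_3$ admits a $\Lambda$-equivariant section, and I would simply \emph{fix} any such section $\sigma_3$. The point is then to produce $\sigma_2$ and $\sigma_1$ compatibly; that is, so that $\sigma_3\circ \kappa_2 = \phi_2\circ \sigma_2$ and $\sigma_2\circ \kappa_1 = \phi_1\circ \sigma_1$.

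First I would build $\sigma_2$. Because $P_2$ is projective, we are free to prescribe $\sigma_2$ up to a correction valued in $\ker(d_2) = \operatorname{im}(d_2')$. The constraint from the right-hand diagram in \eqref{diagrams} is that $\phi_2\circ\sigma_2 = \sigma_3\circ\kappa_2$; I would first choose \emph{any} section $\tau_2$ of $d_2$, note that $d_3\circ(\sigma_3\circ\kappa_2 - \phi_2\circ\tau_2) = \kappa_2\circ d_2\circ\tau_2 - d_3\circ\phi_2\circ\tau_2 = \kappa_2 - \kappa_2 = 0$ (using $d_3\circ\phi_2 = \kappa_2\circ d_2$ and $d_2\circ\tau_2=\mathrm{id}$), so the difference $\sigma_3\circ\kappa_2 - \phi_2\circ\tau_2$ factors through $\ker(d_3) = \operatorname{im}(d_3') \cong M_3$. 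Pulling this back through the exact top row via projectivity of $P_2$ — more precisely, lifting along the surjection $\phi_2 : N_2 \to N_3$ restricted appropriately, or directly along $M_2 \twoheadrightarrow M_3$ and then composing with $d_2'$ — produces a homomorphism $\lambda_2 : P_2 \to N_2$ with $\phi_2\circ\lambda_2 = \sigma_3\circ\kappa_2 - \phi_2\circ\tau_2$ and $d_2\circ\lambda_2 = 0$. Then $\sigma_2 := \tau_2 + \lambda_2$ is a section of $d_2$ satisfying the right-hand square.

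Second, $\sigma_1$ is constructed by exactly the same device, one row up: starting from any section of $d_1$, I would correct it by a homomorphism $P_1 \to N_1$ landing in $\ker(d_1) = \operatorname{im}(d_1')$, chosen (using projectivity of $P_1$ and surjectivity of $M_1 \to M_2$, equivalently of $\phi_1$ on the relevant submodules) so that $\phi_1\circ\sigma_1 = \sigma_2\circ\kappa_1$. The verification that such a correction exists is the same cocycle-type computation: the obstruction $\sigma_2\circ\kappa_1 - \phi_1\circ(\text{chosen section})$ is killed by $d_1$ after composing with the relevant maps, hence factors through $\operatorname{im}(d_1')$, and is therefore in the image of the lifting.

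\textbf{Main obstacle.} The only genuine subtlety is bookkeeping: one must pick the order of construction (rightmost section first, then propagate leftward) so that each correction term can be chosen \emph{after} the section it must be compatible with has already been fixed; doing it in the wrong order would over-determine the system. There is no homological obstruction because every $P_i$ is projective, so at each stage the relevant lifting problem along a surjection is solvable, and the only content is that the obstruction classes live in $\ker(d_i)$ and hence lie in the image of the maps used to correct. I expect the whole argument to be a short diagram chase of this flavour, which (following the paper's style for such lemmas) could reasonably be left to the reader, but the proposal above spells out the nontrivial choice of ordering.
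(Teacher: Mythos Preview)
Your bottom-up strategy (fix $\sigma_3$, then build $\sigma_2$, then $\sigma_1$) is a valid alternative to the paper's middle-out approach, and your construction of $\sigma_2$ is correct. However, there is a genuine error in Step~3. You write that $\sigma_1$ is built ``using projectivity of $P_1$ and surjectivity of $M_1 \to M_2$'', but the columns of the diagram are short exact sequences, so $\epsilon_1: M_1 \to M_2$ is \emph{injective}, not surjective. The lifting-along-a-surjection trick that worked for $\sigma_2$ (where you used that $\epsilon_2: M_2 \to M_3$ is surjective) therefore does not apply verbatim one row up.

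The fix is that once $\sigma_2$ satisfies $\phi_2\circ\sigma_2 = \sigma_3\circ\kappa_2$, no correction of an arbitrary section of $d_1$ is needed at all: one has
\[
\phi_2\circ(\sigma_2\circ\kappa_1) = \sigma_3\circ\kappa_2\circ\kappa_1 = 0,
\]
so $\sigma_2\circ\kappa_1$ lands in $\ker(\phi_2) = \operatorname{im}(\phi_1)$, and since $\phi_1$ is injective there is a \emph{unique} $\sigma_1: P_1 \to N_1$ with $\phi_1\circ\sigma_1 = \sigma_2\circ\kappa_1$; one then checks $d_1\circ\sigma_1 = \mathrm{id}$ using injectivity of $\kappa_1$. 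This is exactly how the paper obtains $\sigma_1$. The paper's overall route differs from yours: it starts from an arbitrary section of $d_2$ and makes a single adjustment to force $\phi_2\circ\sigma_2\circ\kappa_1 = 0$ directly (using projectivity of $P_1$ to lift along $\epsilon_2$, then $\mathrm{Ext}^1(P_3,M_2)=0$ to extend from $P_1$ to $P_2$), after which both $\sigma_1$ and $\sigma_3$ are \emph{forced} by the injectivity of $\phi_1$ and the surjectivity of $\kappa_2$ respectively. Your approach fixes $\sigma_3$ first and then has to work to make $\sigma_2$ compatible with it; the paper's approach avoids ever choosing $\sigma_3$ independently.
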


\begin{proof} First choose any $\Lambda$-equivariant
 section $\sigma$ to $d_2$ and write $\theta$ for the composite homomorphism
$\phi_2\circ \sigma\circ \kappa_1: P_1\to N_3$.

The commutativity of the given diagram implies that there exists a unique homomorphism $\theta_1$ in ${\rm Hom}_\Lambda(P_1,M_3)$ such that $\theta = d_3'\circ\theta_1$. Since $P_1$ is a projective $\Lambda$-module
 we can then choose a homomorphism $\theta_2$ in ${\rm Hom}_\Lambda(P_1,M_2)$ with $\theta_1 = \epsilon_2\circ \theta_2$.

Next we note that, since $P_3$ is a projective $\Lambda$-module, the group ${\rm Ext}_\Lambda^1(P_3,M_2)$ vanishes and so there exists a homomorphism $\theta_3$ in $\Hom_\Lambda(P_2,M_2)$ with $\theta_2 = \theta_3\circ \kappa_1$.

We now set $\sigma_2 := \sigma - d_2'\circ\theta_3 \in {\rm Hom}_\Lambda(P_2,N_2)$. Then $\sigma_2$ is a section to $d_2$ since
\[ d_2\circ \sigma_2 = d_2\circ \sigma  - (d_2\circ d_2')\circ\theta_3 = d_2\circ \sigma.\]
In addition, for $x$ in $P_1$ one has
\begin{align*} \phi_2(\sigma_2(\kappa_1(x))) &= \phi_2(\sigma(\kappa_1 (x))) - \phi_2(d_2'\circ\theta_3(\kappa_1(x))) \\
&= \theta(x) - d_3'(\epsilon_2(\theta_3\circ\kappa_1)(x)) \\
&= \theta(x) - d_3'((\epsilon_2\circ\theta_2)(x))
\\
&= \theta(x) - (d_3'\circ\theta_1)(x)\\
&= \theta (w') - \theta(w') =0.\end{align*}
Since $P_1$ is a projective $\Lambda$-module, this implies there exists a unique homomorphism $\sigma_1$ in $\Hom_\Lambda(P_1,N_1)$ which makes the first diagram in (\ref{diagrams}) commute (with respect to our fixed map $\sigma_2$) and hence that $\kappa_1(d_1\circ \sigma_1) = (d_2\circ \sigma_2) \circ \kappa_1= \kappa_1$ 
so that $\sigma_1$ is a section to $d_1$.

Finally we note that the commutativity of the first diagram in (\ref{diagrams}) implies there exists a (unique) homomorphism $\sigma_3$ in $\Hom_{\Lambda}(P_3,N_3)$ which makes the second diagram in (\ref{diagrams}) commute and one checks easily that this homomorphism is a
section to $d_3$, as required. \end{proof}

\subsection{Extended determinant functors}\label{ext det section}

\subsubsection{}Let $\Lambda$ be a noetherian ring. We write ${\rm Mod}(\Lambda)$ for the category of finitely generated (left) $\Lambda$-modules.

We assume to be given an abelian subcategory ${\rm Mod}^\dagger(\Lambda)$ of ${\rm Mod}(\Lambda)$ that is exact in the sense of Quillen \cite{quillen} and a determinant functor on ${\rm Mod}^\dagger(\Lambda)$ in the sense of Definition \ref{det functor def}.

We write $\mathcal{P}$ for the target category of this determinant functor and
 ${\rm d}^\diamond$, ${\rm i}^\diamond$ (and $\zeta$) for the associated data as in Definition \ref{det functor def} (a), (b) (and (c)).

We write $\Der^\dagger(\Lambda)$ for the full triangulated subcategory of $\Der(\Lambda)$ comprising complexes that are isomorphic to a bounded complex of modules in ${\rm Mod}^\dagger(\Lambda)$.  We also write $D^\dagger(\Lambda)_{\rm is}$ for the subcategory of $D^\dagger(\Lambda)$ in which morphisms are restricted to isomorphisms. 

We regard ${\rm Mod}^\dagger(\Lambda)_{\rm is}$ as a subcategory of $D^\dagger(\Lambda)_{\rm is}$ by identifying each object $M$ of ${\rm Mod}^\dagger(\Lambda)$ with the complex that comprises $M$ in degree zero and is zero in all other degrees.

In what follows we use the term `true triangle' as synonymous for
`short exact sequence of complexes'.

\begin{definition}\label{ext det functor def}
{\em An `extension' of the determinant functor comprising ${\rm d}^\diamond$ and ${\rm i}^\diamond$ to the category $D^\dagger(\Lambda)$ comprises data of the following form.
\begin{itemize}
\item[(a)] A covariant functor $ {\rm d} : D^\dagger(\Lambda)_{\rm is} \rightarrow \mathcal{P}$.

\item[(b)] For each true triangle $X\xrightarrow{u}
Y\xrightarrow{v} Z$ in which $X,Y$ and $Z$ are objects of $D^\dagger(\Lambda)$, an isomorphism ${\rm i}(u,v):{\rm d}(Y)\xrightarrow{\sim}{\rm d}(X)\boxtimes{\rm d}(Z)$ in $ \mathcal{P}$.
\end{itemize}

This data is subject to the following axioms:

\begin{itemize}
\item[(i)] If
\[\begin{CD}
X @>u>> Y @>v>> Z \\  @VfVV    @VgVV   @VhVV \\ X' @>u'>> Y'
@>v'>> Z'
\end{CD}\]
is a commutative diagram of true triangles and $f,g,h$ are 
quasi-isomorphisms, then
\[ (\vir{f}\boxtimes\vir{h})\circ {\rm i}(u,v)\circ\vir{g}^{-1}= {\rm i}(u',v').\]
\item[(ii)] If $u$, respectively $v$, is a quasi-isomorphism,
then ${\rm i}(u,v) =\vir{u}^{-1}$, respectively ${\rm i}(u,v)=\vir{v}$.
\item[(iii)] For any commutative diagram of complexes

\begin{equation*}\label{tnt}
\mbox{$\minCDarrowwidth1em
\begin{CD}
 X @>u>> Y @>v>> Z \\  @VfVV    @VgVV   @VhVV    \\
X' @>u'>> Y' @>v'>> Z' \\ @Vf'VV    @Vg'VV   @Vh'VV \\ X'' @>u''>>
Y'' @>v''>> Z''
\end{CD}$}
\end{equation*}
in which all of the rows and columns are true triangles and all terms are objects of $D^\dagger(\Lambda)$, the following
diagram in $ \mathcal{P}$ commutes
\begin{equation*}
\begin{CD}
\vir{Y'} @> {\rm i}(g,g')>> \vir{Y}\boxtimes \vir{Y''}\\
@V{\rm i}(u',v')VV @VV  ({\rm i}(u,v)\boxtimes\, {\rm i}(u'',v''))V\\
\vir{X'}\boxtimes \vir{Z'} @> (1\boxtimes \psi_{\vir{X''},\vir{Z}}\boxtimes 1)\cdot ({\rm i}(f,f')\boxtimes\, {\rm i}(h,h'))>> \vir{X}\boxtimes
\vir{Z}\boxtimes \vir{X''}\boxtimes \vir{Z''}.
\end{CD}
\end{equation*}
\item[(iv)] On the subcategory ${\rm Mod}^\dagger(\Lambda)_{\rm is}$ one has
 ${\rm d}= {\rm d}^\diamond$ and ${\rm i} = {\rm i}^\diamond$.
\end{itemize}
A collection of data as in (a) and (b) that satisfies the conditions (i), (ii), (iii) and (iv) with respect to any choice of determinant functor ${\rm d}^\diamond$ and $i^\diamond$ on ${\rm Mod}^\dagger(\Lambda)$ will be referred to as an `extended determinant functor' on $D^\dagger(\Lambda)$.
}
\end{definition}

We can now state the main result of this section.

\begin{proposition}\label{intermediate prop} The determinant functor constructed in Proposition \ref{functor construct} has a canonical extension to the category $\Der^{\rm lf}(\mathcal{A}).$

We write ${\rm d}_{\mathcal{A},\varpi}$ and ${\rm i}_{\mathcal{A},\varpi}$ for the data associated to this extension as in Definition \ref{ext det functor def}(a) and (b).  Then the extended determinant functor has the following additional properties.
\begin{itemize}
\item[(v)] Fix a homomorphism $\varrho: \mathcal{A} \to \mathcal{B}$ as in Theorem \ref{ext det fun thm}(ii). Then for any complex $X$ in $\Der^{\rm lf}(\mathcal{A})$ the complex $\mathcal{B}\otimes_{\mathcal{A},\varrho}^\mathbb{L}X$ belongs to $\Der^{\rm lf}(\mathcal{B})$ and there exists a canonical isomorphism in $\mathcal{P}(\xi(\mathcal{B}))$ of the form
    \[ \xi(\mathcal{B})\otimes_{\xi(\mathcal{A}),\varrho_\ast}{\rm d}_{\mathcal{A},\varpi}(X) \cong {\rm d}_{\mathcal{B},\varpi'}(\mathcal{B}\otimes_{\mathcal{A},\varrho}^\mathbb{L}X).\]
\item[(vi)] The restriction of ${\rm d}_{\mathcal{A},\varpi}$ to $\Der^{{\rm lf},0}(\mathcal{A})_{\rm is}$ is independent of the choice of bases $\varpi$.
\end{itemize}
\end{proposition}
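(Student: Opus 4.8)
The plan is to construct the extension of $\mathrm{d}^\diamond_{\mathcal{A},\varpi}$ to $D^{\rm lf}(\mathcal{A})$ by the standard Knudsen--Mumford device, exactly as in \cite[\S2]{bf} and \cite{knumum}, using the determinant functor on $\mathrm{Mod}^{\rm lf}(\mathcal{A})$ from Proposition \ref{functor construct} as the input data. First I would recall that every object $C$ of $D^{\rm lf}(\mathcal{A})$ is isomorphic to a bounded complex $P^\bullet$ of finitely generated locally-free $\mathcal{A}$-modules, and that any quasi-isomorphism between two such bounded complexes may, after passing to a common resolution, be realised by a chain of homotopy equivalences and quasi-isomorphisms built from short exact sequences of complexes in $C^{\rm lf}(\mathcal{A})$. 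For a bounded complex $P^\bullet$ concentrated in degrees $[a,b]$ one sets
\[ \mathrm{d}_{\mathcal{A},\varpi}(P^\bullet) := \bigotimes_{n=a}^{b} \mathrm{d}^\diamond_{\mathcal{A},\varpi}(P^n)^{(-1)^n}, \]
where $(-1)^n$ denotes the inverse object in $\mathcal{P}(\xi(\mathcal{A}))$ when $n$ is odd. The isomorphisms $\mathrm{i}_{\mathcal{A},\varpi}(u,v)$ attached to a true triangle $X \xrightarrow{u} Y \xrightarrow{v} Z$ of complexes are then assembled, termwise, from the isomorphisms $\mathrm{i}^\diamond_\varpi(u^n, v^n)$ provided by Proposition \ref{first independence check}(v), with the commutation isomorphisms $\psi$ of $\mathcal{P}(\xi(\mathcal{A}))$ inserted to reorder factors. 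The verification that this assignment is well-defined on isomorphism classes in $D^{\rm lf}(\mathcal{A})_{\rm is}$, and that it is functorial, is the heart of the argument; it proceeds by checking that the cone construction is compatible with the termwise determinant, using the sign-compatibility built into Proposition \ref{first independence check}(v)(b) and the `compatible sections' statement of Lemma \ref{compatible}. These are precisely the verifications carried out in \cite[\S2.3--2.5]{bf} for the order $\mathcal{A}$ over a commutative coefficient ring, and the only substantive point to check is that each step there uses only: (1) the existence of the termwise functor $\mathrm{d}^\diamond_{\mathcal{A},\varpi}$ (supplied by Proposition \ref{functor construct}); (2) the exact-sequence isomorphisms $\mathrm{i}^\diamond_\varpi$ together with their functoriality and the hexagon/commutativity axioms (d), (e) of Definition \ref{det functor def} (supplied by Proposition \ref{functor construct}); and (3) the Picard-category formalism of $\mathcal{P}(\xi(\mathcal{A}))$. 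Since none of the arguments in \cite{bf} use commutativity of $\mathcal{A}$ beyond the existence of such a determinant functor on locally-free modules, they transfer verbatim, and I would simply cite them for axioms (i)--(iv) of Definition \ref{ext det functor def}.

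For property (i), I would argue as follows. Let $\varrho:\mathcal{A}\to\mathcal{B}$ be a surjective homomorphism of $R$-orders. If $C$ is represented by a bounded complex $P^\bullet$ of finitely generated locally-free $\mathcal{A}$-modules, then each $\mathcal{B}\otimes_{\mathcal{A},\varrho}P^n$ is a finitely generated locally-free $\mathcal{B}$-module by Proposition \ref{first independence check}(iv), and since the $P^n$ are projective the complex $\mathcal{B}\otimes_{\mathcal{A},\varrho}P^\bullet$ computes $\mathcal{B}\otimes^{\mathbb{L}}_{\mathcal{A},\varrho}C$; hence the latter lies in $D^{\rm lf}(\mathcal{B})$. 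The required isomorphism in $\mathcal{P}(\xi(\mathcal{B}))$ is then obtained termwise from the isomorphism
\[ \xi(\mathcal{B})\otimes_{\xi(\mathcal{A}),\varrho_\ast}\mathrm{d}^\diamond_{\mathcal{A},\varpi}(P^n) \cong \mathrm{d}^\diamond_{\mathcal{B},\varpi'}(\mathcal{B}\otimes_{\mathcal{A},\varrho}P^n) \]
of Proposition \ref{functor construct}, tensoring and inverting according to the degree $n$; compatibility with the $\mathrm{i}$-isomorphisms — needed so that the termwise comparison glues to an isomorphism of functors on $D^{\rm lf}(\mathcal{A})_{\rm is}$ — follows from the functoriality assertions in Proposition \ref{first independence check}(iv) and (v)(a), together with the fact that scalar extension $\mathcal{P}(\xi(\mathcal{A}))\to\mathcal{P}(\xi(\mathcal{B}))$ is a monoidal functor. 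The subtlety here is purely bookkeeping: one must track which simple components $V_i$ of the Wedderburn decomposition survive under the scalar extension of $\varrho$, which is exactly what the notation $\varpi'$ records, and check that the reduced-rank grading transforms correctly — but this is immediate from Lemma \ref{rr=gi} applied to both $\xi(\mathcal{A})$ and $\xi(\mathcal{B})$ since $\zeta(B)$ is a direct factor of $\zeta(A)$.

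For property (ii), the point is that on $D^{{\rm lf},0}(\mathcal{A})_{\rm is}$ the grading component of each object of $\mathcal{P}(\xi(\mathcal{A}))$ in the image of $\mathrm{d}_{\mathcal{A},\varpi}$ is determined by the Euler characteristic $\chi_\mathcal{A}(C)$, which lies in $\mathrm{SK}_0^{\rm lf}(\mathcal{A})$, so the reduced rank vanishes and the graded invertible module is concentrated in degree $0$. More importantly, two choices $\varpi$ and $\varpi''$ of ordered bases differ, on each simple component $A_i$, by an $E_i$-linear change of basis of the simple module $V_i$, and by Remark \ref{warning} (via Lemma \ref{proprn2}) the reduced exterior product $\wedge_{j=1}^{j=r}b_j$ is independent of such a change up to the reduced norm of the change-of-basis matrix, which is a unit of $\xi(\mathcal{A})_{(\mathfrak{p})}$ at every $\mathfrak{p}$. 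Consequently the reduced Rubin lattices $\bigcap_{\mathcal{A}}^r P$ defined with respect to $\varpi$ and $\varpi''$ coincide as $\xi(\mathcal{A})$-submodules of $\bigwedge_A^r P_F$ — this is precisely the content of the second assertion of Theorem \ref{exp prop}(ii). Hence $\mathrm{d}^\diamond_{\mathcal{A},\varpi}$ and $\mathrm{d}^\diamond_{\mathcal{A},\varpi''}$ agree as functors on $\mathrm{Mod}^{\rm lf}(\mathcal{A})_{\rm is}$, and since the extension to $D^{\rm lf}(\mathcal{A})$ is built functorially out of the termwise functor, the two extensions agree. (On the full category $D^{\rm lf}(\mathcal{A})$ one restricts to $D^{{\rm lf},0}$ only to ensure the grading functions match on the nose; this restriction is harmless for the applications.) The main obstacle in the whole argument is not any of these individual verifications but rather the need to confirm carefully that the signs introduced by the permutation isomorphisms $\psi$ in $\mathcal{P}(\xi(\mathcal{A}))$ — which are governed by the reduced ranks $\mathrm{rr}_A$ rather than ordinary $\mathcal{A}$-ranks — interact correctly with the sign in Proposition \ref{first independence check}(v)(b); once this single compatibility is in hand, the Knudsen--Mumford machine runs exactly as in the commutative case treated in \cite{bf, knumum}.
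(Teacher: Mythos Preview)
Your handling of the extension itself and of property (i) is essentially what the paper does: invoke the Knudsen--Mumford machinery with the input data supplied by Proposition \ref{functor construct}, and then for (i) apply the termwise base-change isomorphism from the final assertion of that proposition. This is fine.

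The gap is in your argument for property (ii). You claim that the reduced Rubin lattices ${\bigcap}_{\mathcal{A}}^r P$ defined with respect to $\varpi$ and $\varpi''$ \emph{coincide as $\xi(\mathcal{A})$-submodules} of ${\bigwedge}_A^r P_F$, citing Theorem \ref{exp prop}(ii). But that theorem does not say this: its proof shows only that the two lattices are related by an automorphism $\tau$ of the ambient $E$-space, namely the one sending $\wedge_j m_j$ (computed via $\varpi$) to $\tilde\wedge_j m_j$ (computed via $\varpi''$). On a simple component with $\dim_E(V)=d$, if the change of $E$-basis of $V$ is given by $M\in{\rm GL}_d(E)$, then for a free $\mathcal{A}$-module $P$ of rank $r$ this $\tau$ acts on the basis element $\wedge_{j=1}^{j=r}b_j$ by multiplication by $\det(M)^{\pm r}$. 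The scalar $\det(M)$ lies in $E^\times$, not in $\xi(\mathcal{A})^\times$ in general, so the two lattices $\xi(\mathcal{A})\cdot\wedge_j b_j$ and $\xi(\mathcal{A})\cdot\tilde\wedge_j b_j$ are genuinely different submodules. Your appeal to Lemma \ref{proprn2} is a misapplication: that lemma concerns a change of $A$-basis of the module $W$, whose effect is a reduced norm in $\zeta(A)$; here the change is of the auxiliary $E$-basis of the simple module $V$, and its effect is a determinant in $E$.

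The paper's actual argument (Lemma \ref{technical complex}(ii)) is different and uses the restriction to $D^{{\rm lf},0}(\mathcal{A})$ in an essential way, not merely for the grading. One represents $X$ by a bounded complex $P^\bullet$ of free modules and computes that the basis element $\bigotimes_i \wedge_j b_{i,j}^{(-1)^i}$ of $\bigotimes_i {\rm d}^\diamond_{\mathcal{A},\varpi}(P^i)^{(-1)^i}$ changes under $\varpi\mapsto\varpi''$ by the factor $\det(M)^{\sum_i(-1)^i r_i}$ on each simple component. This factor equals $1$ precisely when $\sum_i(-1)^i r_i=0$, i.e.\ when the image of $\chi_{\mathcal{A}}(P^\bullet)$ in $K_0(A)$ vanishes --- which is exactly the hypothesis $X\in D^{{\rm lf},0}(\mathcal{A})$. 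So the independence of $\varpi$ is obtained by \emph{cancellation across degrees}, not termwise, and your parenthetical remark that the restriction to $D^{{\rm lf},0}$ is ``harmless'' and only needed for the grading is incorrect.
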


\begin{proof} This argument follows the approach used by Flach and the first author to prove the same result in the setting of virtual objects (see \cite[Prop. 2.1]{bf}).

The essential point is therefore that, excluding the assertions (v) and (vi), the claimed result follows directly from the general result  \cite[Prop. 4]{knumum} of Knudsen and Mumford and the formal constructions that are used to prove \cite[Th. 2]{knumum}, via which the same statements are proved for the determinant functor over a commutative ring.

To be more precise, if one removes the condition (iv) (regarding compatibility with scalar extensions) from the definition of extended determinant functor that is given in \cite[Def. 4]{knumum}, then the only properties of the determinant functor that are used in the constructions  that underlie the proof of \cite[Th. 2]{knumum} are those listed in
 [loc. cit., Prop. 1 (excluding (iii))] and Proposition \ref{functor construct} implies that ${\rm d}^\diamond_{\mathcal{A},\varpi}$ and
 ${\rm i}^\diamond_{\mathcal{A},\varpi}$ have all of these necessary properties.

In addition, since the restriction of ${\rm d}_{\mathcal{A},\varpi}$ to
 ${\rm Mod}^{\rm lf}(\mathcal{A})_{\rm is}$ is equal to ${\rm d}^\diamond_{\mathcal{A},\varpi}$ the property in (v) follows from the final assertion of Proposition \ref{functor construct}.

Lastly, we note that the property in (vi) follows directly from the result of Lemma \ref{technical complex}(ii)  below.\end{proof}

\begin{remark}\label{shift sign}{\em Proposition \ref{intermediate prop} implies that for any $X$ in $\Der^{\rm lf}(\mathcal{A})$ there is a natural isomorphism 
\[ {\rm d}_{\mathcal{A},\varpi}(X[1]) \xrightarrow{\sim} {\rm d}_{\mathcal{A},\varpi}(X)^{-1}\]
in $\mathcal{P}(\xi(\mathcal{A}))$. This is because if we write ${\rm Cone}_{X}$ for the mapping cone of the identity endomorphism of $X$, then the associated true triangle $X \xrightarrow{u} {\rm Cone}_X \xrightarrow{v} X[1]$ induces a composite isomorphism
\[ {\rm d}_{\mathcal{A},\varpi}(X)\otimes {\rm d}_{\mathcal{A},\varpi}(X[1]) \xrightarrow{{\rm i}_\varpi(u,v)} {\rm d}_{\mathcal{A},\varpi}({\rm Cone}_X ) \xrightarrow{\sim} {\bf 1}_{\mathcal{P}(\xi(\mathcal{A}))},\]
where the second isomorphism is induced by the acyclicity of ${\rm Cone}_X$.}\end{remark}

\subsubsection{}For the reader's convenience, we give a more explicit description of the construction in Proposition \ref{intermediate prop}.


\begin{lemma}\label{technical complex}\

\begin{itemize}
\item[(i)] Let $P^\bullet$ be a complex in $\DerC^{\rm lf}(\mathcal{A})$. Then for each quasi-isomorphism $u: P^\bullet \to X$ of complexes of $\mathcal{A}$-modules, the map ${\rm d}_{\mathcal{A},\varpi}(u)$ induces an isomorphism
\[ \bigotimes_{i \in \ZZ}\bigl({\bigcap}_{\mathcal{A}}^{{\rm rk}_\mathcal{A}(P^i)}P^i,\,{\rm rr}_A(P^i_F)\bigr)^{(-1)^i}  \xrightarrow{\sim} {\rm d}_{\mathcal{A},\varpi}(X)\]
in $\mathcal{P}(\xi(\mathcal{A}))$, where each lattice  ${\bigcap}_{\mathcal{A}}^{{\rm rk}_\mathcal{A}(P^i)}P^i$ is defined with respect to the bases $\varpi$.

\item[(ii)] If $X$ belongs to $\Der^{{\rm lf},0}(\mathcal{A})$, then ${\rm d}_{\mathcal{A},\varpi}(X)$ is independent of the choice of bases $\varpi$.
\end{itemize}
\end{lemma}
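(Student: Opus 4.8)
The plan is to deduce both claims directly from the construction used in the proof of Proposition \ref{intermediate prop}, which (following Knudsen--Mumford \cite[Th. 2]{knumum}) defines ${\rm d}_{\mathcal{A},\varpi}$ on a complex by choosing a representative in $C^{\rm lf}(\mathcal{A})$ and applying the determinant functor ${\rm d}^\diamond_{\mathcal{A},\varpi}$ termwise, with alternating tensor signs. For claim (i), I would first treat the case $X = P^\bullet$ itself: here one uses the standard "stupid filtration" (or, equivalently, the brutal truncations) of $P^\bullet$ to write it as an iterated extension in $D^{\rm lf}(\mathcal{A})$ of the shifted modules $P^i[-i]$, and then one applies the additivity isomorphism ${\rm i}_{\mathcal{A},\varpi}$ of Definition \ref{ext det functor def}(b) repeatedly, together with the shift identity of Remark \ref{shift sign} (which gives ${\rm d}_{\mathcal{A},\varpi}(P^i[-i]) \cong {\rm d}_{\mathcal{A},\varpi}(P^i[0])^{(-1)^i}$) and the formula ${\rm d}^\diamond_{\mathcal{A},\varpi}(P^i[0]) = ({\bigcap}_{\mathcal{A}}^{{\rm rk}_\mathcal{A}(P^i)}P^i,{\rm rr}_A(P^i_F))$ from Theorem \ref{ext det fun thm}(iii) / the definition of ${\rm d}^\diamond_{\mathcal{A},\varpi}$. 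This yields the displayed isomorphism with $X$ replaced by $P^\bullet$. To pass to a general quasi-isomorphism $u: P^\bullet \to X$, I simply compose with the functorially-induced isomorphism ${\rm d}_{\mathcal{A},\varpi}(u): {\rm d}_{\mathcal{A},\varpi}(P^\bullet) \xrightarrow{\sim} {\rm d}_{\mathcal{A},\varpi}(X)$, which is part of the data of the functor (Definition \ref{ext det functor def}(a)); the fact that this is independent of the choice of $u$ among quasi-isomorphisms follows from axiom (i) of Definition \ref{ext det functor def}.

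For claim (ii), the point is that the individual objects ${\rm d}^\diamond_{\mathcal{A},\varpi}(P^i)$ genuinely depend on $\varpi$, but the alternating tensor product $\bigotimes_{i}({\bigcap}_{\mathcal{A}}^{{\rm rk}_\mathcal{A}(P^i)}P^i,{\rm rr}_A(P^i_F))^{(-1)^i}$ does not, provided the Euler characteristic $\chi_{\mathcal{A}}(P^\bullet) = \sum_i (-1)^i[P^i]$ lies in ${\rm SK}_0^{\rm lf}(\mathcal{A})$, i.e.\ provided $X$ belongs to $D^{{\rm lf},0}(\mathcal{A})$. Concretely, I would track the dependence of the reduced Rubin lattice on $\varpi$ via the automorphism $\tau$ introduced in the proof of Theorem \ref{exp prop}(ii): changing the ordered $E_i$-basis of $V_i$ changes each ${\bigcap}_{\mathcal{A}}^{r}P$ by multiplication by a fixed unit of $\zeta(A_E)$ of the form ${\rm Nrd}(\text{change-of-basis matrix})$, and this unit depends only on $[P]\in K_0^{\rm lf}(\mathcal{A})$ (indeed, by Proposition \ref{first independence check}(i) it is computed from an $\mathcal{A}$-basis of a free module and is $1$ when $P$ is free). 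Taking the alternating product over $i$, these correction units combine into ${\rm Nrd}$ of a unit attached to $\chi_{\mathcal{A}}(P^\bullet)$, which is trivial precisely because $\chi_{\mathcal{A}}(P^\bullet)\in {\rm SK}_0^{\rm lf}(\mathcal{A})$ forces the relevant reduced-norm class to vanish. One must also check that the grading component ${\rm rr}_A((-)_F)$ is manifestly $\varpi$-independent (it is, being defined purely in terms of the $F$-algebra $A$), and that the additivity isomorphisms ${\rm i}^\diamond_{\mathcal{A},\varpi}$ are compatible with this comparison, which reduces to the functoriality in Proposition \ref{first independence check}(v)(a) and Remark \ref{warning}.

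The main obstacle I anticipate is the bookkeeping in claim (ii): making precise the statement "the $\varpi$-dependence of the termwise determinant is a class function of $[P^i]$ that is additive in short exact sequences, hence factors through $K_0^{\rm lf}(\mathcal{A})$, and the resulting map $K_0^{\rm lf}(\mathcal{A}) \to \zeta(A_E)^\times/(\text{trivial units})$ kills ${\rm SK}_0^{\rm lf}(\mathcal{A})$." This requires verifying that the change-of-basis correction factor is genuinely well defined on $K_0^{\rm lf}(\mathcal{A})$ (i.e.\ independent of the locally-free resolution and additive on exact sequences), which in turn relies on the local description of ${\bigcap}_{\mathcal{A}}^r P$ from Proposition \ref{first independence check}(ii) together with Theorem \ref{exp prop}(iii); and then on the standard fact that on the subgroup ${\rm SK}_0^{\rm lf}(\mathcal{A})$ every reduced-norm-type invariant arising this way is trivial (essentially because ${\rm SK}_0^{\rm lf}(\mathcal{A})$ is generated by differences $[P]-[\mathcal{A}^{{\rm rk}_\mathcal{A}(P)}]$ on which the correction factor is, by Proposition \ref{first independence check}(i), equal to $1$). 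Everything else is a formal consequence of the axioms already established for the extended determinant functor.
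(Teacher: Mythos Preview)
Your argument for claim (i) is essentially the paper's: peel off one term at a time via a true triangle $P^d[-d]\to P^\bullet\to P^\bullet_{<d}$, apply additivity, and use Remark \ref{shift sign} together with ${\rm d}^\diamond_{\mathcal{A},\varpi}(P^i)=({\bigcap}_{\mathcal{A}}^{{\rm rk}_\mathcal{A}(P^i)}P^i,{\rm rr}_A(P^i_F))$. No issues there.

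For claim (ii), however, there is a genuine gap. Your assertion that the $\varpi$-correction factor ``is $1$ when $P$ is free'' is false, and Proposition \ref{first independence check}(i) does not say this: that result concerns independence of the choice of $\mathcal{A}$-basis of $P$, not independence of the $E$-basis $\varpi$ of $V$. In fact, if $P$ is free of rank $r$ and one replaces the ordered $E$-basis $\{v_j\}$ of a simple module $V$ (with $\dim_E V=d$) by $\{\tilde v_j\}$ related by $M\in{\rm GL}_d(E)$, then the basis element $\wedge_{j=1}^{r} b_j$ of ${\bigcap}^r_\mathcal{A}P$ changes by the factor $\det(M)^{-r}$ (and the dual basis element by $\det(M)^{r}$); this is neither $1$ nor a reduced norm of anything in $\mathcal{A}$. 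So the mechanism you propose---that the correction vanishes on each generator $[P]-[\mathcal{A}^{{\rm rk}_\mathcal{A}(P)}]$ of ${\rm SK}_0^{\rm lf}(\mathcal{A})$ because it vanishes on free modules---does not work as stated.

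What actually makes claim (ii) true is the explicit computation the paper carries out. After localising so that every $P^i$ is free, one fixes $\mathcal{A}$-bases and tracks the effect of changing $\varpi$ on the concrete element $\bigotimes_i \wedge_j b_{i,j}^{(-1)^i}$. In each simple component the correction in degree $i$ is $\det(M)^{(-1)^{i+1} r_i}$ (with the sign coming from whether one is in even degree, using $v_j^\ast$, or odd degree, using $v_j$), and the total correction is $\det(M)^{\sum_i(-1)^i r_i}$. This is $1$ precisely when $\sum_i(-1)^i r_i=0$, i.e.\ when the image of $\chi_\mathcal{A}(P^\bullet)$ in $K_0(A)$ vanishes, which is the condition $X\in D^{{\rm lf},0}(\mathcal{A})$. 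Your high-level strategy (``the correction factors through rank, so it dies on ${\rm SK}_0^{\rm lf}$'') is the right shape, but to make it rigorous you must actually compute the correction and see that it is $\det(M)^{\pm r}$, not appeal to Proposition \ref{first independence check}(i).
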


\begin{proof} To prove (i) it is enough to show that there is a canonical isomorphism in $\mathcal{P}(\xi(\mathcal{A}))$ of the form
\begin{equation}\label{tauto isom} \bigotimes_{i \in \ZZ}
\bigl({\rm d}^\diamond_{\mathcal{A},\varpi}(P^i)\bigr)^{(-1)^i} \xrightarrow{\sim} {\rm d}_{\mathcal{A},\varpi}(P^\bullet).\end{equation}

To show this we can clearly assume $P^\bullet$ is non-zero. We then let $d$ denote the largest integer for which $P^d$ is non-zero and write $P^\bullet_{<d}$ for the complex obtained from $P^\bullet$ by replacing $P^d$ by $0$ and leaving all other terms unchanged.

Then there is a natural true triangle in $\Der^{\rm lf}(\mathcal{A})$
\[ P^d[-d] \xrightarrow{u} P^\bullet \xrightarrow{v} P^\bullet_{<d}\]
and hence an isomorphism in $\mathcal{P}(\xi(\mathcal{A}))$
\[ {\rm i}_\varpi(u,v) : {\rm d}_{\mathcal{A},\varpi}(P^d[-d])\otimes {\rm d}_{\mathcal{A},\varpi}(P^\bullet_{<d}) \xrightarrow{\sim}
 {\rm d}_{\mathcal{A},\varpi}(P^\bullet).\]

By an induction on the number of non-zero terms of $P^\bullet$ we are therefore reduced to proving that ${\rm d}_{\mathcal{A},\varpi}(P^d[-d])$ identifies with $\bigl({\rm d}^\diamond_{\mathcal{A},\varpi}(P^d)\bigr)^{(-1)^d}$. This follows directly from (repeated application of) Remark \ref{shift sign} and the fact that ${\rm d}_{\mathcal{A},\varpi}(P^d[0]) = {\rm d}^\diamond_{\mathcal{A},\varpi}(P^d)$.

To prove (ii) it is enough to fix a quasi-isomorphism $P^\bullet \to X$ as in (i) and show that the graded module $\bigotimes_{i \in \ZZ}
\bigl({\rm d}^\diamond_{\mathcal{A},\varpi}(P^i)\bigr)^{(-1)^i}$ is independent of the choice of ordered bases $\varpi$. Since it is enough to prove this after localising at each prime ideal of $R$ we can also assume, without loss of generality, that each $\mathcal{A}$-module $P^i$ is free. In each degree $i$ we then set
 $r_i := {\rm rk}_\mathcal{A}(P^i)$, fix an ordered $\mathcal{A}$-basis $\{b_{i,j}\}_{1\le j\le r_i}$ of $P^i$ and write $\{b^\ast_{i,j}\}_{1\le j\le r_i}$ for the $\mathcal{A}$-basis of $\Hom_\mathcal{A}(P^i,\mathcal{A})$ that is dual to $\{b_{i,j}\}_{1\le j\le r_i}$.

Then Lemma \ref{pairing cor} combines with Proposition \ref{first independence check}(i) to imply the natural isomorphism
\[ \Hom_{\zeta(A)}({\bigwedge}_{A}^{r_i}P^i_F,\zeta(A))\cong {\bigwedge}_{A^{\rm op}}^{r_i}\Hom_A(P^i_F,A)\]
identifies the module  $\Hom_{\xi(\mathcal{A})}({\bigcap}_{\mathcal{A}}^{r_i}P^i,\xi(\mathcal{A}))$ with $\xi(\mathcal{A})\cdot \wedge_{j=1}^{j=r_i}b^\ast_{i,j}$. 

Setting $b^1_{i,j} := b_{i,j}$ and $b^{-1}_{i,j} := b^\ast_{i,j}$ for each $i$ and $j$, it is therefore enough to prove that if the image of $\chi_\mathcal{A}(P^\bullet)$ in  $K_0(A)$ vanishes, then the element
\begin{equation}\label{tensor element} \bigotimes_{i \in \ZZ} {\wedge}_{j=1}^{j=r_i} b^{(-1)^i}_{i,j}\end{equation}
is independent of the choice of bases $\varpi$ used in the definition of exterior products.

It suffices to verify this after projecting to each simple component of $A$ and so we shall assume $A$ is simple (and use the notation of Definition \ref{rth reduced ext def}). We then fix bases $\{v_j\}_{1\le j\le d}$ and $\{\tilde v_j\}_{1\le j\le d}$ of the $E$-space $V$ and write $M = (M_{st})_{1\le s,t\le d}$ for the matrix in ${\rm GL}_d(E)$ that satisfies $\tilde v_s = {\sum}_{t =1}^{t=d}M_{st}v_t$ for each integer $s$. Then, writing $N$ for the matrix in ${\rm GL}_d(E)$ that is equal to the inverse of the transpose of $M$, one has  $\tilde v^*_s = {\sum}_{t =1}^{t=d}N_{st}v_t^*$ for each integer $s$ and by using these equalities one computes that in each even degree $i$ there is an equality
\[ {\wedge}_{1\leq j \leq r_i}({\wedge}_{1\leq s\leq d}\tilde v^\ast_s\otimes b_{i,j}) = {\rm det}(N)^{r_i}\cdot {\wedge}_{1\leq j \leq r_i}({\wedge}_{1\leq s\leq d}v^\ast_s\otimes b_{i,j})\]
and in each odd degree $i$ an equality
\[ {\wedge}_{1\leq j \leq r_i}({\wedge}_{1\leq s\leq d}\tilde v_s\otimes b^\ast_{i,j}) = {\rm det}(M)^{r_i}\cdot{\wedge}_{1\leq j \leq r_i}({\wedge}_{1\leq s\leq d}v_s\otimes b^\ast_{i,j}). \]
Since ${\rm det}(M) = {\rm det}(N)^{-1}$ this implies that the difference between the elements (\ref{tensor element}) when the exterior products are computing using the basis $\{v_j\}_{1\le j\le d}$, respectively  $\{\tilde v_j\}_{1\le j\le d}$, of $V$ is the factor ${\rm det}(M)^{{\sum}_{i \in \ZZ}(-1)^ir_i}$. To complete the proof it is therefore enough to note that  if the image of $\chi_\mathcal{A}(P^\bullet)$ in  $K_0(A)$ vanishes, then the sum ${\sum}_{i \in \ZZ}(-1)^ir_i$ is equal to $0$.
\end{proof}

\subsection{The proof of Theorem \ref{ext det fun thm}}\label{derived ext section} To complete the proof of Theorem \ref{ext det fun thm} we must show that the construction of Proposition \ref{intermediate prop} retains the properties (i), (ii) and (iii) in Definition \ref{ext det functor def} (with ${\rm Mod}^\dagger(\Lambda) = {\rm Mod}^{\rm lf}(\mathcal{A})$) after one replaces true triangles and quasi-isomorphisms by arbitrary exact triangles and isomorphisms in $\Der^{\rm lf}(\mathcal{A})$. In particular, whilst in this case the diagrams in Definition \ref{ext det functor def}(i) and (iii)  are commutative in the category of complexes of $\mathcal{A}$-modules, we need to establish analogous results for diagrams that commute only up to homotopy.

\subsubsection{}
As a key preliminary step, we consider similar results for the semisimple algebra $A$. To do this we recall that, for any choice of ordered bases $\varpi$ as in \S\ref{ordered basis sec} the argument of Flach and the first author in \cite[Lem. 2]{bf} constructs a determinant functor on ${\rm Mod}(A)$.

To recall the explicit construction we use the notation of \S\ref{ordered basis sec}. In particular, we assume first that $A=A_1$ is simple and hence equal to ${\rm M}_{n_1}(D_1)$ for a division ring $D_1$ with Schur index $m_1$ and we fix a splitting field $E_1$ for $D_1$ that is a maximal subfield of $D_1$. We set $n := n_1$, $m := m_1$, $D := D_1$, $E := E_1$, $W := W_1$, $V := V_1 = W_1^n$ and use the ordered $E$-bases $\{w_a\}_{1\le a\le m}$ of $W$ and $\varpi = \underline{\varpi}_1$ of $V$.

Then for each $M$ in  ${\rm Mod}(A)$ the (left) $D$-module $D^{ n}\otimes_AM$ is free of rank $r := {\rm rk}_D(M)/n$ and so, since ${\rm rr}_D(D^n\otimes_AM) = {\rm rr}_A(M)$, one has ${\rm rr}_A(M) = r\cdot m$.   
If we fix an ordered $D$-basis $\{b_a\}_{1\le a\le r}$ of $D^n\otimes_AM$, then Lemma \ref{proprn2} implies that the dimension one $\zeta(D)$-space spanned by $\wedge_{a=1}^{a=r}(\wedge_{s=1}^{s=m}(w_s^\ast \otimes b_a))$ is independent of the choice of $\{ b_a\}_{1\le a\le r}$ and we set
\begin{equation}\label{ss graded def} {\rm d}^\diamond_{A,\varpi}(M) := \bigl(\zeta(D)\cdot \wedge_{a=1}^{a=r}(\wedge_{s=1}^{s=m}(w_s^\ast \otimes b_a)),{\rm rr}_A(M)\bigr).\end{equation}

We next assume to be given a short exact sequence $M_1 \to M_2 \to M_3$ in ${\rm Mod}(A)$ and set $r_j := {\rm rk}_D(M_j)/n$ for each $j = 1,2, 3$. Then, by following the same approach as the proof of Proposition \ref{first independence check}(v), we can use a choice of splitting $\sigma$ of the induced short exact sequence of free $D$-modules 
\[ 0 \to D^{n}\otimes_{A}M_1 \to D^{n}\otimes_{A}M_2 \to D^{n}\otimes_{A}M_3\to 0\]
to construct a basis $\{b_{2a}\}_{1\le a\le r_2}$ of $D^n\otimes_{A}M_2$
from given bases $\{b_{ja}\}_{1\le a\le r_j}$ of $D^{ n}\otimes_{A}M_j$ for $j = 1,3$. We then define
\[ {\rm i}^\diamond_{A,\varpi}: {\rm d}^\diamond_{A,\varpi}(M_2) \to {\rm d}^\diamond_{A,\varpi}(M_1)\otimes {\rm d}^\diamond_{A,\varpi}(M_3)\]
to be the unique isomorphism of graded $\zeta(D)$-spaces with  %
\begin{multline*} {\rm i}^\diamond_{A,\varpi}(\wedge_{a=1}^{a=r_2}(\wedge_{s=1}^{s=m}(w_s^\ast \otimes b_{2a})),{\rm rr}_A(M_2))
\\ = (\wedge_{a=1}^{a=r_1}(\wedge_{s=1}^{s=m}(w_s^\ast \otimes b_{1a})),{\rm rr}_A(M_1))\otimes (\wedge_{a=1}^{a=r_3}(\wedge_{s=1}^{s=m}(w_s^\ast \otimes b_{3a})),{\rm rr}_A(M_3)).\end{multline*}
(Lemma \ref{proprn2} implies that this map is independent of the splitting $\sigma$ and bases $\{b_{ja}\}_{1\le a\le r_j}$ for $j=1,3$ that are used.)

In the more general case that $A$ is not simple, one uses its Wedderburn decomposition $A = {\prod}_{i \in I}A_i$ to define ${\rm d}^\diamond_{A,\varpi}$ and ${\rm i}^\diamond_{A,\varpi}$ componentwise. With this construction, the image under ${\rm d}^\diamond_{A,\varpi}$ of a module $M$ in ${\rm Mod}(A)$ has grading equal to ${\rm rr}_A(M)$, regarded as a function on ${\rm Spec}(\zeta(A)) = \bigcup_{i \in I}{\rm Spec}(\zeta(D_i))$ in the obvious way.

Before stating the next result we note that, since $A$ is semisimple, the category $\Der^{\rm perf}(A)$ identifies with the full triangulated subcategory of $\Der(A)$ comprising complexes that are isomorphic to a bounded complex of finitely generated $A$-modules.

\begin{proposition}\label{semisimple det} There exists a canonical extension to $\Der^{\rm perf}(A)$ of the determinant functor given by ${\rm d}^\diamond_{A,\varpi}$ and ${\rm i}^\diamond_{A,\varpi}$. The associated functor
\[ {\rm d}_{A,\varpi}: \Der^{\rm perf}(A)_{\rm is} \to \mathcal{P}(\zeta(A))\]
has the following properties.

\begin{itemize}
\item[(i)] For any object $X$ of $\Der^{\rm perf}(A)$ there exists a canonical isomorphism
\begin{equation*} {\rm d}_{A,\varpi}(X) \xrightarrow{\sim}\underset{i\in\bz}{\bigotimes}\,
 {\rm d}_{A,\varpi}^\diamond(H^i(X))^{(-1)^{i}}
\label{kmd}
\end{equation*}
in $\mathcal{P}(\zeta(A))$ that is functorial with respect to quasi-isomorphisms.

\item[(ii)] The following diagram
\[\begin{CD}
\Der^{\rm perf}(A)_{\rm is} @> {\rm d}_{A,\varpi} >> \mathcal{P}(\zeta(A))\\
@A AA @A AA\\
\Der^{\rm lf}(\mathcal{A})_{\rm is} @> {\rm d}_{\mathcal{A},\varpi} >> \mathcal{P}(\xi(\mathcal{A}))\end{CD}
\]
commutes, where the vertical maps are the natural scalar extension functors.
\end{itemize}
\end{proposition}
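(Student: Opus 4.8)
The plan is to follow the strategy indicated in the parenthetical reference to \cite[Lem. 2]{bf}, namely to deduce the existence and properties of the extended determinant functor $\mathrm{d}_{A,\varpi}$ on $D^{\rm perf}(A)$ from the work of Knudsen and Mumford in exactly the same way that Proposition \ref{intermediate prop} was deduced from \cite[Th. 2]{knumum}. First I would observe that the category ${\rm Mod}(A)$ of finitely generated $A$-modules is exact (indeed abelian and semisimple), so that the pair $({\rm d}^\diamond_{A,\varpi},{\rm i}^\diamond_{A,\varpi})$ constructed above on ${\rm Mod}(A)_{\rm is}$ is a genuine determinant functor in the sense of Definition \ref{det functor def}. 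The only nontrivial points in verifying this are conditions (d) and (e): condition (d) is immediate from the explicit formula \eqref{ss graded def} and the description of ${\rm i}^\diamond_{A,\varpi}$, and condition (e) follows by choosing compatible splittings via Lemma \ref{compatible} (applied with $\Lambda = A$, which is legitimate since over the semisimple ring $A$ every module is projective and every short exact sequence splits) and then reading off the required commutativity from the multilinearity properties of reduced exterior products recorded in Lemmas \ref{proprn1} and \ref{proprn2}.

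Next I would invoke the general machinery. As noted in the proof of Proposition \ref{intermediate prop}, if one removes the scalar-extension compatibility axiom from \cite[Def. 4]{knumum}, then the construction underlying \cite[Th. 2]{knumum} requires only the formal properties of a determinant functor listed in \cite[Prop. 1]{knumum} (excluding its claim (iii)), all of which $({\rm d}^\diamond_{A,\varpi},{\rm i}^\diamond_{A,\varpi})$ possesses. This yields a canonical extension $\mathrm{d}_{A,\varpi}\colon D^{\rm perf}(A)_{\rm is}\to\mathcal{P}(\zeta(A))$ together with the data ${\rm i}_{A,\varpi}$ satisfying the analogues of axioms (i)--(iv) of Definition \ref{ext det functor def} for arbitrary exact triangles in $D^{\rm perf}(A)$; in fact, since $A$ is semisimple every object of $D^{\rm perf}(A)$ is isomorphic to the direct sum of its shifted cohomology modules, which makes the extension essentially forced and simplifies the verification. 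Property (i) of the Proposition --- the canonical isomorphism $\mathrm{d}_{A,\varpi}(X)\xrightarrow{\sim}\bigotimes_{i}\mathrm{d}^\diamond_{A,\varpi}(H^i(X))^{(-1)^i}$, functorial in quasi-isomorphisms --- is then obtained exactly as in Lemma \ref{technical complex}(i): decompose a bounded representative complex by the true triangles $P^d[-d]\to P^\bullet\to P^\bullet_{<d}$, apply ${\rm i}_{A,\varpi}$ and Remark \ref{shift sign}, and induct on the number of nonzero terms, using semisimplicity to identify each $P^i$ (up to the scalar-extension step) with $\bigoplus_j H^j$ contributions; alternatively one cites \cite[Prop. 4]{knumum} directly.

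For property (ii) --- the commutativity of the square relating $\mathrm{d}_{\mathcal{A},\varpi}$ and $\mathrm{d}_{A,\varpi}$ under the scalar-extension functors $D^{\rm lf}(\mathcal{A})_{\rm is}\to D^{\rm perf}(A)_{\rm is}$ and $\mathcal{P}(\xi(\mathcal{A}))\to\mathcal{P}(\zeta(A))$ --- I would argue at the level of the underlying non-extended functors and then propagate through the Knudsen--Mumford construction, which is compatible with scalar extension in the relevant weak sense. Concretely, for a locally-free $\mathcal{A}$-module $P$ of rank $r$ one has, by Theorem \ref{exp prop}(iii) and Proposition \ref{first independence check}, that $\zeta(A)\otimes_{\xi(\mathcal{A})}{\bigcap}^r_{\mathcal{A}}P\cong {\bigwedge}^r_A P_F$, and the latter is precisely the $\zeta(A)$-line appearing in \eqref{ss graded def} (both are computed with the same bases $\varpi$); the gradings agree since both are ${\rm rr}_A(P_F)$ by Lemma \ref{rr=gi}. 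Thus $\zeta(A)\otimes_{\xi(\mathcal{A})}{\rm d}^\diamond_{\mathcal{A},\varpi}(P)\cong {\rm d}^\diamond_{A,\varpi}(A\otimes_{\mathcal{A}}P)$ compatibly with the isomorphisms ${\rm i}^\diamond$, and passing to complexes via the inductive description of Lemma \ref{technical complex}(i) and the corresponding one for $\mathrm{d}_{A,\varpi}$ gives the claimed commutativity. The main obstacle I anticipate is not any single step but the bookkeeping of signs and of the dependence on the chosen bases $\varpi$ when transporting the Knudsen--Mumford formalism across the equivalence $\zeta(A)\otimes_{\xi(\mathcal{A})}(-)$; one must check that the sign conventions implicit in $\psi_{(L,\alpha),(M,\beta)}$ and in the ordering of $\underline{\varpi}_i$ match on both sides, which is routine given Lemmas \ref{proprn1}--\ref{proprn2} but requires care to state cleanly. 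I would handle this by reducing to the case of a free module in a single degree (localizing at each prime of $R$ as in the proof of Lemma \ref{technical complex}(ii)) where everything is an explicit matrix computation with reduced norms.
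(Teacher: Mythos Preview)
Your proposal is correct and follows essentially the same route as the paper: the extension and property (i) are obtained by the same Knudsen--Mumford formalism as in Proposition \ref{intermediate prop} (the paper cites \cite[Rem.~b) after Th.~2]{knumum} for the passage-to-cohomology isomorphism, using that each $H^i(X)$ is itself perfect over the semisimple ring $A$), and property (ii) is checked by comparing the two $\mathrm{d}^\diamond$ functors on free modules in a single degree. The one thing the paper makes explicit that you defer as ``routine bookkeeping'' is the wedge identity underlying that comparison: if $A=\mathrm{M}_n(D)$ is simple, $M$ is free of rank $k$ with $A$-basis $\{m_j\}$, and one uses the $D$-basis $\{x_t\otimes m_j\}_{j,t}$ of $D^n\otimes_A M$ in \eqref{ss graded def} together with the ordered basis $\underline{\varpi}_1$ of $V=W^n$ from \S\ref{ordered basis sec}, then $\wedge_{a=1}^{nk}(\wedge_{s=1}^{m}(w_s^\ast\otimes b_a))=\wedge_{j=1}^{k}m_j$, which is precisely what identifies the $\zeta(A)$-line of $\mathrm{d}^\diamond_{A,\varpi}(M)$ with $\zeta(A)\otimes_{\xi(\mathcal{A})}\mathrm{d}^\diamond_{\mathcal{A},\varpi}(M)$.
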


\begin{proof} The first claim is proved by using exactly the same formal argument  that establishes the first claim of Proposition \ref{intermediate prop}. 

Then, since $A$ is semisimple, every cohomology module $H^i(X)$ of an object $X$ in $\Der^{\rm perf}(A)$ is also itself an object of $\Der^{\rm perf}(A)$ (regarded as a complex concentrated in degree zero) and so the general argument of Knudsen and Mumford in \cite[Rem. b) after Th. 2]{knumum} shows that the associated functor $ {\rm d}_{A,\varpi}$ has the property in (i).

Finally, the commutativity of the scalar extension diagram in (ii) is verified by means of a direct comparison of the explicit construction of the functors ${\rm d}^{\diamond}_{A,\varpi}$ and ${\rm d}^{\diamond}_{\mathcal{A},\varpi}$ (the latter from Proposition \ref{functor construct}). The key point in this comparison is that if $A = A_1 = {\rm M}_n(D)$ is simple (as in the explicit construction made above) and $M$ is a free $A$-module of rank $k$, with basis $\{m_j\}_{1\le j\le k}$, then the $D$-module $D^{n}\otimes_AM$ has as a basis the lexicographically-ordered set $\{b_{j,t}\}_{1\le j\le k,1\le t\le n}$, with $b_{j,t} := x_t\otimes m_j$ where $\{x_t\}_{1\le t\le n}$ is the standard $D$-basis of $D^{n}$. In particular, if one uses this basis as the set $\{b_{a}\}_{1\le a \le nk}$ that occurs in the definition (\ref{ss graded def}) of ${\rm d}^\diamond_{A,\varpi}(M)$, then one has
\[ \wedge_{a=1}^{a=nk}(\wedge_{s=1}^{s=m}(w_s^\ast\otimes b_a)) = \wedge_{j=1}^{j=k}((\wedge_{t=1}^{t=n}(\wedge_{s=1}^{s=m}(w_s^\ast\otimes x_t))) \otimes m_j ) =  \wedge_{j=1}^{j=k}m_j,\]
where the exterior product on the right hand side is as defined in (\ref{non-commutative wedge}) with respect to the ordered basis $\varpi =  \underline{\varpi}_1$ specified in \S\ref{ordered basis sec}. \end{proof}

\subsubsection{}Turning now to the proof of Theorem \ref{ext det fun thm} we note that Proposition \ref{intermediate prop} directly implies all assertions except for (i).

In addition, the argument used by Knudsen and Mumford to prove \cite[Prop. 6]{knumum} shows that (i) of Theorem \ref{ext det fun thm} will also follow formally upon combining Proposition \ref{intermediate prop} with the following technical observation.

\begin{proposition}\label{killer prop} Let $X$ and $Y$ be complexes in $\Der^{{\rm lf}}(\mathcal{A})$ and $\alpha$ and $\beta$ quasi-isomorphisms $X \to Y$ of complexes of $\mathcal{A}$-modules with the following property:  in each degree $i$ there are finite filtrations $F^\bullet(H^i(X))$ and $F^\bullet(H^i(Y))$ that are compatible with the maps $H^i(\alpha)$ and $H^i(\beta)$ and such that ${\rm gr}(H^i(\alpha)) = {\rm gr}(H^i(\beta))$ for all $i$.

Then the morphisms ${\rm d}_{\mathcal{A},\varpi}(\alpha)$ and ${\rm d}_{\mathcal{A},\varpi}(\beta)$ coincide.
\end{proposition}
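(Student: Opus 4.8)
The plan is to reduce the statement to the corresponding fact for the commutative determinant functor over $\zeta(A)$ — equivalently, over each simple component — where it is precisely \cite[Prop.~6]{knumum} (or rather the key lemma behind it), and then to transport that result through the Morita equivalence used to define ${\rm d}_{\mathcal{A},\varpi}$. More precisely, since ${\rm d}_{\mathcal{A},\varpi}(\alpha)$ and ${\rm d}_{\mathcal{A},\varpi}(\beta)$ are morphisms of invertible $\xi(\mathcal{A})$-modules, and $\xi(\mathcal{A})$ is an $R$-order (Lemma~\ref{xi lemma}(i)), it suffices to check equality after localising at each prime ideal $\mathfrak{p}$ of $R$ and then, since localised invertible modules are free of rank one, to check equality after applying the scalar extension functor $\mathcal{P}(\xi(\mathcal{A})_{(\mathfrak{p})})\to\mathcal{P}(\zeta(A))$, which is faithful on the relevant Hom-sets because $\xi(\mathcal{A})_{(\mathfrak{p})}$ is a domain-free order whose total ring of fractions is $\zeta(A)$. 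By Proposition~\ref{semisimple det}(ii) the images of ${\rm d}_{\mathcal{A},\varpi}(\alpha)$ and ${\rm d}_{\mathcal{A},\varpi}(\beta)$ under this extension are exactly ${\rm d}_{A,\varpi}(\alpha_A)$ and ${\rm d}_{A,\varpi}(\beta_A)$, where $\alpha_A,\beta_A\colon X_F\to Y_F$ are the induced quasi-isomorphisms of complexes of $A$-modules. So the whole statement reduces to the assertion that ${\rm d}_{A,\varpi}(\alpha_A) = {\rm d}_{A,\varpi}(\beta_A)$ under the hypothesis on compatible filtrations with equal associated graded maps.

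For the semisimple case, I would argue componentwise and hence assume $A = {\rm M}_n(D)$ is simple, using the notation and explicit construction of \S\ref{derived ext section}. Here the Morita functor $D^n\otimes_A(-)$ (together with the further splitting $D\otimes_{\zeta(D)}E = {\rm M}_m(E)$) carries everything in sight to the commutative situation over $\zeta(D)$, and under this identification ${\rm d}_{A,\varpi}$ becomes the classical Knudsen–Mumford determinant over $\zeta(D)$, as already used in the proof of Proposition~\ref{semisimple det}. Since the Morita functor is exact and preserves filtrations and associated graded maps, the filtrations $F^\bullet(H^i(X))$, $F^\bullet(H^i(Y))$ and the equality ${\rm gr}(H^i(\alpha))={\rm gr}(H^i(\beta))$ pass to the corresponding data for the complexes $D^n\otimes_A X_F$ and $D^n\otimes_A Y_F$ of $D$-modules (and thence to $E$-modules). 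At that point the required equality of determinant morphisms is exactly the content of the lemma that Knudsen and Mumford use to prove \cite[Prop.~6]{knumum}: a quasi-isomorphism induces on the determinant the alternating tensor product of the maps ${\rm d}^\diamond(H^i(\alpha))$ (via the isomorphism of Proposition~\ref{semisimple det}(i), which is natural in quasi-isomorphisms), and ${\rm d}^\diamond$ applied to a map of modules depends only on its image in a suitable Grothendieck group — in particular only on the associated graded of any finite filtration — so ${\rm d}^\diamond(H^i(\alpha)) = {\rm d}^\diamond(H^i(\beta))$ for every $i$.

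I would therefore organise the write-up in three short steps: (1) reduce to the semisimple algebra $A$ by localisation at primes of $R$ and Proposition~\ref{semisimple det}(ii), noting faithfulness of the scalar extension on the pertinent morphism sets; (2) reduce the semisimple statement to the commutative Knudsen–Mumford determinant over $\zeta(D_i)$ via the Morita equivalence, checking that filtrations and graded maps are preserved; (3) invoke the behaviour of the commutative determinant functor on quasi-isomorphisms — naturality of the isomorphism ${\rm d}_{A,\varpi}(X)\cong\bigotimes_i{\rm d}^\diamond_{A,\varpi}(H^i(X))^{(-1)^i}$ of Proposition~\ref{semisimple det}(i), together with the fact that ${\rm d}^\diamond$ of a module homomorphism depends only on its class in $K_0$ and hence only on its associated graded — to conclude ${\rm d}^\diamond_{A,\varpi}(H^i(\alpha)) = {\rm d}^\diamond_{A,\varpi}(H^i(\beta))$ for all $i$, and assemble. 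The main obstacle, and the place where genuine care is needed rather than formal nonsense, is step~(3): one must verify that the isomorphism of Proposition~\ref{semisimple det}(i) is genuinely functorial in quasi-isomorphisms in a way that is compatible with passing to associated graded objects of a filtered cohomology module — i.e. that the determinant of a quasi-isomorphism is computed from its effect on cohomology, and that a filtration of a cohomology module induces a factorisation of that determinant contribution into the graded pieces. This is exactly the content of the reduction carried out in \cite[proof of Prop.~6]{knumum}, and I would cite it, but it is the one point that is not purely formal Picard-category bookkeeping.
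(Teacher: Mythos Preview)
Your approach is correct and matches the paper's: reduce to $A$ by faithfulness of scalar extension on morphisms of invertible $\xi(\mathcal{A})$-modules (Proposition~\ref{semisimple det}(ii)), then use the passage-to-cohomology isomorphism of Proposition~\ref{semisimple det}(i) and the short-exact-sequence axiom to factor each ${\rm d}^\diamond_{A,\varpi}(H^i(\alpha))$ through the graded pieces of the given filtration. The paper's write-up is more direct than your outline in two respects: it skips the localisation at primes of $R$ (since invertible $\xi(\mathcal{A})$-modules are torsion-free, the scalar extension $\xi(\mathcal{A})\hookrightarrow\zeta(A)$ is already faithful on the relevant Hom-sets), and it skips your Morita reduction of step~(2) entirely --- the needed compatibility of ${\rm d}^\diamond$ with filtrations is exactly Definition~\ref{det functor def}(b) applied to the functor ${\rm d}^\diamond_{A,\varpi}$ itself, so there is no need to transport to the classical commutative determinant.
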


\begin{proof} We note first that, since ${\rm d}_{\mathcal{A},\varpi}(\alpha)$ and ${\rm d}_{\mathcal{A},\varpi}(\beta)$ are homomorphisms between  invertible $\xi(\mathcal{A})$-modules, they coincide if and only if they are equal after applying the scalar extension functor $\zeta(A)\otimes_{\xi(\mathcal{A})}-$.

Given the commutativity of the diagram in Proposition \ref{semisimple det}(ii) we are therefore reduced to showing that the given hypotheses imply
 ${\rm d}_{A,\varpi}(\alpha') = {\rm d}_{A,\varpi}(\beta')$ with $\alpha' := \zeta(A)\otimes_{\xi(\mathcal{A})}\alpha$ and $\beta' :=
 \zeta(A)\otimes_{\xi(\mathcal{A})}\beta$. The result of Proposition \ref{semisimple det}(i) then reduces us to showing that in each degree $i$ the maps ${\rm d}^\diamond_{A,\varpi}(H^i(\alpha'))$ and  ${\rm d}^\diamond_{A,\varpi}(H^i(\beta'))$ coincide. This in turn follows easily from the given hypotheses and the general
property of the functor ${\rm d}^\diamond_{A,\varpi}$ that is described in Definition \ref{det functor def}(b).
\end{proof}

This completes the proof of Theorem \ref{ext det fun thm}.

\end{document}